\renewcommand{\d}{\mathrm{d}}
\newcommand{\D}{\mathrm{D}}
\newcommand{\e}{\mathrm{e}}
\newtheorem{Thm}{Theorem}[section]
\newtheorem{Lem}[Thm]{Lemma}
\newtheorem{Prop}[Thm]{Proposition}
\newtheorem{Cor}[Thm]{Corollary}
\newtheorem{Con}[Thm]{Conjecture}
\newtheorem*{MainThm}{Main Theorem}
\theoremstyle{definition}
\newtheorem{Def}[Thm]{Definition}
\newtheorem{Rem}[Thm]{Remark}
\newtheorem{Nota}[Thm]{Notation}
\newtheorem{Ex}[Thm]{Example}
\newtheoremstyle{named}{}{}{\itshape}{}{\bfseries}{.}{.5em}{#1 #3}
\theoremstyle{named}
\def\R{\mathbb{R}}
\def\Q{\mathbb{Q}}
\def\N{\mathbb{N}}
\def\C{\mathbb{C}}
\def\Z{\mathbb{Z}}
\def\fD{\mathfrak{D}}
\def\fb{\mathfrak{b}}
\def\sl{\mathfrak{sl}}
\def\g{\mathfrak{g}}
\def\fh{\mathfrak{h}}
\def\cO{\mathcal{O}}
\def\cC{\mathcal{C}}
\def\cE{\mathcal{E}}
\def\cH{\mathcal{H}}
\def\cK{\mathcal{K}}
\def\cM{\mathcal{M}}
\def\cP{\mathcal{P}}
\def\cR{\mathcal{R}}
\def\cU{\mathcal{U}}
\def\cV{\mathcal{V}}
\def\cW{\mathcal{W}}
\def\cX{\mathcal{X}}
\def\a{\alpha}
\def\b{\beta}
\def\e{\varepsilon}
\def\G{\Gamma}
\def\c{\gamma}
\def\D{\Delta}
\def\d{\delta}
\def\k{\kappa}
\def\h{\theta}
\def\l{\lambda}
\def\L{\Lambda}
\def\w{\omega}
\def\bA{\mathbf{A}}
\def\bb{\mathbf{b}}
\def\bc{\mathbf{c}}
\def\bC{\mathbf{C}}
\def\bD{\mathbf{D}}
\def\bu{\mathbf{u}}
\def\bx{\mathbf{x}}
\def\be{\mathbf{e}}
\def\bE{\mathbf{E}}
\def\bf{\mathbf{f}}
\def\bF{\mathbf{F}}
\def\bi{\mathbf{i}}
\def\bK{\mathbf{K}}
\def\bL{\mathbf{L}}
\def\bp{\mathbf{p}}
\def\bQ{\mathbf{Q}}
\def\bT{\mathbf{T}}
\def\=>{\Longrightarrow}
\def\inj{\hookrightarrow}
\def\corr{\longleftrightarrow}
\def\to{\longrightarrow}
\def\ox{\otimes}
\def\o+{\oplus}
\def\bo+{\bigoplus}
\def\x{\times}
\def\<{\langle}
\def\>{\rangle}
\def\cong{\equiv}
\def\^{\wedge}
\def\+{\dagger}
\def\inv{^{-1}}
\def\half{\frac12}
\def\dis{\displaystyle}
\def\dd[#1,#2]{\frac{d#1}{d#2}}
\def\del[#1,#2]{\frac{\partial #1}{\partial #2}}
\def\:{\;:\;}
\def\ol{\mathring{\l}}
\def\rank{\mathrm{rank}}
\def\tab{\;\;\;\;\;\;}
\newcommand{\til}[1]{\widetilde{#1}}
\newcommand{\what}[1]{\widehat{#1}}
\newcommand{\mat}[1]{\begin{pmatrix}#1\end{pmatrix}}
\newcommand{\bin}[1]{\begin{bmatrix}#1\end{bmatrix}}
\newcommand{\case}[2][ll]{\left\{\begin{array}{#1}#2 \\ \end{array}\right.}
\newcommand{\Eq}[1]{\begin{align}#1\end{align}}
\newcommand{\Eqn}[1]{\begin{align*}#1\end{align*}}
\renewcommand{\vec}[1]{\overrightarrow{#1}}
\renewcommand{\over}[1]{\overline{#1}}
\tikzset{>=latex}
\tikzstyle{vthick}=[line width=1.8pt]
\newcommand\drawpath[2]{%
  \foreach \too [count=\c from 1] in {#1}
  {
  \ifthenelse{\c=1}
  {\xdef\from{\too}}
  {\path (\from) edge [->, #2] (\too);
    \xdef\from{\too}}
  };
}
\begin{document}
\title{Positive Representations with Zero Casimirs}

\author{  Ivan Chi-Ho Ip\footnote{
	  Department of Mathematics, Hong Kong University of Science and Technology\newline
	  Email: ivan.ip@ust.hk\newline		
	  Email: rman@connect.ust.hk
          }, Ryuichi Man\footnotemark[1]
}
\maketitle

\numberwithin{equation}{section}

\begin{abstract}
In this paper, we construct a new family of generalization of the positive representations of split-real quantum groups based on the degeneration of the Casimir operators acting as zero on some Hilbert spaces. It is motivated by a new observation arising from modifying the representation in the simplest case of $\cU_q(\sl(2,\R))$ compatible with Faddeev's modular double, while having a surprising tensor product decomposition. For higher rank, the representations are obtained by the polarization of Chevalley generators of $\cU_q(\g)$ in a new realization as universally Laurent polynomials of a certain skew-symmetrizable quantum cluster algebra. We also calculate explicitly the Casimir actions of the maximal $A_{n-1}$ degenerate representations of $\cU_q(\g_\R)$ for general Lie types based on the complexification of the central parameters. 
\end{abstract}
\vspace{3mm}

{\small \textbf{Keywords.} quantum groups, positive representations, cluster algebra, Casimir operators}
\vspace{3mm}

{\small \textbf {2010 Mathematics Subject Classification.} Primary 17B37, 13F60}

\tableofcontents
\section{Introduction}\label{sec:intro}

\subsection*{Motivation}
\emph{Positive representations} were introduced in \cite{FI} to study the representation theory of \emph{split real quantum groups} $\cU_{q}(\g_\R)$ associated to semisimple Lie algebra $\g$, as well as its \emph{modular double} $\cU_{q\til{q}}(\g_\R)$ introduced by \cite{Fa1, Fa2} in the regime where $|q|=1$. These representations are natural generalizations of a special family of representations of $\cU_q(\sl(2,\R))$ classified in \cite{Sch} and studied in detail by Teschner \emph{et al.} \cite{BT, PT1, PT2} from the physics point of view of quantum Liouville theory, in which they are characterized by the actions of the Chevalley generators $\<\be, \bf, \bK\>$ as \emph{positive self-adjoint operators} on the Hilbert space $L^2(\R)$. In general, this family of representations of $\cU_q(\g_\R)$, which we will refer to as the \emph{standard positive representations}, has been constructed explicitly for all Lie types \cite{FI, Ip2, Ip3}, and has since been given a cluster realization \cite{Ip7, SS1} as well as a geometric meaning in terms of the quantization of potential functions \cite{GS}, associated to moduli spaces of certain framed $G$-local systems \cite{FG1}.

In this paper, we discover a new family of representations of $\cU_q(\g_\R)$ which does not lie in the original family of the standard positive representations, but yet the Chevalley generators of the quantum group still act by positive operators. This is based on a simple observation in the $\sl_2$ case, where the generators can be re-expressed in terms of the Casimir element 
\Eq{
\bC=\bf \be-q\bK -q\bK\inv.
}
Let $q=e^{\pi \bi b^2}$ where $\bi:=\sqrt{-1}$ and $0<b<1$ such that $|q|=1$. In the family of the standard positive representation $\cP_\l$, $\bC$ acts as multiplication on $L^2(\R)$ by a positive real scalar
\Eq{
\pi_\l(\bC)=e^{2\pi b \l}+e^{-2\pi b\l}\geq 2} for a real parameter $\l\in \R_{\geq0}$. We observe that however, if we require that $\bC$ acts by zero instead, the resulting representation is still positive, since we can rewrite formally
\Eq{
\bf=\be\inv(\bC+q\bK+q\bK\inv) \leadsto q\be\inv \bK+q\be\inv \bK\inv
}
which is a positive expression. We call this the \emph{degenerate positive representation}, denoted by $\cP^0$. 

From another point of view, formally this can be obtained by setting the \emph{real} parameter $\l$ to certain special \emph{complex} values $\l_0=\pm\frac{\bi}{4b}$, so that we may consider $\cP^0:=:\cP_{\l_0}$ as some kind of analytic continuation of the standard positive representations $\cP_\l$. It turns out that $\cP^0$ also behaves surprisingly well under taking tensor product since it decomposes into a direct integral of the standard family $\cP_\l$ again. In some sense this is reminiscent of the \emph{complementary series} of the unitary representations of the split real group $SL(2,\R)$.

Finally, the action of the Casimir by zero is very special, in the sense that we have an embedding of (a quotient of) $\cU_q(\sl_2)$ into a skew-symmetrizable quantum cluster algebra $\cO_q(\cX)$. In particular, the image of the Chevalley generators are universally Laurent polynomials, such that $\cP^0$ is realized as a certain polarization of such embedding. This also induces a dual construction which is compatible with the modular double structure.

\subsection*{Degenerate positive representations}

Generalizing the motivation in $\sl_2$, we construct a new family of representations of $\cU_q(\sl(n,\R))$ by formally setting the generalized Casimir elements $\bC_k=0$ in the standard positive representations $\cP_\l$ in an appropriate cluster chart. The construction is based on the cluster realization of $\cU_q(\sl(n,\R))$ and its symmetric folding by cluster mutations due to \cite{SS2}. This leads to the consideration of a new, skew-symmetrizable cluster variety $\cX^0$, and we obtain the following main result (Theorem \ref{mainthmAn}). 

Let $\fD_q(\g)$ denote the Drinfeld's double of the Borel subalgebra of $\cU_q(\g)$.
\begin{MainThm}
There is an embedding of $\fD_q(\sl_{n})/\<\bC_k=0\>$ into a skew-symmetrizable quantum cluster algebra $\cO_q(\cX^0)$, such that the image of the Chevalley generators are universally Laurent polynomials.

Passing to a polarization, we have an irreducible representation $\cP^0$ of $\cU_q(\sl(n,\R))$ acting on a Hilbert space as positive operators, such that all the generalized Casimir operators $\bC_k$ act by zero.
\end{MainThm}

By reversing the multipliers of the symmetric folding, one also induces from $\cP^0$ a representation $\til{\cP}^0$ compatible with the modular double counterpart (Corollary \ref{cormainAn}). Hence in fact we have constructed two different new embeddings of $\cU_q(\sl(n,\R))$ with specialized Casimir actions.

Next, we proceed to discuss the representations for $\cU_q(\g_\R)$ for general Lie type, where $\rank(\g)=n$. If it is not of type $A_n$, the symmetric folding construction may not work. However, we can consider a parabolic subgroup $W_J\subset W$ of the Weyl group associated to a subset of the Dynkin index $J\subset I$. In \cite{Ip8}, we have constructed the \emph{parabolic positive representations} $\cP_\l^J$ of $\cU_q(\g_\R)$ based on $W_J$. Using the same argument presented in \cite{Ip8}, we proved that one can do the symmetric folding construction on different type $A$ parabolic parts, and obtain a new family of representations (Theorem \ref{mainthm}) which is again referred to as the \emph{degenerate positive representations}. 

\begin{MainThm} Given a parabolic subgroup $W_J\subset W$ of type $A$, there exists a new family of irreducible representations $\cP_\l^{0,J}$ of $\cU_q(\g_\R)$ parametrized by $\l\in \R^{n-|J|}$, such that the Chevalley generators are positive operators realized by a polarization of universally Laurent polynomials in a skew-symmetrizable quantum cluster algebra.
\end{MainThm}
Again the construction also yields another representation $\cP_{\til{\l}}^{0,J}$ compatible with the modular double counterpart (Theorem \ref{mainthmmod}).

\subsection*{Computation of generalized Casimirs}
We observed that the symmetric folding construction yielding $\cP^0$ can also be obtained formally by setting the parameters $\l$ to certain special complex values (which we call the \emph{general solution} of a symmetric equation). Since the generalized Casimirs $\bC_k$ of the original representation $\cP_\l$ act by scalars in terms of $\l$ only, one obtain the corresponding Casimir actions for $\cP_\l^{0,J}$ by substituting the specialized parameter $\l\in\C^n$ with appropriate \emph{complex shifts}. In the parabolic case, when $|J|=n-1$, the resulting representations $\cP_\l^{0,J}$, which we refer to as \emph{maximal degenerate representations}, are parametrized by a single number $\l\in\R$. 

Using the Weyl-type character formula developed in \cite{Ip5}, together with the explicit presentation of the weight spaces of the fundamental representations of $\g$, as well as some technical calculations involving the central characters of the folded quantum torus algebra, we compute explicitly all the actions of the generalized Casimirs of $\cP_\l^{0,J}$ in the case $W_J$ is of type $A_{n-1}$. This is summarized in Theorem \ref{thmother}--\ref{thmother2}.

\subsection*{Regular positive representations}
The original standard positive representations $\cP_\l$ \cite{FI, Ip2, Ip3}, the parabolic positive representations $\cP_\l^J$\cite{Ip8}, the degenerate representations $\cP_{\l}^{0,J}$ as well as their modular double counterpart $\cP_{\til{\l}}^{0,J}$ considered in this paper, all share the same important cluster theoretic properties. Namely, we have a homomorphism of the Drinfeld's double quantum group $\fD_q(\g)$ into a quantum torus algebra, such that the image of the Chevalley generators are universally Laurent polynomials. In other words, we have a homomorphism
$$\fD_q(\g)\to \cO_q(\cX)$$
to the quantum algebra of regular functions of a cluster variety $\cX$, or equivalently, the quantum upper cluster algebra of $\cX$. Furthermore, the representations are recovered from a polarization of any cluster chart of $\cO_q(\cX)$ as positive operators.

This motivates the definition of \emph{regular positive representations} (Definition \ref{regposrep}), and the new goal is to classify all the irreducible regular positive representations up to unitary equivalence. We propose in Conjecture \ref{mainconj} that these are classified by the 4 types of positive representations above, as well as their appropriate mixtures.

\subsection*{Outline} 

The paper is organized as follows. In Section \ref{sec:pre}, we set the notations and recall the basic construction of the positive representations of $\cU_q(\g_\R)$ via the polarization of its cluster embedding into a certain quantum torus algebra. We also recall some results on the calculation of the generalized Casimir operators. In Section \ref{sec:sl2}, we focus on the case of $\cU_q(\sl(2,\R))$ and discuss the main observations and results that motivate the general construction in higher rank. In Section \ref{sec:An}, we give the symmetric folding construction in type $A_n$, where the mutation sequence and the change of central parameter are outlined in Appendix \ref{App:mutate}. In Section \ref{sec:higher}, we state the main results for general Lie type by parabolic folding, and explain the computation of the Casimir action for the maximal degenerate representations. Finally, in Section \ref{sec:class}, we discuss the classification of the regular positive representations, and illustrate with an example in type $A_2$.

\subsection*{Acknowledgment} We would like to thank J\"{o}rg Teschner for some insightful discussions made many years ago regarding the motivations in the case $\cU_q(\sl(2,\R))$ when this project is still in its infancy. We would also like to thank Gus Schrader for some helpful comments. The first author is supported by the Hong Kong RGC General Research Funds ECS \#26303319.
\section{Prerequisites}\label{sec:pre}
\subsection{Root systems}\label{sec:pre:root}
Let $\g$ be a finite-dimensional semisimple Lie algebra over $\C$. Let $I$ be the root index of the Dynkin diagram of $\g$ such that 
\Eq{
|I|=n=\rank(\g).}
Let $\Phi$ be the set of roots of $\g$. Let $\Pi_+ := \{ \a_i \}_{i \in I}$ be the set of simple roots and $\D_+$ the set of positive roots. Let $W=\<s_i\>_{i\in I}$ be the Weyl group generated by the simple reflections $s_i:=s_{\a_i}$. We write  
\Eq{
N=l(w_0)} to be the length of the longest element of $W$.

\begin{Def} Let $(-,-)$ be a $W$-invariant inner product of the root lattice. We define
\Eq{
a_{ij}:=\frac{2(\a_i,\a_j)}{(\a_i,\a_i)},\tab i,j\in I
}
such that $A:=(a_{ij})$ is the \emph{Cartan matrix}. 

We normalize $(-,-)$ as follows: we choose the symmetrization factors (also called the \emph{multipliers}) such that for any $i\in I$,
\Eq{\label{di}d_i:=\frac{1}{2}(\a_i,\a_i)=\case{1&\mbox{$i$ is long root or in the simply-laced case,}\\\frac{1}{2}&\mbox{$i$ is short root of type $B,C,F$,}\\\frac{1}{3}&\mbox{$i$ is short root of type $G$,}}}
and $(\a_i,\a_j)=-1$ when $i,j \in I$ are adjacent in the Dynkin diagram, such that
\Eq{
d_ia_{ij}=d_ja_{ji}.
}
\end{Def}
\begin{Def} We denote the \emph{simple coroots} by \Eq{
H_i:=\a_i^\vee:=\frac{2\a_i}{(\a_i,\a_i)}\in\fh,}
the \emph{fundamental weights} dual to $H_i$ by
\Eq{
w_i:=\sum_j (A\inv)_{ji}\a_j\in \fh_\R^*,}
and the \emph{fundamental coweights} dual to $\a_i$ by
\Eq{\label{Wieq}W_i:=\sum_j(A\inv)_{ij}H_j\in\fh_\R.}
We also let
\Eq{
\rho:=\frac12\sum_{\a\in \D_+}\a = \sum_i w_i = \sum_i d_i W_i.
}
be the half sum of positive roots.
\end{Def}
\begin{Def}
The Weyl group $W$ acts on the fundamental coweights by
\Eq{\label{weylact}
s_i\cdot W_j = W_j - \d_{ij}\a_j^\vee=W_j-\d_{ij}\sum_{k=1}^n a_{jk}W_k.
}
\end{Def}

\begin{Def} Let $w_0\in W$ be the longest element of the Weyl group. The \emph{Dynkin involution}
\Eq{
I&\to I\nonumber\\
i&\mapsto i^*
}
is defined by
\Eq{w_0 s_i w_0 = s_{i^*}.}
Equivalently, we have
\Eq{
w_0(\a_i) = -\a_{i^*},\tab \a_i\in \Pi_+ .
}
\end{Def}

\subsection{Quantum groups $\cU_q(\g)$ and $\fD_q(\g)$}\label{sec:pre:Uq}
For any finite dimensional complex semisimple Lie algebra $\g$, Drinfeld \cite{Dr} and Jimbo \cite{Ji1} associated to it a remarkable Hopf algebra $\cU_q(\g)$ known as the \emph{quantum group}, which is a certain deformation of the universal enveloping algebra. We follow the notations used in \cite{Ip7} for $\cU_q(\g)$ as well as the Drinfeld's double $\fD_q(\g)$ of its Borel part.

In the following, we assume $\g$ is of simple Dynkin type, with straightforward modification for the semisimple case.
\begin{Def} \label{qi} Let $d_i$ be the multipliers \eqref{di}. We define
\Eq{
q_i:=q^{d_i},
} which we will also write as
\Eq{
q_l&:=q,\\ 
q_s&:=\case{q^{\frac12}&\mbox{if $\g$ is of type $B, C, F$},\\q^{\frac13}&\mbox{if $\g$ is of type $G$},}
}
for the $q$ parameters corresponding to long and short roots respectively.
\end{Def}

\begin{Def} We define $\fD_q(\g)$ to be the $\C(q_s)$-algebra generated by the elements
$$\{\bE_i, \bF_i,\bK_i^{\pm1}, \bK_i'^{\pm1}\}_{i\in I}$$ 
subject to the following relations (we will omit the obvious relations involving $\bK_i^{-1}$ and ${\bK_i'}^{-1}$ below for simplicity):
\Eq{
\bK_i\bE_j&=q_i^{a_{ij}}\bE_j\bK_i, &\bK_i\bF_j&=q_i^{-a_{ij}}\bF_j\bK_i,\label{KK1}\\
\bK_i'\bE_j&=q_i^{-a_{ij}}\bE_j\bK_i', &\bK_i'\bF_j&=q_i^{a_{ij}}\bF_j\bK_i',\label{KK2}\\
\bK_i\bK_j&=\bK_j\bK_i, &\bK_i'\bK_j'&=\bK_j'\bK_i', &\bK_i\bK_j' = \bK_j'\bK_i,\label{KK3}\\
&&[\bE_i,\bF_j]&= \d_{ij}\frac{\bK_i-\bK_i'}{q_i-q_i\inv},\label{EFFE}
}
together with the \emph{Serre relations} for $i\neq j$:
\Eq{
\sum_{k=0}^{1-a_{ij}}(-1)^k\frac{[1-a_{ij}]_{q_i}!}{[1-a_{ij}-k]_{q_i}![k]_{q_i}!}\bE_i^{k}\bE_j\bE_i^{1-a_{ij}-k}&=0,\label{SerreE}\\
\sum_{k=0}^{1-a_{ij}}(-1)^k\frac{[1-a_{ij}]_{q_i}!}{[1-a_{ij}-k]_{q_i}![k]_{q_i}!}\bF_i^{k}\bF_j\bF_i^{1-a_{ij}-k}&=0,\label{SerreF}
}
where $\dis [k]_q:=\frac{q^k-q^{-k}}{q-q\inv}$ is the \emph{$q$-number}, and $\dis [n]_q!:=\prod_{k=1}^n [k]_q$ is the \emph{$q$-factorial}.
\end{Def}

The algebra $\fD_q(\g)$ is a Hopf algebra with coproduct
\Eq{
\D(\bE_i)&=1\ox \bE_i+\bE_i\ox \bK_i,&\D(\bK_i)&=\bK_i\ox \bK_i,\\
\D(\bF_i)&=\bF_i\ox 1+\bK_i'\ox \bF_i,&\D(\bK_i')&=\bK_i'\ox \bK_i',
}
We will not need the counit and antipode in this paper.

\begin{Def}
The \emph{quantum group} $\cU_q(\g)$ is defined as the quotient
\Eq{
\cU_g(\g):=\fD_q(\g)/\<\bK_i\bK_i'=1\>_{i\in I},
}
and it inherits a well-defined Hopf algebra structure from $\fD_q(\g)$. 
\end{Def}
\begin{Rem} $\fD_q(\g)$ is the \emph{Drinfeld's double} of the quantum Borel subalgebra $\cU_q(\fb)$ generated by $\bE_i$ and $\bK_i$.    
\end{Rem}

\begin{Nota}\label{notbi}
In the split real case with $q\in \C$, we require $|q|=1$ and write
\Eq{q:=e^{\pi \bi  b^2}} 
where $\bi=\sqrt{-1}$ and $0<b<1$. We assume $b^2\notin\Q$. We also write 
\Eq{b_i:= \sqrt{d_i}b\label{bi}}
such that $q_i = e^{\pi \bi b_i^2}$ as in Definition \ref{qi}. We will also write $q_s=e^{\pi \bi b_s^2}$.
\end{Nota}

\begin{Def}We define the rescaled generators by
\Eq{
\be_i &:=\left(\frac{\bi }{q_i-q_i\inv}\right)\inv \bE_i,&\bf_i &:=\left(\frac{\bi }{q_i-q_i\inv}\right)\inv \bF_i.\label{rescaleFF}
}
We also denote by $\fD_q(\g)$ the $\C(q_s)$-algebra generated by 
\Eq{\{\be_i, \bf_i, \bK_i, \bK_i'\}_{i\in I}
} and the corresponding quotient by $\cU_q(\g)$. The generators satisfy all the defining relations above except \eqref{EFFE} which is modified to
\Eq{\label{EFFE2}
[\be_i, \bf_j]=\d_{ij} (q_i-q_i\inv)(\bK_i'-\bK_i).
}
\end{Def}
\begin{Def}
We define $\cU_q(\g_\R)$ to be the real form of $\cU_q(\g)$ induced by the star structure
\Eq{\be_i^*=\be_i,\tab \bf_i^*=\bf_i,\tab \bK_i^*=\bK_i,}
with $q^*=\over{q}=q\inv$, making it a Hopf-* algebra.
\end{Def}

\subsection{Quantum torus algebra}\label{sec:pre:qtorus}
In this subsection we recall some definitions and properties concerning the quantum torus algebra and their cluster realizations.
\begin{Def} A \emph{cluster seed} is a datum 
\Eq{
\bQ=(Q, Q_0, B, D),
} where $Q$ is a finite set, $Q_0\subset Q$ is a subset called the \emph{frozen subset}, $B=(\e_{ij})_{i,j\in Q}$ a skew-symmetrizable $\frac12\Z$-valued matrix called the \emph{exchange matrix}, and $D=\mathrm{diag}(d_j)_{j\in Q}$ is a diagonal $\Q_{>0}$-matrix called the \emph{multiplier}, such that
\Eq{W:=DB=-B^TD} is a skew-symmetric $\Q$-matrix. The rank of $\bQ$ is defined to be the rank of the matrix $B$.
\end{Def}
In the following, we will consider only the case where there exists a \emph{decoration} 
\Eq{\label{deco}\eta:Q\to I
} to the root index of a simple Dynkin diagram, such that $D=\mathrm{diag}(d_{\eta(j)})_{j\in Q}$ where $(d_i)_{i\in I}$ are the multipliers given in \eqref{di}.

Let $\L_\bQ$ be a $\Z$-lattice with basis $\{\vec{e_i}\}_{i\in Q}$, and let $d=\min(d_{\eta(j)})_{j\in Q}$. Also let
\Eq{\label{wij}w_{ij}=d_i\e_{ij}=-w_{ji}.}
We define a skew symmetric $d\Z$-valued form $(-,-)$ on $\L_\bQ$ by 
\Eq{
(\vec{e_i}, \vec{e_j}):=w_{ij}.
}

\begin{Def} Let $q$ be a formal parameter. We define the \emph{quantum torus algebra}\footnote{We abuse the notation here for convenience. More precisely it should be written as $\cO_q(\cX^\bQ)$ where $\cX^\bQ$ denote the cluster Poisson tori associated to the seed $\bQ$. See also Definition \ref{regOq}.} $\cX_q^\bQ$ associated to a cluster seed $\bQ$ to be the associative algebra over $\C[q^{\pm d}]$ generated by $\{X_i^{\pm 1}\}_{i\in Q}$ subject to the relations
\Eq{\label{XiXj}
X_iX_j=q^{-2w_{ij}}X_jX_i,\tab i,j\in Q.
}
The generators $X_i\in \cX_q^\bQ$ are called the \emph{quantum cluster variables}, and they are called \emph{frozen} if $i\in Q_0$.

Alternatively, $\cX_q^\bQ$ is generated by $\{X_{\l}\}_{\l\in \L_\bQ}$ with $X_0:=1$ subject to the relations
\Eq{q^{(\l,\mu)}X_{\l}X_{\mu} = X_{\l+\mu},\tab \mu,\l\in\L_\bQ.}

Finally, we define $\bT_q^\bQ$ to be the \emph{fraction field} of the quantum torus algebra $\cX_q^\bQ$, which is well defined since $\cX_q^\bQ$ is an Ore domain.
\end{Def}

\begin{Nota}\label{xik}
Under this realization, we shall write 
\Eq{
X_i=X_{\vec{e_i}},
} and define the monomials (allowing the indices to repeat)
\Eq{X_{i_1,...,i_k}:=X_{\vec{e_{i_1}}+...+\vec{e_{i_k}}},}
or more generally for $n_1,...,n_k\in \R$,
\Eq{X_{i_1^{n_1},...,i_k^{n_k}}:=X_{n_1\vec{e_{i_1}}+...+n_k\vec{e_{i_k}}}.}

A collection of monomials is said to be \emph{independent} if the underlying vectors of the indices are linearly independent over $\R$.
\end{Nota}
\begin{Def}\label{quiver} 
We associate to each cluster seed $\bQ=(Q,Q_0,B,D)$ with decoration $\eta$ a quiver, denoted again by $\bQ$, with vertices labeled by $Q$ and adjacency matrix $C=(c_{ij})_{i,j\in Q}$, where
\Eq{
c_{ij}:=\case{\e_{ij}d_id_j\inv&\mbox{if }d_j>d_i,\\\e_{ij}&\mbox{otherwise.}}
}
An arrow $i\to j$ represents the algebraic relation 
\Eq{\label{XiXj}
X_iX_j=q_*^{-2}X_jX_i,}
where $*=\case{i&\mbox{if }d_i\geq d_j,\\j&\mbox{if }d_i\leq d_j.}$
\end{Def}
Note that $c_{ij}$ is skew-symmetric, so the quiver is well-defined. Obviously one can recover the cluster seed and the exchange matrix $B$ from the quiver and the multipliers by
\Eq{
\e_{ij}=\case{c_{ij}d_jd_i\inv&\mbox{if }d_j>d_i,\\c_{ij}&\mbox{otherwise.}}
}
\begin{Nota}\label{thick}
We will use squares to denote frozen nodes $i\in Q_0$ and circles otherwise. We will also use dashed arrows if $|c_{ij}|=\half$, which only occur between frozen nodes. For display convenience, we will represent the algebraic relations \eqref{XiXj} by thick or thin arrows (see for example Figure \ref{fig-X2fold}) to indicate the power of $q$ when we rewrite $q_*$ in terms of $q$ in the commutation relation \eqref{XiXj}. However, thickness is \emph{not} part of the data of the quiver.
\end{Nota}
   
\begin{Nota}\label{ol}
Let $\eta: \N\to I$ be a decoration. For any symbol $x_k$, $k\in \N$, we denote the \emph{rescaled symbol} by
\Eq{
\mathring{x}_k:= b_{\eta(k)} x_k,
}
where $b_i\in\R$ is defined in \eqref{bi}.
\end{Nota}

\begin{Def}\label{posrepX} A \emph{polarization} $\pi$ of the quantum torus algebra $\cX_q^\bQ$ on a Hilbert space $\cH=L^2(\R^M)$ is an assignment
\Eq{
X_i\mapsto e^{2\pi \mathring{L}_i},\tab i\in Q,
}
where $\mathring{L}_i(\mathring{u}_k, \mathring{p}_k,\ol_k)$ is a linear combination of the (rescaled) position and momentum operators  $\{u_k, p_k\}_{k=1}^M$ satisfying the Heisenberg relations
\Eq{
[u_j, p_k]=\frac{\d_{jk}}{2\pi\bi },
}
together with real parameters $\l_k \in \R$, such that they satisfy algebraically
\Eq{[\mathring{L}_i,\mathring{L}_j]=\frac{b^2w_{ij}}{2\pi\bi }.} 
Each generator $X_i$ acts as a positive essentially self-adjoint operator on $\cH$, and altogether these give a representation of $\cX_q^\bQ$ on $\cH$.
\end{Def}
\begin{Rem}
The domains of these unbounded operators are discussed in detail in e.g. \cite{FG3, Ip1, PT2}. In this paper, we will only deal with the algebraic relations among the cluster variables, and assume that their polarizations are well-defined acting on an appropriate dense subspace $\cP\subset \cH$ which contains the subspace $\cW$ of entire rapidly decreasing functions of the form
$$\cW=\{e^{-\bu^T \bA \bu+\bb\cdot \bu}P(\bu)\vert \bb\in \C^n, \bA\in M_{n\x n}(\C):\mbox{positive definite}, P:\mbox{polynomial}\}$$
which forms the core of essential self-adjointness of $\pi(X_i)$.
\end{Rem}
\begin{Nota}\label{ELnote}We will simplify notations and write
\Eq{\label{EL} 
e(L):=e^{\pi \mathring{L}}
}
for $L$ a linear combination of position, momentum operators and scalars as above, and $\mathring{L}$ rescales the corresponding variables with index $k$ by $b_{\eta(k)}$.
\end{Nota}
\begin{Def} Assume the polarization of a monomial is of the form
\Eq
{e^{\pi(\sum \a_k\mathring{u}_k + \sum \b_k\mathring{p}_k + \sum\c_k\ol_k)}.}
We call $\sum \a_k\mathring{u}_k +\sum\b_k\mathring{p}_k$ the \emph{Weyl part}, and $\sum\c_k\ol_k$ the \emph{central parameter} of the polarization. 
\end{Def}

\begin{Def} A Laurent monomial $C\in \cX_q^\bQ$ is called a \emph{central monomial} if it commutes with every cluster variable $X_i$, $i\in Q$. The center of $\cX_q^\bQ$ is generated by $|Q|-\rank(\bQ)$ independent central monomials.

A polarization is \emph{irreducible} if every central monomial acts as multiplication by scalars, i.e. their Weyl part is trivial. In this case we refer to the action $\pi(C)$ as the \emph{central character}.
\end{Def}

\begin{Lem}\label{polaru} Assume the rank of $\bQ$ is $2M$. Then there exists an irreducible polarization $\pi_\l$ of $\cX_q^\bQ$ on $\cH=L^2(\R^M)$ parametrized by the central characters, i.e. the central parameters $\l$ of the independent central monomials.

Any polarization of $\cX_q^\bQ$ on $\cH$ with the same central character is unitarily equivalent to $\pi_\l$ by an $Sp(2M)$ action on the lattice $\L_\bQ$ (known as the \emph{Weil representation} \cite{GS}).
\end{Lem}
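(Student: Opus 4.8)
The plan is to reduce the assertion to the structure theory of skew-symmetric bilinear forms together with the Stone--von Neumann uniqueness theorem for the Heisenberg relations. Write $V:=\L_\bQ\otimes\R$ and extend the form $(-,-)$ of \eqref{wij} to $V$. By hypothesis it has rank $2M$, so its radical $R:=\{\lambda\in V:(\lambda,\mu)=0\text{ for all }\mu\in V\}$ has dimension $|Q|-2M$, and the induced form on $V/R$ is nondegenerate. Since $b^2\notin\Q$, the parameter $q$ is not a root of unity, so the relation $X_\lambda X_\mu=q^{-2(\lambda,\mu)}X_\mu X_\lambda$ shows that $X_\lambda$ is central if and only if $(\lambda,\mu)=0$ for all $\mu$, i.e. $\lambda\in R$. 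Hence the independent central monomials are exactly the $X_c$ for $c$ ranging over a basis of the integral radical $R\cap\L_\bQ$, recovering the count $|Q|-\rank(\bQ)$ asserted for the center.

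To construct $\pi_\l$, I would first fix a symplectic normal form. By the symplectic (Darboux) basis theorem applied over $\R$, choose a basis $f_1,g_1,\dots,f_M,g_M$ of a complement to $R$ in $V$, together with a basis $c_1,\dots,c_{|Q|-2M}$ of $R$, normalized so that $(f_j,g_k)=\mu_k\delta_{jk}$ and $(f_j,f_k)=(g_j,g_k)=0$ for suitable $\mu_k>0$. On $\cH=L^2(\R^M)$ I would then define $\mathring{L}$ on these basis vectors by sending $f_k,g_k$ to appropriate real multiples of $\mathring{u}_k,\mathring{p}_k$, chosen so that the Heisenberg relation $[\mathring{u}_k,\mathring{p}_k]=\frac{b_{\eta(k)}^2}{2\pi\bi}$ (with $b_{\eta(k)}^2=d_{\eta(k)}b^2$ by \eqref{bi}) reproduces $[\mathring{L}_{f_k},\mathring{L}_{g_k}]=\frac{b^2(f_k,g_k)}{2\pi\bi}$; this is where the multipliers $d_{\eta(k)}$ are absorbed. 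Each $c_k$ is sent to the scalar $\ol_k$ with free real parameter $\lambda_k$. Expressing each $\vec{e_i}$ in the basis $\{f_k,g_k,c_k\}$ then yields $\mathring{L}_i$ as a real linear combination of $\mathring{u}_k,\mathring{p}_k,\ol_k$, and I set $\pi_\l(X_i):=e^{2\pi\mathring{L}_i}$ as in Definition~\ref{posrepX}. Since $[\mathring{L}_i,\mathring{L}_j]=\frac{b^2 w_{ij}}{2\pi\bi}$ holds by construction, the Weyl/Baker--Campbell--Hausdorff identity yields the torus relations $X_iX_j=q^{-2w_{ij}}X_jX_i$; as each $\mathring{L}_i$ is a real combination of the self-adjoint $u_k,p_k$ and real scalars, $\pi_\l(X_i)$ is positive self-adjoint on the core $\cW$. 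Finally each $X_{c_k}$ acts by the scalar $e^{2\pi\ol_k}$, so the polarization is irreducible and parametrized by the central character $\l=(\lambda_k)$.

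For uniqueness, let $\pi$ be any polarization on $\cH$ with the same central character as $\pi_\l$. Passing to the quotient of $\cX_q^\bQ$ by the relations $X_{c_k}=e^{2\pi\ol_k}$, both $\pi$ and $\pi_\l$ descend to representations of the reduced quantum torus attached to the nondegenerate symplectic space $V/R$, realized by exponentiated Heisenberg generators (Weyl operators) with the same Planck parameter $b^2$. By the Stone--von Neumann theorem, any two such irreducible representations are unitarily equivalent. Concretely, the two polarizations differ only in the choice of symplectic basis of $V/R$, i.e. by an element $g\in Sp(2M,\R)$ of the automorphism group of the symplectic form; the intertwining unitary $U_g$ is precisely the image of $g$ under the metaplectic (Weil) representation, giving $U_g\,\pi_\l(X_\lambda)\,U_g^{-1}=\pi(X_\lambda)$ for all $\lambda$, which is the claimed equivalence.

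The main obstacle I expect is the uniqueness half: making the Stone--von Neumann argument rigorous for these unbounded positive operators (working through bounded Weyl exponentials on the common core $\cW$) and identifying the purely combinatorial $Sp(2M)$ action on the lattice $\L_\bQ$ with a genuine unitary from the Weil representation, including the attendant domain considerations deferred to the Remark. By contrast, the existence half is essentially linear algebra, the only delicate point being the scaling bookkeeping of the multipliers $d_{\eta(k)}$ needed to match the rescaled skew form to the canonical Heisenberg relations.
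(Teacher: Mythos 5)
The paper offers no proof of this lemma at all: it is imported as a known fact, with the uniqueness clause attributed to the Weil representation of \cite{GS}. Your argument is the standard one that this citation stands for, and its skeleton is correct: the identification of the center with the integral radical (valid precisely because $b^2\notin\Q$ forces $q^{-2(\l,\mu)}=1\iff(\l,\mu)=0$, as the values $(\l,\mu)$ are rational), a Darboux basis of a complement of the radical to build $\pi_\l$, and Stone--von Neumann/metaplectic machinery for uniqueness. Note also that since Definition \ref{posrepX} already stipulates that each $\mathring{L}_i$ is a genuine real linear combination of $u_k,p_k$ and scalars, the operator-theoretic worries you defer to the end are lighter than you fear: everything reduces to finite-dimensional symplectic linear algebra plus the known unitary implementation of such maps.

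There are two gaps in the uniqueness half as written. First, to invoke Stone--von Neumann with multiplicity one you must check that the Weyl system of an \emph{arbitrary} polarization $\pi$ with scalar central character is irreducible on $L^2(\R^M)$, i.e.\ that the Weyl parts of the $\pi(X_i)$ span all of $\mathrm{span}\{u_k,p_k\}$. This is automatic but needs saying: the map sending $v\in\L_\bQ\ox\R$ to the Weyl part of $\mathring{L}_v$ preserves the forms, so its kernel is contained in the radical $R$, and it contains $R$ because the central character hypothesis forces central monomials to have trivial Weyl part; hence it induces an injection of the $2M$-dimensional space $V/R$ into a $2M$-dimensional target, which is therefore onto, and the generated von Neumann algebra contains all $e^{\bi tu_k},e^{\bi tp_k}$. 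Second, and more substantively, two polarizations with the same central character need \emph{not} differ only by a symplectic change of basis of $V/R$: their scalar parts on non-central generators can differ by an arbitrary linear functional vanishing on $R$ (e.g.\ $X_1\mapsto e^{2\pi b u}$ versus $X_1\mapsto e^{2\pi b(u+1)}$ in a rank-$2$ seed with trivial center). No element of $Sp(2M,\R)$ implements such a shift. It is instead absorbed by conjugation with the one-parameter unitary groups $e^{\bi t\mathring{L}_w}$ themselves: since $\bigl[\mathring{L}_w,\mathring{L}_v\bigr]=\tfrac{b^2(w,v)}{2\pi\bi}$, one has $e^{\bi t\mathring{L}_w}\,e^{2\pi\mathring{L}_v}\,e^{-\bi t\mathring{L}_w}=e^{2\pi\mathring{L}_v+tb^2(w,v)}$, and by nondegeneracy on $V/R$ every functional vanishing on $R$ is of the form $(w,\cdot)$. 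So the intertwiner lives in the affine-metaplectic (Jacobi) group rather than in $Sp(2M,\R)$ proper --- this is also the honest reading of the lemma's loose phrase ``an $Sp(2M)$ action on the lattice''. With these two additions your proof is complete.
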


Next we recall the notion of quantum cluster mutations.
\begin{Def}\label{qmut} Given a cluster seed $\bQ=(Q,Q_0,B, D)$ and an element $k\in Q\setminus Q_0$, a \emph{cluster mutation in direction $k$} is another seed $\mu_k^q(\bQ):=\bQ'=(Q', Q_0', B', D')$ with $Q':=Q$, $Q_0':=Q_0$, $D':=D$ and
\Eq{
\e'_{ij} &:= \case{-\e_{ij}&\mbox{if $i=k$ or $j=k$},\\ \e_{ij}+\frac{\e_{ik}|\e_{kj}|+|\e_{ik}|\e_{kj}}{2}&\mbox{otherwise}.}
}

The cluster mutation in direction $k$ induces an isomorphism $\mu_k^q:\bT_q^{\bQ'}\to\bT_q^{\bQ}$ called the \emph{quantum cluster mutation}, defined by
\Eq{
\mu_k^q(X_i'):=\case{X_k\inv&\mbox{if $i=k$},\\ \dis X_i\prod_{r=1}^{|\e_{ki}|}(1+q_i^{2r-1}X_k)&\mbox{if $i\neq k$ and $\e_{ki}\leq 0$},\\\dis X_i\prod_{r=1}^{\e_{ki}}(1+q_i^{2r-1}X_k\inv)\inv&\mbox{if $i\neq k$ and $\e_{ki}\geq 0$},}
}
where we denote by $X_i'$ the quantum cluster variables of $\bT_q^{\bQ'}$.
\end{Def}
\begin{Def}\label{regOq} We denote by $\cO_q(\cX)$ the quantum algebra of regular functions of the cluster variety $\cX$. More precisely, the elements of $\cO_q(\cX)$ consists of all elements $f\in \cX_q^\bQ$ which remain Laurent polynomials over $\C[q^{\pm d}]$ under any quantum cluster mutations. Equivalently, $\cO_q(\cX)$ is the quantum upper cluster algebra of $\cX$.

We will also refer to elements of $\cO_q(\cX)$ as \emph{universally Laurent polynomials}\footnote{This terminology usually refers to the classical $q=1$ setting.}.
\end{Def}

A useful criterion is the following Lemma.
\begin{Lem}\cite{GS}\label{stdmon} A cluster monomial $X_{i_1,...,i_s} \in \cX_q^\bQ$ is a \emph{standard monomial} if it is a sink with respect to mutable vertices, in the sense that
\Eq{
\sum_{k=1}^s \e_{i_k,j}\geq 0,\tab \forall j\in Q\setminus Q_0.
}
An element $f\in \cX_q^\bQ$ belongs to $\cO_q(\cX)$, i.e. a universally Laurent polynomial, if it can be cluster mutated to a standard monomial in some cluster seed.
\end{Lem}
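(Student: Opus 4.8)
For the \emph{second} assertion I would first reduce it to the first. The point is that $\cO_q(\cX)$ is intrinsic to the cluster variety: the quantum mutation isomorphisms $\mu_k^q:\bT_q^{\bQ'}\to\bT_q^{\bQ}$ identify the subalgebras of universally Laurent polynomials attached to the two seeds. Hence if $f\in\cX_q^{\bQ}$ can be cluster-mutated to a standard monomial $M$ in some seed $\bQ'$ reachable from $\bQ$, then $f$ is precisely the image of $M$ under the composite mutation isomorphism, so $f\in\cO_q(\cX)$ as soon as $M\in\cO_q(\cX)$. Everything thus reduces to showing that a standard monomial is universally Laurent, which is the first assertion applied in the seed $\bQ'$.

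For a standard monomial $M=X_{i_1,\dots,i_s}=X_\lambda$ with $\lambda=\vec e_{i_1}+\dots+\vec e_{i_s}$, I would analyze a single mutation $\mu_k^q$ in a mutable direction $k$. Applying the inverse mutation (again a mutation, by involutivity) to each factor $X_{i_l}$ produces, for $i_l\neq k$, either a numerator factor $\prod_r(1+q_{i_l}^{2r-1}X_k')$ or a denominator factor $\prod_r(1+q_{i_l}^{2r-1}(X_k')^{-1})^{-1}$ according to the sign of $\e_{k,i_l}$, while $X_k\mapsto (X_k')^{-1}$. The essential computation is to commute all the monomial parts $X_{i_l}'$ to one side: the relations \eqref{XiXj} shift the arguments of the dilogarithm factors by matching powers of $q$, so that numerator and denominator factors telescope in pairs. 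After this cancellation the surviving factors number exactly $\bigl|\sum_l\e_{i_l,k}\bigr|$, and the sink condition $\sum_l\e_{i_l,k}\geq0$ guarantees that they all appear in the numerator. Hence $\mu_k^q$ carries $M$ to $X_\mu'\cdot\prod_r(1+q^{2r-1}X_k')$, a genuine Laurent polynomial; the frozen directions need no check, since they are never mutated and their powers are simply carried along.

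To promote this single-step statement to universal Laurentness I would induct on the length of the mutation sequence. The difficulty is that after one mutation $M$ is no longer a monomial but a sum of monomials $X_\mu',\,X_\mu'X_k',\dots,X_\mu'(X_k')^{m}$; to keep the induction running I would show that each summand is again a standard monomial for the mutated seed $\bQ'$, using the exchange-matrix mutation rule to verify that the inequalities $\sum\e'_{\bullet,j}\geq0$ are inherited. Granting this, the image lies in the $\C[q^{\pm d}]$-span of standard monomials of $\bQ'$, the single-step argument applies to each term in any subsequent direction, and Laurentness propagates along every path of the exchange graph, giving $M\in\cO_q(\cX)$.

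I expect the main obstacle to be exactly this propagation step — checking that the sink property is preserved in a suitable sense under mutation so that the induction closes — rather than the single-mutation telescoping, which is a direct if delicate quantum computation. An alternative that avoids tracking the sink condition through every seed would be a starfish-type reduction: that an element which is Laurent in the initial seed and in every one-step mutation already lies in the upper cluster algebra. This would let me conclude from the single-step computation alone, at the cost of verifying the coprimality-type hypotheses of such a reduction in the present skew-symmetrizable quantum setting.
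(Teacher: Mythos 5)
First, a structural point: the paper does not prove this lemma at all --- it is quoted from [GS] --- so your argument has to stand entirely on its own; there is no in-paper proof to compare against. Within your proposal, the reduction of the second assertion to the first via the mutation isomorphisms is fine, and your single-mutation computation is essentially correct: for a sink monomial the dilogarithm-type factors produced by one mutation all land in the numerator, so one mutation of a standard monomial yields a Laurent polynomial of the form $X'_\mu\prod_r(1+q^{2r-1}X'_k)$. (One caveat: the inequality as printed in the Lemma must be read with the \emph{sink} convention that the paper itself uses in its $\sl_2$ proof, i.e.\ the monomial $q^{+}$-commutes with every mutable variable; with the literal sign and the paper's arrow convention even the one-step claim would fail.)

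The genuine gap is the propagation step of your induction, and it is not a technicality: the claim that each summand of the once-mutated monomial is again standard in the mutated seed is false, already in the smallest possible example. Take a rank-two seed with both vertices mutable, $d_1=d_2=1$, and a single arrow $1\to 2$, so that $M=X_2$ is a standard (sink) monomial. Mutating at $1$, Definition \ref{qmut} gives $\mu_1^q(X_2')=X_2(1+qX_1^{-1})^{-1}$, hence $(\mu_1^q)^{-1}(X_2)=X_2'(1+qX_1')=X_2'+X_{2,1}'$. In the mutated seed the arrow is reversed, so the summand $X_2'$ is now a \emph{source}, and $X_{2,1}'$ pairs with opposite signs against the two mutable directions; neither summand is standard. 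Worse, the summand $X_2'$ is not universally Laurent at all: pulled back to the original seed it equals $X_2(1+qX_1^{-1})^{-1}$, which is not a Laurent polynomial. So no term-by-term induction can close. Universal Laurentness of the image is a property of the \emph{sum} as a whole, maintained by cancellations of the $(1+qX_k)$-denominators \emph{across} summands --- one can watch this happen explicitly around the $A_2$ pentagon, where the expressions for $M$ in the five successive charts have $1,2,3,2,1$ monomials, and at each backward step the individual terms acquire denominators that cancel only jointly.

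This leaves your ``starfish'' alternative carrying all of the weight: if one knew that Laurentness in a seed and in all of its one-step mutations already implies membership in $\cO_q(\cX)$, then your single-step computation would finish the proof. But that statement is precisely a quantum upper-bound theorem for skew-symmetrizable cluster Poisson tori with frozen (and possibly degenerate) directions --- the $\cX$-side analogue of Berenstein--Zelevinsky --- and it is not available off the shelf in this setting; establishing it, or verifying its coprimality-type hypotheses, is of the same order of difficulty as the lemma itself. As written, the proposal proves only the one-step statement.
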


Finally, we also recall that the monomial part of the quantum cluster mutation induces a change in polarization as follows.
\begin{Prop}\label{monopolar} Let $k\in Q\setminus Q_0$ and $\bQ':=\mu_k^q(\bQ)$. If $\pi$ is a polarization of $\cX_q^\bQ$, then
\Eq{\pi'(X_i):=\case{\pi(X_k)\inv&\mbox{if $i=k$,}\\
\pi(X_i)&\mbox{if $i\neq k$ and $\e_{ki}\leq 0$,}\\
\pi(X_{e_i+\e_{ki}k_i})&\mbox{if $i\neq k$ and $\e_{ki}\geq 0$}
}}
gives a polarization of $\cX_q^{\bQ'}$.
\end{Prop}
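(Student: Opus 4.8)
The plan is to check directly that $\pi'$ meets the two conditions of Definition~\ref{posrepX} for the mutated seed $\bQ'=\mu_k^q(\bQ)$: that each $\pi'(X_i)$ has the form $e^{2\pi\mathring{L}'_i}$ with $\mathring{L}'_i$ a real combination of the position and momentum operators and scalars, and that these satisfy $[\mathring{L}'_i,\mathring{L}'_j]=\frac{b^2w'_{ij}}{2\pi\bi}$ for the new data $w'_{ij}=d_i\e'_{ij}$ (note $D'=D$, so the multipliers are unchanged and $\cH=L^2(\R^M)$ is the same Hilbert space).

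First I would promote $\pi$ from the cluster generators to all lattice monomials: setting $\pi(X_\l)=e^{2\pi\mathring{L}_\l}$ for $\l\in\L_\bQ$, the assignment $\l\mapsto\mathring{L}_\l$ is linear and satisfies $[\mathring{L}_\l,\mathring{L}_\mu]=\frac{b^2(\l,\mu)}{2\pi\bi}$, consistently with $X_\l X_\mu=q^{-(\l,\mu)}X_{\l+\mu}$ via Baker--Campbell--Hausdorff (all the commutators are central scalars). The three clauses defining $\pi'$ then simply send $X_i$ to the image of one lattice vector $\vec{f_i}$, with $\vec{f_k}=-\vec{e_k}$, with $\vec{f_i}=\vec{e_i}$ when $\e_{ki}\le0$, and with $\vec{f_i}=\vec{e_i}+\e_{ki}\vec{e_k}$ when $\e_{ki}\ge0$ (the two formulas agreeing at $\e_{ki}=0$). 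Putting $\mathring{L}'_i:=\mathring{L}_{\vec{f_i}}$ gives $\pi'(X_i)=e^{2\pi\mathring{L}'_i}$ with $\mathring{L}'_i$ again real-linear in the $u_k,p_k$, so each $\pi'(X_i)$ is a positive essentially self-adjoint operator. This disposes of the first condition, and since $[\mathring{L}'_i,\mathring{L}'_j]=\frac{b^2(\vec{f_i},\vec{f_j})}{2\pi\bi}$ by lattice-linearity, it reduces the second to the purely combinatorial identity
\[(\vec{f_i},\vec{f_j})=w'_{ij}=d_i\e'_{ij},\qquad i,j\in Q,\]
computed with the old skew form on $\L_\bQ$.

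The final step is to verify this identity by expanding $(\vec{f_i},\vec{f_j})$ bilinearly, using $(\vec{e_a},\vec{e_b})=w_{ab}=d_a\e_{ab}$ and $(\vec{e_k},\vec{e_k})=0$, and splitting into cases according to the signs of $\e_{ki}$ and $\e_{kj}$. The cases with $i=k$ or $j=k$ reduce at once to $\e'_{kj}=-\e_{kj}$. For $i,j\neq k$ the essential ingredient is the skew-symmetrizability $d_a\e_{ab}=-d_b\e_{ba}$ (equivalently $\e_{ka}\le0\iff\e_{ak}\ge0$): it makes the cross terms $\e_{kj}(\vec{e_i},\vec{e_k})+\e_{ki}(\vec{e_k},\vec{e_j})$ produced by the monomial transformation match the quadratic correction $\frac{\e_{ik}|\e_{kj}|+|\e_{ik}|\e_{kj}}{2}$ of the exchange-matrix mutation in Definition~\ref{qmut}, across all four sign regimes. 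I expect this sign bookkeeping --- reconciling the monomial picture on the lattice with the matrix mutation rule --- to be the only genuine point of friction, each individual case being a one-line check.
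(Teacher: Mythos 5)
Your proof is correct, but there is nothing in the paper to compare it against: Proposition \ref{monopolar} is introduced by ``we also recall that\dots'' and is stated without proof, as a standard fact about the monomial part of quantum cluster mutations (in the spirit of \cite{FG3}). So your contribution is precisely to supply the verification the paper omits, and the route you chose is the natural one. The reduction to the lattice identity $(\vec{f_i},\vec{f_j})=d_i\e'_{ij}$, with $\vec{f_k}=-\vec{e_k}$, $\vec{f_i}=\vec{e_i}$ for $\e_{ki}\le 0$, and $\vec{f_i}=\vec{e_i}+\e_{ki}\vec{e_k}$ for $\e_{ki}\ge 0$, is sound, and the case checks you defer do all close: for $i=k$ or $j=k$ both sides are $-w_{ik}$ (the $\e_{ki}\vec{e_k}$ term drops out since $(\vec{e_k},\vec{e_k})=0$); for $i,j\ne k$ with $\e_{ki}\le 0\le\e_{kj}$ the single cross term $\e_{kj}w_{ik}=d_i\e_{ik}\e_{kj}$ equals $d_i\cdot\frac{\e_{ik}|\e_{kj}|+|\e_{ik}|\e_{kj}}{2}$ because here $\e_{ik}\ge 0$; with $\e_{ki}\ge 0\ge\e_{kj}$ the cross term is $\e_{ki}w_{kj}=d_k\e_{ki}\e_{kj}=-d_i\e_{ik}\e_{kj}$, matching the correction since $\e_{ik}\le 0$; and in the two same-sign regimes the correction vanishes while (in the $\e_{ki},\e_{kj}\ge 0$ case) the two cross terms cancel each other by $d_k\e_{ki}=-d_i\e_{ik}$. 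Two small points are worth stating explicitly in a final write-up: the second and third clauses agree when $\e_{ki}=0$, so $\pi'$ is well defined; and the Baker--Campbell--Hausdorff manipulation used to extend $\pi$ to all lattice monomials is legitimate at the paper's declared level of rigor, since the paper restricts attention to algebraic relations among the $\pi(X_i)$ on a common dense core, and all commutators involved are central scalars.
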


\subsection{Positive representations and cluster realization of $\cU_q(\g)$}\label{sec:pre:pos}
The family of the standard \emph{positive representations} $\cP_\l$ of $\cU_q(\g_\R)$ is constructed in \cite{FI, Ip2, Ip3} where the Chevalley generators of the quantum group are represented by positive essentially self-adjoint operators on the Hilbert space $L^2(\R^N)$ where $N=l(w_0)$. The representations are parametrized by $\l\in\R_{\geq 0}^n$.  In \cite{Ip7}, they are realized by an embedding of $\fD_q(\g)$ into a certain quantum torus algebra $\cX_q^\bD$ and taking the \emph{group-like polarization}. 

\begin{Thm}\cite{Ip7} Given a reduced word $\bi_0$ of the longest element of the Weyl group, one can construct the \emph{basic quiver} $\bD(\bi_0)$ of rank $2N+2n$ and its associated quantum torus algebra $\cX_q^{\bD(\bi_0)}$ such that 
\begin{itemize}
\item There exists an embedding of the Drinfeld's double\footnote{Throughout this paper, we will use bold letter to denote the generators of $\fD_q(\g)$ or $\cU_q(\g)$, while unbolded Roman letters denote their images in a quantum torus algebra.}
\Eq{\iota: \fD_q(\g)&\inj \cX_q^{\bD(\bi_0)}\nonumber\\
(\be_i, \bf_i, \bK_i, \bK_i') &\mapsto (e_i, f_i, K_i, K_i'),\label{dqxq}
}
where $K_i$ and $K_i'$ are cluster monomials, such that $K_iK_i'$, $i\in I$ are $n$ independent central monomials of $\cX_q^{\bD(\bi_0)}$.
\item In particular we have an embedding
\Eq{\iota: \cU_q(\g)&\inj \cX_q^{\bD(\bi_0)}/\<K_iK_i'=1\>_{i\in I}. \label{uqxq}}
\item There exists a polarization $\pi_\l$ of $\cX_q^{\bD(\bi_0)}$ where $\pi_\l(K_iK_i') = 1$ and the other $n$ independent central monomials act by $e(4\l_i) \in\R_{>0}$, such that the composition with the embedding \eqref{uqxq} coincides with the standard positive representations $\cP_\l$.
\item The representation $\cP_\l$ is irreducible, in the sense that the only operators strongly commuting with the action of the Chevalley generators are multiplication by scalars. 
\item The basic quivers associated to different reduced words $\bD(\bi_0')$ are mutation equivalent, and so the resulting expressions of the positive representations $\cP_\l$ are unitarily equivalent.
\end{itemize}
\end{Thm}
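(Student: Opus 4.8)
The plan is to build the quiver $\bD(\mathbf{i}_0)$ by amalgamating, along the reduced word $\mathbf{i}_0=s_{j_1}\cdots s_{j_N}$, the elementary Fock--Goncharov seeds attached to each simple reflection, and then to exhibit the images of the Chevalley generators as explicit Laurent elements. First I would assign to each letter a column of nodes decorated by the corresponding simple root, glue consecutive columns by amalgamation (summing the exchange-matrix entries at the identified frozen nodes), and read off the skew form $(-,-)$ from $w_{ij}=d_i\e_{ij}$. The candidate embedding sends $\bK_i,\bK_i'$ to single cluster monomials supported on the $i$-colored boundary nodes, and sends $\be_i$ (resp.\ $\bf_i$) to a telescoping sum of monomials running along the $i$-colored nodes from one end of the quiver to the other. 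By Lemma \ref{stdmon} each such image is a universally Laurent polynomial, since it can be mutated to a single standard monomial along the $i$-colored mutation path.

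The bulk of the work is to verify that these images satisfy the defining relations of $\fD_q(\g)$. The $\bK$-relations \eqref{KK1}--\eqref{KK3} are immediate from the commutation law \eqref{XiXj} once one records the pairings $(-,-)$ of the supporting lattice vectors. The commutator relation \eqref{EFFE2} I would prove by a direct telescoping computation: for $i\neq j$ the supports of $e_i$ and $f_j$ involve disjoint colors and commute, while for $i=j$ the bracket collapses to the difference $K_i'-K_i$ of the two boundary monomials up to the scalar $(q_i-q_i^{-1})$. The Serre relations \eqref{SerreE}--\eqref{SerreF} are the genuine obstacle; here I would exploit locality, observing that the relation in colors $i,j$ only involves the sub-quiver spanned by the $i$- and $j$-colored nodes, which is isomorphic to the basic quiver of the rank-two Levi ($A_1\times A_1$, $A_2$, $B_2$, or $G_2$). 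This reduces the check to a finite list of rank-two $q$-commutator identities that can be verified by hand. Having a homomorphism, injectivity follows by comparing the image of a PBW basis of $\fD_q(\g)$ with the linearly independent standard monomials it produces in $\cX_q^{\bD(\mathbf{i}_0)}$.

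Centrality of $K_iK_i'$ is a short computation: the supporting lattice vector of $K_iK_i'$ pairs to zero with every $\vec{e_j}$ under $(-,-)$. Together with $n$ further central monomials these account for the full center, of rank $|Q|-\rank(\bQ)=2n$, so that $\rank(\bQ)=2N$ and the polarization of Lemma \ref{polaru} acts on $L^2(\R^N)$. Passing to the quotient $\langle K_iK_i'=1\rangle$ yields the embedding of $\cU_q(\g)$. For the representation I would invoke Lemma \ref{polaru} to produce the irreducible group-like polarization $\pi_\l$ determined by the central characters, normalized so that $\pi_\l(K_iK_i')=1$ and the remaining monomials act by $e(4\l_i)$; matching the resulting operator formulas with the explicit expressions of \cite{FI, Ip2, Ip3} identifies $\pi_\l\circ\iota$ with $\cP_\l$. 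Irreducibility is then immediate from the irreducibility of the polarization, since any operator strongly commuting with all Chevalley generators commutes with the whole quantum torus algebra and hence is scalar.

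Finally, for independence of the reduced word I would use that any two reduced words of $w_0$ are connected by braid moves, and that each braid move is realized by an explicit finite sequence of cluster mutations (Definition \ref{qmut}) taking $\bD(\mathbf{i}_0)$ to $\bD(\mathbf{i}_0')$ while fixing the images of the Chevalley generators inside the shared fraction field. By Proposition \ref{monopolar} the monomial part of each mutation changes the polarization, and the full mutation is intertwined by a unitary built from the quantum dilogarithm; composing these intertwiners gives the unitary equivalence of the two realizations of $\cP_\l$. The hardest parts throughout are the Serre-relation verification and the bookkeeping of the braid-move mutation sequences; the remainder is structural once the rank-two cases are settled.
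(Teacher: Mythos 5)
This theorem is not proved in the present paper at all: it is imported verbatim from \cite{Ip7} (with the quiver construction deferred to \cite{GS,Ip8}), so your proposal can only be judged against the argument given there and in \cite{SS1}. Your overall architecture does match that argument: amalgamation of elementary seeds along $\bi_0$, Chevalley images as telescoping path sums with $K_i,K_i'$ as monomials, universal Laurentness via mutation to a standard monomial (Lemma \ref{stdmon}), reduction of the Serre relations to rank-two sub-quivers, injectivity by leading monomials of PBW elements, and reduced-word independence via braid moves realized as mutation sequences intertwined by quantum dilogarithm unitaries (Proposition \ref{monopolar}). That skeleton is correct.

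Two of your steps, however, are genuinely flawed as written. First, your verification of \eqref{EFFE2} for $i\neq j$ rests on the claim that the supports of $e_i$ and $f_j$ ``involve disjoint colors and commute.'' This is false: in the basic quiver (see Figure \ref{fig-A4}) each $e_i$-path crosses every row, so its constituent monomials share vertices with, and do not individually $q$-commute trivially with, those of $f_j$; the vanishing of $[e_i,f_j]$ for $i\neq j$ is again a telescoping cancellation between neighboring terms, not a disjointness statement, and skipping it skips most of the actual computation. Second, your irreducibility argument asserts that an operator strongly commuting with the Chevalley images commutes with the whole quantum torus algebra. But $\iota(\fD_q(\g))$ generates only a proper subalgebra of $\cX_q^{\bD(\bi_0)}$, so this implication is precisely what has to be proven, not a triviality. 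The standard way to close this gap is a separation-of-terms argument: conjugation by the unitaries $\pi_\l(K_j)^{\bi t}$ rescales the distinct monomial summands of $\pi_\l(e_i)$, $\pi_\l(f_i)$ by distinct phases, so strong commutation with the Cartan part and with the sums forces commutation with each individual monomial; only then does one obtain enough Weyl operators to invoke irreducibility of the polarization (Lemma \ref{polaru}) and conclude that the commutant consists of scalars. Without some such step the fourth bullet point of the theorem remains unsupported in your proposal.
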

Here we say that an operator $X$ \emph{strongly commutes} with a positive operator $Y$ if $X$ commutes with the spectral projection of $Y$, or in other words, $X$ commutes with the bounded unitary operators $Y^{\bi t}$ for all $t\in\R$.

We omit the detailed construction of the basic quiver, see \cite{GS,Ip8} for more details and examples.
\begin{Def} The central elements $K_iK_i' \in \cX_q^{\bD(\bi_0)}$ are called the \emph{Cartan monomials}. A polarization $\pi$ is \emph{group-like} if $\pi(K_iK_i')=1$ for all $i\in I$.
\end{Def}

By the explicit expressions of $\cP_\l$ given in \cite{FI, Ip2}, it is parametrized by $\l=(\l_i)_{i\in I}$ with the identity decoration \eqref{deco}, where the central parameters of the polarization of $\cP_\l$ can be chosen as in Figure \ref{fig-A4}, in such a way that one side of the frozen variables carry $e(-2\l_i)$, while the variables along one half of the middle column carry $e(4\l_i)$. The polarizations of all other remaining variables have trivial central parameters.

\begin{Nota}\label{path}
The embedding of $\fD_q(\g)$ can sometimes be represented by telescopic sums in some cluster chart, described as paths on the quiver $\bD(\bi_0)$ (see Figure \ref{fig-A4} for an example). Following the convention used in \cite{Ip7}, we will use {\color{blue}\textbf{blue paths}} to denote the image of $f_i$ and $K_i'$ in $\cX_q^{\bD(\bi_0)}$ as follows. For a path $v_1\to v_2\to \cdots\to v_S$ on the quiver, the embedding is given by
\Eq{\label{patheq}
f_i&=X_{v_1}+X_{v_1,v_2}+\cdots + X_{v_1,...,v_{S-1}},\\
K_i'&=X_{v_1,v_2,...,v_S}.
}
We will use other colors to denote the embedding of $e_i$ and $K_i$ in a similar way.
\end{Nota}

\subsection{Casimir operators}\label{sec:pre:cas}
For the following definitions, we require an extension $\what{\cU}_q(\g):=\cU_q(\g)[\bK_i^{\pm\frac{1}{h}}]$ of the quantum group, where $h$ is the Coxeter number of $\g$, in order to allow fractional powers of the Cartan generators $\bK_i$.
\begin{Thm}\cite{Ip5} The center of $\what{\cU}_q(\g)$ is generated by the $n$ generalized Casimir elements
\Eq{\bC_k:=(1\ox \mathrm{Tr}|_{V_k}^q)(RR_{21}),\label{caseq}}
where 
\begin{itemize}
\item $V_k$ is the $k$-th fundamental representation of $\cU_q(\g)$, $k=1,...,n$.
\item the quantum trace $\mathrm{Tr}|_V^q$ of $x\in \cU_q(\g)$ is given by
\Eq{\mathrm{Tr}|_V^q(x):=\mathrm{Tr}|_V(xu\inv)
}
where
\Eq{\label{ueq} u:=\bK_{2\rho}\til{\bK}_{2\rho}:=\prod_i q_i^{2W_i}\prod_i q_i^{\frac{2W_i}{b_i^2}}}
and $W_i$ are the fundamental coweights \eqref{Wieq}.
\item $R\in \cU_q(\g)\what{\ox}\cU_q(\g)$ is the universal $R$-matrix.
\end{itemize}
\end{Thm}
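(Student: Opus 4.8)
The plan is to prove the two assertions separately: first that each $\bC_k$ is a well-defined central element of $\what{\cU}_q(\g)$, and then that the $\bC_k$ generate the whole center. Centrality is a formal consequence of the quasitriangular structure, while generation follows from the quantum Harish-Chandra isomorphism, so these are two genuinely different arguments with very different difficulty.

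For centrality I would start from the defining intertwining property of the universal $R$-matrix, $R\D(x)=\D^{\mathrm{op}}(x)R$ for all $x\in\cU_q(\g)$. Applying the flip of the tensor factors gives $R_{21}\D^{\mathrm{op}}(x)=\D(x)R_{21}$, and combining the two identities shows that the monodromy element $R_{21}R$ is invariant under conjugation by $\D(x)$ for every $x$ (equivalently, $RR_{21}$ is invariant under conjugation by $\D^{\mathrm{op}}(x)$). The element $u=\bK_{2\rho}\til{\bK}_{2\rho}$ in \eqref{ueq} is precisely the Drinfeld element implementing the square of the antipode, $uxu\inv=S^2(x)$; this is exactly what endows the quantum trace $\mathrm{Tr}|_V^q(x)=\mathrm{Tr}|_V(xu\inv)$ with the twisted cyclicity needed to convert the above invariance into genuine centrality. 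The standard Reshetikhin--Turaev/Drinfeld computation then shows that $1\ox\mathrm{Tr}|_{V_k}^q$ applied to $RR_{21}$ produces an element commuting with every $x\in\cU_q(\g)$. I would also note here that $u$ involves the fundamental coweights $W_i$, which lie in the coweight lattice rather than the coroot lattice; this forces fractional powers $\bK_i^{\pm 1/h}$, which is precisely why one must pass to the extension $\what{\cU}_q(\g)$ for the $\bC_k$ to make sense, and one should track the modular-dual factor $\til{\bK}_{2\rho}$ consistently throughout.

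For generation I would invoke the quantum Harish-Chandra homomorphism $\chi$, obtained by recording the scalar by which a central element acts on the highest weight line of each highest weight module. This realizes the center as a subalgebra of a suitable ($\rho$-shifted) Weyl-invariant completion of the group algebra of the weight lattice, the crucial structural input being that $\chi$ is injective with image exactly the invariant subalgebra. It then remains to identify $\chi(\bC_k)$: evaluating $RR_{21}$ on a highest weight module of highest weight $\Lambda$ reproduces the quantum Casimir eigenvalue, and taking the quantum trace over $V_k$ produces, up to an invertible normalizing factor, a Weyl-invariant $q$-character $\sum_\mu \dim(V_k)_\mu\, c_\mu\, e^{\mu}$ whose dominant leading term is $e^{w_k}$. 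Because every dominant weight is a nonnegative integer combination of the fundamental weights $w_k$, a standard triangularity (dominance-order) induction shows that these elements generate the full invariant ring, which is a polynomial ring in the fundamental characters; pulling back through the isomorphism $\chi$, the $\bC_k$ generate the center.

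The main obstacle is the generation step, and within it, establishing that $\chi$ is an isomorphism onto the full Weyl-invariant subalgebra in this extended, modular-double setting. Injectivity is comparatively soft, since a central element is determined by its scalar action on sufficiently many highest weight modules, but surjectivity, together with the precise matching of the image with the representation ring so that exactly the $n$ fundamental characters arise as leading terms, is where the real work lies. By contrast, the centrality step is routine once the intertwining property of $R$ and the $S^2$-implementing property of $u$ are in hand, and the only genuine care it requires is the bookkeeping of the fractional Cartan powers that define $\what{\cU}_q(\g)$.
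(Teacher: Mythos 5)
You should first be aware that this paper does not prove the statement at all: it is quoted from \cite{Ip5} as a prerequisite, so your proposal can only be measured against the standard argument (presumably the one in \cite{Ip5}), which your two-step plan — centrality from quasitriangularity plus a twisted trace, generation via quantum Harish--Chandra — indeed follows. Your centrality step is essentially correct, with one repair needed: $u=\bK_{2\rho}\til{\bK}_{2\rho}$ is \emph{not} Drinfeld's element $m(S\ox 1)(R_{21})$; it is a grouplike (pivotal) element, and the property your argument actually uses is $uxu\inv=S^2(x)$. This holds, but only after checking that conjugation by the modular factor $\til{\bK}_{2\rho}=\prod_i q_i^{2W_i/b_i^2}=\prod_i e^{2\pi\bi W_i}$ is trivial on $\cU_q(\g)$: since $\a_j(W_i)=\d_{ij}$, conjugating $\bE_j$ or $\bF_j$ produces only the phase $e^{\pm2\pi\bi\d_{ij}}=1$. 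Without this check the twisted cyclicity of $\mathrm{Tr}|_V^q$, hence centrality, is unjustified, because $\til{\bK}_{2\rho}$ acts on the weight spaces of $V_k$ by nonconstant phases.

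The genuine gap is in the generation step. Write $Q$ and $P$ for the root and weight lattices. The quantum Harish--Chandra theorem you cite (injectivity, image equal to the shifted-Weyl-invariant part of the group algebra, identification with the representation ring) is a theorem about $\cU_q(\g)$ with Cartan lattice between $Q$ and $P$. The algebra here is $\what{\cU}_q(\g)=\cU_q(\g)[\bK_i^{\pm1/h}]$, whose Cartan lattice is $\tfrac1hQ$, and $\tfrac1hQ\supsetneq P$ in every type except $A_1$. The shifted-Weyl-invariant ring built on this extended lattice is strictly larger than the representation ring generated by the fundamental characters, so if the Harish--Chandra image of the center of $\what{\cU}_q(\g)$ were ``exactly the invariant subalgebra'' of its natural Cartan lattice, the theorem would be \emph{false}: the $\bC_k$ could not generate. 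What must actually be proved — and what your proposal assumes away by citing the structural input — is that adjoining the fractional powers $\bK_i^{\pm1/h}$ creates no new central elements, i.e.\ that the Harish--Chandra image of the center still lies in the span of invariant elements supported on $2P$, which is exactly the image of the representation ring. Even for $\sl_2$ this is a nontrivial check: $\bK^{1/2}+q\inv\bK^{-1/2}$ is invariant under the shifted Weyl action $\bK^{1/2}\mapsto q\inv\bK^{-1/2}$, yet a PBW computation (the commutator $[\be,\bf]$ produces only integral powers of $\bK$, which by induction on the PBW filtration forces the Cartan support of any central element into $Q$) shows it is not the Harish--Chandra image of any central element of $\what{\cU}_q(\sl_2)$. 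An analogous support analysis in higher rank, e.g.\ via the grading of $\what{\cU}_q(\g)$ by the cosets $\tfrac1hQ/Q$ and the automorphisms fixing $\cU_q(\g)$ pointwise, is the real content of the generation step; it cannot be obtained by quoting Joseph--Letzter/Rosso-type statements. Note also that you locate the difficulty in ``surjectivity,'' but surjectivity onto the representation ring is the easy direction — it is precisely what the construction of the $\bC_k$ in the first half provides; the hard direction is that the center is no bigger.
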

\begin{Rem} In fact $\bC_V:=(1\ox \mathrm{Tr}|_V^q)(RR_{21})$ also lie in the center of $\what{\cU}_q(\g)$ for any finite dimensional representation $V$ of $\cU_q(\g)$. Hence one may refer to $\bC_k$ defined above as the generalized Casimirs with respect to the fundamental representations.
\end{Rem}

\begin{Ex} For $\cU_q(\sl_2)$, the Casimir element is given by
\Eq{
\bC&=\be\bf  - q\inv\bK - q \bK\inv\\
&=\bf\be-q\bK-q\inv \bK\inv\nonumber.
}
For $\cU_q(\sl_3)$, the two generalized Casimir elements are given by
{\small\Eq{
\bC_1=&\bK(q^{-2} \bK_1\bK_2+\bK_1\inv \bK_2 + q^2 \bK_1\inv \bK_2\inv -q\inv \bK_2 \be_1\bf_1-q\bK_1\inv \be_2\bf_2 + \be_{21}\bf_{12}),\label{Cas3a}\\
\bC_2=&\bK\inv(q^{2} \bK_1\inv \bK_2\inv+\bK_1 \bK_2\inv + q^{-2} \bK_1 \bK_2 -q \bK_2\inv \be_1\bf_1-q\inv \bK_1 \be_2\bf_2 + \be_{12}\bf_{21}),\label{Cas3b}
}}
where $\bK=\bK_1^{\frac{1}{3}}\bK_2^{-\frac{1}{3}}$, and
\Eq{\be_{ij}:=\frac{q^{\half}\be_j\be_i - q^{-\half}\be_i\be_j}{q-q\inv},\tab \bf_{ij}:=\frac{q^{\half}\bf_j\bf_i - q^{-\half}\bf_i\bf_j}{q-q\inv}} are the images of the Lusztig's isomorphism extended to positive generators \cite{Ip4}.
\end{Ex}

\begin{Rem} One can modify the Cartan part of the universal $R$-matrix, or more explicitly, replace $\bK_i\inv$ by $\bK_i'$ in the formula above to obtain the central elements for $\fD_q(\g)$.
\end{Rem}

Since the generalized Casimirs involve only fractional powers of the Cartan generators, their actions by the positive representations are still well defined. As the standard positive representations $\cP_\l$ are irreducible, the generalized Casimirs act as multiplication by scalars. The choice of the element $u$ in \eqref{ueq} which is compatible with the modular double, ensures that the Casimir operators are positive self-adjoint, acting with spectrum $\pi_\l(\bC_k)\geq \dim V_k$. Their actions by the standard positive representations are computed as follows.

\begin{Thm}\cite{Ip5}\label{CasAct} Let $\mu_{\cV}$ denote the weight of the weight space $\cV\subset V_k$. The generalized Casimir operators $\bC_k$ acts on $\cP_\l$ by the scalar
\Eq{\label{CasActEq}\pi_\l(\bC_k)=\sum_{\cV\subset V_k} \exp\left(-4\pi \mu_\cV(\vec{\l_\fh})\right),}
where the sum is taken over all the weight spaces of $V_k$, and 
\Eq{
\vec{\l_\fh}=\sum_{i\in I} \ol_i W_i\in\fh_\R.
}
\end{Thm}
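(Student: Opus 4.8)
The plan is to exploit that $\cP_\l$ is irreducible, so that each central element $\bC_k$ acts by a single scalar, and then to pin down that scalar from a highest-weight model carrying the same central character, where only the Cartan part of the $R$-matrix survives.

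First I would record the triangular factorization $R=\Theta\,q^{t}$ of the universal $R$-matrix, with $q^{t}$ its Cartan (weight-zero) part and $\Theta=1\ox 1+\sum_{\beta\in\D_+}(\cdots)$ the quasi-$R$-matrix, graded so that each nontrivial summand raises the weight by $\beta$ in one tensor factor and lowers it by $\beta$ in the other. Then $RR_{21}$ has total weight zero, and since the quantum trace of a weight-changing operator on $V_k$ vanishes, only the part of $RR_{21}$ preserving the weight in the $V_k$-slot contributes to $\bC_k=(1\ox \mathrm{Tr}|_{V_k}^q)(RR_{21})$. Writing $RR_{21}=\sum_i a_i\ox b_i$, this exhibits $\bC_k=\sum_i a_i\,\mathrm{Tr}|_{V_k}(b_i u\inv)\in\what{\cU}_q(\g)$ as a pure-Cartan term coming from $q^{t}q^{t}_{21}$ together with correction terms whose first tensor factor is a weight-zero product of the shape $\bf_\beta\cdots\be_\beta$ in which raising operators are present.

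Next I would evaluate this central element on a generic highest-weight module $M(\Lambda)$ of $\cU_q(\g)$ at its highest weight vector $v_\Lambda$. After normal-ordering each correction so that its raising operators stand on the right, every correction annihilates $v_\Lambda$ and only the Cartan contribution remains. Evaluating $q^{t}q^{t}_{21}$ on a weight space $\cV\sub V_k$ together with the factor $u\inv=\bK_{2\rho}\inv\til{\bK}_{2\rho}\inv$ that produces the weight shift, and summing the trace over the weight spaces, yields a weighted character
\Eq{
\bC_k|_{M(\Lambda)}=\sum_{\cV\sub V_k}\dim\cV\cdot q^{2(\Lambda+\rho,\,\mu_\cV)}
}
up to the overall normalization fixed by conventions; the $\cU_q(\sl_2)$ value $-(q^{\Lambda+1}+q^{-\Lambda-1})$ and the displayed $\cU_q(\sl_3)$ Casimirs serve as consistency checks, each collapsing to a sum over the $\dim V_k$ weight spaces.

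The final step, which I expect to be the main obstacle, is to transport this scalar to $\cP_\l$, which has no honest highest weight vector. Since $\bC_k$ is central and $\cP_\l$ is irreducible, $\pi_\l(\bC_k)$ is still a scalar, namely the value of the same central element at the central character of $\cP_\l$. I would therefore identify this central character: reading off from the explicit formulas of $\cP_\l$ (equivalently, from the group-like polarization of the cluster embedding $\iota:\fD_q(\g)\inj\cX_q^{\bD(\bi_0)}$) that $\cP_\l$ shares its central character with the continued highest-weight parameter determined by $\vec{\l_\fh}=\sum_i\ol_i W_i$, under the substitution sending $q^{2(\Lambda+\rho,\,-)}$ to $\exp(-4\pi\mu(\vec{\l_\fh}))$. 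Feeding this into the weighted character collapses the sum to $\sum_{\cV\sub V_k}\exp(-4\pi\mu_\cV(\vec{\l_\fh}))$, the claimed value; the correct factor $4\pi$, and the fact that the apparent sign disappears to leave a positive spectrum $\geq\dim V_k$, are ensured precisely by the self-dual combination $u=\bK_{2\rho}\til{\bK}_{2\rho}$ compatible with the modular double, and are already visible in the $\cU_q(\sl_2)$ check $e^{2\pi b\l}+e^{-2\pi b\l}\geq 2$. The delicate point is to justify this continuation rigorously, i.e.\ that $\cP_\l$ realizes the same central character as the corresponding (non-unitary) highest-weight module and that the central eigenvalue is the holomorphic continuation of the weighted character; one may instead bypass it entirely by computing $\iota(\bC_k)$ as an explicit universally Laurent polynomial in $\cO_q(\cX)$ — a telescoping sum of monomials indexed by the weight spaces of $V_k$ — and polarizing it term by term.
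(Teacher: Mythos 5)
The paper itself contains no proof of Theorem \ref{CasAct}: it is imported verbatim from \cite{Ip5}, so there is no internal argument to compare yours against, and I am assessing your proposal on its own merits.

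Your steps 1--4 amount to citing the standard Drinfeld--Reshetikhin fact that the central element $(1\ox \mathrm{Tr}|_{V_k}^q)(RR_{21})$ acts on a highest-weight module $M(\Lambda)$ by the weighted character $\sum_{\cV\subset V_k}\dim\cV\cdot q^{2(\Lambda+\rho,\mu_\cV)}$; the formula you quote is correct, but your derivation sketch has its bookkeeping backwards. Since the first tensor slot of $R$ carries raising operators and that of $R_{21}$ carries lowering operators, the weight-zero corrections in $RR_{21}$ naturally appear with raising operators on the \emph{left} (shape $\be_\beta\cdots\bf_\beta$, not $\bf_\beta\cdots\be_\beta$ as you wrote); such terms do \emph{not} annihilate $v_\Lambda$, and reordering them so that raising operators stand on the right generates new Cartan terms which contribute essentially --- already for $\cU_q(\sl_2)$ the $\bE\bF$-ordered form of the Casimir has Cartan part with the opposite $\rho$-shift, and it is precisely the action of the $\bE\bF$ term on $v_\Lambda$ that restores $q^{\Lambda+1}+q^{-\Lambda-1}$. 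This is repairable because the end result is a known theorem, but "only the Cartan contribution from $q^{t}q^{t}_{21}$ remains" is false as stated.

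The genuine gap is your step 5, which you flag honestly but do not close, and it is exactly where the content of the theorem lies. $\cP_\l$ has no highest-weight vector, its generators are unbounded positive self-adjoint operators, and its parameter $\l$ is real where the Verma parameter would have to be complex; so the assertion that $\cP_\l$ "shares its central character with the continued highest-weight parameter determined by $\vec{\l_\fh}$" cannot be "read off" --- it is a restatement of the theorem, not an input. Two analytic points are also assumed silently: (i) that an algebraically central element of $\what{\cU}_q(\g)$ acts by a scalar on the irreducible $\cP_\l$ (irreducibility here is a statement about \emph{strong}, spectral commutation, and formal commutation of unbounded operators does not imply it); and (ii) that the eigenvalue depends holomorphically on $\l$, without which no continuation argument can even be formulated. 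Your fallback route is the one with real traction: $\iota(\bC_k)$ is a universally Laurent polynomial expressible in the central monomials of $\cX_q^{\bD(\bi_0)}$, whose central characters are known ($\pi_\l(K_iK_i')=1$, $\pi_\l(Q_i)=e(4\l_i)$), so the scalar can be computed term by term; for $\sl_2$ the paper's own data confirms this, since $\iota(\bC)=X_{1,3}+X_{1,2,3,4}$ is a sum of two central monomials polarizing to $e^{-2\pi b\l}+e^{2\pi b\l}$. But you devote one sentence to this route and carry out none of it, so as written the proof is incomplete at its decisive step.
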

Another way of computing the action is by the following Weyl character formula.
\begin{Cor}\cite{Ip5}\label{CasWeyl} We have
\Eq{\label{CasWeylEq}\pi_\l(\bC_k)= \frac{\sum_{w\in W} \mathrm{sgn}(w)e^{-4\pi (w_k+\rho)(w\cdot\vec{\l_\fh})}}{\prod_{\a\in\D^+}(e^{-2\pi\a(\vec{\l_\fh})}-e^{2\pi\a(\vec{\l_\fh})})},
}
where $w_k$ is the $k$-th fundamental weight, which is also the highest weight of $V_k$. The Weyl group acts on $\vec{\l_\fh}$ by \eqref{weylact}.
\end{Cor}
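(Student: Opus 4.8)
The plan is to read \eqref{CasWeylEq} as nothing more than the classical Weyl character formula specialized at a point, so the entire task is to recognize the weight-space sum of Theorem \ref{CasAct} as the character of $V_k$ evaluated through a suitable multiplicative map. First I would introduce the evaluation homomorphism $\phi$ from the group algebra of the weight lattice into $\C$ determined by $\phi(e^\mu):=\exp(-4\pi\,\mu(\vec{\l_\fh}))$ for a weight $\mu\in\fh_\R^*$; this is a ring homomorphism precisely because $\mu\mapsto\mu(\vec{\l_\fh})$ is additive. Applying $\phi$ to the formal character $\mathrm{ch}\,V_k=\sum_\mu(\dim(V_k)_\mu)\,e^\mu$ gives $\phi(\mathrm{ch}\,V_k)=\sum_{\cV\subset V_k}\exp(-4\pi\,\mu_\cV(\vec{\l_\fh}))$, which by \eqref{CasActEq} is exactly $\pi_\l(\bC_k)$. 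Hence it suffices to apply $\phi$ to both sides of the Weyl character formula.

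Next I would invoke Weyl's formula and the Weyl denominator identity in the group algebra,
\[
\Big(\sum_{w\in W}\mathrm{sgn}(w)\,e^{w\rho}\Big)\cdot\mathrm{ch}\,V_k=\sum_{w\in W}\mathrm{sgn}(w)\,e^{w(w_k+\rho)},\qquad \sum_{w\in W}\mathrm{sgn}(w)\,e^{w\rho}=\prod_{\a\in\D^+}\big(e^{\a/2}-e^{-\a/2}\big).
\]
Both are identities in the group algebra, so they survive application of the ring homomorphism $\phi$. Under $\phi$ the factor $e^{\a/2}-e^{-\a/2}$ maps to $\exp(-2\pi\a(\vec{\l_\fh}))-\exp(2\pi\a(\vec{\l_\fh}))$, producing the denominator of \eqref{CasWeylEq}; since $\mathrm{ch}\,V_k$ is a genuine group-algebra element the division is formal, and it is legitimate to divide whenever $\vec{\l_\fh}$ is regular (i.e. $\a(\vec{\l_\fh})\neq0$ for all $\a\in\D^+$), which is the range in which the ratio in \eqref{CasWeylEq} is meaningful.

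The only genuine bookkeeping step is rewriting the numerator. Applying $\phi$ gives $\sum_{w\in W}\mathrm{sgn}(w)\exp\big(-4\pi\,(w(w_k+\rho))(\vec{\l_\fh})\big)$; using that the natural pairing $\fh_\R^*\times\fh_\R\to\R$ is $W$-equivariant, i.e. $(w\mu)(\xi)=\mu(w\inv\xi)$, and reindexing $w\mapsto w\inv$ (which preserves $\mathrm{sgn}$), this becomes $\sum_{w\in W}\mathrm{sgn}(w)\exp\big(-4\pi(w_k+\rho)(w\cdot\vec{\l_\fh})\big)$, matching \eqref{CasWeylEq}.

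I expect the main, though mild, obstacle to be confirming that the coweight action \eqref{weylact} used to define $w\cdot\vec{\l_\fh}\in\fh_\R$ is indeed the dual of the standard reflection action on weights, so that the $W$-equivariance of the pairing applies verbatim and no stray sign appears. This is immediate from $\a_i(W_j)=\d_{ij}$, which yields $s_i\cdot W_j=W_j-\d_{ij}\a_j^\vee$ in agreement with \eqref{weylact}, but it should be checked explicitly to pin down the orientation of the action. Everything else is a direct transcription of the classical formula, so no deeper quantum-group input beyond Theorem \ref{CasAct} is required.
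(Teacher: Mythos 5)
Your proof is correct. The paper itself gives no argument for this corollary (it is quoted from \cite{Ip5}), but your derivation is exactly the intended one: read the sum \eqref{CasActEq} as the formal character of $V_k$ pushed through the evaluation homomorphism $e^\mu\mapsto e^{-4\pi\mu(\vec{\l_\fh})}$, then specialize the Weyl character formula and the denominator identity, using $W$-equivariance of the pairing and $\mathrm{sgn}(w\inv)=\mathrm{sgn}(w)$ to put the numerator in the stated form; your check that \eqref{weylact} is the dual reflection action (via $\a_i(W_j)=\d_{ij}$) is the right point to pin down, and the division is indeed only meaningful for regular $\vec{\l_\fh}$, as you note. The one convention worth making explicit is that the sum over weight spaces in Theorem \ref{CasAct} comes from a trace, so it counts weights with multiplicity — precisely the reading needed for it to equal $\phi(\mathrm{ch}\,V_k)$, and the one you use.
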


In type $A_n$, the fundamental representations $V_k$, $k=1,...,n$, are given by the exterior product $\L^k V$ of the standard representation $V=V_1$. Using Theorem \ref{CasAct}, we have an explicit formula for the action of $\bC_k$ in terms of the \emph{elementary symmetric polynomials} in $n+1$ variables:
\Eq{
\cE_k(\bx_0,...,\bx_n):= \sum_{i_1<i_2<\cdots<i_k} \bx_{i_1}\cdots \bx_{i_k}.
}

\begin{Prop}\cite{Ip5} Let 
\Eq{
\varpi_i:=\frac{1}{n+1}\sum_{k=1}^n k\l_k -\sum_{j=n+1-i}^n \l_j,\tab i=0,...,n.
}
Note that 
\Eq{
\varpi_0+\varpi_1+ \cdots+\varpi_n=0.} 
Then the actions of $\bC_k$ on $\cP_\l$ are given by
\Eq{\label{casact}
\pi_\l(\bC_k) = \cE_k\left(e^{4\pi b\varpi_0},..., e^{4\pi b\varpi_n}\right).
}
\end{Prop}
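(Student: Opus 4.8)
The plan is to reduce the statement entirely to the weight-space formula of Theorem \ref{CasAct} and then carry out an explicit computation of the weights of the fundamental representations in type $A_n$. Since $\g$ has rank $n$ and is of type $A_n$, we have $\g=\sl_{n+1}$, and the $k$-th fundamental representation is the exterior power $V_k=\Lambda^k V$, where $V=V_1$ is the $(n+1)$-dimensional standard representation. I would fix the standard weights $\epsilon_0,\dots,\epsilon_n$ of $V$, normalized by $\sum_{j=0}^n\epsilon_j=0$, with simple roots $\alpha_i=\epsilon_{i-1}-\epsilon_i$ for $i\in I$. Then the weights $\mu_\cV$ of $\Lambda^k V$ are exactly the $\binom{n+1}{k}$ sums $\epsilon_{i_1}+\cdots+\epsilon_{i_k}$ over $0\le i_1<\cdots<i_k\le n$, each of multiplicity one.

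First I would substitute this description of the weight spaces into Theorem \ref{CasAct}. Because each weight of $V_k$ is a sum of $k$ distinct $\epsilon_{i_s}$, the exponential factorizes across the summand and the sum over weight spaces collapses into an elementary symmetric polynomial:
\Eqn{
\pi_\l(\bC_k)=\sum_{i_1<\cdots<i_k}\prod_{s=1}^k\exp\left(-4\pi\,\epsilon_{i_s}(\vec{\l_\fh})\right)=\cE_k(x_0,\dots,x_n),
}
where $x_j:=\exp(-4\pi\,\epsilon_j(\vec{\l_\fh}))$. Thus the whole statement reduces to identifying $-\epsilon_j(\vec{\l_\fh})$ with $b\,\varpi_{\sigma(j)}$ for some permutation $\sigma$; since $\cE_k$ is symmetric in its arguments, any relabeling of the $x_j$ is harmless.

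The key computation is then the pairing $\epsilon_j(\vec{\l_\fh})$. Because type $A_n$ is simply-laced, all multipliers satisfy $d_i=1$, hence $\ol_i=b\l_i$ and $\vec{\l_\fh}=b\sum_{i\in I}\l_i W_i$. Using the duality $\alpha_i(W_m)=\delta_{im}$ — which follows immediately from $W_m=\sum_k(A\inv)_{mk}H_k$ together with $\alpha_i(H_k)=a_{ki}$, so that $\alpha_i(W_m)=\sum_k(A\inv)_{mk}a_{ki}=(A\inv A)_{mi}=\delta_{mi}$ — I get $\alpha_i(\vec{\l_\fh})=b\l_i$. Expanding $\epsilon_j=\epsilon_0-\sum_{i=1}^j\alpha_i$ and solving $\sum_{j}\epsilon_j=0$ for $\epsilon_0$ yields
\Eqn{
\epsilon_j(\vec{\l_\fh})=b\left(\sum_{i=j+1}^n\l_i-\frac{1}{n+1}\sum_{i=1}^n i\l_i\right)=-b\,\varpi_{n-j}.
}
Therefore $x_j=e^{4\pi b\,\varpi_{n-j}}$, and as $j$ ranges over $0,\dots,n$ the multiset $\{x_0,\dots,x_n\}$ equals $\{e^{4\pi b\varpi_0},\dots,e^{4\pi b\varpi_n}\}$; by symmetry of $\cE_k$ this proves \eqref{casact}. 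The relation $\sum_i\varpi_i=0$ then follows at once from $\sum_j\epsilon_j=0$, or directly by observing that in the double sum $\sum_{i=0}^n\sum_{j=n+1-i}^n\l_j$ the coefficient of each $\l_j$ is $j$, producing $\sum_k k\l_k$, which cancels the barycentric first term.

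The argument is essentially bookkeeping once Theorem \ref{CasAct} is available, and I do not expect any genuine analytic or representation-theoretic obstacle. The only point that needs care is the index convention: the ordering of the weights $\epsilon_j$ relative to the definition of $\varpi_i$ forces the reversal $j\mapsto n-j$, and one must check that the constant term $\frac{1}{n+1}\sum_i i\l_i$ coming out of the pairing matches precisely the barycentric term in the definition of $\varpi_i$. Both discrepancies are immaterial because $\cE_k$ is symmetric, but getting the conventions consistent is where the bulk of the (routine) verification lies.
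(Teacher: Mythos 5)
Your proof is correct and follows exactly the route the paper indicates for this result: apply the weight-space formula of Theorem \ref{CasAct} to the fundamental representations $V_k=\L^k V$ in type $A_n$, so that the sum over weights factorizes into the elementary symmetric polynomial $\cE_k$, with the pairing $\epsilon_j(\vec{\l_\fh})=-b\varpi_{n-j}$ supplying the identification of variables (the index reversal being harmless by symmetry of $\cE_k$). Your bookkeeping of the coweight duality $\a_i(W_m)=\d_{im}$ and the barycentric normalization is accurate, so nothing is missing.
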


In particular the action of $\bC_1$ is given by
\Eq{\label{casw}
\pi_\l(\bC_1)=\sum_{i=0}^n e^{4\pi b\varpi_i}.
}
More generally, the Casimir actions can be expressed by a generating polynomial
\Eq{
p(t)&=(t+e^{4\pi b \varpi_0})\cdots (t+e^{4\pi b\varpi_n})\\
&=t^n+\pi_\l(\bC_1) t^{n-1}+\pi_\l(\bC_2) t^{n-2}+\cdots + \pi_\l(\bC_n) t + 1.
}

\begin{Lem}
The solution to the system of equation
\Eq{\label{symeq}
\case{\cE_k(\bx_0,...,\bx_n)=0,& k=1,...,n,\\
\cE_{n+1}(\bx_0,...,\bx_n):=\bx_0\cdots \bx_n=1,}
}
is given by the the set of $(n+1)$-th roots of $-1$, in other words
\Eq{
\{\bx_0,...,\bx_n\} = \{e^{\frac{(n-2k)}{n+1}\pi \bi }\}_{k=0,...,n}.
}
\end{Lem}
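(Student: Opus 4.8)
The plan is to read the system off the coefficients of a single monic polynomial, via Vieta's formulas. By definition $\cE_k(\bx_0,\ldots,\bx_n)$ is the $k$-th elementary symmetric polynomial, so the generating identity
\[
\prod_{i=0}^n(t+\bx_i)=\sum_{k=0}^{n+1}\cE_k(\bx_0,\ldots,\bx_n)\,t^{n+1-k},\qquad \cE_0:=1,
\]
holds identically in $t$. This is exactly the generating polynomial $p(t)$ introduced above, and it turns the combined data $(\cE_1,\ldots,\cE_{n+1})$ into the full list of coefficients of a degree $(n+1)$ polynomial in $t$.

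First I would substitute the system into this identity. Imposing $\cE_k=0$ for $k=1,\ldots,n$ together with $\cE_{n+1}=\bx_0\cdots\bx_n=1$ collapses the right-hand side and yields
\[
\prod_{i=0}^n(t+\bx_i)=t^{n+1}+1.
\]
Hence $(\bx_i)$ solves the system if and only if the multiset $\{-\bx_0,\ldots,-\bx_n\}$ is exactly the root multiset of $t^{n+1}+1$; equivalently, each $\bx_i$ satisfies $\bx_i^{n+1}=(-1)^n$. Since this is an \emph{if and only if}, it characterizes all solutions in one stroke, so no separate converse is needed.

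Next I would solve the resulting binomial equation and confirm the explicit list. Because $t^{n+1}+1$ has $n+1$ pairwise distinct roots, the multiset is genuinely a set and the solution is unique up to permutation of the $\bx_i$. A one-line check gives $\bigl(e^{\frac{(n-2k)}{n+1}\pi\bi}\bigr)^{n+1}=e^{(n-2k)\pi\bi}=(-1)^n$, and the $n+1$ exponents $\frac{(n-2k)\pi}{n+1}$, $k=0,\ldots,n$, are pairwise distinct modulo $2\pi$ (they differ by nonzero multiples of $\frac{2\pi}{n+1}$ and together span an arc of length $\frac{2n\pi}{n+1}<2\pi$). Therefore $\{e^{\frac{(n-2k)}{n+1}\pi\bi}\}_{k=0,\ldots,n}$ is precisely the set of the $n+1$ solutions of $\bx^{n+1}=(-1)^n$, i.e. the negatives of the $(n+1)$-th roots of $-1$, matching the claimed parametrization.

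There is no genuine analytic difficulty here; the entire content is the Vieta translation, and the only points demanding care are the bookkeeping of signs and the parity of $n$ (which decides whether $\bx_i^{n+1}$ equals $+1$ or $-1$), together with the distinctness of the roots, which is what makes ``the solution set'' well-defined. I would keep the generating polynomial $p(t)$ explicit in the write-up so that the normalization $\cE_{n+1}=\bx_0\cdots\bx_n=1$, which is automatic in the original Casimir setting because $\sum_i\varpi_i=0$, appears transparently.
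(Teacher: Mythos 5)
Your proof is correct and follows essentially the same route as the paper: both identify the system, via the generating polynomial, with the factorization $t^{n+1}+1=\prod_{i=0}^n(t+\bx_i)$ and read off the roots. You are in fact a bit more careful than the paper's two-line proof about the sign and parity bookkeeping (the $\bx_i$ are the \emph{negatives} of the roots of $t^{n+1}+1$, so $\bx_i^{n+1}=(-1)^n$, which is exactly what the explicit list $\{e^{\frac{(n-2k)}{n+1}\pi\bi}\}$ satisfies) and about the distinctness of the roots, which the paper leaves implicit.
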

\begin{proof}
By the generating polynomial, the system of equations is equivalent to identifying
$$t^{n+1}+1 = (t+\bx_0)\cdots (t+\bx_n),$$
hence $\{\bx_i\}$ is any permutation of the $(n+1)$-th root of $-1$.
\end{proof}

\begin{Def}\label{gensol} We call an ordered scalars $(\l_1,...,\l_n)\in \C^n$ a \emph{general solution} to \eqref{symeq} if for any $k=0,...,n$, the roots $\bx_k=e^{4\pi b\varpi_k}$ and lying within the range 
\Eq{
-\pi \leq 4\pi \bi b\varpi_k\leq \pi.}
In particular, there are $(n+1)!$ of them, each of which corresponds to a distinct permutation of the $(n+1)$-th roots of $-1$.

The solution corresponding to the following choice of $(n+1)$-th roots of $-1$,
\Eq{
4\pi b\varpi_k = \frac{n-2k}{n+1}\pi\bi ,\tab k=0,...,n
}
is given by
\Eq{
\l_1=\cdots = \l_n = \frac{\bi}{2(n+1)b},
}
which is refered as the \emph{standard solution} of \eqref{symeq}.
\end{Def}

As a direct consequence,
\begin{Cor}
As a complex function of $\l_1,...,\l_n$,
\Eq{
\pi_\l(\bC_k)=0
} if we substitute $(\l_1,...,\l_n)$ to be any general solution of \eqref{symeq}.
\end{Cor}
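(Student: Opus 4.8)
The plan is to deduce this Corollary directly from the preceding Lemma together with the explicit formula \eqref{casact} for the Casimir action, treating $\pi_\l(\bC_k)$ purely as a complex-analytic function of the parameters $\l_1,\dots,\l_n$. First I would recall that by \eqref{casact} we have
\Eq{
\pi_\l(\bC_k)=\cE_k\!\left(e^{4\pi b\varpi_0},\dots,e^{4\pi b\varpi_n}\right),
}
where the $\varpi_i$ are the fixed linear functionals of $\l_1,\dots,\l_n$ given in the Proposition. The key observation is that this expression is manifestly holomorphic in $(\l_1,\dots,\l_n)\in\C^n$, since each $\varpi_i$ is linear in the $\l_j$ and the exponential and the elementary symmetric polynomials $\cE_k$ are entire functions. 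Hence the identity $\pi_\l(\bC_k)$ genuinely extends to a complex-analytic function on all of $\C^n$, even though the representation-theoretic derivation assumed $\l\in\R_{\geq0}^n$, and it therefore makes sense to evaluate it at complex arguments.

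The main step is then to set $\bx_i:=e^{4\pi b\varpi_i}$ and observe that, by definition of a general solution (Definition \ref{gensol}), the tuple $(\bx_0,\dots,\bx_n)$ is precisely a permutation of the $(n+1)$-th roots of $-1$; equivalently the product $\bx_0\cdots\bx_n=1$ and $\{\bx_i\}$ solves the system \eqref{symeq}. Indeed the constraint $\varpi_0+\cdots+\varpi_n=0$ forces $\prod_i\bx_i=e^{4\pi b(\sum_i\varpi_i)}=e^0=1$, which is exactly $\cE_{n+1}(\bx_0,\dots,\bx_n)=1$, so the substitution is legitimately an instance of the system solved in the Lemma. By the Lemma, the only solutions of \eqref{symeq} are the roots of $-1$, and conversely when $\{\bx_i\}$ is such a permutation we have $\cE_k(\bx_0,\dots,\bx_n)=0$ for every $k=1,\dots,n$, reading off the coefficients of the generating polynomial $t^{n+1}+1=(t+\bx_0)\cdots(t+\bx_n)$. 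Substituting back, $\pi_\l(\bC_k)=\cE_k(\bx_0,\dots,\bx_n)=0$ as required.

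In the interest of completeness I would also remark that the substitution is valid in the operator-theoretic sense: whereas for real $\l$ the Casimirs act as positive scalars bounded below by $\dim V_k$, the point of the Corollary is the formal/analytic statement that the scalar-valued function vanishes at the complex general solutions, which is what later licenses the degenerate representation $\cP^0$ obtained by the complex shift. The argument is essentially a bookkeeping matching between the two presentations, so I expect no genuine obstacle; the only point requiring slight care is confirming that the normalization in Definition \ref{gensol} — the branch condition $-\pi\le 4\pi\bi b\varpi_k\le\pi$ — indeed selects a genuine permutation of all $(n+1)$ distinct roots of $-1$ rather than a degenerate or repeated assignment, so that every $\cE_k$ with $1\le k\le n$ vanishes and not merely some of them. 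This is immediate from the Lemma's identification of the solution set, so the Corollary follows.
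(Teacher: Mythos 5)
Your proof is correct and follows exactly the route the paper intends: the paper states this Corollary as a direct consequence of the formula $\pi_\l(\bC_k)=\cE_k\left(e^{4\pi b\varpi_0},\dots,e^{4\pi b\varpi_n}\right)$ in \eqref{casact}, the Lemma identifying solutions of \eqref{symeq} with permutations of the $(n+1)$-th roots of $-1$, and Definition \ref{gensol}, which is precisely the matching you carry out. Your added remarks on holomorphy in $\l$ and on $\sum_i\varpi_i=0$ forcing $\cE_{n+1}=1$ are sound bookkeeping that the paper leaves implicit.
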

There is also a ``modular double counterpart" which will be useful later. 

Recall the \emph{$q$-binomial coefficients} defined by
\Eq{\bin{n\\k}_q:=\frac{[n]_q!}{[k]_q![n-k]_q!}.}
\begin{Cor}
As a complex function of $\l_1,...,\l_n$, if we substitute $(b^2\l_1,...,b^2\l_n)$ into \eqref{casw}, where $(\l_1,...,\l_n)$ is any general solution of \eqref{symeq}, we obtain
\Eq{
\pi_\l(\bC_k)=\bin{n+1\\k}_{q^{\frac{1}{n+1}}},\tab k=1,...,n.}
\end{Cor}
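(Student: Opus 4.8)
The plan is to reduce the whole statement to one computation: evaluating the elementary symmetric polynomials $\cE_k$ appearing in \eqref{casact} at a balanced $q$-geometric progression, and then recognizing the output as a Gaussian binomial coefficient. The guiding observation is that this claim is the exact counterpart of the preceding corollary: there the same $\cE_k$ are evaluated at the unit-modulus base $e^{\pi\bi/(n+1)}$, where they vanish because the associated $q$-binomial coefficients specialize at a root of unity, whereas here the base is the genuine deformation $e^{\pi\bi b^2/(n+1)}=q^{\frac1{n+1}}=:p$. The entire argument is therefore parallel to that corollary, with the base deformed.

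First I would use that each $\varpi_i$ is a \emph{linear} functional of $(\l_1,\dots,\l_n)$, as is immediate from its definition, so that the substitution $\l_j\mapsto b^2\l_j$ merely multiplies every $\varpi_i$ by $b^2$. Writing the $k$-th Casimir via the general formula \eqref{casact} (of which \eqref{casw} is the case $k=1$), the arguments $e^{4\pi b\varpi_i}$ are thereby replaced by $\exp(b^2\cdot 4\pi b\varpi_i)$. If $(\l_1,\dots,\l_n)$ is a general solution, Definition \ref{gensol} says that the numbers $4\pi b\varpi_i$ are, up to a permutation, the values $\frac{n-2k}{n+1}\pi\bi$; since $q=e^{\pi\bi b^2}$, each argument becomes
\Eq{
\exp\!\Big(\frac{(n-2k)\pi\bi b^2}{n+1}\Big)=q^{\frac{n-2k}{n+1}}=p^{\,n-2k},\tab p:=q^{\frac1{n+1}}.
}
Because $\cE_k$ is symmetric, the permutation is irrelevant, so after the substitution the Casimir action equals $\cE_k\big(p^{n},p^{n-2},\dots,p^{-n}\big)$.

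It then remains to prove the balanced Gaussian binomial identity
\Eq{
\prod_{i=0}^{n}\big(t+p^{\,n-2i}\big)=\sum_{k=0}^{n+1}\bin{n+1\\k}_p\,t^{\,n+1-k},
}
whose coefficient of $t^{\,n+1-k}$ is $\cE_k(p^{n},\dots,p^{-n})=\bin{n+1\\k}_p$, which is the assertion. This is the single nontrivial ingredient, and I expect it to be the main (if standard) step. I would prove it by induction on $n$: splitting $\cE_k$ of the $n+1$ variables according to whether the smallest variable $p^{-n}$ is used, and noting $\{p^{n},\dots,p^{-(n-2)}\}=p\cdot\{p^{n-1},\dots,p^{-(n-1)}\}$, reduces the claim to the balanced $q$-Pascal relation
\Eq{
\bin{n+1\\k}_p=p^{\,k}\bin{n\\k}_p+p^{-(n+1-k)}\bin{n\\k-1}_p,
}
which follows from the identity $p^{k}[n+1-k]_p+p^{-(n+1-k)}[k]_p=[n+1]_p$ for balanced $q$-numbers. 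The only real hazard is keeping the powers of $p$ aligned in the Pascal step; otherwise the computation is a direct substitution with no analytic subtlety, every quantity being an explicit finite sum.
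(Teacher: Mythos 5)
Your proposal is correct and follows essentially the same route as the paper: substitute the rescaled general solution so that the quantities $e^{4\pi b\varpi_i}$ become the balanced powers $q^{\frac{n-2i}{n+1}}$, then read off $\pi_\l(\bC_k)$ as coefficients of the generating polynomial $\prod_i\bigl(t+q^{\frac{n-2i}{n+1}}\bigr)=\sum_k\bin{n+1\\k}_{q^{1/(n+1)}}t^{n+1-k}$. The only difference is that the paper simply asserts this Gaussian binomial expansion as standard, whereas you supply an inductive proof via the balanced $q$-Pascal relation, which is a correct (if optional) addition.
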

\begin{proof}
By this substitution, the following sets for $k=0,...,n$ become
$$\{e^{4\pi b\varpi_k}\}=\{e^{\frac{(n-2k)}{n+1}\pi \bi b^2}\}=\{q^{\frac{n-2k}{n+1}}\}.$$
Hence the generating polynomial of the Casimirs becomes
\Eqn{
p(t)&=(t+e^{4\pi b \varpi_0})\cdots (t+e^{4\pi b\varpi_n})\\
&=(t+q^{\frac{n}{n+1}})(t+q^{\frac{n-2}{n+1}})\cdots (t+q^{-\frac{n}{n+1}})\\
&=\sum_{k=0}^{n+1} \bin{n+1\\k}_{q^{\frac{1}{n+1}}} t^{n+1-k}
}
as required.
\end{proof}
\section{Motivation: $\cU_q(\sl(2,\R))$}\label{sec:sl2}
In this section, we explain the motivation of the construction of the degenerate positive representations from the simplest case of $\cU_q(\sl(2,\R))$ and discuss some of its properties.
\subsection{Cluster realization of $\cU_q(\sl(2,\R))$}\label{sec:sl2:pos}
Recall that we have the cluster realization of the Drinfeld's double $\fD_q(\sl_2)$ into the quantum torus algebra $\cX_q^{\mathrm{std}}$ represented by the quiver in Figure \ref{fig-A1} with all multipliers $d_i=1$,
\begin{figure}[htb!]
\centering
\begin{tikzpicture}[every node/.style={inner sep=0, minimum size=0.5cm, thick, fill=white, draw}, x=2cm, y=1cm]
\node (1) at (0,1) {$1$};
\node (2) at (1,0) [circle]{$2$};
\node (3) at (2,1) {$3$};
\node (4) at (1,2)  [circle]{$4$};
\drawpath{1,2,3}{blue}
\drawpath{3,4,1}{red}
\end{tikzpicture}
\caption{The quiver for $\cX_q^{\mathrm{std}}$.}\label{fig-A1}
\end{figure}
such that we have an embedding of the Drinfeld's double $\fD_q(\sl_2)=\<\be,\bf, \bK,\bK'\>$ given by\footnote{The choice of indices follows the convention used in the higher rank, cf. Section \ref{sec:An}.}
\Eq{\label{sl2emb}
\be&\mapsto X_3+X_{3,4}\nonumber\\
\bf&\mapsto X_1+X_{1,2}\\
\bK&\mapsto X_{3,4,1}\nonumber\\
\bK'&\mapsto X_{1,2,3}\nonumber
}
using Notation \ref{xik}. The Casimir element of $\fD_q(\sl_2)$ is given by
\Eq{
\bC&:=\bf\be-q\bK-q\inv \bK'\\
&\mapsto X_{1,3}+X_{1,2,3,4}
}
which lies in the center of $\cX_q^{\mathrm{std}}$.

Let us perform a quantum cluster mutation at vertex $2$, such that we obtain the quiver (with minor rearrangements) in Figure \ref{fig-A1sym} corresponding to the quantum torus algebra denoted by $\cX_q^{\mathrm{sym}}$. 

\begin{figure}[htb!]
\centering
\begin{tikzpicture}[every node/.style={inner sep=0, minimum size=0.5cm, thick, fill=white, draw}, x=2cm, y=1cm]
\node (1) at (0,0) {$1$};
\node (2) at (1,1) [circle]{$2$};
\node (3) at (2,0) {$3$};
\node (4) at (1,2)  [circle]{$4$};
\drawpath{1,3}{blue}
\drawpath{3,4,1}{red}
\drawpath{3,2,1}{red}
\end{tikzpicture}
\caption{The quiver for $\cX_q^{\mathrm{sym}}$.}\label{fig-A1sym}
\end{figure}

In this cluster chart, the embedding of the Drinfeld's double becomes
\Eq{\label{sl2embsym}
\be&\mapsto X_3+X_{3,2}+X_{3,4}+X_{3,2,4}\nonumber\\
\bf&\mapsto X_1\\
\bK&\mapsto X_{3,2,4,1}\nonumber\\
\bK'&\mapsto X_{1,3}\nonumber
}
and the Casimir element is given by
\Eq{
\bC&\mapsto X_{1,2,3}+X_{1,3,4}\nonumber\\
&=X_{1,3}(X_2+X_4),
}
where we note that both $X_2$ and $X_4$ commute with $X_{1,3}$ in this cluster chart.

Now the main observation is that the variables $X_2$ and $X_4$ are symmetric in this quiver, which allows us to perform a certain kind of ``folding''. Another observation is that only the upper Borel generators $\be$ and $\bK$ depend on these two variables. In fact, we can rewrite $\be$ as
\Eq{\label{sl2e}
\be&\mapsto X_3+qX_3(X_2+X_4)+X_{3,2,4}\nonumber\\
&=X_3+X_1\inv X_{1,3}(X_2+X_4)+X_{3,2,4}\\
&=X_3+X_1\inv \bC +X_{3,2,4}\nonumber.
}
Since $\bC$ lies in the center of $\cX_q^{\mathrm{sym}}$, the commutation relations among the Chevalley generators remain invariant if we set $$\bC=0.$$ 
In other words, we have a homomorphic image of $\fD_q(\sl_2)$ given by
\Eq{
\be&\mapsto X_3+X_{3,2,4}\nonumber\\
\bf&\mapsto X_1\\
\bK&\mapsto X_{3,2,4,1}\nonumber\\
\bK'&\mapsto X_{1,3}\nonumber
}
or equivalently, an embedding of $\fD_q(\sl_2)/\<\bC=0\>$ into $\cX_q^{\mathrm{sym}}$.

Since $X_2$ and $X_4$ are symmetric in these expressions, we further identify them by means of \emph{folding}. 
\begin{Def} We define the \emph{symmetric folding} $\cX_q^{0}$ to be the skew-symmetrizable quantum torus algebra by combining the variables $X_2$ and $X_4$ of $\cX_q^{\mathrm{sym}}$.  We write $\cO_q(\cX^0)$ to denote the quantum algebra of regular functions of the corresponding cluster variety. More precisely, $\cX_q^0$ is generated by
\Eq{
X_1^0:= X_1, \tab X_2^0 := X_{2,4},\tab X_3^0:=X_3
}
with the new multipliers defined to be $(d_1,d_2,d_3):=(1,2,1)$.
\end{Def}
It follows that the $q$-commutation relations of the cluster variables of $\cX_q^0$ are given by
\Eq{
X_{1}^0X_{2}^0&=q^4 X_{2}^0X_{1}^0\nonumber\\
X_{2}^0X_{3}^0&=q^4 X_{3}^0X_{2}^0\\
X_{3}^0X_{1}^0&=q^2 X_{1}^0X_{3}^0\nonumber
}
and they can be presented by the quiver as in Figure \ref{fig-X2fold} according to Notation \ref{thick}.

\begin{figure}[htb!]
\centering
\begin{tikzpicture}[every node/.style={inner sep=0, minimum size=0.5cm, thick, fill=white, draw}, x=2cm, y=1cm]
\node (1) at (0,0) {$1$};
\node (2) at (1,2) [circle]{$2$};
\node (3) at (2,0) {$3$};
\drawpath{1,3}{blue}
\drawpath{3,2,1}{red,vthick}
\end{tikzpicture}
\caption{The quiver for the symmetric folding $\cX_q^0$.}\label{fig-X2fold}
\end{figure}
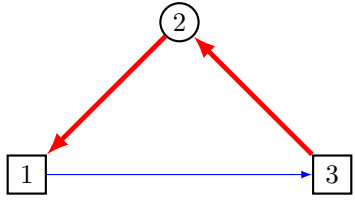
\begin{Thm} There is an embedding from $\fD_q(\sl_2)/\<\bC=0\>$ to the symmetric folding $\cX_q^0$. Furthermore, the Chevalley generators $\{\be, \bf, \bK, \bK'\}$ are realized as universally Laurent polynomials in $\cO_q(\cX^0)$.
\end{Thm}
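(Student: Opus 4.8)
The plan is to derive the embedding by restricting the homomorphism into $\cX_q^{\mathrm{sym}}$ already obtained after imposing $\bC=0$, and then to establish universal Laurentness by a direct check in the two cluster charts of the folded seed. First I would note that in the $\bC=0$ specialization of \eqref{sl2embsym} the variables $X_2,X_4$ occur only through the combination $X_{2,4}$: the images $\bf\mapsto X_1$ and $\bK'\mapsto X_{1,3}$ involve neither, while $\be\mapsto X_3+X_{3,2,4}$ and $\bK\mapsto X_{3,2,4,1}$ involve them solely via $X_{2,4}$. Hence the image lies in the subalgebra $\cA\subset\cX_q^{\mathrm{sym}}$ generated by $X_1,X_3,X_{2,4}$. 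I would then check that the assignment $X_1\mapsto X_1^0$, $X_3\mapsto X_3^0$, $X_{2,4}\mapsto X_2^0$ preserves the $q$-commutation relations — a short computation gives $X_1X_{2,4}=q^4X_{2,4}X_1$, $X_{2,4}X_3=q^4X_3X_{2,4}$, $X_3X_1=q^2X_1X_3$, matching the three displayed relations of $\cX_q^0$ — so that $\cA\cong\cX_q^0$ as quantum tori. Since the map into $\cX_q^{\mathrm{sym}}$ is injective, composing with this isomorphism yields the embedding $\fD_q(\sl_2)/\<\bC=0\>\hookrightarrow\cX_q^0$.

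For universal Laurentness I would use that the quiver of Figure \ref{fig-X2fold} has vertex $2$ as its only mutable vertex, so $\cX^0$ has exactly two cluster charts — the initial one and its image under $\mu_2$ — and $\cO_q(\cX^0)$ is the intersection of the two corresponding Laurent rings. It therefore suffices to exhibit each generator as a Laurent polynomial in both charts. In the initial chart this is immediate: writing $X_i$ for $X_i^0$, we have the monomials $\bf=X_1$, $\bK'=X_{1,3}$, $\bK=X_{3,2,1}$ and the two-term element $\be=X_3+X_{3,2}=X_3(1+q^2X_2)$.

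The substance lies in the mutated chart. Writing $X_i'$ for the $\mu_2$-mutated variables, and keeping in mind that the exchange factors at vertex $2$, of multiplier $d_2=2$, carry the parameter $q^{d_2}=q^2$, the mutation reads $X_2=(X_2')^{-1}$, $X_1=X_1'(1+q^2X_2')$ and $X_3=X_3'\bigl(1+q^2(X_2')^{-1}\bigr)^{-1}$. The decisive observation — and the precise role of setting $\bC=0$ — is that $\be=X_3(1+q^2X_2)$ is now literally the mutated cluster variable $X_3'$, hence a monomial. For $\bf$ the image $X_1'(1+q^2X_2')$ is already Laurent, while for $\bK'$ and $\bK$ I would substitute and let the binomial factors telescope; for instance with $Y=X_2'$ one finds $(1+q^{-2}Y)(1+q^2Y^{-1})^{-1}=q^{-2}Y$, collapsing $\bK'$ and $\bK$ to scalar multiples of $X_1'X_2'X_3'$ and $X_1'X_3'$. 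I expect the only genuine difficulty to be the non-commutative bookkeeping of these cancellations with the correct powers of $q$: it is exactly the multiplier $d_2=2$ produced by the symmetric folding that tunes the exchange factor to $1+q^2X_2$ and thereby makes the telescoping collapse to honest monomials, rather than leaving an irreducible rational factor such as $(Y+q^2)(Y+q)^{-1}$. Once Laurentness in both charts is confirmed, the two-chart description of $\cO_q(\cX^0)$ completes the proof.
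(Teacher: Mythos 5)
Your proposal is correct, and all of its computations check out. The embedding step is in substance identical to the paper's: the subalgebra $\cA=\<X_1^{\pm1},X_{2,4}^{\pm1},X_3^{\pm1}\>\subset\cX_q^{\mathrm{sym}}$ is precisely how the paper \emph{defines} $\cX_q^0$, so restricting the $\bC=0$ homomorphism to $\cA$ is exactly the paper's construction. Your mutated-chart formulas are also right, including your reading of the exchange factor as $1+q^2X_2$ with $q^2=q^{d_2}$ (this is indeed the intended convention: Definition \ref{qmut} as printed carries $q_i$ where it must be the multiplier of the mutated vertex, or else the identification of $\be$ with the new cluster variable would fail), and the telescoping identity does collapse $\bK$ and $\bK'$ to the cluster monomials $X'_{1,3}$ and $X'_{1,2,3}$ respectively, while $\be=X_3(1+q^2X_2)$ is literally $X_3'$. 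Where you genuinely part ways with the paper is in certifying universal Laurentness. The paper invokes the standard-monomial criterion of Lemma \ref{stdmon}: $\bf$, $\bK$, $\bK'$ commute with the unique mutable variable $X_2^0$ up to nonnegative powers of $q$, hence are standard monomials in the initial seed, while $\be$ becomes the standard monomial ${X_3^0}'$ after mutating at vertex $2$; the criterion then yields Laurentness in every chart with no second-chart computation. You instead observe that a seed with a single mutable vertex has exactly two charts (quantum mutation at a fixed vertex being involutive) and verify Laurentness of all four generators in both charts by hand. Your route is more elementary and self-contained --- it bypasses the cluster-theoretic criterion entirely, and it makes visible both the cancellation mechanism and the precise role of the folded multiplier $d_2=2$ --- but it trades on the two-chart enumeration, which is special to this rank-one situation: in the higher-rank folding results proved by the same method (Theorems \ref{mainthmAn} and \ref{mainthm}) the set of charts is unbounded and only the criterion-based argument survives. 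Both proofs pivot on the same key fact, namely that $\be$ is the mutated cluster variable at vertex $3$; the difference lies only in how the remaining three generators are handled.
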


This is the main construction that allows us to generalize the zero Casimir representation to higher rank in the next section.

\begin{proof}

The homomorphism of $\fD_q(\sl_2)$ to this quantum torus algebra is given explicitly by
\Eqn{
\be&\mapsto X_3^0+X_{3,2}^0\\
\bf&\mapsto X_1^0\\
\bK&\mapsto X_{3,2,1}^0\\
\bK'&\mapsto X_{1,3}^0.
}
One checks explicitly that $$\bC=\be\bf  - q\inv\bK - q \bK\inv \mapsto 0.$$

The image of $\bf, \bK, \bK'$ are standard monomials in this seed by Lemma \ref{stdmon}: the only mutable variable $X_2^0$ commutes with $K,K'$, while $e X_2^0=q^4 X_2^0 e$. For the generator $\be$, we mutate at vertex $2$ (taking into account the fact that the multiplier $d_2=2$) and obtain the new cluster chart.
\begin{figure}[htb!]
\centering
\begin{tikzpicture}[every node/.style={inner sep=0, minimum size=0.5cm, thick, fill=white, draw}, x=2cm, y=1cm]
\node (1) at (0,0) {$1$};
\node (2) at (1,2) [circle]{$2$};
\node (3) at (2,0) {$3$};
\drawpath{1,2, 3}{blue, vthick}
\drawpath{3,1}{red}
\end{tikzpicture}
\caption{The quiver for $\mu_2^q(\cX_q^0)$.}\label{fig-A1mut}
\end{figure}
The generator transforms as $$\be\mapsto {X_3^0}'$$ in the new cluster chart in which it is again a standard monomial, as it $q^4$-commutes with the mutable variable ${X_2^0}'$. Hence the conclusion follows.
\end{proof}

\subsection{Positive representation with zero Casimir}\label{sec:sl2:cas}
The standard positive representations $\cP_\l$ of $\cU_q(\sl(2,\R))$ and its modular double, parametrized by $\l\geq0$, were first studied in \cite{Fa1, PT1, PT2} in the context of quantum Liouville theory. They can be obtained by choosing a polarization of the quantum torus algebra $\cX_q^{\mathrm{std}}$ such that $\iota(\bK\bK')=1$ (i.e. identifying $K'=K\inv$) and the central monomial $X_{2,4}$ acts by the central parameter $e(4\l)=e^{4\pi b\l}$.

Specifically, we choose the polarization
\Eq{
\pi_\l(X_1)&=e(u-2p)\nonumber\\
\pi_\l(X_2)&=e(-2u)\\
\pi_\l(X_3)&=e(-u+2p-2\l)\nonumber\\
\pi_\l(X_4)&=e(2u+4\l)\nonumber
}
where $p=\frac{1}{2\pi \bi}\frac{d}{du}$, such that by \eqref{sl2emb}, the Chevalley generators of $\cU_q(\sl_2)$ act on the Hilbert space $L^2(\R, du)$ as positive self-adjoint operator as
\Eq{
\pi_\l(\be)&:= e^{\pi b(u+2p+2\l)}+e^{\pi b(-u+2p-2\l)}\\
\pi_\l(\bf)&:= e^{\pi b(u-2p)}+e^{\pi b(-u-2p)}\nonumber\\
\pi_\l(\bK)&:=e^{\pi b(2u+2\l)}\nonumber.
}

\begin{figure}[htb!]
\centering
\begin{tikzpicture}[every node/.style={inner sep=0, minimum size=0.5cm, thick, fill=white, draw}, x=2cm, y=1cm]
\node (1) at (0,1) {$1$};
\node(2) at (1,0) [circle]{$2$};
\node [label={[yshift=-0.4em] \tiny $-2\l$}](3) at (2,1) {$3$};
\node [label={[yshift=-0.4em] \tiny $4\l$}] (4) at (1,2)  [circle]{$4$};
\drawpath{1,2,3}{blue}
\drawpath{3,4,1}{red}
\end{tikzpicture}
\caption{The polarization of $\cX_q^{\mathrm{std}}$ with central parameters.}
\end{figure}

Since the representation $\cP_\l$ is irreducible, the Casimir operator (with $K'=K\inv$)
\Eqn{
\bC&:=\be\bf  - q\inv\bK - q \bK\inv \\
&\mapsto X_{1,3}+X_{1,2,3,4}
}
acts as multiplication by scalar, given by
\Eq{
\pi_\l(\bC)=e^{2\pi b\l}+e^{-2\pi b \l}.
}

Now we observe that the pair $(\bK,\bf)$ forms a quantum torus. Hence, one can unitarily transform $\cP_\l$ in such a way that $(\bK,\bf)$ acts by the canonical representation
\Eq{
\pi_\l(\bf)=e^{-2\pi b p},\tab \pi_\l(\bK)=e^{2\pi b u}.
}

This can be done by polarizing the quiver obtained from cluster mutation at vertex 2 according to Proposition \ref{monopolar}.
\begin{figure}[htb!]
\centering
\begin{tikzpicture}[every node/.style={inner sep=0, minimum size=0.5cm, thick, fill=white, draw}, x=2cm, y=1cm]
\node (1) at (0,0) {$1$};
\node (2) at (1,1) [circle]{$2$};
\node [label={[yshift=-0.4em] \tiny $-2\l$}] (3) at (2,0) {$3$};
\node [label={[yshift=-0.4em] \tiny $4\l$}] (4) at (1,2)  [circle]{$4$};
\drawpath{1,3}{blue}
\drawpath{3,4,1}{red}
\drawpath{3,2,1}{red}
\end{tikzpicture}
\caption{The polarization of $\cX_q^{\mathrm{sym}}$ with central parameters.}
\end{figure}

Recall that any two polarizations are unitarily equivalent if the central characters are the same. Here the central monomials are given by $\pi_\l(X_4X_2\inv) = e(4\l)$ as well as $\iota(\bK \bK')=X_{1^2,2,3^2,4}=1$. Hence there exists a change of variables such that the polarization is equivalent to that of Figure \ref{fig-A1polar}.
\begin{figure}[htb!]
\centering
\begin{tikzpicture}[every node/.style={inner sep=0, minimum size=0.5cm, thick, fill=white, draw}, x=2cm, y=1cm]
\node  (1) at (0,0) {$1$};
\node[label={[yshift=-0.4em] \tiny $-2\l$}]  (2) at (1,1) [circle]{$2$};
\node (3) at (2,0) {$3$};
\node[label={[yshift=-0.4em] \tiny $2\l$}] (4) at (1,2)  [circle]{$4$};
\drawpath{1,3}{blue}
\drawpath{3,4,1}{red}
\drawpath{3,2,1}{red}
\end{tikzpicture}
\caption{Another polarization of $\cX_q^{\mathrm{sym}}$.}\label{fig-A1polar}
\end{figure}

The equivalence $\pi_\l(\cX_q^{\mathrm{std}})\simeq \pi_\l(\cX_q^{\mathrm{sym}})$ can be realized explicitly by the following unitary transformation, which is given by a multiplication by the \emph{quantum dilogarithm function} $g_b(z)$ (see \cite{Ip1} for details) followed by a change of variables:
\Eq{
\Psi:= (u\mapsto u-\l) \circ e^{\pi i u^2} g_b^*(e^{2\pi b u}).
}

The new polarization together with the change of variables is given by
\Eq{
\pi_\l(X_1)&=e(-2p)\nonumber\\
\pi_\l(X_2)&=e(2u-2\l)\\
\pi_\l(X_3)&=e(-2u+2p)\nonumber\\
\pi_\l(X_4)&=e(2u+2\l)\nonumber.
}
With this new polarization applied to \eqref{sl2embsym}, we obtain a representation unitary equivalent to $\cP_\l$ given by
\Eq{\label{sl2posrep}
\pi_\l(\be)&=e^{2\pi b(u+p)}+e^{2\pi b(-u+p)}+e^{2\pi b(\l+p)}+e^{2\pi b(-\l+p)}\nonumber\\
&=e^{2\pi b(u+p)}+e^{2\pi b(-u+p)}+\pi_\l(\bC)e^{2\pi bp}\\
\pi_\l(\bf)&=e^{-2\pi b p}\nonumber\\
\pi_\l(\bK)&=e^{2\pi b u}\nonumber\\
\pi_\l(\bC)&=e^{2\pi b \l}+e^{-2\pi b \l}\nonumber.
}
We note that this representation matches with the formal relation
\Eq{
\be=(\bC+q\inv\bK+q \bK\inv)\bf\inv.
}

If we now set $\bC=0$, the last two monomials of $\pi_\l(\be)$ vanish. This gives us a new representation of $\cU_q(\sl(2,\R))$ acting by positive operators that does not belong to the standard family.
\begin{Thm} We have an irreducible representation $\cP^0$ of $\cU_q(\sl(2,\R))$ by positive operators on $L^2(\R)$, given by
\Eq{
\pi^0(\be)&=e^{2\pi b(u+p)}+e^{2\pi b(-u+p)}\\
\pi^0(\bf)&=e^{-2\pi b p}\nonumber\\
\pi^0(\bK)&=e^{2\pi b u}\nonumber
}
such that the Casimir operator $\bC$ acts as zero on $L^2(\R)$.
\end{Thm}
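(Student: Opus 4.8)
The plan is to obtain $\cP^0$ as a degeneration of the standard positive representation $\cP_\l$ in the form \eqref{sl2posrep}, by deleting the third monomial of $\pi_\l(\be)$. The crucial structural remark is that this monomial equals $\pi_\l(\bC)e^{2\pi bp} = \pi_\l(\bC)\,\pi_\l(\bf)\inv$, i.e. a scalar multiple of $\bf\inv$, which is a function of $p$ alone. Setting $\pi^0(\be) := \pi_\l(\be) - \pi_\l(\bC)\pi_\l(\bf)\inv$ while keeping $\pi^0(\bf) = \pi_\l(\bf)$ and $\pi^0(\bK) = \pi_\l(\bK)$, there are four things to check: that the defining relations of $\cU_q(\sl(2,\R))$ still hold, that $\bC$ now acts by zero, that the operators are positive and essentially self-adjoint, and that the representation is irreducible.

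For the relations, I would argue that deleting $\pi_\l(\bC)\bf\inv$ changes nothing. Since this term is a function of $p$, it commutes with $\pi^0(\bf) = e^{-2\pi bp}$, so $[\pi^0(\be),\pi^0(\bf)] = [\pi_\l(\be),\pi_\l(\bf)]$ and the relation \eqref{EFFE2} (with $\bK' = \bK\inv$) is inherited verbatim from $\cP_\l$. Likewise, conjugation by $\pi^0(\bK) = e^{2\pi bu}$ multiplies $\bf\inv$ by the same power of $q$ as it does each monomial of $\be$, so the Cartan relation $\bK\be = q^2\be\bK$ is preserved; the relations among $\bf$ and $\bK$ are untouched. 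Hence $\cP^0$ is a representation of $\cU_q(\sl(2,\R))$.

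For the Casimir, I would use that $\cP_\l$ is irreducible, so $\pi_\l(\be)\pi_\l(\bf) - q\inv\pi_\l(\bK) - q\pi_\l(\bK)\inv = \pi_\l(\bC)$ is the scalar $e^{2\pi b\l}+e^{-2\pi b\l}$. The deleted term contributes exactly $\pi_\l(\bC)\bf\inv\bf = \pi_\l(\bC)$ to the product $\be\bf$, so $\pi^0(\be)\pi^0(\bf) = \pi_\l(\be)\pi_\l(\bf) - \pi_\l(\bC)$, and therefore $\pi^0(\bC) = \pi_\l(\bC) - \pi_\l(\bC) = 0$. This also makes precise the ``analytic continuation'' viewpoint of the introduction: $\cP^0$ is the specialization $\l = \frac{\bi}{4b}$ at which $\pi_\l(\bC) = e^{\pi\bi/2}+e^{-\pi\bi/2} = 0$, the deleted monomial being precisely the one that vanishes there.

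Positivity and irreducibility I would treat last. Each surviving summand $e^{2\pi b(u+p)}$, $e^{2\pi b(-u+p)}$, $e^{-2\pi bp}$, $e^{2\pi bu}$ is the exponential of a self-adjoint real linear combination of $u$ and $p$, hence positive self-adjoint; note that after deletion no $\l$-dependence remains, so positivity is manifest despite the complex specialization. That $\pi^0(\be)$, a sum of exponentials of two non-commuting self-adjoint operators, is positive and essentially self-adjoint on the core $\cW$ is the one genuinely analytic point, and I expect it to be the main obstacle to a fully rigorous argument; it is however a standard building block of the theory and is inherited from the corresponding statement for $\cP_\l$ (see \cite{Ip1, PT2}), the two summands already being present there. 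Finally, $(\pi^0(\bK),\pi^0(\bf)) = (e^{2\pi bu}, e^{-2\pi bp})$ is a rescaling of the Schr\"odinger representation of the Weyl pair, which is irreducible on $L^2(\R)$ by Stone--von Neumann; thus any operator strongly commuting with $\pi^0(\bK)$ and $\pi^0(\bf)$ is a scalar, giving irreducibility in the sense used for positive representations.
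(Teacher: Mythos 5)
Your proposal is correct and follows essentially the same route as the paper: both start from $\cP_\l$ in the symmetric polarization \eqref{sl2posrep}, delete the term $\pi_\l(\bC)e^{2\pi b p}=\pi_\l(\bC)\pi_\l(\bf)\inv$ (equivalently specialize $\l=\frac{\bi}{4b}$ so that $\pi_\l(\bC)=0$), and get irreducibility from the Weyl pair $(\bK,\bf)$ via Stone--von Neumann. The only cosmetic difference is that you verify the surviving relations by direct commutator computations, whereas the paper packages the same fact structurally as the centrality of $\bC$ in $\cX_q^{\mathrm{sym}}$ (Section \ref{sec:sl2:pos}), giving the embedding of $\fD_q(\sl_2)/\<\bC=0\>$ into the folded algebra $\cX_q^0$.
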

\begin{Rem} In fact, the representation $\cP^0$ is unitarily equivalent to the integrable representation of type $(I)_{+,+,c=0}$ classified by Schm\"udgen \cite{Sch}.
\end{Rem}

\subsection{Analytic continuation and modular double}\label{sec:sl2:mod}
From another point of view, what we have done is replacing the action of the Casimir operator by zero:
\Eq{
\pi_\l(\bC)=e^{2\pi b\l}+e^{-2\pi b \l}\leadsto 0,
}
which amounts to ``solving for $\l$'' and we obtain
\Eq{
\l=\pm\frac{\bi}{4b}+\frac{\bi}{b}n,\tab n\in \Z.
}
Informally we have analytically continued the weight parameter $\l$ to take complex values. Moreover, if the representation of Faddeev's modular double is also taken into account, the value of $\l$ can be uniquely determined up to a sign. Hence, we shall only consider the \emph{general solution} of \eqref{symeq} in the sense of Definition \ref{gensol}.

We recall the following useful Lemma.
\begin{Lem}\cite{PT2}\label{qbin} Let $X,Y$ be positive self-adjoint operators on $L^2(\R)$ such that $XY=q^2 YX$. Then $X+Y$ is also positive self-adjoint and
\Eq{(X+Y)^{\frac{1}{b^2}} = X^{\frac{1}{b^2}}+Y^{\frac{1}{b^2}}.}
\end{Lem}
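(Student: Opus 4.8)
The plan is to reduce to the canonical realization of the $q$-commuting pair and then to exploit the modular self-duality of the non-compact quantum dilogarithm $g_b$ already used to define $\Psi$ in Section~\ref{sec:sl2:cas}. First I would apply the Stone--von Neumann theorem: taking the relation $XY=q^2YX$ in its integrated (Weyl) form, the unitary groups $X^{\bi t}$ and $Y^{\bi s}$ obey the Weyl relations, so the pair $(X,Y)$ is a direct integral of copies of the Schr\"odinger pair and it suffices to argue on one irreducible block. There we may take $X=e^{2\pi b u}$ and $Y=e^{2\pi b p}$ for a canonical pair $u,p$ (the two positive scaling constants being removed by independent shifts $u\mapsto u+a$, $p\mapsto p+c$), the relation $XY=q^2YX$ fixing the sign of $[u,p]$. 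Raising to the $\frac{1}{b^2}$ power gives $X^{1/b^2}=e^{2\pi u/b}$ and $Y^{1/b^2}=e^{2\pi p/b}$, which satisfy $X^{1/b^2}Y^{1/b^2}=\til q^2\,Y^{1/b^2}X^{1/b^2}$ with $\til q=e^{\pi\bi/b^2}$; hence the right-hand side is again a sum of two $\til q$-commuting positive operators. Positivity and essential self-adjointness of both $X+Y$ and $X^{1/b^2}+Y^{1/b^2}$ on the core $\cW$ follow from the standard analysis of such sums \cite{PT2, Ip1}, so both sides of the asserted identity are unambiguously defined positive self-adjoint operators.

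The heart of the argument is a single operator identity: conjugation of one exponential by the quantum dilogarithm produces a sum of two $q$-commuting exponentials. Setting $v:=p-u$ and $Z:=e^{2\pi b v}$ (a positive operator, with $(u,v)$ again canonical), the Faddeev--Kashaev ``quantum exponential'' relation reads
\Eqn{
g_b(Z)\inv\, e^{2\pi b u}\, g_b(Z) &= e^{2\pi b u}+e^{2\pi b(u+v)}=e^{2\pi bu}+e^{2\pi b p}=X+Y.
}
Thus $X+Y=g_b(Z)\inv X\, g_b(Z)$ is unitarily conjugate to the pure exponential $X$; since conjugation by a unitary commutes with the positive functional calculus, taking the $\frac{1}{b^2}$ power yields
\Eqn{
(X+Y)^{1/b^2}=g_b(Z)\inv\, X^{1/b^2}\, g_b(Z).
}

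Finally I would invoke the invariance of $g_b$ under $b\leftrightarrow b\inv$. The identity above is the integrated form of the $b$-difference equation of $g_b$; the very same function also satisfies the $b\inv$-difference equation, and $X^{1/b^2}=e^{2\pi u/b}$ commutes with the argument $Z$, so the \emph{same} unitary $g_b(Z)$ gives the dual relation
\Eqn{
g_b(Z)\inv\, e^{2\pi u/b}\, g_b(Z) &= e^{2\pi u/b}+e^{2\pi(u+v)/b}=e^{2\pi u/b}+e^{2\pi p/b}=X^{1/b^2}+Y^{1/b^2}.
}
Comparing the last two displays gives $(X+Y)^{1/b^2}=X^{1/b^2}+Y^{1/b^2}$, as claimed. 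The main obstacle is analytic rather than algebraic: one must establish the conjugation relations as honest identities of unbounded self-adjoint operators on a common dense domain, verify the essential self-adjointness used above, and ensure the functional calculus passes through the (unbounded) conjugation. These are exactly the points treated carefully in \cite{PT2}; the algebraic content is simply that the self-dual dilogarithm $g_b$ intertwines the $b$-sum and the $b\inv$-sum with one and the same pure exponential.
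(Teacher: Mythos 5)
The paper never proves this lemma---it is imported verbatim from \cite{PT2}---so there is no internal proof to compare against; the relevant comparison is with the standard argument in the cited literature, and that is essentially what you reconstruct: integrate the relation $XY=q^2YX$ and apply Stone--von Neumann to reduce to the Schr\"odinger pair, exhibit $X+Y$ as a quantum-dilogarithm conjugate of the single exponential $X$, pass the conjugation through the positive functional calculus, and invoke the $b\leftrightarrow b\inv$ self-duality of $g_b$ to obtain the same conjugation statement for the $\frac{1}{b^2}$-powers. This architecture is correct and is the proof the paper is implicitly relying on (the paper itself uses the same two ingredients, $\mathrm{Ad}_{g_b}$ and modular invariance, in the proof of Theorem \ref{thmmod}); note only that with the paper's conventions the intertwining reads $\mathrm{Ad}_{g_b(Z)}X = X+Y$ rather than $\mathrm{Ad}_{g_b(Z)\inv}X$, a harmless normalization of $g_b$ versus $g_b^*$.

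There is, however, one step that as written is wrong and, taken literally, contradicts your own conclusion: the claim that $X^{1/b^2}=e^{2\pi u/b}$ \emph{commutes} with $Z=e^{2\pi bv}$. The only commutation that could justify manipulating $g_b(Z)$ is strong commutation (commutation of spectral projections), and if that held then $X^{1/b^2}$ would commute with every bounded function of $Z$, in particular with the unitary $g_b(Z)$ itself, forcing $g_b(Z)\inv X^{1/b^2} g_b(Z)=X^{1/b^2}$ rather than $X^{1/b^2}+Y^{1/b^2}$. What is actually true is only the formal Weyl relation $e^{2\pi\bi t u/b}\,e^{2\pi\bi s bv}=e^{2\pi\bi ts}\,e^{2\pi\bi s bv}\,e^{2\pi\bi t u/b}$, whose phase is trivial at $t=s=1$ but not for generic $t,s$: the two positive operators do \emph{not} strongly commute, and this failure is exactly the modular-double subtlety that makes the lemma nontrivial. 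The correct justification of your final display is the other half of your sentence: in the $v$-representation, $g_b(Z)$ acts as multiplication by the meromorphic function $G(v)=g_b(e^{2\pi bv})$, the operator $e^{2\pi u/b}$ shifts $v$ by an imaginary amount $\mp\bi/b$ (crossing the formal periodicity of the argument $e^{2\pi bv}$, so the multiplier does \emph{not} return to itself), and the resulting ratio $G(v\mp\bi/b)/G(v)$ is evaluated by the $b\inv$-shift equation satisfied by $G$, which produces precisely the dual factor and hence the sum $X^{1/b^2}+Y^{1/b^2}$. With that step repaired---and with the domain and analytic-vector bookkeeping you defer to \cite{PT2, Ip1}---your proof is the standard one.
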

Here again the composition of the unbounded operators is taken in the integrable sense \cite{Ip1, PT2} and the powers of positive operators are defined by functional calculus.
\begin{Thm}\label{sl2mod} The transcendental relations are satisfied for $\l=\pm\frac{\bi}{4b}$. In other words, the positive operators
$$\pi^0(\be)^{\frac{1}{b^2}},\tab \pi^0(\bf)^{\frac{1}{b^2}},\tab \pi^0(\bK)^{\frac{1}{b^2}}$$
on $L^2(\R)$ define a positive representation of the modular double counterpart $\cU_{\til{q}}(\sl(2,\R))$.
\end{Thm}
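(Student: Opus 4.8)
The plan is to exhibit the three modular-dual generators explicitly as the $\frac{1}{b^2}$-powers and to check that they close into $\cU_{\til q}(\sl(2,\R))$, where $\til q = e^{\pi\bi/b^2}$ and $\til b := 1/b$. Conceptually this realizes the analytic continuation $\l=\pm\frac{\bi}{4b}$ of the known modular-double structure of $\cP_\l$, but I would verify it directly. Since $\pi^0(\bf) = e^{-2\pi b p}$ and $\pi^0(\bK) = e^{2\pi b u}$ are exponentials of self-adjoint operators, functional calculus gives at once
\Eqn{\pi^0(\bf)^{\frac1{b^2}} = e^{-2\pi\til b p},\tab \pi^0(\bK)^{\frac1{b^2}} = e^{2\pi\til b u},}
which are exactly the Chevalley generators $\til\bf,\til\bK$ of the dual copy. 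Thus all of the content sits in $\pi^0(\be)^{1/b^2}$.

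First I would write $\pi^0(\be) = X + Y$ with $X = e^{2\pi b(u+p)}$ and $Y = e^{2\pi b(-u+p)}$, both positive self-adjoint, and compute by Baker--Campbell--Hausdorff that $XY = q^4\,YX$. The key point is that this is a $q^2$-type relation with respect to the \emph{rescaled} parameter $\hat b := \sqrt{2}\,b$, in the sense that $XY = \hat q^2\,YX$ with $\hat q := e^{\pi\bi\hat b^2} = q^2$. Applying Lemma \ref{qbin} with the parameter $\hat b$ in place of $b$ then yields
\Eqn{(X+Y)^{\frac{1}{2b^2}} = X^{\frac{1}{2b^2}} + Y^{\frac{1}{2b^2}} = e^{\pi\til b(u+p)} + e^{\pi\til b(-u+p)},}
and squaring, with the cross term evaluated again by BCH, produces
\Eqn{\til\be := \pi^0(\be)^{\frac1{b^2}} = e^{2\pi\til b(u+p)} + e^{2\pi\til b(-u+p)} + \left(\til q^{\frac12}+\til q^{-\frac12}\right)e^{2\pi\til b p}.}

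Next I would verify, once more by BCH, that $\til\be,\til\bf,\til\bK$ satisfy the defining relations \eqref{KK1}--\eqref{EFFE2} of $\cU_{\til q}(\sl(2,\R))$: each of the three monomials of $\til\be$ is conjugated by the same power $\til q^{2}$ under $\til\bK$, reproducing $\til\bK\til\be = \til q^{2}\til\be\til\bK$, and the commutator $[\til\be,\til\bf]$ reproduces \eqref{EFFE2} with $q$ replaced by $\til q$. Computing the dual Casimir then gives $\til\bC = \til q^{\frac12}+\til q^{-\frac12} = \bin{2\\1}_{\til q^{\frac12}}$, matching the modular-double-counterpart value from the Corollary on the $b^2\l$-substitution and serving as a consistency check; note in particular that the dual is an \emph{ordinary, nonzero}-Casimir positive representation, not another degenerate one. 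Positivity of all three operators is automatic: $\til\bf,\til\bK$ are exponentials, and $\til\be = \pi^0(\be)^{1/b^2}$ is a positive real power of the positive self-adjoint operator $\pi^0(\be)$, so the negative sign of the coefficient $\til q^{\frac12}+\til q^{-\frac12} = 2\cos\!\big(\tfrac{\pi}{2b^2}\big)$ for $0<b<1$ causes no difficulty. Finally, the \emph{transcendental relations} proper --- that the $b$- and $\til b$-generators strongly commute, so that one obtains a representation of the full modular double --- follow from the standard commutativity of $b$- and $b^{-1}$-exponentials of a common linear form, using $b^2\notin\Q$, exactly as in \cite{Fa1, PT2}.

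The step I expect to be the main obstacle is the bookkeeping around the $q^4$ (rather than $q^2$) commutation of the two terms of $\pi^0(\be)$: it forces the detour through $\hat b = \sqrt{2}\,b$ and the squaring step, and it is precisely the resulting middle term that upgrades the dual from a zero-Casimir to a genuine positive representation. The second, more technical obstacle is operator-analytic --- establishing that $\pi^0(\be)$ is essentially self-adjoint with strictly positive spectrum so that $\pi^0(\be)^{1/b^2}$ is well defined, and that every identity above holds on a common dense core. These points are handled by the integrability and quantum-dilogarithm techniques of \cite{Ip1, PT2}, which also underlie Lemma \ref{qbin}.
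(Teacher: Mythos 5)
Your proposal is correct and follows essentially the same route as the paper: the powers of $\pi^0(\bf)$ and $\pi^0(\bK)$ are immediate by functional calculus, and for $\pi^0(\be)$ the paper likewise writes $\pi^0(\be)^{1/b^2}=\bigl(\pi^0(\be)^{1/(2b^2)}\bigr)^2$, applies Lemma \ref{qbin} to the two $q^4$-commuting (i.e. $\hat q^2$-commuting with $\hat b=\sqrt{2}\,b$) monomials, and squares to obtain the three-term expression with middle coefficient $\til q^{1/2}+\til q^{-1/2}$, identifying the dual Casimir with the value at $\l=\pm\frac{\bi}{4b}$. Your only deviations are cosmetic — re-verifying the $\cU_{\til q}(\sl(2,\R))$ relations by direct commutator computation where the paper simply compares with the known expression \eqref{sl2posrep} of $\cP_\l$, and spelling out the strong-commutativity and domain points that the paper leaves to the cited references.
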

\begin{proof}
Since $\pi^0(\be)$ and $\pi^0(\bK)$ are monomials, the transcendental relations are trivial:
\Eqn{
\pi^0(\bf)^{\frac{1}{b^2}}&=e^{-2\pi b\inv p}\\
\pi^0(\bK)^{\frac{1}{b^2}}&=e^{2\pi b\inv u}.
}
For $\pi^0(\be)$, since the two monomial terms are $q^4$ commuting, we compute using Lemma \ref{qbin} and the binomial formula to get
\Eqn{
\pi^0(\be)^{\frac{1}{b^2}}&=(e^{2\pi b(-u+p)}+e^{2\pi b(u+p)})^{\frac{1}{b^2}}\\
&=\left((e^{2\pi b(-u+p)}+e^{2\pi b(u+p)})^{\frac{1}{2b^2}}\right)^2\\
&=\left((e^{2\pi b\inv(-u+p)}+e^{2\pi b\inv(u+p)})\right)^2\\
&=e^{2\pi b\inv(-u+p)}+e^{2\pi b\inv(u+p)}+(\til{q}^{\frac12}+\til{q}^{-\frac12})e^{2\pi b\inv p}
}
where $\til{q}=e^{\pi \bi b^{-2}}$. Comparing with the explicit expression of $\cP_\l$, we see that this is indeed a representation of $\cU_{\til{q}}(\sl(2,\R))$ with the new Casimir 
$$\til{\bC}=\til{q}^{\frac12}+\til{q}^{-\frac12}=e^{2\pi b\inv \l}+e^{-2\pi b\inv \l}$$
corresponding to setting the weight parameter to $\l=\pm\frac{\bi}{4b}$.
\end{proof}

\subsection{Tensor product decomposition}\label{sec:sl2:tensor}

The main result of this section is the decomposition of the tensor product of $\cP^0$ with itself. It turns out that the tensor product lies in a certain ``closure'' of the standard positive representations in the form of a direct integral, with respect to a special Plancherel measure that is different from the usual one \cite{Ip1,PT2} by a factor of $\sqrt{2}$.

\begin{Thm}\label{sl2casspec}
We have the following decomposition
\Eq{
\cP^0\ox \cP^0 \simeq \int_{\R_{\geq 0}}^\o+ \cP_\l d\mu^0(\l),}
where $\cP_\l$ is the standard positive representation parametrized by $\l\geq 0$ and the rescaled Plancherel measure is given by 
\Eq{
d\mu^0(\l) = 2\sqrt{2}\sinh(4\pi b \l)\sinh(\pi b\inv \l)d\l.
}
\end{Thm}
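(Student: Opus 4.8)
The plan is to exploit the centrality of the Casimir: decomposing $\cP^0\ox\cP^0$ into irreducibles is the same as diagonalising the positive self-adjoint operator $\D(\bC)$ on $L^2(\R^2)$, since $\D(\bC)$ commutes with the whole image of $\cU_q(\sl(2,\R))$ and hence acts by the scalar $\pi_\l(\bC)=e^{2\pi b\l}+e^{-2\pi b\l}$ on each fibre isomorphic to $\cP_\l$. First I would compute $\D(\bC)$ from the coproduct. The decisive simplification is that $\bC$ acts as zero on each tensor factor: writing $\bf\be=\bC+q\bK+q\inv\bK\inv$ and setting $\bC=0$, every contribution proportional to $\bK\ox\bK$ and $\bK\inv\ox\bK\inv$ cancels against $-q\D(\bK)-q\inv\D(\bK\inv)$, leaving only the cross terms
\Eqn{
\D(\bC)\big|_{\cP^0\ox\cP^0}=\bf\ox\be+\bK\inv\be\ox\bf\bK+(q+q\inv)\,\bK\inv\ox\bK.
}
Substituting the explicit formulas for $\pi^0(\be),\pi^0(\bf),\pi^0(\bK)$ turns this into an explicit positive difference operator whose spectrum should be exactly $[2,\infty)$, matching $\l\in\R_{\geq0}$.

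Next I would separate out the spectral variable. The operators $\D(\bf)=e^{-2\pi bp_1}+e^{-2\pi b(u_1+p_2)}$ and $\D(\bK)=e^{2\pi b(u_1+u_2)}$ generate a quantum torus on $L^2(\R^2)$; the two summands of $\D(\bf)$ are $q^2$-commuting, so a quantum-dilogarithm fusion of them followed by an affine change of coordinates separating the total coordinate $u_1+u_2$ from the relative one — exactly the type of unitary $\Psi$ used to pass between $\cX_q^{\mathrm{std}}$ and $\cX_q^{\mathrm{sym}}$ above — brings them to the canonical form $\D(\bf)=e^{-2\pi bP}$, $\D(\bK)=e^{2\pi bU}$ in the total variable. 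By the Stone–von Neumann uniqueness in that variable, the central operator $\D(\bC)$ is then forced to act purely on the relative variable as a single positive self-adjoint operator $L$, and $\D(\be)$ acquires precisely the canonical shape $e^{2\pi b(U+P)}+e^{2\pi b(-U+P)}+L\,e^{2\pi bP}$ of a standard positive representation with $L$ playing the role of $\pi_\l(\bC)$. Thus the whole problem reduces to the spectral resolution of the one-variable operator $L$.

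Finally I would diagonalise $L$, which is of $b$-Whittaker/modular type with generalised eigenfunctions expressible through the non-compact quantum dilogarithm $g_b$. Writing its eigenvalue as $2\cosh(2\pi b\l)$ identifies the spectral parameter $\l\in\R_{\geq0}$ and produces the fibrewise isomorphism with $\cP_\l$, so that $\cP^0\ox\cP^0\simeq\int_{\R_{\geq0}}\cP_\l\,d\mu^0(\l)$. The spectral (Plancherel) measure of $L$ is then $d\mu^0$: I expect the factor $\sinh(4\pi b\l)\sinh(\pi b\inv\l)$ to arise from the modular asymptotics and residues of the dilogarithm kernel entering the eigenfunction transform, while the overall constant $2\sqrt2$ — the origin of the $\sqrt2$ discrepancy with the usual Plancherel measure — is fixed by the Gaussian normalisation of the fusion step together with the unitarity (completeness and orthogonality relations) of the transform.

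The main obstacle is exactly this last step: rigorously establishing self-adjointness of $L$, proving completeness and orthogonality of its eigenfunction transform, and extracting the precise measure, in particular the unusual modular-asymmetric arguments $4\pi b\l$ versus $\pi b\inv\l$ and the constant $2\sqrt2$. One must also confirm that the decomposition is purely continuous over $\R_{\geq0}$ with no discrete (complementary-series) summand, i.e. that the spectrum of $L$ is exactly $[2,\infty)$. An independent cross-check would be to formally set $\l_1=\l_2=\frac{\bi}{4b}$ in the known decomposition of $\cP_{\l_1}\ox\cP_{\l_2}$, verify that the fusion density degenerates to $d\mu^0$, and check that no poles of that density are dragged across the integration contour under this analytic continuation.
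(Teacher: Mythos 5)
Your reduction steps are sound and in fact run parallel to the paper's own proof: following \cite{NT} you reduce the decomposition to the diagonalisation of $\D(\bC)$; your formula for $\D(\bC)\big|_{\cP^0\ox\cP^0}$ is the correct specialisation of \eqref{DC} with the $\bC\ox\bK\inv$ and $\bK\ox\bC$ terms killed; and your ``dilogarithm fusion plus affine change of coordinates'' bringing $(\D(\bf),\D(\bK))$ to canonical monomial form is exactly what the paper does by mutating at the vertex $2$ (which becomes unfrozen after amalgamation) and tracking the polarization via Proposition \ref{monopolar}, after which $\D(\be)$ indeed takes the shape of \eqref{sl2posrep} with an operator $L$ in the relative variable playing the role of $\pi_\l(\bC)$.

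The genuine gap is the final step, which you yourself flag as ``the main obstacle'': you never diagonalise $L$, and your guesses about where the measure comes from are not correct. The missing observation is that the relative-variable operator is explicitly $L=e^{4\pi b p_2}+e^{2\pi b u_2}+e^{-2\pi b u_2}$, and after the canonical rescaling $u_2\mapsto \sqrt{2}\,u_2$, $p_2\mapsto \tfrac{1}{\sqrt{2}}p_2$ this is \emph{precisely} Kashaev's geodesic length operator $\bL=e^{2\pi b'\bp}+e^{2\pi b'\bx}+e^{-2\pi b'\bx}$ with the quantum parameter $b'=\sqrt{2}\,b$. Its spectral decomposition is a known theorem \cite{Ka2}: simple, purely continuous spectrum $[2,\infty)$ with explicit Plancherel measure $4\sinh(2\pi b'\l)\sinh(2\pi b'^{-1}\l)\,d\l$. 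Citing this result at parameter $b'=\sqrt2 b$ and changing the spectral variable so that the eigenvalue matches $\pi_\l(\bC)=e^{2\pi b\l}+e^{-2\pi b\l}$ produces the rescaled measure at once, and simultaneously disposes of every analytic issue on your list (self-adjointness, completeness and orthogonality of the eigenfunction transform, absence of a discrete or complementary-series part). In particular, the overall $\sqrt{2}$ and the modular-asymmetric arguments of the two hyperbolic sines are caused purely by the substitution $b\mapsto\sqrt2 b$ — ultimately by the multiplier $d_\star=2$ of the folded vertex — and not by ``Gaussian normalisation of the fusion step'' or ``residues of the dilogarithm kernel.'' Without this identification your proposal stops exactly where the real content of the proof lies; the direct $b$-Whittaker analysis you sketch as a fallback is a substantial piece of spectral theory that you do not carry out. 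Finally, your proposed cross-check is misleading: the fusion density in the decomposition of $\cP_{\l_1}\ox\cP_{\l_2}$ is \emph{independent} of $\l_1,\l_2$, so a formal continuation to $\l_1=\l_2=\tfrac{\bi}{4b}$ cannot ``degenerate'' to $d\mu^0$; the fact that $\cP^0\ox\cP^0$ sees a genuinely different (rescaled) Plancherel measure is exactly the surprise of the theorem, and it shows that naive analytic continuation of the standard fusion transform fails here.
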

\begin{proof}
The tensor product representation can be realized by the amalgamation of two copies of the quiver corresponding to $\cX_q^0$  along one side of the frozen vertex (here denoted by vertex $2$) as shown in Figure \ref{fig-tensor}, such that the actions of the generators by coproduct correspond to concatenation of their telescopic paths.

\begin{figure}[htb!]
\centering
\begin{tikzpicture}[every node/.style={inner sep=0, minimum size=0.5cm, thick, fill=white, draw}, x=1cm, y=1cm]
\node (1) at (0,0) {$1$};
\node (2) at (4,0) [circle]{$2$};
\node (3) at (8,0) {$3$};
\node (4) at (2,2) [circle]{$4$};
\node (5) at (6,2) [circle]{$5$};
\drawpath{1,2,3}{blue}
\drawpath{3,5,2,4,1}{vthick, red}
\end{tikzpicture}
\caption{The quiver for $\cX_q^0\ox \cX_q^0$.}\label{fig-tensor}
\end{figure}

The polarization of the quiver for $\cU_q(\sl(2,\R))$ is obtained by multiplying the corresponding polarizations $(u_1,p_1), (u_2,p_2)$ of the two copies of the quiver acting on $L^2(\R^2)$. Note that we do not have any central parameters.

Following \cite{NT}, it suffices to decompose the Casimir operator in the tensor product representation
\Eq{
\D(\bC)&=\D(\bf)\D(\be)-q\D(\bK)-q\inv \D(\bK\inv)\\
&=\bK\ox \bC+\bC\ox \bK\inv + \be\ox \bf+\bf \bK\ox \bK\inv\be+(q+q\inv)\bK\ox \bK\inv.\label{DC}
}

By mutating at vertex 2 (which becomes unfrozen after amalgamation), we obtain the quiver in Figure \ref{fig-tensor2}
\begin{figure}[htb!]
\centering
\begin{tikzpicture}[every node/.style={inner sep=0, minimum size=0.5cm, thick, fill=white, draw}, x=1cm, y=1cm]
\node (1) at (0,0) {$1$};
\node (2) at (2,2) [circle]{$2$};
\node (3) at (4,0) {$3$};
\node (4) at (0,4) [circle]{$4$};
\node (5) at (4,4) [circle]{$5$};
\drawpath{3,2,1}{red}
\drawpath{1,3}{blue}
\drawpath{4,2,5}{vthick, red}
\path (5.north west) edge[->, vthick, red] (4.north east);
\path (5.south west) edge[->, vthick, red] (4.south east);
\end{tikzpicture}
\caption{The quiver for $\mu_2(\cX_q^0\ox \cX_q^0)$.}\label{fig-tensor2}
\end{figure}
together with the embedding
\Eqn{
\D(\be)&\mapsto X_3+X_{3,2}+X_{3,2,5}+X_{3,5,2,4}+X_{3,5,2^2,4}\\
&=X_3+X_1\inv\D(\bC)+X_{3,5,2^2,4}\\
\D(\bf)&\mapsto X_1\\
\D(\bK)&\mapsto X_{1,2^2,3,4,5}\\
\D(\bK')&\mapsto X_{1,3}\\
\D(\bC)&\mapsto X_{1,2,3}+X_{1,2,3,5}+X_{1,2,3,4,5}.
}

Keeping track of the change in polarization according to Proposition \ref{monopolar}, we obtain the following positive representation of $\cU_q(\sl(2,\R))$ on $L^2(\R^2)$ given by
\Eqn{
\pi_{\cP^0\ox \cP^0}(\be)&=e^{2\pi b(2p_2+p_1)}+e^{2\pi b(u_2+p_1)}+e^{2\pi b(-u_2+p_1)}+e^{2\pi b(-u_1+p_1)}+e^{2\pi b(u_1+p_1)}\\
\pi_{\cP^0\ox \cP^0}(\bf)&=e^{-2\pi bp_1}\\
\pi_{\cP^0\ox \cP^0}(\bK)&=e^{2\pi b u_1}\\
\pi_{\cP^0\ox \cP^0}(\bC)&=e^{4\pi b p_2}+e^{2\pi b u_2}+e^{-2\pi b u_2}.
}

Upon a rescaling of the variables $\case{u_2\mapsto \sqrt{2}u_2\\p_2\mapsto \frac{1}{\sqrt{2}}p_2}$, we see that the Casimir operator is equivalent to Kashaev's geodesic length operator \cite{Ka1}
\Eq{
\bL:=e^{2\pi b\bp}+e^{2\pi b\bx}+e^{-2\pi b\bx},
}
but with the quantum parameter $b\mapsto \sqrt{2}b$ instead. The spectral decomposition of $\bL$ is well-known \cite{Ka2} with simple spectrum 
$$\bL\simeq \int_{\R_{\geq 0}}^\o+ \pi_\l(\bC) d\mu(\l),$$
where $$\pi_\l(\bC)=e^{2\pi b \l}+e^{-2\pi b \l}$$
is the action of the Casimir on the standard positive representation and the Plancherel measure is given by
$$d\mu(\l)=4\sinh(2\pi b\l)\sinh(2\pi b\inv \l)d\l.$$
As a consequence we have the decomposition $\pi_{\cP^0\ox \cP^0}(\bC)$ into simple spectrum 
$$\pi_{\cP^0\ox \cP^0}(\bC)\simeq \int_{\R_{\geq 0}}^\o+ \pi_\l(\bC) d\mu^0(\l)$$ with the corresponding rescaled measure.
\end{proof}

\subsection{Remark on other Casimir values}\label{sec:sl2:rem}
It is natural to ask whether we still obtain a sensible representation by setting the Casimir operator to its other eigenvalues. For example, if we set $\bC=1$, we obtain from \eqref{sl2embsym}--\eqref{sl2e} formally
\Eq{
\be&\mapsto X_3+X_1\inv+X_{3,2,4}\\
\bf&\mapsto X_1\nonumber\\
\bK&\mapsto X_{3,2,4,1}\nonumber\\
\bK'&\mapsto X_{1,3}\nonumber
}
in the image of the quantum torus algebra $\cX_q^{\mathrm{sym}}$. In particular, a polarization of $\cX_q^{\mathrm{sym}}$ provides us with a representation by symmetric operators:
\Eq{
\be&=e^{2\pi b(-u+p)}+e^{2\pi b p}+e^{2\pi b(u+p)}\\
\bf&=e^{-2\pi b p}\nonumber\\
\bK&=e^{2\pi b u}\nonumber\\
\bK'&=e^{-2\pi b u}\nonumber
}
However, we can check that the image of $\bf$ in $\cX_q^{\mathrm{sym}}$ is no a longer universally Laurent polynomial, so that $\bf\notin \cO_q(\cX)$. In particular, one cannot mutate $\bf$ in such a way that it gets transformed into a cluster monomial, unlike the usual cases.

On the other hand, if we set 
\Eq{
\bC=q^\half+q^{-\half}=2\cos\frac{\pi b^2}{2},} then from the calculation in the proof of Theorem \ref{sl2mod} with $\til{q}$ replaced by $q$, the modular duality ensures that the resulting representation is indeed \emph{regular}, in the sense that the Chevalley generators are universally Laurent polynomials lying in $\cO_q(\cX^0)$, provided that we invert the multipliers of $\cX_q^0$ such that $d_2=\half$. 

In Section \ref{sec:class}, we will discuss the general classification of such ``regular" positive representations.

\begin{Rem}\label{b=1} If we further set $b=1$, then both $\bC$ and $\til{\bC}$ act by zero, while the transformation $g_b(z)$ is independent of $\l$ and well-defined as $g_1(z)\sim \mathrm{Li}_2(z)$. This curious connection is worth investigating in future works.
\end{Rem}

We end this section by a simple observation. Let $\cP_\l^{c}$ denote the representation of $\cU_q(\sl(2,\R))$ obtained from \eqref{sl2posrep} where $\bC$ acts by a positive scalar $c\geq 0$. 
\begin{Prop} Let $c,c'\geq 0$. Then $\D(\bC)$ is a symmetric operator with positive spectrum $\geq 2$ acting on $\cP_{\l}^{c}\ox \cP_{\l'}^{c'}$.
\end{Prop}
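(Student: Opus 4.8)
The plan is to read $\D(\bC)$ as the Casimir operator of the tensor product representation $\cP_\l^c\ox\cP_{\l'}^{c'}$, which is again a positive representation, and to treat the two assertions by different means: symmetry is formal, while the lower bound $\geq 2$ is reduced, by a positivity argument, to the degenerate case $c=c'=0$ already analyzed in Theorem \ref{sl2casspec}.

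First I would record that $\cP_\l^c\ox\cP_{\l'}^{c'}$ is positive. The coproduct gives $\D(\bK)=\bK\ox\bK$, $\D(\be)=1\ox\be+\be\ox\bK$ and $\D(\bf)=\bf\ox 1+\bK\inv\ox\bf$; for $c,c'\geq 0$ each tensor factor acts by positive operators, so these are sums of positive operators living on the two commuting legs, essentially self-adjoint by the quantum dilogarithm techniques recalled around Lemma \ref{qbin} (cf. \cite{Ip1,PT2}). Symmetry of $\D(\bC)$ is then immediate: $\bC$ is self-adjoint in $\cU_q(\sl(2,\R))$ and $\D$ is a homomorphism of Hopf-$*$-algebras, so $\D(\bC)^*=\D(\bC^*)=\D(\bC)$; equivalently one checks the operator identity $[\D(\bf),\D(\be)]=(q-q\inv)(\D(\bK)-\D(\bK)\inv)$, which makes the non-self-adjoint cross terms of $\D(\bf)\D(\be)$ cancel.

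For the spectral bound I would start from the algebra identity \eqref{DC},
\Eq{
\D(\bC)=\bK\ox\bC+\bC\ox\bK\inv+\be\ox\bf+\bf\bK\ox\bK\inv\be+(q+q\inv)\bK\ox\bK\inv,
}
and substitute the scalar actions $\bC\mapsto c$ and $\bC\mapsto c'$ on the two factors. Reading off \eqref{sl2posrep}, on each leg one has $\be=\be^0+c\,\bf\inv$ with $\bf\inv=e^{2\pi bp}$ and $\be^0$ the degenerate ($\bC=0$) value of $\be$. The part with $c=c'=0$ is precisely the Casimir $\D(\bC)^0$ of $\cP^0\ox\cP^0$, and the remainder is
\Eq{
\D(\bC)-\D(\bC)^0=c\big(1\ox\bK\inv+\bf\inv\ox\bf\big)+c'\big(\bK\ox 1+\bf\bK\ox\bK\inv\bf\inv\big).
}
The decisive point is that every bracketed operator is positive. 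The terms $1\ox\bK\inv$, $\bK\ox 1$ and $\bf\inv\ox\bf$ are manifestly positive, and $\bf\bK\ox\bK\inv\bf\inv=(\bf\bK)\ox(\bf\bK)\inv$ is positive because on each factor $\bf\bK$ equals a positive self-adjoint operator up to a scalar phase in $q^{\Z}$, and the phases carried by the two legs are mutually inverse and hence cancel, leaving a tensor product of two positive operators. Thus $\D(\bC)\geq\D(\bC)^0$ as quadratic forms on the common dense core $\cW$.

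It then remains to bound $\D(\bC)^0$, which is exactly what is computed in the proof of Theorem \ref{sl2casspec}: after mutating at the amalgamation vertex and transporting the polarization, $\D(\bC)^0$ becomes (up to the rescaling $b\mapsto\sqrt2\,b$) Kashaev's length operator, whose spectrum is known \cite{Ka2} to be $[2,\infty)$, so $\D(\bC)^0\geq 2$. Together with the form inequality this yields $\langle\D(\bC)\psi,\psi\rangle\geq 2\|\psi\|^2$ for all $\psi$ in the core, and therefore the self-adjoint closure of $\D(\bC)$ has spectrum $\geq 2$. I expect the main difficulty to be analytic rather than algebraic: one must exhibit a single dense core on which all these unbounded positive operators are simultaneously defined, on which the form comparison is legitimate, and on which $\D(\bC)$ is essentially self-adjoint, together with a clean functional-calculus justification (via Lemma \ref{qbin}) of the phase-cancellation positivity of the mixed term. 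The algebraic reduction to the $c=c'=0$ case is by contrast routine.
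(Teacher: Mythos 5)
Your proof is correct, and its skeleton is the same as the paper's: write $\D(\bC)$ as the Casimir $\D(\bC_0)$ of $\cP^0\ox\cP^0$ plus a remainder proportional to $c$ and $c'$, obtain $\D(\bC_0)\geq 2$ from Theorem \ref{sl2casspec}, and check that the remainder is positive. However, your execution is more precise than the paper's own. The paper displays the identity $\D(\bC)=\bK\ox\bC+\bC\ox\bK\inv+\D(\bC_0)$, i.e.\ it takes the remainder to be just $c'(\bK\ox 1)+c(1\ox\bK\inv)$. Taken literally this is incomplete: since $\be$ acts on $\cP_\l^{c}$ as $\be^0+c\,\bf\inv$ by \eqref{sl2posrep}, the terms $\be\ox\bf$ and $\bf\bK\ox\bK\inv\be$ in \eqref{DC} also carry $c$- and $c'$-dependence, and the true remainder is
$$\D(\bC)-\D(\bC_0)=c\left(1\ox\bK\inv+\bf\inv\ox\bf\right)+c'\left(\bK\ox 1+\bf\bK\ox\bK\inv\bf\inv\right),$$
exactly as you computed. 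Your phase-cancellation observation --- that $\bf\bK\ox\bK\inv\bf\inv=(\bf\bK)\ox(\bf\bK)\inv$ is positive because the Weyl-reordering phases $q^{\mp1}$ produced on the two legs are mutually inverse --- is precisely the extra ingredient needed to make the paper's ``evidently positive'' claim valid for these cross terms, and it does not appear in the paper. So your proposal both reproduces the paper's strategy and patches a small gap in its proof; the analytic caveats you flag (a common dense core, essential self-adjointness of the closure) are likewise left implicit in the paper, consistent with its stated conventions on polarizations.
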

\begin{proof} It follows from \eqref{DC} that $\D(\bC)$ is of the form
$$\D(\bC)=\bK\ox \bC+\bC\ox \bK\inv + \D(\bC_0)$$
where $\D(\bC_0)$ is the Casimir for $\cP^0\ox \cP^0$ which we know has positive spectrum $\geq 2$ by Theorem \ref{sl2casspec}. Since the action of $\bK\ox \bC+\bC\ox \bK\inv$ is evidently positive symmetric, we have our conclusion.
\end{proof}
\section{Positive representations at zero Casimirs in type $A_n$}\label{sec:An}
Let us denote by $\bi_{A_n}$ the standard reduced word of the longest element of the Weyl group of $\sl_{n+1}$, given by
\Eq{
\bi_{A_n}=1\;21\;321\;4321\cdots n\;n-1\;\cdots\; 321.
}

\subsection{Symmetric quiver $\cX_q^{\mathrm{sym}}$}\label{sec:An:sym}
In type $A_n$, recall that we have an embedding of the Drinfeld's double $\fD_q(\sl_{n+1})$ into the quantum torus algebra $\cX_q^{\mathrm{std}}$ given by the quiver in Figure \ref{fig-A4} corresponding to $\bi_{A_n}$ associated to a triangulation of a once punctured disk, where the paths on the quiver gives the embedding of the Chevalley generators as telescoping sums, as explained in Notation \ref{path}.

\begin{figure}[htb!]
\centering
\begin{tikzpicture}[every node/.style={inner sep=0, minimum size=0.5cm, thick}, x=1.2cm, y=0.5cm]
\foreach \y [count=\c from 1] in{22,17,10,1}
\foreach \x in{1,...,\c}{
	\pgfmathtruncatemacro{\ind}{\x+\y-1}
	\pgfmathtruncatemacro{\indop}{\y+2*\c-\x+1}
	\pgfmathtruncatemacro{\r}{5-\c}
	\pgfmathtruncatemacro{\ex}{\x-1-\c}
	\pgfmathtruncatemacro{\exop}{\c+1-\x}
	\ifthenelse{\x=1}{
		\node(\ind) at (\x-1,2*\c-1-\x)[draw] {\tiny$\ind$};
		\node [label={[xshift=1em, yshift=-0.5em] \tiny $-2\l_\r$}](\indop) at (9-\x,2*\c-1-\x)[draw] {\tiny$\indop$};
	}{
		\node(\ind) at (\x-1,2*\c-1-\x)[draw, circle]{\tiny$\ind$};
		\node(\indop) at (9-\x,2*\c-1-\x)[draw, circle]{\tiny$\indop$};
	}
}
\foreach \x[count=\c from 1] in{5,13,19,23}{
\node (\x) at (4, 4-\c*2)[draw, circle]{\tiny$\x$};
}
\foreach \x[count=\c from 1] in{25,26,27,28}{
\node[label={[xshift=0em, yshift=-0.4em] \tiny $4\l_\c$}](\x) at (4, 12-\c*2)[draw, circle]{\tiny$\x$};
}

\foreach \x[count=\c from 1]in{22,17,10,1}{
\pgfmathtruncatemacro{\ee}{\x+2*\c}{
	\drawpath{\x,...,\ee}{blue}
}
}
\drawpath{9,25,1}{, red}
\drawpath{16,8,26,2,10}{, red}
\drawpath{21,15,7,27,3,11,17}{, red}
\drawpath{24,20,14,6,28,4,12,18,22}{, red}
\drawpath{4,27,6,13,4}{}
\drawpath{3,26,7,14,19,12,3}{}
\drawpath{2,25,8,15,20,23,18,11,2}{}
\drawpath{22,17,10,1}{dashed, }
\drawpath{9,16,21,24}{dashed, }
\end{tikzpicture}
\caption{$A_4$-quiver, with the $\be_i$-paths colored in red and $\bf_i$-paths colored in blue.}
\label{fig-A4}
\end{figure}
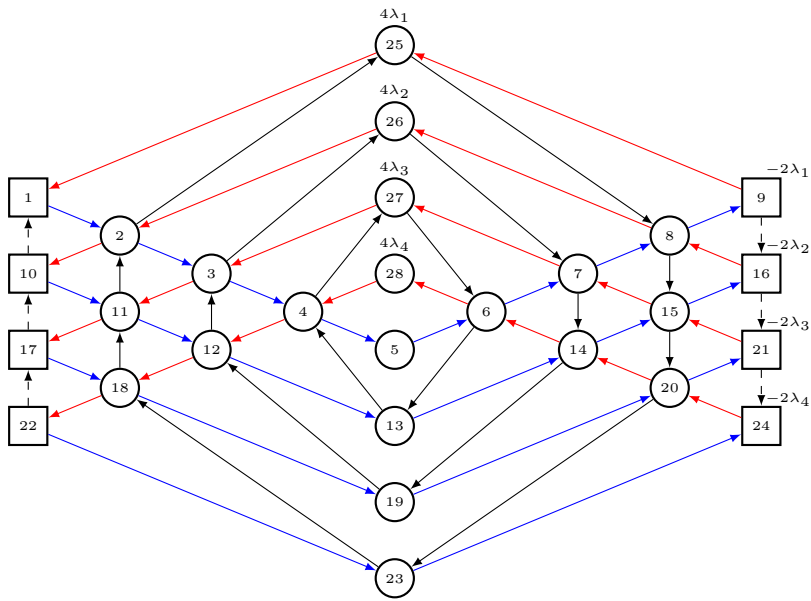
We have also indicated the assignment of the central parameters for the polarization of $\cU_q(\sl(n+1,\R))$, in such a way that 
\begin{itemize}
\item The polarization is group-like, i.e. $\pi_\l(K_iK_i')=1$ for $i\in I$, and
\item the remaining $n$ central monomials $Q_i$ are given by the product of the cluster variables along the monodromy cycles around the puncture.
\end{itemize}
For example, in Figure \ref{fig-A4} showing $n=4$, the central monomials are given by the cluster monomials
\Eq{\label{Qk}
Q_1&:=X_{2,25,8,15,20,23,18,11}\\
Q_2&:=X_{3,26,7,14,19,12}\nonumber\\
Q_3&:=X_{4,27,6,13}\nonumber\\
Q_4&:=X_{5,28}\nonumber
}
(here $Q_4$ corresponds to a degenerate $2$-cycle) such that
\Eq{
\pi_\l(Q_k) = e(4\l_k),\tab k=1,...,n.
}

Furthermore, following the mutation sequence provided in either \cite{FG1, Ip7, SS2}, we can flip the triangulation as in Figure \ref{PP} by $\mat{n+2\\3}$ cluster mutations, obtaining the quiver for $\cX_q^{\mathrm{sf}}$ corresponding to the self-folded triangulation in Figure \ref{fig-sf}.

\begin{figure}[htb!]
\centering 
\begin{tikzpicture}[baseline=(a),x=0.7cm,y=0.7cm]
\draw [vthick] (0,0) circle (2);
\node (a) at (0,0) [draw, circle, minimum size=0.2, inner sep=1.5]{};
\node (b) at (0,2) [draw, circle, minimum size=0.2, inner sep=1.5, fill=black]{};
\node (c) at (0,-2) [draw, circle, minimum size=0.2, inner sep=1.5, fill=black]{};
\path (a) edge[-, thin] (b);
\path (a) edge[-, thin] (c);
\draw[->,dashed,blue, thick] (-2,0)--(0,-1);
\draw[->,dashed,blue, thick] (0,-1)--(2,0);
\end{tikzpicture}
$\longrightarrow$
\begin{tikzpicture}[baseline=(a),x=0.7cm,y=0.7cm]
\draw [vthick] (0,0) circle (2);
\node (a) at (0,0) [draw, circle, minimum size=0.2, inner sep=1.5]{};
\node (b) at (0,2) [draw, circle, minimum size=0.2, inner sep=1.5, fill=black]{};
\node (c) at (0,-2) [draw, circle, minimum size=0.2, inner sep=1.5, fill=black]{};
\draw [thin](a) to (b);
\draw [thin](b) to [out=225, in=180](0,-0.7) to [out=0,in=-45](b);
\path (-2,0)edge[->, dashed,blue, thick, bend right=60](2,0);
\end{tikzpicture}
\caption{Flipping to a self-folded triangle. The dotted arrows indicate schematically the transformation of the $\bf_i$ paths on the quiver.}\label{PP}
\end{figure}
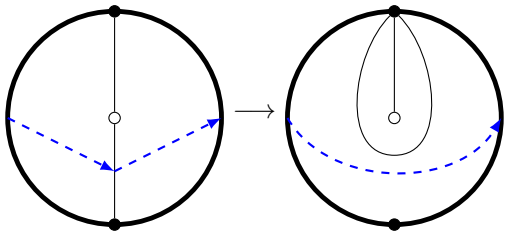

\begin{figure}[htb!]
\centering
\begin{tikzpicture}[every node/.style={inner sep=0, minimum size=0.5cm, thick}, x=1cm, y=1.3cm]

\node (22) at (3,0) [draw]{\tiny $22$};
\node[label={[xshift=0em, yshift=-2.7em] \tiny $-2\l_4$}] (24) at (5,0) [draw]{\tiny $24$};

\node (17) at (2,1) [draw]{\tiny $17$};
\node (23) at (4,1) [draw, circle]{\tiny $23$};
\node [label={[xshift=0em, yshift=-2.7em] \tiny $-2\l_3$}](21) at (6,1) [draw]{\tiny $21$};

\node (10) at (1,2) [draw]{\tiny $10$};
\node (18) at (3,2) [draw, circle]{\tiny $18$};
\node (20) at (5,2) [draw, circle]{\tiny $20$};
\node [label={[xshift=0em, yshift=-2.7em] \tiny $-2\l_2$}](16) at (7,2) [draw]{\tiny $16$};

\node (1) at (0,3) [draw]{\tiny $1$};
\node (11) at (2,3) [draw, circle]{\tiny $11$};
\node (19) at (4,3) [draw, circle]{\tiny $19$};
\node (15) at (6,3) [draw, circle]{\tiny $15$};
\node [label={[xshift=0em, yshift=-2.7em] \tiny $-2\l_1$}](9) at (8,3) [draw]{\tiny $9$};

\node (2) at (1,4) [draw, circle]{\tiny $2$};
\node (12) at (3,4) [draw, circle]{\tiny $12$};
\node (14) at (5,4) [draw, circle]{\tiny $14$};
\node (8) at (7,4) [draw, circle]{\tiny $8$};

\node (3) at (2,5) [draw, circle]{\tiny $3$};
\node (13) at (4,5) [draw, circle]{\tiny $13$};
\node (7) at (6,5) [draw, circle]{\tiny $7$};
\node [label={[xshift=0em, yshift=-0.4em] \tiny $4\l_1$}](25) at (8,5) [draw, circle]{\tiny $25$};

\node (4) at (3,6) [draw, circle]{\tiny $4$};
\node (6) at (5,6) [draw, circle]{\tiny $6$};
\node [label={[xshift=0em, yshift=-0.4em] \tiny $4\l_2$}](26) at (7,6) [draw, circle]{\tiny $26$};

\node (5) at (4,7) [draw, circle]{\tiny $5$};
\node [label={[xshift=0em, yshift=-0.4em] \tiny $4\l_3$}](27) at (6,7) [draw, circle]{\tiny $27$};

\node [label={[xshift=0em, yshift=-0.4em] \tiny $4\l_4$}](28) at (5,8) [draw, circle]{\tiny $28$};
\drawpath{1,11,19,15,9}{blue}
\drawpath{10,18,20,16}{blue}
\drawpath{17,23,21}{blue}
\drawpath{22,24}{blue}
\drawpath{9,8,25,2,1}{red}
\drawpath{16,15,14,7,26,3,12,11,10}{orange}
\drawpath{21,20,19,12,13,6,27,4,13,14,19,18,17}{purple}
\drawpath{24,23,18,11,2,3,4,5,6,7,8,15,20,23,22}{green}
\drawpath{4,28}{bend left, green}
\drawpath{28,6}{green}
\drawpath{25,7,13,3}{}
\drawpath{26,6,4}{}
\drawpath{3,25}{bend left=10}
\drawpath{4,26}{bend left=10}
\drawpath{22,17,10,1}{dashed}
\drawpath{9,16,21,24}{dashed}
\end{tikzpicture}
\caption{The quiver for the self-folded quiver $\cX_q^{\mathrm{sf}}$}\label{fig-sf}
\end{figure}

\begin{Rem} The quiver in Figure \ref{fig-sf} is a rearrangement of \cite[Figure 15]{SS2} where the top self-folded part is a mirror image, and the top vertex is connected since \cite{SS2} is dealing with the universal cover and modulo the deck transformation.
\end{Rem}

Finally, a mutation sequence, provided in \cite{SS2} in order to study the monodromy that gives the central characters, transforms the quiver into the \emph{self-folded symmetric} form associated to the quantum torus algebra $\cX_q^{\mathrm{sym}}$. Our new result here is the computation of the central parameters associated to the polarization of $\cX_q^{\mathrm{sym}}$.

\begin{Thm}\label{thmsym} The self-folded symmetric quiver of $\cX_q^{\mathrm{sym}}$ (using the same index), together with the central parameters of the vertices, is given by Figure \ref{fig-sym}.

\begin{figure}[htb!]
\centering
\begin{tikzpicture}[every node/.style={inner sep=0, minimum size=0.5cm, thick}, x=1cm, y=1.3cm]

\node (22) at (3,0) [draw]{\tiny $22$};
\node[label={[xshift=0em, yshift=-2.7em] \tiny $-2\l_4$}] (24) at (5,0) [draw]{\tiny $24$};

\node (17) at (2,1) [draw]{\tiny $17$};
\node (23) at (4,1) [draw, circle]{\tiny $23$};
\node [label={[xshift=0em, yshift=-2.7em] \tiny $-2\l_3$}](21) at (6,1) [draw]{\tiny $21$};

\node (10) at (1,2) [draw]{\tiny $10$};
\node (18) at (3,2) [draw, circle]{\tiny $18$};
\node (20) at (5,2) [draw, circle]{\tiny $20$};
\node [label={[xshift=0em, yshift=-2.7em] \tiny $-2\l_2$}](16) at (7,2) [draw]{\tiny $16$};

\node (1) at (0,3) [draw]{\tiny $1$};
\node (11) at (2,3) [draw, circle]{\tiny $11$};
\node (19) at (4,3) [draw, circle]{\tiny $19$};
\node (15) at (6,3) [draw, circle]{\tiny $15$};
\node [label={[xshift=0em, yshift=-2.7em] \tiny $-2\l_1$}](9) at (8,3) [draw]{\tiny $9$};

\node [label={[xshift=0em, yshift=-0.4em] \tiny $4\l_1$}](2) at (1,4) [draw, circle]{\tiny $2$};
\node (12) at (3,4) [draw, circle]{\tiny $12$};
\node (14) at (5,4) [draw, circle]{\tiny $14$};
\node (8) at (7,4) [draw, circle]{\tiny $8$};

\node [label={[xshift=0em, yshift=-0.4em] \tiny $4\l_2$}](13) at (2,5) [draw, circle]{\tiny $13$};
\node (7) at (4,5) [draw, circle]{\tiny $7$};
\node (25) at (6,5) [draw, circle]{\tiny $25$};

\node [label={[xshift=-0.5em, yshift=-0.4em] \tiny $4\l_3$}](26) at (3,6) [draw, circle]{\tiny $26$};
\node (3) at (5,6) [draw, circle]{\tiny $3$};

\node [label={[xshift=0em, yshift=-0.4em] \tiny $4\l_4$}](28) at (4,7) [draw, circle]{\tiny $28$};
\node (5) at (4,8) [draw, circle]{\tiny $5$};
\node [label={[xshift=0em, yshift=-0.4em] \tiny $-4\l_3$}](27) at (4,9) [draw, circle]{\tiny $27$};
\node [label={[xshift=0em, yshift=-0.4em] \tiny $-4\l_2-4\l_3$}](4) at (4,10) [draw, circle]{\tiny $4$};
\node [label={[xshift=0em, yshift=-0.4em] \tiny $-4\l_1-4\l_2-4\l_3$}](6) at (4,11) [draw, circle]{\tiny $6$};
\drawpath{1,11,19,15,9}{blue}
\drawpath{10,18,20,16}{blue}
\drawpath{17,23,21}{blue}
\drawpath{22,24}{blue}
\drawpath{9,8}{red}
\drawpath{8,2}{bend right=10, red}
\drawpath{2,1}{red}
\drawpath{16,15,14,25}{orange}
\drawpath{25,13}{bend right=10,orange}
\drawpath{13,12,11,10}{orange}
\drawpath{21,20,19,12,7,3, 26,7,14,19,18,17}{purple}
\drawpath{3,26}{bend right=10,green}
\drawpath{24,23,18,11,2,25,7,13,3}{green}
\drawpath{3,25,8,15,20,23,22}{green}
\drawpath{26,4,3}{green}
\drawpath{26,6,3}{green}
\drawpath{26,27,3}{green}
\drawpath{26,5,3}{green}
\drawpath{26,28,3}{green}
\drawpath{22,17,10,1}{dashed}
\drawpath{9,16,21,24}{dashed}
\path (0,6) edge[dotted] (8,6);
\end{tikzpicture}
\caption{The quiver for $\cX_q^{\mathrm{sym}}$ with central parameters.}\label{fig-sym}
\end{figure}
\end{Thm}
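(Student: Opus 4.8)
The plan is to separate the two assertions of Theorem \ref{thmsym}: the shape of the quiver underlying $\cX_q^{\mathrm{sym}}$, and the central parameters attached to its vertices. The quiver is produced by applying the mutation sequence of \cite{SS2} (recorded in Appendix \ref{App:mutate}) to the self-folded quiver $\cX_q^{\mathrm{sf}}$ of Figure \ref{fig-sf}. Verifying that the outcome is the quiver drawn in Figure \ref{fig-sym} is a purely combinatorial check, obtained by iterating the exchange-matrix rule of Definition \ref{qmut}; I would either carry this out directly along the sequence or, since the symmetric form is essentially the one in \cite{SS2}, defer it to that reference. The genuinely new content, and the focus of the proof, is the determination of the central parameters.

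For the central parameters, the key point is that they transform \emph{linearly and explicitly} under each mutation through the monomial change of polarization in Proposition \ref{monopolar}. Writing $c_i$ for the central parameter of the cluster variable $X_i$, a mutation $\mu_k$ acts by $c_k\mapsto -c_k$, leaves $c_i$ unchanged when $\e_{ki}\le 0$, and sends $c_i\mapsto c_i+\e_{ki}c_k$ when $\e_{ki}\ge 0$; these follow at once from the three cases of Proposition \ref{monopolar} by reading off the scalar (non-operator) part. Since the frozen vertices $9,16,21,24$ are never mutated, their parameters $-2\l_i$ persist throughout, so the problem reduces to tracking the mutable vertices. I would begin from the initial assignment on $\cX_q^{\mathrm{sf}}$ in Figure \ref{fig-sf}---the vertices $25,26,27,28$ carrying $4\l_i$---and propagate the above rule step by step along the sequence, maintaining a running table of the $c_i$.

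The decisive feature to follow is the \emph{telescoping accumulation} along the folded central column, which is what produces the parameters $-4\l_3$, $-4\l_2-4\l_3$ and $-4\l_1-4\l_2-4\l_3$ on vertices $27,4,6$ in Figure \ref{fig-sym}: each mutation that pushes a variable up the column adds one further multiple of a frozen parameter, yielding exactly these partial sums, and checking that the signs and multiplicities come out correctly is the delicate arithmetic. As an independent consistency check I would confirm that the two families of central monomials keep their prescribed central characters. Because central elements map to central elements under the mutation isomorphism and Proposition \ref{monopolar} preserves their action, the Cartan monomials must still satisfy the group-like condition $\pi_\l(K_iK_i')=1$, and the monodromy monomials $Q_k$ of \eqref{Qk}, re-expressed as cluster monomials in the symmetric chart, must still evaluate to $\pi_\l(Q_k)=e(4\l_k)$; any failure of these identities would flag a bookkeeping error. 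Conversely, by Lemma \ref{polaru} it is precisely the correctness of these central characters that determines the polarization up to unitary equivalence, so verifying them suffices to identify the representation.

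The main obstacle I anticipate is the sheer length and error-proneness of the bookkeeping: the sequence is long, each mutation touches several vertices, and the cancellations and accumulations in the telescoping column must be tracked without slip. The parallel combinatorial verification of the quiver shape is equally tedious but can be offloaded to \cite{SS2}, leaving the stable and explicit propagation of central parameters---pinned down by the invariant central characters---as the true substance of the theorem.
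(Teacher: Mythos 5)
Your proposal is correct and takes essentially the same route as the paper: the paper's proof also runs the mutation sequence of \cite{SS2} (described in Appendix \ref{App:mutate}, starting from the self-folded quiver and noting only a mirror reflection relative to \cite{SS2}) and computes the central parameters by induction, keeping track of the monomial changes of Proposition \ref{monopolar} at each wave of mutations. Your explicit statement of the parameter-transformation rule and the consistency check against the central characters of the Cartan monomials and the $Q_k$ are faithful refinements of exactly this bookkeeping.
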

\begin{proof} A complete description and an example of the mutation are illustrated in Appendix \ref{App:mutate}. We mutate with respect to the opposite quiver on the top half so that the resulting quiver differs from those of \cite{SS2} by a mirror reflection. The computation of the central parameters of the vertices follows by induction, keeping track of the monomial changes at each wave of mutations.

The color of the paths again help illustrate the embedding of $\fD_q(\sl_{n+1})$.
\end{proof}

The quiver of $\cX_q^{\mathrm{sym}}$ can be considered as an amalgamation of the ``standard part" and the ``symmetric part", separated by the dotted lines in Figure \ref{fig-sym}. 
\begin{Def}
We call the vertices above the dotted lines the \emph{symmetric vertices} and label them from top to bottom by $c_0, c_1,...,c_n$ (e.g. vertex $6$, $4$, $27$, $5$, $28$ in Figure \ref{fig-sym}). 
\end{Def}

In particular, the monomials $Q_k:=X_{c_k}X_{c_{k-1}}\inv$ for $k=1,...,n$ form a generating set of the non-Cartan central monomials of $\cX_q^{\mathrm{sym}}$ with polarization
\Eq{
\pi_\l(Q_k) = e(4\l_k).
}
Hence, they are the image of the previous central monomials $Q_k$ under the sequence of mutations.

The important point to note is that all the Chevalley generators $\be_i$ and $\bf_i$, except $\be_n$, do not depend on the symmetric variables. On the other hand, the action of $\be_n$ is expressed in terms of the elementary symmetric polynomials $\cE_k$ in the cluster variables associated to the symmetric part. 

More precisely, rewriting the results of \cite{SS2}, we have
\Eq{\label{enAB}
\iota(\be_n)=\sum_{k=0}^{n+1} A_k B_k
}
where $B_0=1$, $B_k=\cE_k(X_{c_0},...,X_{c_{n}})$ are the elementary symmetric polynomials, and $A_k \in \cX_q^{\mathrm{sym}}$ are Laurent polynomials in the quantum cluster variables along the colored path of the quiver, which do not involve $X_{c_0},...,X_{c_{n}}$. By factorizing $X_{c_0}^k$ out of $B_k$, we see that
\Eq{
B_k:=X_{c_0}^k \cC_k
}
where 
\Eq{
\cC_k:=\cE_k(1,\frac{X_{c_1}}{X_{c_0}},\cdots \frac{X_{c_n}}{X_{c_0}})= \cE_k(1, Q_1,Q_1Q_2,...,Q_1\cdots Q_n)
} lies in the center of $\cX_q^{\mathrm{sym}}$. Furthermore, the polarization of $\cC_k$ gives
\Eq{\label{ckc}
\pi_\l(\cC_k)=e\left(-\frac{4}{n+1}\sum_{k=1}^n (n+1-k)\l_k\right) \pi_\l(\bC_k)
}
which is proportional to the action of the Casimir operators given by \eqref{casact}.

The following lemma is needed in Section \ref{sec:higher}.
\begin{Lem}\label{outer} Let $\Xi=X_{i_1,i_2,...,i_{2n}}\in \cX_q^{\mathrm{std}}$, where $i_1,...,i_{2n}$ are the indices of the frozen variables in the standard quiver (see Figure \ref{fig-A4}). In particular, the central parameter of $\Xi$ is given by $\dis -2\sum_{k=1}^n\ol_k$.

Under the cluster mutation of Theorem \ref{thmsym}, the monomial transforms as
\Eq{
\Xi\mapsto \mu_q(\Xi):=X'_{j_1,...,j_{n(n+2)}}
}
where $j_k$ are given by the indices of all the vertices of $\cX_q^{\mathrm{sym}}$, except $c_0,...,c_{n-1}$.
\end{Lem}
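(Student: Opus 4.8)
The statement has two independent pieces: the value of the central parameter of $\Xi$ in $\cX_q^{\mathrm{std}}$, and the identification of its image after the mutation sequence. The central parameter is immediate from the conventions recorded in Figure \ref{fig-A4}. Of the $2n$ frozen variables, the right-hand column (the vertices $9,16,21,24$ in the displayed case $n=4$) is assigned central parameter $-2\ol_k$, $k=1,\dots,n$, while the left-hand column ($1,10,17,22$) is assigned trivial central parameters. Since the central parameter of a product of cluster variables is the sum of the central parameters of its factors, $\Xi=X_{i_1,\dots,i_{2n}}$ has central parameter $\sum_{k=1}^n(-2\ol_k)=-2\sum_{k=1}^n\ol_k$.

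For the image, the plan is to propagate $\Xi$ through the explicit mutation sequence of Appendix \ref{App:mutate} realizing $\cX_q^{\mathrm{std}}\to\cX_q^{\mathrm{sf}}\to\cX_q^{\mathrm{sym}}$, exactly as in the proof of Theorem \ref{thmsym}. The governing principle is the following monomiality criterion: since every seed in this sequence is simply-laced (all $d_i=1$), a Laurent monomial $X_v$ is sent to a monomial by a mutation $\mu_k$ if and only if $X_v$ commutes with $X_k$, i.e. $\sum_i v_i\e_{ik}=0$; in that case the exchange binomials $\prod_r(1+q^{2r-1}X_k^{\pm1})$ telescope away and $X_v$ transforms exactly by its monomial part, as recorded in Proposition \ref{monopolar}. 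I would therefore verify, wave by wave, that the current image of $\Xi$ commutes with the cluster variable at each vertex being mutated, so that it stays a single monomial throughout and its exponent vector evolves by the piecewise-linear (tropical) rule.

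With monomiality secured, the computation reduces to tracking the lattice vector $\sum_{\text{frozen}}\vec{e_i}$ through the sequence, using the same inductive bookkeeping of created and cancelled exponents that proves Theorem \ref{thmsym}. Reading the endpoint against the vertices of $\cX_q^{\mathrm{sym}}$ in Figure \ref{fig-sym}, one finds every vertex occurring with exponent $1$ except the symmetric vertices $c_0,\dots,c_{n-1}$, which do not appear, giving $\mu_q(\Xi)=X'_{j_1,\dots,j_{n(n+2)}}$ as claimed.

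The main obstacle is confirming that the commutation (hence monomiality) condition is maintained at each of the mutations in the sequence: this is where the structural description of the waves in Appendix \ref{App:mutate} does the real work, and once it is in place the matching of the final support is a direct, if lengthy, reading of Figure \ref{fig-sym}. I would not expect the central parameter of the image in the symmetric polarization to coincide with that of $\Xi$, since $\Xi$ is not a central monomial and Proposition \ref{monopolar} redistributes between the Weyl and central parts; accordingly the first part is genuinely a statement about $\Xi$ inside $\cX_q^{\mathrm{std}}$ alone, and cannot be used as an invariant check for the second.
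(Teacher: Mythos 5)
Your plan for the first half (summing the frozen central parameters from Figure \ref{fig-A4}) is correct, and your plan for the second half is the ``direct'' route that the paper acknowledges in the first sentence of its proof but does not carry out. The proof the paper actually writes is different and avoids all tracking: it embeds the standard $A_n$ quiver into the standard $A_{n+1}$ quiver one rank higher (Figure \ref{fig-A5}), where $\Xi$ multiplied by the two extra middle vertices becomes the monodromy central monomial $Q_1$ of the larger algebra. Centrality makes your commutation criterion automatic at every step, and, more importantly, it identifies the image with no tropical bookkeeping at all: $\mu_q(Q_1)$ must be a central monomial of the mutated algebra with central parameter $4\ol_1$ and involving no frozen variables of the larger quiver, and a direct check that the claimed product $\Xi'$ has exactly these properties pins it down by uniqueness (Figure \ref{fig-A5sym}). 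Your route instead requires the commutation condition at each of the roughly $\binom{n+2}{3}$ mutations plus a full readout of the exponent vector at the end. Note also that your ``main obstacle'' has a one-line resolution you missed: $\Xi$ commutes with \emph{every} mutable variable of $\cX_q^{\mathrm{std}}$ (at each mutable vertex the arrows to frozen vertices cancel in pairs, the dashed half-arrows living only among frozen vertices), and this property propagates through arbitrary mutations because the exchange expression $\mu_k^q(X_i')$ of a mutable variable involves only the mutable variables $X_i$ and $X_k$. So a single check in the initial seed secures monomiality for the entire sequence; this is the same structural fact that the paper's rank-plus-one trick packages as centrality of $Q_1$.

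The genuine error is your closing remark. If the current image of $\Xi$ commutes with $X_k$ at the step where one mutates at $k$ (the very condition you need for monomiality), then $(\mu_k^q)^{-1}$ agrees on it with the monomial map $(\mu_k')^{-1}$, and since the polarization change of Proposition \ref{monopolar} is exactly $\pi'=\pi\circ\mu_k'$, the operator is literally unchanged: $\pi'$ of the image equals $\pi$ of the input. Iterating over the whole sequence, the polarization of $\mu_q(\Xi)$ in $\cX_q^{\mathrm{sym}}$ coincides with that of $\Xi$ in $\cX_q^{\mathrm{std}}$, so its central parameter is still $-2\sum_{k}\ol_k$; nothing is ``redistributed'' for an element commuting with every mutated variable, even though it is redistributed for individual cluster variables. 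Your assertion that this invariance fails is therefore inconsistent with your own monomiality argument, and it discards precisely the tool on which the paper's proof runs (matching of central characters) as well as the one safeguard that would validate your final readout: against the labels of Figure \ref{fig-sym}, demanding that the central parameters in the support sum to $-2\sum_k\ol_k$ is exactly what determines which end of the symmetric column survives in $\mu_q(\Xi)$, a nontrivial constraint given that the paper's $A_1$ and $A_n$ figures use mirror-image conventions for the symmetric part. Without this check, a lengthy tropical computation with a single orientation slip would go undetected.
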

\begin{proof} One can follow the mutation sequence directly, as described in Appendix \ref{App:mutate}, to show the required transformation.

Alternatively, one can argue as follows. Consider the standard quiver $\bQ_{n+1}$ for type $A_{n+1}$ which is one rank higher as in Figure \ref{fig-A5}. We can regard $\Xi$ together with the top and bottom-most vertices (colored in red) as the central monomial $Q_1$ with central parameter $4\l_1$. Now we perform the symmetric folding for the $A_n$ subalgebra with root index $2,...,n+1$ (the part shaded in green). Note that the top and bottom-most vertices do not play a role under the mutation since they are outside of the $A_n$ subalgebra quiver. The resulting quiver is shown in Figure \ref{fig-A5sym}.

Since $\Xi$ lies in the center of $\cX_q^{\bQ_{n+1}}$, under the quantum mutation, only the monomial transformation as in Proposition \ref{monopolar} is effective, and hence the resulting image $\mu_q(\Xi)$ in the mutated quiver stays a monomial.

\begin{figure}[htb!]
\centering
\begin{tikzpicture}[every node/.style={inner sep=0, minimum size=0.2cm}, x=0.8cm, y=0.3cm]
\fill[green!10](5,11)--(9,7)--(9,1)--(5,-3)--(1,1)--(1,7)--cycle;
\foreach \y [count=\c from 1] in{33,28,21,12,1}
\foreach \x in{1,...,\c}{
	\pgfmathtruncatemacro{\ind}{\x+\y-1}
	\pgfmathtruncatemacro{\indop}{\y+2*\c-\x+1}
	\pgfmathtruncatemacro{\r}{6-\c}
	\pgfmathtruncatemacro{\ex}{\x-1-\c}
	\pgfmathtruncatemacro{\exop}{\c+1-\x}
	\ifthenelse{\x=1}{
		\node(\ind) at (\x-1,2*\c-1-\x)[draw] {};
		\node [label={[xshift=1.2em, yshift=-0.65em] \tiny $-2\l_\r$}](\indop) at (11-\x,2*\c-1-\x)[draw] {};
	}{\ifthenelse{\x=2}{
		\node(\ind) at (\x-1,2*\c-1-\x)[draw, circle, fill=red] {};
		\node (\indop) at (11-\x,2*\c-1-\x)[draw, circle, fill=red] {};
	}{
		\node(\ind) at (\x-1,2*\c-1-\x)[draw, circle]{};
		\node(\indop) at (11-\x,2*\c-1-\x)[draw, circle]{};
	}}
}
\foreach \x[count=\c from 1] in{6,16,24,30}{
\node (\x) at (5, 5-\c*2)[draw, circle]{};
}
\node(34) at (5,-5)[draw,circle,fill=red]{};
\foreach \x[count=\c from 1] in{36,37,38,39,40}{
\ifthenelse{\x=36}{
\node[label={[xshift=0em, yshift=-0.1em] \tiny $4\l_\c$}](\x) at (5, 15-\c*2)[draw, circle, fill=red]{};
}{
\node[label={[xshift=0em, yshift=-0.1em] \tiny $4\l_\c$}](\x) at (5, 15-\c*2)[draw, circle]{};
}
}

\foreach \x[count=\c from 1]in{33,28,21,12,1}{
\pgfmathtruncatemacro{\ee}{\x+2*\c}{
	\drawpath{\x,...,\ee}{}
}
}
\drawpath{11,36,1}{}
\drawpath{20,10,37,2,12}{}
\drawpath{27,19,9,38,3,13,21}{}
\drawpath{32,26,18,8,39,4,14,22,28}{}
\drawpath{35,31,25,17,7,40,5,15,23,29,33}{}
\drawpath{33,28,21,12,1}{dashed}
\drawpath{11,20,27,32,35}{dashed}
\drawpath{36,10,19,26,31,34,29,22,13,2,36}{}
\drawpath{37,9,18,25,30,23,14,3,37}{}
\drawpath{38,8,17,24,15,4,38}{}
\drawpath{39,7,16,5,39}{}
\end{tikzpicture}
\caption{The quiver for the standard embedding for type $A_5$, with the vertices of the central monomial $Q_1$ labeled in red.}\label{fig-A5}
\end{figure}
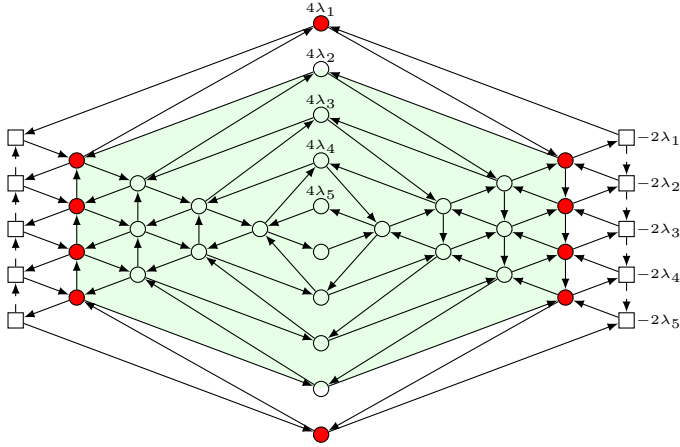

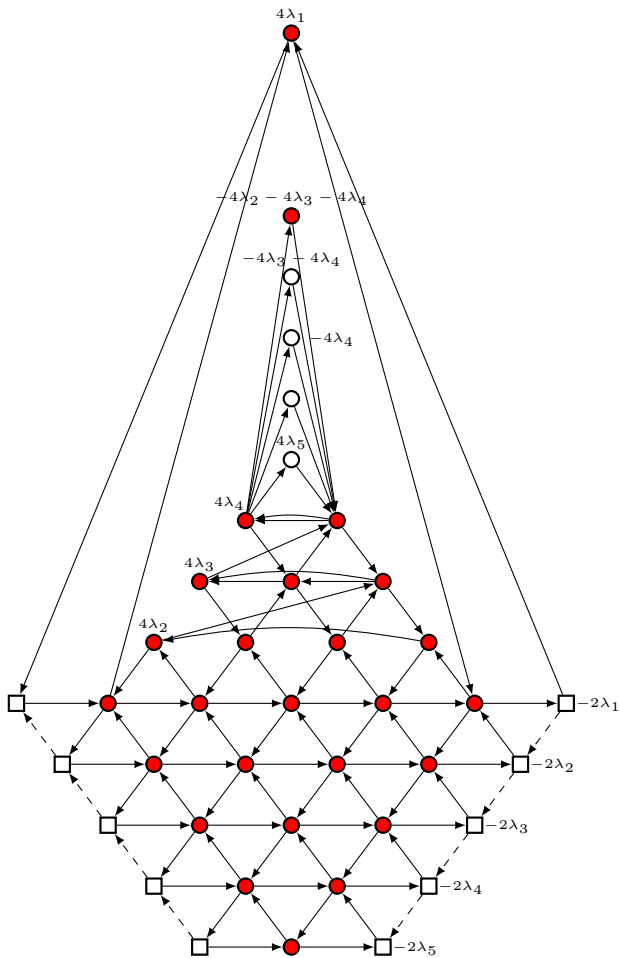
\begin{figure}[htb!]
\centering
\begin{tikzpicture}[every node/.style={inner sep=0, minimum size=0.2cm, thick}, x=0.6cm, y=0.8cm]

\node (31) at (-2,3)[draw]{};
\node (32) at (-1,2)[draw]{};
\node (33) at (0, 1)[draw]{};
\node (34) at (1, 0)[draw]{};
\node (35) at (2, -1) [draw]{};
\node [label={[xshift=1.2em, yshift=-0.7em] \tiny $-2\l_1$}] (36) at (10,3)[draw]{};
\node [label={[xshift=1.2em, yshift=-0.7em] \tiny $-2\l_2$}](37) at (9,2)[draw]{};
\node [label={[xshift=1.2em, yshift=-0.7em] \tiny $-2\l_3$}]  (38) at (8,1)[draw]{};
\node [label={[xshift=1.2em, yshift=-0.7em] \tiny $-2\l_4$}](39) at (7,0)[draw]{};
\node [label={[xshift=1.2em, yshift=-0.7em] \tiny $-2\l_5$}] (40) at (6,-1)[draw]{};
\node (41) at (4,-1)[draw,fill=red,circle]{};
\node [label={[xshift=0em, yshift=0em] \tiny $4\l_1$}](42)  at (4,14)[draw,fill=red,circle]{};

\node (22) at (3,0) [draw, fill=red, circle]{};
\node (24) at (5,0) [draw, fill=red, circle]{};

\node (17) at (2,1) [draw, fill=red, circle]{};
\node (23) at (4,1) [draw, fill=red, circle]{};
\node (21) at (6,1) [draw, fill=red, circle]{};

\node (10) at (1,2) [draw, fill=red, circle]{};
\node (18) at (3,2) [draw, fill=red, circle]{};
\node (20) at (5,2) [draw, fill=red, circle]{};
\node  (16) at (7,2) [draw, fill=red, circle]{};

\node (1) at (0,3) [draw, fill=red, circle]{};
\node (11) at (2,3) [draw, fill=red, circle]{};
\node (19) at (4,3) [draw, fill=red, circle]{};
\node (15) at (6,3) [draw, fill=red, circle]{};
\node (9) at (8,3) [draw, fill=red, circle]{};

\node [label={[xshift=0em, yshift=0em] \tiny $4\l_2$}](2) at (1,4) [draw, fill=red, circle]{};
\node (12) at (3,4) [draw, fill=red, circle]{};
\node (14) at (5,4) [draw, fill=red, circle]{};
\node (8) at (7,4) [draw, fill=red, circle]{};

\node [label={[xshift=0em, yshift=0em] \tiny $4\l_3$}](13) at (2,5) [draw, fill=red, circle]{};
\node (7) at (4,5) [draw, fill=red, circle]{};
\node (25) at (6,5) [draw, fill=red, circle]{};

\node [label={[xshift=-0.6em, yshift=0em] \tiny $4\l_4$}](26) at (3,6) [draw, fill=red, circle]{};
\node (3) at (5,6) [draw, fill=red, circle]{};

\node [label={[xshift=0em, yshift=0em] \tiny $4\l_5$}](28) at (4,7) [draw, circle]{};
\node (5) at (4,8) [draw, circle]{};
\node [label={[xshift=1.5em, yshift=-0.7em] \tiny $-4\l_4$}](27) at (4,9) [draw, circle]{};
\node [label={[xshift=0em, yshift=0em] \tiny $-4\l_3-4\l_4$}](4) at (4,10) [draw, circle]{};
\node [label={[xshift=0em, yshift=0em] \tiny $-4\l_2-4\l_3-4\l_4$}](6) at (4,11) [draw, circle, fill=red]{};
\drawpath{1,11,19,15,9}{}
\drawpath{10,18,20,16}{}
\drawpath{17,23,21}{}
\drawpath{22,24}{}
\drawpath{9,8}{}
\drawpath{8,2}{bend right=10}
\drawpath{2,1}{}
\drawpath{16,15,14,25}{}
\drawpath{25,13}{bend right=10,}
\drawpath{13,12,11,10}{}
\drawpath{21,20,19,12,7,3, 26,7,14,19,18,17}{}
\drawpath{3,26}{bend right=10,}
\drawpath{24,23,18,11,2,25,7,13,3}{}
\drawpath{3,25,8,15,20,23,22}{}
\drawpath{26,4,3}{}
\drawpath{26,6,3}{}
\drawpath{26,27,3}{}
\drawpath{26,5,3}{}
\drawpath{26,28,3}{}
\drawpath{22,17,10,1}{}
\drawpath{9,16,21,24}{}
\drawpath{31,1,32,10,33,17,34,22,35,41,40,24,39,21,38,16,37,9,36}{}
\drawpath{24,41,22}{}
\drawpath{36,42,9}{}
\drawpath{1,42,31}{}
\drawpath{35,34,33,32,31}{dashed}
\drawpath{36,37,38,39,40}{dashed}
\end{tikzpicture}
\caption{The partially self-folded symmetric quiver in type $A_4\subset A_5$, with the resulting vertices for $C_1$ colored in red.}
\label{fig-A5sym}
\end{figure}
Then, we check directly that the product $\Xi'=X'_{j_1,...,j_{n(n+2)}}$ of all the cluster variable as described (colored in red), lies in the center of the resulting quantum torus algebra $\cX_q^{\bQ_{n+1}^{\mathrm{sym}}}$, with central parameter $4\ol_1$. Since $\Xi$ does not involve any frozen variables of $\cX_q^{\bQ_{n+1}^{\mathrm{sym}}}$, there is no contribution by the Cartan monomials $\iota(\bK_i\bK_i')$. Hence, $\Xi'$ must coincide with the transformed monomial $\mu_q(\Xi)$ of the original $\Xi$.
\end{proof}

\subsection{Positive representations at zero Casimir}\label{sec:An:pos0}
Following the motivation from the previous section, we can now state the following Main Theorem.
\begin{Thm}\label{mainthmAn}
There is an embedding from $\fD_q(\sl_{n+1})/\<\bC_k=0\>$ to a skew-symmetrizable quantum cluster algebra $\cO_q(\cX^0)$, such that the image of the Chevalley generators are universally Laurent polynomials.

Passing to a polarization, we have an irreducible representation $\cP^0$ of $\cU_q(\sl(n+1,\R))$ acting on $L^2(\R^N)$ as positive operators, where $N=\frac{n(n+1)}{2}$, such that all the Casimir operators $\bC_k$ act by zero.
\end{Thm}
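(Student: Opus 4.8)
The plan is to lift the $\sl_2$ argument of Section \ref{sec:sl2} to general type $A_n$, using the symmetric quiver of Theorem \ref{thmsym} as the starting point. First I would record the image of $\be_n$ in $\cX_q^{\mathrm{sym}}$ from \eqref{enAB}, namely $\iota(\be_n)=\sum_{k=0}^{n+1}A_kB_k$ with $B_k=X_{c_0}^k\cC_k$, and note that $\cC_1,\dots,\cC_n$ are central monomials proportional to the Casimir actions by \eqref{ckc}, while $\cC_0=1$ and $\cC_{n+1}$ is an invertible central monomial. Since $\cC_1,\dots,\cC_n$ are central, their preimages $\bC_k$ are central in $\fD_q(\sl_{n+1})$, so the ideal $\<\bC_k=0\>$ is generated by central elements and the quotient $\fD_q(\sl_{n+1})/\<\bC_k=0\>$ is well-defined; in particular all commutation relations among the Chevalley generators survive the specialization. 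In this quotient only the terms $k=0$ and $k=n+1$ persist, giving $\iota(\be_n)\mapsto A_0+A_{n+1}B_{n+1}$ with $B_{n+1}=X_{c_0}\cdots X_{c_n}$ the fully symmetric monomial, while every other generator ($\be_i$ for $i<n$, $\bf_j$, $\bK_j$, $\bK_j'$) already avoids the symmetric variables.

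The next step is the symmetric folding. I would identify $X_{c_0},\dots,X_{c_n}$ into a single variable $X_c^0$ carrying multiplier $d_c=n+1$, producing the skew-symmetrizable seed $\cX_q^0$, exactly as $X_2,X_4$ are combined in the $\sl_2$ case of Figure \ref{fig-X2fold}. Rather than treating folding as a ring map, I would define the homomorphism directly on generators by their folded images and verify the defining relations of $\fD_q(\sl_{n+1})$ in $\cX_q^0$. The crucial point, and the reason the zero-Casimir specialization is precisely what makes this a homomorphism, is that a general $B_k=\cE_k(X_{c_0},\dots,X_{c_n})$ is not a monomial and would be ill-defined after identifying the symmetric variables, whereas the surviving $B_0=1$ and $B_{n+1}$ are, up to a power of $q$, monomials in $X_c^0$. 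The relations not involving $\be_n$ hold because those images are unchanged, and the relations involving $\be_n$ (its Serre relations and commutators with $\bK_j,\bf_j$) follow from the symmetry of the quiver once only $B_0$ and $B_{n+1}$ remain; one then checks directly that $\bC_k\mapsto 0$.

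To upgrade this to an embedding into $\cO_q(\cX^0)$ I would verify the universally Laurent property via the standard monomial criterion of Lemma \ref{stdmon}. The generators $\bf_j,\bK_j,\bK_j'$ and $\be_i$ for $i<n$ are (telescoping sums mutating to) sinks just as in the standard realization; the only new case is $\be_n=A_0+A_{n+1}(X_c^0)^{n+1}$, which I expect to handle by a single mutation at the folded node $c$ (accounting for $d_c=n+1$) transforming it into a standard monomial, directly generalizing the mutation of Figure \ref{fig-A1mut}. For the representation, a dimension count confirms that folding the $n+1$ symmetric vertices into one, while eliminating the $n$ antisymmetric $Q_k$ central directions, leaves a seed of rank $2N$ with $N=\frac{n(n+1)}{2}$ whose only central monomials are the $n$ Cartan monomials $\bK_j\bK_j'$. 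Imposing the group-like condition $\bK_j\bK_j'=1$ and invoking Lemma \ref{polaru} yields a single irreducible polarization on $L^2(\R^N)$; each generator is a sum of pairwise $q$-commuting positive monomials, hence positive self-adjoint by Lemma \ref{qbin}, and the Casimirs act by zero since the representation factors through the quotient.

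I expect the main obstacle to be the universally Laurent verification for $\be_n$ in the folded, skew-symmetrizable setting: one must exhibit an explicit mutation respecting the nontrivial multiplier $d_c=n+1$ and confirm that the standard monomial criterion still applies, since the cluster machinery of \cite{Ip7, SS2} was developed for the unfolded quiver. A secondary subtlety is justifying that folding genuinely yields a valid skew-symmetrizable seed (that the combined exchange matrix remains skew-symmetrizable with the new multipliers), and that irreducibility of the polarization transfers to irreducibility of $\cP^0$ as a $\cU_q(\sl(n+1,\R))$-module, i.e.\ that the commutant of the Chevalley images is scalar.
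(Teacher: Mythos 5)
Your overall skeleton---fold the symmetric vertices into a single variable $X_\star$ with multiplier $n+1$, observe that after killing the Casimirs only $B_0$ and $B_{n+1}$ survive in $\iota(\be_n)$, then polarize---is the same as the paper's, but the two steps you yourself flag as delicate are precisely where your plan breaks down, and one of them fails outright.

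The universally Laurent verification for $\be_n$ cannot be done the way you propose. Your plan is to apply the standard monomial criterion of Lemma \ref{stdmon} by ``a single mutation at the folded node'' generalizing Figure \ref{fig-A1mut}. But the paper explicitly records (Remark \ref{notstd}) that for $n\geq 3$ the special generator $\be_n$ is \emph{not} mutation-equivalent to a standard monomial in $\cO_q(\cX^0)$; the trick works only in types $A_1$ and $A_2$. The obstruction is that the telescopic sum for $\be_n$ contains many monomials besides the single jump through $X_\star$, and the unfolded mutation sequence that would collapse it to a monomial requires mutating at the \emph{individual} symmetric vertices $X_{c_i}$, which no longer exist after folding (cf.\ Remark \ref{notmut}: the transformation $g_b(\pi(X_\star)^{\frac{1}{n+1}})$ is not a cluster mutation of $\cX_q^0$). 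The paper's actual argument is structural rather than criterion-based: a mutation at $X_\star$ in the folded seed is, by the general theory of folding, equal to the simultaneous (unordered) mutations at all of $X_{c_0},\dots,X_{c_n}$ upstairs followed by the specialization $\cE_k(X_{c_0},\dots,X_{c_n})\mapsto 0$ ($k=1,\dots,n$), $\cE_{n+1}\mapsto X_\star$, while mutations at every other vertex commute with folding. Since the Chevalley generators are universally Laurent in the unfolded $\cO_q(\cX)$ and this specialization sends Laurent polynomials to Laurent polynomials, their folded images are universally Laurent. You need this correspondence (or some substitute) in place of Lemma \ref{stdmon}.

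Irreducibility is the second gap: you correctly note that irreducibility of the polarization of $\cX_q^0$ does not give that the commutant of the Chevalley images is scalar, but you leave it as an acknowledged subtlety with no mechanism. The paper closes it with Lemma \ref{fCas}: in type $A_n$ the fundamental representations are exterior powers of the standard one, so $E_\a^2=0$ there and the $R$-matrix truncates, whence each $\bC_k$ is a linear combination of $1,\be_n,\be_{n-1}\be_n,\dots,\be_1\cdots\be_n$ with coefficients not involving $\be_n$ on the right; solving this linear system expresses $\be_n$ as a rational function of the remaining $3n-1$ generators and the $\bC_k$ in the fraction field $\bT_q$. Since $\cP^0$ differs from $\cP_\l$ only in the action of $\be_n$ and the Casimirs act by scalars, any operator strongly commuting with the $3n-1$ unchanged generators also commutes with $\be_n$, and scalarity then follows from the known irreducibility of $\cP_\l$. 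Without this (or an equivalent) lemma, your claim that $\cP^0$ is irreducible is unproven.
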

\begin{proof} We identify all the $n+1$ symmetric vertices by taking their product as a new cluster variable $\dis X_\star:=\prod_{i=0}^n X_{c_i}$. We also let $d_\star=n+1$ be the multiplier of this new vertex. We obtain the quantum torus algebra $\cX_q^0$ with the quiver given by Figure \ref{fig-A4X0}.

\begin{figure}[htb!]
\centering
\begin{tikzpicture}[every node/.style={inner sep=0, minimum size=0.5cm, thick}, x=1cm, y=1.3cm]

\node (22) at (3,0) [draw]{\tiny $22$};
\node(24) at (5,0) [draw]{\tiny $24$};

\node (17) at (2,1) [draw]{\tiny $17$};
\node (23) at (4,1) [draw, circle]{\tiny $23$};
\node (21) at (6,1) [draw]{\tiny $21$};

\node (10) at (1,2) [draw]{\tiny $10$};
\node (18) at (3,2) [draw, circle]{\tiny $18$};
\node (20) at (5,2) [draw, circle]{\tiny $20$};
\node (16) at (7,2) [draw]{\tiny $16$};

\node (1) at (0,3) [draw]{\tiny $1$};
\node (11) at (2,3) [draw, circle]{\tiny $11$};
\node (19) at (4,3) [draw, circle]{\tiny $19$};
\node (15) at (6,3) [draw, circle]{\tiny $15$};
\node (9) at (8,3) [draw]{\tiny $9$};

\node (2) at (1,4) [draw, circle]{\tiny $2$};
\node (12) at (3,4) [draw, circle]{\tiny $12$};
\node (14) at (5,4) [draw, circle]{\tiny $14$};
\node (8) at (7,4) [draw, circle]{\tiny $8$};

\node (13) at (2,5) [draw, circle]{\tiny $13$};
\node (7) at (4,5) [draw, circle]{\tiny $7$};
\node (25) at (6,5) [draw, circle]{\tiny $25$};

\node (26) at (3,6) [draw, circle]{\tiny $26$};
\node (3) at (5,6) [draw, circle]{\tiny $3$};

\node (28) at (4,7) [draw, circle]{$\star$};
\drawpath{1,11,19,15,9}{blue}
\drawpath{10,18,20,16}{blue}
\drawpath{17,23,21}{blue}
\drawpath{22,24}{blue}
\drawpath{9,8}{red}
\drawpath{8,2}{bend right=10, red}
\drawpath{2,1}{red}
\drawpath{16,15,14,25}{orange}
\drawpath{25,13}{bend right=10,orange}
\drawpath{13,12,11,10}{orange}
\drawpath{21,20,19,12,7,3, 26,7,14,19,18,17}{purple}
\drawpath{3,26}{bend right=10,green}
\drawpath{24,23,18,11,2,25,7,13,3}{green}
\drawpath{3,25,8,15,20,23,22}{green}
\drawpath{26,28,3}{green, vthick}
\drawpath{22,17,10,1}{dashed}
\drawpath{9,16,21,24}{dashed}
\end{tikzpicture}
\caption{The quiver for the skew-symmetrizable $\cX_q^0$ in type $A_4$.}\label{fig-A4X0}
\end{figure}
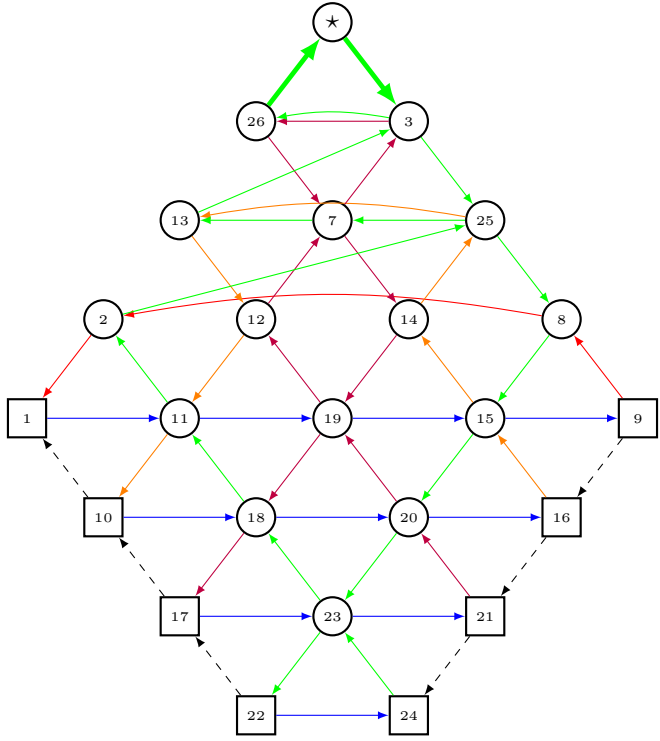

Note that in this quantum cluster seed, we have new relations (e.g. using the labeling from Figure \ref{fig-A4X0})
\Eq{
X_\star X_3=q^{-2(n+1)} X_3 X_\star,\tab X_\star X_{26}=q^{2(n+1)}X_{26}X_\star,
}

This identification comes from setting all the elementary symmetric polynomials $B_k$, $k=1,...,n$ in the embedding \eqref{enAB} of $\be_n$ to zero, while $B_{n+1}$ becomes $X_\star$, so that the image of $\be_n$ reduces to a single ``jump'' in the telescopic sum in the combined variable $X_\star$ from the last monomial term of $A_0$ to the first monomial term of $A_{n+1}B_{n+1}$. We also have the corresponding images for $\bK_n$. Since the symmetric part is proportional to the Casimir element, which lies in the center, the resulting expressions for $\be_n$ and $\bK_n$ obtained by setting the symmetric part to the scalar zero provide a homomorphic image of $\fD_q(\sl_{n+1})$ in $\cX_q^0$.

By the usual properties of the folding of quantum cluster algebras, a mutation at $X_\star$ is equivalent to first mutating simultaneously (unordered) the original unfolded expression at $X_{c_0},...,X_{c_n}$, which remains a symmetric expression involving $X_{c_i}$, and then setting the symmetric part 
\Eq{
\cE_i(X_{c_0},...,X_{c_n})&\mapsto 0, \tab i=1,...,n\\
\cE_{n+1}(X_{c_0},...,X_{c_n})=\prod_{k=0}^n X_{c_i} &\mapsto X_\star.
}
On the other hand, mutating at other variables different from $X_\star$ works as in the original unfolded expression. Since the Chevalley generators are known to be universally Laurent polynomials in the unfolded quantum cluster algebra $\cO_q(\cX)$, the folding procedure still produces universally Laurent polynomials in $\cO_q(\cX^0)$.

In terms of positive representation, the resulting polarization of $X_\star$ is given by the $n$-th power of the Weyl expression of $X_{c_0}$ but with the central parameter associated to $X_\star$ vanishing. Since we essentially identified all the central monomials $Q_k$ from the folding, the resulting positive representation does not depend on any parameter, which matches with the rank of the quiver of $\cX_q^0$. 

Irreducibility in the case of $\cU_q(\sl(2,\R))$ is clear from the action of the pair $(\bK, \bf)$. To prove the irreducibility of the representation in general, we require the following Lemma.

\begin{Lem}\label{fCas} By the embedding $\fD_q(\sl_{n+1})\inj \cX_q$, the generator $\be_n$ can be expressed as a rational expression in $\be_1,...,\be_{n-1}$, $\bf_1,...,\bf_n$, $\bK_1,...,\bK_n$ as well as the Casimirs $\bC_1,...,\bC_n$, considered as elements in the field of fraction $\bT_q$.
\end{Lem}

Recall that in the representation $\cP^0$ constructed from folding, only the action of the generator $\be_n$ is modified and the polarization of all remaining $3n-1$ Chevalley generators remain the same as $\cP_\l$. By Lemma \ref{fCas}, any operator that strongly commutes with the $3n-1$ Chevalley generators also commutes with $\be_n$, and is hence the multiplication by a scalar by the irreducibility of the standard positive representation $\cP_\l$.
\end{proof}
\begin{proof}[Proof of Lemma \ref{fCas}] In $\cU_q(\sl_2)$, recall that we have the expression
$$\be=(\bC-q\inv\bK-q\bK\inv)\bf\inv.$$
By the embedding to $\cX_q$, this element makes sense as a rational element in $\bT_q$, which factorizes and simplifies to a polynomial representing the image of $\be\in \cX_q$.

In general for type $A_n$, the fundamental representations are given by the exterior power of the standard representation $V_k:=\L^k V_1$. It is known that the action of the quantum group generators $E_\a^2=0$ for any positive root $\a$ in these representation. Hence the formula for the universal $R$-matrix restricted to $V_k$ is simply given by \cite{Ip5}
$$R|_{1\ox V_k}=\cK\prod_{\a\in \D^+} (1+\be_\a \ox \bf_\a)$$
where $\cK$ is the Cartan part and the product is over a normal ordering of the positive roots determined by the standard longest word $\bi_{A_n}$.

As a consequence, by expanding the trace and commuting the generators $\be_n$ with $\bK_i$ as well as any $(\be_i, \be_j)$ for $|i-j|>1$, we conclude that the explicit expressions of the Casimir elements are spanned by $1, \be_n, \be_{n-1}\be_n,...,\be_1\cdots \be_n$ with coefficients in $\cU_q(\g)$ without $\be_n$ multiplying from the right. (For example, see \eqref{Cas3a}--\eqref{Cas3b} after expanding the $\be_{ij}$ terms.)

Treating $x_k:=\be_k\cdots \be_n$ as indeterminates, we obtain a system of $n$ linear equations with coefficient in $\cU_q(\g)$. Hence, $\be_n$ can be solved as a rational expression in terms of the remaining $3n-1$ generators as well as the $\bC_k$.
\end{proof}

Formally, setting $\l$ to a general solution of \eqref{symeq} also gives the representation $\cP^0$ by \eqref{ckc}. However, since $\l$ is complex, the default polarization of some cluster variables of $\cX_q^0$ may not be positive if it has nontrivial central parameter. We fix this as follows.
\begin{Cor}\label{eqcor}
 $\cP^0$ is obtained from $\cP_\l$ by 
 \begin{itemize}
 \item choosing a polarization of $\cX_q^{\mathrm{sym}}$ where the central parameters of each symmetric variable $X_{c_i}$ are shifted by a constant $c$, such that $X_\star:=X_{c_0,...,c_n}$ as well as all other variables have trivial central parameters, and
 \item setting $\l$ to a general solution $\l_0\in\C^n$ of \eqref{symeq}.
 \end{itemize}
We may formally write $\cP^0 = \cP_{\l_0}$.
\end{Cor}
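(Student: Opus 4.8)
The plan is to exploit the proportionality \eqref{ckc} between the central symmetric functions $\cC_k$ and the Casimir operators $\bC_k$, which already translates the folding prescription ``set $B_k=0$ for $k\le n$'' into the analytic condition ``set $\bC_k=0$''. First I would take the expression \eqref{enAB} for $\iota(\be_n)$ in $\cX_q^{\mathrm{sym}}$ together with the factorization $B_k=X_{c_0}^k\cC_k$, and apply the polarization $\pi_\l$. Since each $\cC_k$ is central it acts by the scalar $\pi_\l(\cC_k)$, and by \eqref{ckc} this scalar is a nowhere-vanishing exponential prefactor times $\pi_\l(\bC_k)$. Substituting $\l=\l_0$, a general solution of \eqref{symeq}, the Corollary preceding Definition \ref{gensol} gives $\pi_{\l_0}(\bC_k)=0$, hence $\pi_{\l_0}(\cC_k)=0$ and therefore $\pi_{\l_0}(B_k)=0$ for $k=1,\dots,n$. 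Only $A_0B_0=A_0$ and $A_{n+1}B_{n+1}$ survive, and since $B_{n+1}=\cE_{n+1}(X_{c_0},\dots,X_{c_n})=X_\star$, the resulting operator is exactly the image of $\be_n$ produced by the folding of Theorem \ref{mainthmAn}.

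Next I would dispatch the remaining generators. By the discussion preceding \eqref{enAB}, none of $\be_1,\dots,\be_{n-1}$, the $\bf_i$, or the $\bK_i$ involves the symmetric variables, so their polarizations are literally unchanged under $\l=\l_0$ and agree termwise with those of $\cP^0$. Thus the only object whose two descriptions must be reconciled is the surviving symmetric monomial $X_\star$.

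The positivity bookkeeping is handled by a change of polarization justified by Lemma \ref{polaru}. Shifting the central parameter of each symmetric variable $X_{c_i}$ by a common constant $c$ leaves every ratio $Q_k=X_{c_k}X_{c_{k-1}}\inv$ invariant, hence leaves every $\cC_k$ and the full central character unchanged, so the shifted assignment is again a polarization, unitarily equivalent to $\cP_\l$. The shift changes the central parameter of $X_\star=\prod_i X_{c_i}$ by $(n+1)c$, and I would take $c$ to be the unique value killing that central parameter; using the freedom of Lemma \ref{polaru} to push the residual $\l$-dependence out of the non-symmetric variables, one arranges that at $\l=\l_0$ the operator $X_\star$, as well as every non-symmetric cluster variable, is a genuine positive essentially self-adjoint operator with trivial central parameter. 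Identifying this $X_\star$ with the merged variable of $\cX_q^0$ then gives $\cP^0=\cP_{\l_0}$.

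I expect the main obstacle to be precisely this last positivity step: a priori, substituting the complex value $\l_0$ converts individual cluster variables into a phase times a positive operator, and one must verify that the shift $c$ together with the accompanying polarization change genuinely \emph{removes} these phases rather than merely redistributing them. The guiding model is the rank-one case, where at $\l_0=\frac{\bi}{4b}$ the two Casimir monomials in $\pi_\l(\be)$ become $\pm\bi\,e^{2\pi b p}$ and cancel exactly; the content of \eqref{ckc} is that the same cancellation occurs simultaneously for all of $B_1,\dots,B_n$, so that what remains is a manifestly positive operator realizing $\cP^0$.
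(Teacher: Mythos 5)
Your proposal is correct and takes essentially the same route as the paper's own proof: the identification of the surviving terms comes from the vanishing of $B_1,\dots,B_n$ at $\l_0$ via \eqref{ckc} (which the paper carries over from the proof of Theorem \ref{mainthmAn}), and the positivity is restored by a change of polarization justified by Lemma \ref{polaru}, with a common shift $c$ of the symmetric variables chosen so that $X_\star$ acquires trivial central parameter. Your bookkeeping is in fact the careful version of the paper's terse ``one checks that the polarizations of all central monomials remain invariant'': since $K_nK_n'$ contains each $X_{c_i}$ exactly once, the shift by $c$ on the $n+1$ symmetric vertices must indeed be accompanied, as you indicate, by resetting the non-symmetric central parameters to zero so that the group-like condition and the $Q_k$ characters are preserved simultaneously.
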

\begin{proof} By direct computation, the constant $c$ is given by
\Eqn{
c&=\frac{1}{n}\left(4\l_n-4\l_{n-1}-(4\l_{n-1}+4\l_{n-2})-\cdots - 4\sum_{k=1}^{n-1}\l_k\right)\\
&=4\l_n - \frac{4}{n}\sum_{k=1}^{n-1}k\l_k.
}
One checks that the polarizations of all central monomials remain invariant. The symmetric folding that takes $X_\star=X_{c_0,...,c_n}$ removes all the central parameters in $\cX_q^0$ and the polarization of $\cX_q^0$ is still positive by construction.
\end{proof}

\begin{Rem}\label{notstd} We remark that even the Chevalley generators are universally Laurent polynomials in $\cO_q(\cX^0)$, in type $A_n$ for $n\geq 3$, the special generator $\be_n$ is no longer a standard monomial in general.
\end{Rem}

Finally, we have to show that the construction is independent of the choice of the ``special" generator $\be_n$ used, since the standard positive representations in type $A_n$ is symmetric with respect to the Dynkin involution $\be_i\corr \bf_{i^*}$ as well as $\be_i\corr \be_{i^*}$ induced from the change of words $\bi_0\corr \bi_0^*$ by Coxeter moves, both of which are related by cluster mutations and changes of polarizations. This is addressed in the following Proposition.

\begin{Prop}\label{mainprop} Assume $\cP_\l\simeq \cP_\l'$ are two unitarily equivalent standard positive representations of $\cU_q(\g)$ related by quantum cluster mutations. Then the degenerate representations obtained as in Corollary \ref{eqcor} by setting $\l$ to a general solution of \eqref{symeq} are still unitarily equivalent,
\Eq{\cP^0\simeq {\cP^0}'. \label{l0eq}}
\end{Prop}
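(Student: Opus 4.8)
The plan is to exploit the fact that the unitary equivalence $\cP_\l\simeq\cP_\l'$ is witnessed by a concrete intertwiner $U(\l)$ arising from the given mutation sequence, and to argue that this intertwiner survives the specialization $\l\mapsto\l_0$ that produces the degenerate representations. By the cluster realization of \cite{Ip7} together with Definition \ref{qmut} and Proposition \ref{monopolar}, $U(\l)$ factors as a composition of elementary quantum mutation operators, each of which is the product of a symplectic (Weil) transformation implementing the monomial part (cf. Lemma \ref{polaru}) and a multiplication by the quantum dilogarithm $g_b$ evaluated at the cluster variable $X_k$ at the mutation vertex. The crucial structural observation is that, written in the canonical position and momentum variables, none of these building blocks depends on $\l$ algebraically: the dependence on $\l$ enters \emph{only} through the central parameters, i.e. the $\ol_k$-shifts carried by the polarizations of the cluster variables. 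Thus $U(\l)$ is, as an operator-valued expression, analytic in $\l$, being assembled from the meromorphic function $g_b$ and from shifts that are linear in $\l$.

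Next I would invoke Corollary \ref{eqcor}, which realizes $\cP^0$ as the representation $\cP_{\l_0}$ at the complex general solution $\l_0$ of \eqref{symeq}, \emph{after} shifting the central parameters of the symmetric variables $X_{c_i}$ by the constant $c$ so that $X_\star$ and every remaining cluster variable carries trivial central parameter. The point of this shift is precisely that it keeps all the arguments of the quantum dilogarithm factors of $U$ positive self-adjoint even at the complex value $\l=\l_0$; consequently each $g_b$-factor remains a well-defined unitary multiplication operator and no pole of $g_b$ is encountered. The intertwining identities $U(\l)\,\pi_\l(x)=\pi_\l'(x)\,U(\l)$ hold for all real $\l$ and all $x\in\fD_q(\g)$; since both sides are analytic in $\l$ on a neighbourhood containing the real locus and the point $\l_0$, they continue to $\l=\l_0$ by the identity principle. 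This yields a unitary $U(\l_0)$ intertwining the unfolded, zero-Casimir representations.

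Finally I would descend the argument through the symmetric folding, reusing the mechanism from the proof of Theorem \ref{mainthmAn}: a mutation not involving the symmetric vertices commutes with the identification $X_\star=\prod_{i=0}^n X_{c_i}$, while a mutation at $X_\star$ equals the simultaneous unordered mutation at $X_{c_0},\dots,X_{c_n}$ followed by sending $\cE_i(X_{c_0},\dots,X_{c_n})\mapsto 0$ for $i\le n$ and $\cE_{n+1}\mapsto X_\star$. Since the symmetric part is proportional to the central Casimirs $\bC_k$, which act by zero in both $\cP^0$ and ${\cP^0}'$, the folded version of $U(\l_0)$ descends to a well-defined unitary on the folded Hilbert spaces, giving $\cP^0\simeq{\cP^0}'$.

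I expect the main obstacle to be the analytic-continuation step of the second paragraph: one must confirm that the particular quantum dilogarithm factors appearing in $U$ stay off their singular set as $\l$ runs from the real axis to $\l_0$, and that the continuation of the intertwining relations is legitimate as an operator identity on the dense core $\cW$ rather than merely formally. The central-parameter shift of Corollary \ref{eqcor} is exactly the device that resolves the positivity issue, so the remaining work is the combinatorial verification that the mutation sequence can be arranged to avoid the poles of $g_b$ throughout the continuation, once positivity of the dilogarithm arguments is secured.
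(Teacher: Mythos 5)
Your overall strategy --- factoring the intertwiner $U(\l)$ into Weil transformations and quantum dilogarithm multiplications, observing that $\l$ enters only through the central parameters, and analytically continuing to the general solution $\l_0$ of \eqref{symeq} while avoiding the poles of $g_b$ --- coincides with the first half of the paper's proof. But there is a genuine gap at the decisive point: your claim that after continuation ``each $g_b$-factor remains a well-defined \emph{unitary} multiplication operator'' is false, and the central-parameter shift of Corollary \ref{eqcor} does not rescue it. That shift only arranges positivity of the polarization of the folded algebra $\cX_q^0$ (of $X_\star$ and the remaining variables); the dilogarithm factors occurring in the unfolded mutation sequence still acquire genuinely complex arguments at $\l=\l_0$. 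This is visible in the paper's own $A_1$ example, where the continued intertwiner is $g_b(e^{2\pi bu}e^{\frac{\pi\bi}{2}})\,g_b(e^{2\pi bu}e^{-\frac{\pi\bi}{2}})$: the individual factors are not unitary (their moduli are not $1$), and unitarity of the composition is exactly what needs to be proved, not an input. What the pole-avoidance observation actually yields is much weaker: since $\l_0$ involves no half-integer multiples of $b$, the continued operator $g_b(e^{\pi\mathring{L}})$ is densely defined and invertible with adjoint given by $g_b(e^{2\pi bx+\bi\a})^*=g_b(e^{2\pi bx-\bi\a})\inv$, nothing more.

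Consequently your continuation produces only a nonzero, densely defined, invertible intertwiner $\Phi:\cP^0\to{\cP^0}'$, and the proposal is missing the step that upgrades this to a unitary equivalence. The paper supplies it using positivity and irreducibility of $\cP^0$ and ${\cP^0}'$ (Theorem \ref{mainthmAn}): for any $X\in\cU_q(\g)$ and $t\in\R$ with $\pi^0(X)^{\bi t}$ bounded unitary,
\Eqn{
\Phi^*\Phi\,\pi^0(X)^{\bi t}=\Phi^*{\pi^0}'(X)^{\bi t}\Phi=\left({\pi^0}'(X)^{-\bi t}\Phi\right)^*\Phi=\left(\Phi\,\pi^0(X)^{-\bi t}\right)^*\Phi=\pi^0(X)^{\bi t}\,\Phi^*\Phi,
}
so $\Phi^*\Phi$ strongly commutes with the whole representation, hence is a nonzero scalar by irreducibility, and $\Phi$ becomes unitary after rescaling. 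Without this (or some substitute), you have shown only that the two degenerate representations are intertwined by a closable invertible operator, which does not by itself give $\cP^0\simeq{\cP^0}'$ in the unitary sense claimed. A minor further point: your final ``descent through the folding'' paragraph is redundant, since Corollary \ref{eqcor} already identifies $\cP^0$ with $\cP_{\l_0}$ as a representation on the same Hilbert space, so the whole argument can (and in the paper does) take place there.
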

\begin{proof} For a given polarization of $\cX_q$, it is known \cite{FG3} that a quantum cluster mutation is realized by unitary transformations on $\cH\simeq L^2(\R^N)$ of the form
$$\Psi=\cM\circ g_b(e^{\pi \mathring{L}})$$
where $g_b$ is the quantum dilogarithm function, $L$ as in Notation \ref{ELnote}, and $\cM\in Sp(2N)$ gives a change of variables with respect to $\{u_i, p_j\}$ realizing the monomial transformation described in Proposition \ref{monopolar}, or in other words, just an invertible change of polarization (cf. Lemma \ref{polaru}).

We recall some analytic properties of the quantum dilogarithm function $g_b(e^{2\pi b x})$ which can be found in \cite{Ip1, PT2}. It can be analytically continued to a meromorphic function $G(x)$, such that it admits poles at $x=\bi\left(\frac{b+b\inv}{2}+nb+mb\inv\right)$ and zeros at $x=-\bi\left(\frac{b+b\inv}{2}+nb+mb\inv\right)$ for $n,m\in \Z_{\geq0}$. Furthermore, for a constant $\a\in\R$, $G(x+\bi\a)$ has at most exponential growth on the real line $x\in\R$ bounded by $e^{2\pi b \a x}$, and is hence a densely defined operator on $L^2(\R)$. The adjoint can be similarly defined for $\a\in\R$, with the reciprocal analytic properties, by $$g_b(e^{2\pi b x+\bi\a})^*:=g_b(e^{2\pi b x-\bi\a})\inv.$$

By a choice of polarization, the general operator $g_b(e^{\pi\mathring{L}})$ can be reduced to an action by a single variable in which the previous analytic properties apply. Furthermore, it depends analytically on $\l_i$. By setting $\l_1=\cdots=\l_n=\frac{\bi}{2(n+1)b}$ to be the general solution of \eqref{symeq}, $x\in \R$ will not pass through the poles and zeros of $g_b$ since $\l_i$ does not involve half multiples of $b$. Hence, we conclude that $g_b(e^{\pi\mathring{L}})$ is invertible on a dense domain and the adjoint is also well-defined.

Therefore, given a composition of the transformations with respect to the mutation sequence (together with a change of variables) realizing $\cP_\l\simeq \cP_\l'$, by analytically continuing $\l$ towards the general solution of \eqref{symeq}, we obtain a nonzero densely defined invertible intertwiner:
$$\Phi: \cP^0\simeq {\cP^0}'.$$

Now by Theorem \ref{mainthmAn}, both representations $\cP^0$ and ${\cP^0}'$ are positive and irreducible defined on a dense subspace of $L^2(\R^N)$. Hence, for any $X\in \cU_q(\g)$ and $t\in \R$ such that $\pi^0(X)^{\bi t}$ is bounded and unitary, we have by positivity
\Eqn{
\Phi^*\Phi\pi^0(X)^{\bi t}&=\Phi^*{\pi^0}'(X)^{\bi t}\Phi\\
&=({\pi^0}'(X)^{-\bi t}\Phi)^*\Phi\\
&=(\Phi\pi^0(X)^{-\bi t})^*\Phi \\
&= \pi^0(X)^{\bi t}\Phi^*\Phi}
hence $\Phi^*\Phi$ commutes strongly with any $\pi^0(X)$. Since $\Phi^*\Phi$ is nonzero, upon rescaling if necessary it must be the identity operator by irreducibility, so $\Phi$ is a unitary equivalence.
\end{proof}
\begin{Ex} In the case $\cU_q(\sl(2,\R))$, consider the quiver in Figure \ref{fig-A1}. We compare the two representations with quivers obtained by a mutation at $2$ followed by folding, and by a mutation at $4$ followed by folding. The two folded quivers are related by a cluster mutation with double weight (see Figure \ref{fig-X2fold}--\ref{fig-A1mut}). By choosing the standard polarization, the conclusion of Proposition \ref{mainprop} gives us an identity between the folded unitary transformation and the unfolded one by specifying $\l=\frac{\bi}{4b}$, namely
\Eq{
g_{\sqrt{2}b}(e^{4\pi b u}) =c\cdot g_b(e^{2\pi bu}e^{\frac{\pi \bi}{2}})g_b(e^{2\pi bu}e^{-\frac{\pi \bi}{2}})
}
for some constant $c\in \C$ with unit norm. By setting $u=0$ and using the functional equation for $g_b$ (cf. \cite{Ip1}), one calculates the constant to be just $c=1$. This identity, valid for $0<b<1$, should be considered as the analytic continuation of the more well-known ones obtained for quantum dilogarithm at the root of unity \cite{IY}.
\end{Ex}
\begin{Rem}\label{notmut} By Remark \ref{notstd} however, we see that in general the unitary equivalence in \eqref{l0eq} may not be expressible as quantum cluster mutations in $\cX_q^0$ anymore, since the unitary transformation $g_b(\pi(X_\star)^\frac{1}{n+1})$ corresponding to the unfolded variable is not available. It will be interesting to find all such equivalences that can actually be realized as cluster mutations, which hold e.g. in type $A_1$ and $A_2$.
\end{Rem}
\subsection{Modular double counterpart}\label{sec:An:mod}
\begin{Thm}\label{thmmod} The representation $\cP^0$ of $\cU_q(\sl(n+1,\R))$ induces a representation $\til{\cP}^0$ of its modular double counterpart $\cU_{\til{q}}(\sl(n+1,\R))$ with the central parameter $\l$ given by the general solution of \eqref{symeq}.
\end{Thm}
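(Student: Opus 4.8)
The plan is to reproduce, for general $n$, the argument given for $\cU_q(\sl(2,\R))$ in Theorem \ref{sl2mod}, using the identification $\cP^0=\cP_{\l_0}$ from Corollary \ref{eqcor}, where $\l_0$ is the general solution of \eqref{symeq}. Recall that the standard positive representation $\cP_\l$ already carries a modular double structure \cite{FI, Ip2, Ip3}: the operators $\til{\be}_i:=\pi_\l(\be_i)^{1/b^2}$, $\til{\bf}_i:=\pi_\l(\bf_i)^{1/b^2}$, $\til{\bK}_i:=\pi_\l(\bK_i)^{1/b^2}$ (recall $d_i=1$ in type $A$), defined by functional calculus on the positive self-adjoint generators, satisfy the relations of $\cU_{\til{q}}(\sl(n+1,\R))$ and commute with the action of $\cP_\l$. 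The task is to show that these transcendental relations survive the continuation $\l\mapsto\l_0$. Since the folding modifies only the generator $\be_n$ (as in the proof of Theorem \ref{mainthmAn}), the operators $\be_i\,(i<n)$, $\bf_i$, $\bK_i$ in $\cP^0$ are honest analytic continuations to $\l_0$ of their counterparts in $\cP_\l$, and their transcendental relations persist by continuity together with positivity (guaranteed by the shifted polarization of Corollary \ref{eqcor} and the analytic estimates of Proposition \ref{mainprop}). The whole problem therefore reduces to the transcendental relation for $\be_n$, whose monomial expansion degenerates under the folding.

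For $\be_n$ I would argue directly as in the $\sl_2$ case. In $\cP^0$ its image is the truncated telescopic sum obtained from \eqref{enAB} by sending the central symmetric polynomials $B_1,\dots,B_n$ to zero and $B_{n+1}$ to the folded variable $X_\star=\prod_{i=0}^n X_{c_i}$ of multiplier $d_\star=n+1$; its monomials form a chain whose consecutive terms $q^2$-commute, save for the single ``jump'' through $X_\star$, which is $q^{2(n+1)}$-commuting. Applying Lemma \ref{qbin} along the $q^2$-commuting segments and treating the folded jump by the split-then-power device of Theorem \ref{sl2mod} --- passing to the $\frac{1}{(n+1)b^2}$-power, which distributes across the $q^{2(n+1)}$-commuting pair, then raising to the $(n+1)$-th power to recover the $\frac{1}{b^2}$-power --- should yield $\pi^0(\be_n)^{1/b^2}=\til{\be}_n$. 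The cross terms generated in this last step must reassemble into the modular counterpart of the (now vanished) symmetric part, weighted by the \emph{modular} Casimir values, exactly mirroring how the term $(\til{q}^{1/2}+\til{q}^{-1/2})e^{2\pi b\inv p}$ reappeared in the $\sl_2$ computation. This is the main obstacle: Lemma \ref{qbin} addresses only a single $q^2$-commuting pair, so one must check that the split-then-power procedure is consistent across the \emph{entire} truncated sum --- not just one pair, as for $\sl_2$ --- and reproduces the genuine dual generator $\til{\be}_n$ rather than a spurious combination, while keeping the continued operators positive and essentially self-adjoint at the complex value $\l_0$. Equivalently, in the ``reversed multiplier'' language of the introduction, one checks that replacing $d_\star=n+1$ by $\frac{1}{n+1}$ gives a skew-symmetrizable datum whose positive polarization realizes $\cU_{\til{q}}(\sl(n+1,\R))$.

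Once the transcendental relation for $\be_n$ is in hand, the central parameter of $\til{\cP}^0$ is pinned down by the $b\mapsto b\inv$ version of the Casimir formula \eqref{casact}: at the general solution $\varpi_i=\frac{(n-2i)\bi}{4b(n+1)}$, so $e^{4\pi b\inv\varpi_i}=\til{q}^{(n-2i)/(n+1)}$ and the modular Casimirs act by $\pi^0(\til{\bC}_k)=\cE_k(\til{q}^{\frac{n}{n+1}},\dots,\til{q}^{-\frac{n}{n+1}})=\bin{n+1\\k}_{\til{q}^{\frac{1}{n+1}}}$, generalizing the value $\til{q}^{1/2}+\til{q}^{-1/2}$ of Theorem \ref{sl2mod}. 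This exhibits $\til{\cP}^0$ as the modular double representation attached to the general solution $\l_0$, completing the argument.
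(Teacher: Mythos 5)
Your proposal takes a different route from the paper, and the route runs straight into what the paper explicitly leaves open. You attempt to construct $\til{\cP}^0$ by proving the transcendental relations, i.e.\ by showing that $\pi^0(\be_n)^{1/b^2}$ (together with the powers of the other generators) reproduces the dual generators of $\cU_{\til{q}}(\sl(n+1,\R))$. But that is precisely the Conjecture stated immediately after Corollary \ref{cormainAn}: the paper notes that the transcendental relations hold automatically for $\be_1,\dots,\be_{n-1}$, $\bf_1,\dots,\bf_n$, $\bK_1,\dots,\bK_n$ (these remain standard monomials in some cluster chart), but that by Remark \ref{notstd} the generator $\be_n$ is no longer a standard monomial for $n\geq 3$, so ``a new algebraic or analytic way is thus required to prove this relation.'' The split-then-power argument you invoke is carried out in the paper only for $A_1$ (Theorem \ref{sl2mod}) and $A_2$, where the telescopic sum for $\be_n$ happens to split into brackets that pairwise $q^{\pm 2}$-commute so that Lemma \ref{qbin} applies; for $n\geq 3$ no such grouping is exhibited, and your sketch does not supply one --- the decisive sentence is ``should yield $\pi^0(\be_n)^{1/b^2}=\til{\be}_n$,'' and you yourself flag the consistency of the procedure across the whole truncated sum as ``the main obstacle'' without resolving it. As written, the core step of your proof is an open problem, not a proof.

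The paper's own proof of Theorem \ref{thmmod} is designed to sidestep exactly this point: it never takes $1/b^2$ powers of the non-monomial operator $\pi^0(\be_n)$. Instead it builds the dual quantum torus algebra $\til{\cX}_q^0$ directly, setting $\til{X}_i:=X_i^{1/b^2}$ for $i\neq\star$ and $\til{X}_\star:=X_\star^{1/((n+1)b^2)}$, with the multiplier at the folded vertex inverted to $d_\star=\frac{1}{n+1}$. The modular invariance of the quantum dilogarithm, $g_{b_\star}(X_\star)=g_{b_\star\inv}(\til{X}_\star)$ with $b_\star=\sqrt{n+1}\,b$, shows that the single unitary implementing the mutation at $\star$ acts on the dual variables by the full binomial expansion $\mathrm{Ad}_{g_{b_\star\inv}(\til{X}_\star)}\til{X}_i=\sum_{k=0}^{n+1}\bin{n+1\\k}_{\til{q}^{1/(n+1)}}\til{X}_{i,\star^k}$, and comparing coefficients with the unfolded cluster algebra identifies the induced representation as the one whose Casimirs act by $\bin{n+1\\k}_{\til{q}^{1/(n+1)}}$, i.e.\ with central parameter the general solution of \eqref{symeq}. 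So ``induces'' in the statement is realized through the dual cluster structure (reversing the multiplier), which is independent of --- and strictly weaker than --- the transcendental relations you set out to prove. Your final paragraph computing the modular Casimir values is correct and agrees with the paper's conclusion, but it only pins down the parameter once the representation is known to exist; it cannot substitute for the missing construction of $\til{\cP}^0$ itself.
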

\begin{proof}
Let $b_\star:=\sqrt{n+1}b$ and define formally
\Eqn{
\til{X}_i &:= X_i^{\frac{1}{b^2}},\tab  i\neq \star\\
\til{X}_\star&:= X_\star^{\frac{1}{(n+1)b^2}}=X_\star^{\frac{1}{b_\star^2}}
}
with weight $d_\star=\frac{1}{n+1}$ and $d_i=1$ for $i\neq \star$. Let $\til{\cX}_q^0$ be the corresponding quantum torus algebra they generate. (Alternatively one can define $\til{\cX}_q^0$ through a proper rescaling of the lattice and seed basis.)

Note that the polarization of $\til{X}_\star^{n+1}$ is the same as that of $X_\star$ with $b\corr b\inv$.

If $X_iX_\star=q^{-2(n+1)}X_\star X_i$, then
$$\mathrm{Ad}_{g_{b_\star}(X_\star)} X_i = X_i+X_{i,\star}.$$
By the modular invariance of the quantum dilogarithm function, we have
$$g_{b_\star} (X_\star)=g_{b_\star\inv}(\til{X}_\star).$$
Since $\til{X}_i\til{X}_\star=\til{q}^{-2} \til{X}_\star\til{X}_i$, by the binomial theorem, the same mutation gives
$$\mathrm{Ad}_{g_{b_\star\inv}(\til{X}_\star)}\til{X}_i = \sum_{k=0}^{n+1} \bin{n+1\\k}_{\til{q}^{\frac{1}{n+1}}}\til{X}_{i,\star^k}.$$
Therefore, by comparing the coefficients in the unfolded cluster algebra, we see that the modular double counterpart is obtained by setting the Casimir actions of ${\cU_{\til{q}}(\sl(n+1,\R))}$ to $$\pi_\l(\til{\bC}_k)=\bin{n+1\\k}_{\til{q}^{\frac{1}{n+1}}}$$
which is the same as specifying the central parameters $\l\in\C^n$ to one of the general solutions of \eqref{symeq}.
\end{proof}
Replacing $\til{q}$ with $q$ in the discussion above, we have
\begin{Cor}\label{cormainAn} There is a homomorphism from $\cU_q(\sl_{n+1})$ to a skew-symmetrizable quantum cluster algebra $\cO_q(\til{\cX}^0)$ such that for any irreducible polarization $\til{\pi}$ the generalized Casimir elements act as 
\Eq{
\til{\pi}(\bC_k)=\bin{n+1\\k}_{q^{\frac{1}{n+1}}},\tab k=1,...,n} and the Chevalley generators are realized as universally Laurent polynomials.
\end{Cor}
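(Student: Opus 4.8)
The plan is to transplant the argument in the proof of Theorem \ref{thmmod} essentially verbatim, reading the formal parameter as $q$ rather than $\til q$. The first step is to set up the target cluster algebra $\cO_q(\til\cX^0)$ by reversing the multiplier of the symmetric folding: I take the same underlying quiver as $\cX_q^0$ (Figure \ref{fig-A4X0}) but assign the folded vertex the weight $d_\star=\frac{1}{n+1}$ instead of $n+1$, keeping $d_i=1$ for $i\neq\star$, and evaluate at the formal parameter $q$. One checks that $W=DB$ remains skew-symmetric, so $\til\cX_q^0$ is a genuine skew-symmetrizable quantum torus algebra. The only relation that changes relative to $\cX_q^0$ is that $\til X_\star$ now $q$-commutes with its neighbours through a single power of $q$, namely $\til X_i\til X_\star=q^{-2}\til X_\star\til X_i$, which is precisely the $\til q\to q$ image of the relation $\til X_i\til X_\star=\til q^{-2}\til X_\star\til X_i$ appearing in the proof of Theorem \ref{thmmod}.

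Next I would define the homomorphism $\fD_q(\sl_{n+1})\to\til\cX_q^0$ exactly as in Theorem \ref{mainthmAn}: all Chevalley generators except $\be_n$ and the Cartan generator $\bK_n$ are given by the same telescopic sums, since they do not involve the symmetric variables. For $\be_n$ I would use the conjugation by the quantum dilogarithm associated to $\til X_\star$ (playing the role of $g_{b_\star^{-1}}(\til X_\star)$ in Theorem \ref{thmmod}). Because of the single-power commutation relation above, the $q$-binomial theorem in the integrable form of Lemma \ref{qbin} expands this conjugation as $\sum_{k=0}^{n+1}\bin{n+1\\k}_{q^{1/(n+1)}}\til X_{i,\star^k}$, which is the same expansion as in the proof of Theorem \ref{thmmod} with $\til q$ replaced by $q$. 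Comparing this with the unfolded embedding \eqref{enAB}, I would identify the central elements $\cC_k$ with the scalars $\bin{n+1\\k}_{q^{1/(n+1)}}$, so that the resulting expression for $\be_n$ is the full telescoping sum rather than the single ``jump'' of $\cX_q^0$.

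The homomorphism property then follows from the observation that the only modification relative to $\cP_\l$ is the scalar value taken by the central Casimir elements, and these values $\bin{n+1\\k}_{q^{1/(n+1)}}$ are exactly the consistent specialization recorded in the final Corollary of Section \ref{sec:pre:cas} (obtained by substituting $b^2\l$ into a general solution of \eqref{symeq}); since the $\bC_k$ are central, prescribing their scalar values preserves all the defining relations of $\cU_q(\sl_{n+1})$. Universal Laurentness of the generators is inherited exactly as in Theorem \ref{mainthmAn}: $\be_n$ becomes a standard monomial after a single mutation at $\star$, and every other generator is manifestly Laurent in each chart. Finally, because the folding collapses all the free central parameters $Q_k$, the images of the Casimirs are fixed central monomials with no remaining parameter, so for any irreducible polarization $\til\pi$ they act by the stated scalars independently of the polarization.

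I expect the main obstacle to be bookkeeping rather than conceptual: verifying that the reversed-multiplier seed $\til\cX_q^0$ is still compatible with the folding, i.e. that a mutation at $\star$ genuinely corresponds, in the unfolded picture, to the simultaneous mutation at $c_0,\dots,c_n$ that generates the binomial, and that the single-power commutation relation is the one making the $q$-binomial theorem produce the factor $q^{1/(n+1)}$ rather than $q$ or $q^{n+1}$. Once this normalization is pinned down, the remainder of the argument is a direct transcription of Theorem \ref{thmmod}.
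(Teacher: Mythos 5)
Your overall route coincides with the paper's: Corollary \ref{cormainAn} is obtained there by literally replacing $\til{q}$ with $q$ in the proof of Theorem \ref{thmmod}, and your transcription of that argument is essentially faithful. The seed with inverted multiplier $d_\star=\frac{1}{n+1}$ on the same underlying quiver (so that $\til{X}_i\til{X}_\star=q^{-2}\til{X}_\star\til{X}_i$ with a single power of $q$, forcing $\e_{\star i}=\mp(n+1)$ and hence a Gauss binomial $\sum_{k=0}^{n+1}\bin{n+1\\k}_{q^{1/(n+1)}}\til{X}_{i,\star^k}$ for the mutation at $\star$), the comparison with the unfolded embedding \eqref{enAB} identifying the central elements $\cC_k$ with the scalars $\bin{n+1\\k}_{q^{1/(n+1)}}$ (equivalently, substituting $b^2\l$ with $\l$ a general solution of \eqref{symeq}, as in the corollary closing Section \ref{sec:pre:cas}), the remark that prescribing scalar values of central elements preserves the defining relations, and the observation that the folded seed has no residual free central parameters so the Casimir characters are independent of the irreducible polarization --- all of this is exactly the intended proof.

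The one step that fails as written is your justification of universal Laurentness: it is \emph{not} true that $\be_n$ becomes a standard monomial after a single mutation at $\star$ (except for $n=1$). A mutation at $\star$ only collapses the symmetric portion of the telescopic sum; the segments $A_0,A_{n+1}$ of \eqref{enAB} that do not involve the symmetric variables survive, so the image of $\be_n$ remains a genuine multi-term sum in the new chart. This is visible already in type $A_2$: the image of $\be_2$ in $\til{\cX}_q^0$ is an eight-term sum, and the paper's own computation there reduces it to a standard monomial $X_8'$ only after mutating at the four vertices $7,3,2,4$; compare also Remark \ref{notstd}, which issues the analogous warning for $\cX_q^0$ when $n\geq 3$. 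The slip is not fatal, because your headline claim --- that Laurentness is ``inherited exactly as in Theorem \ref{mainthmAn}'' --- is the correct mechanism: a mutation at $\star$ in $\til{\cX}_q^0$ unfolds to the simultaneous (unordered) mutation at $c_0,\dots,c_n$ followed by the specialization $\cE_i(X_{c_0},\dots,X_{c_n})\mapsto \bin{n+1\\i}_{q^{1/(n+1)}}\til{X}_\star^i$, while mutations at all other vertices agree with the unfolded ones; since the Chevalley generators are universally Laurent in the unfolded algebra $\cO_q(\cX)$, their folded images stay universally Laurent in $\cO_q(\til{\cX}^0)$. You should delete the standard-monomial sentence and let this folding argument (or, where one wants an explicit certificate via Lemma \ref{stdmon}, a full mutation sequence as in the $A_2$ example) carry that part of the proof.
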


The transcendental relations however become more subtle. In the original positive representations, the proof relies heavily on the fact that the Chevalley generators can be transformed to cluster monomials, such that the transcendental relations are satisfied for any choice of polarization:
\Eq{
(e^{2\pi bL(\bu,\bp, \l)})^{\frac{1}{b^2}}=e^{2\pi b\inv L(\bu, \bp, \l)}.
}
Hence, the transcendental relations hold for $\be_1,...,\be_{n-1}$ as well as $\bf_1,...,\bf_n$ and $\bK_1,...,\bK_n$. However, by Remark \ref{notstd}, the generator $\be_n$ is in general no longer a standard monomial. A new algebraic or analytic way is thus required to prove this relation.
\begin{Con} 
The transcendental relations are satisfied for all Chevalley generators of $\cU_q(\sl(n+1,\R))$ with $\l\in\C^n$ taken to be any general solution in \eqref{symeq}.
\end{Con}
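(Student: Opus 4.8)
The plan is to isolate the single genuinely hard generator and attack it by the two complementary routes — algebraic and analytic — that the discussion before the conjecture already points to. First I would reduce the statement to the generator $\be_n$ alone. By construction all of $\be_1,\dots,\be_{n-1}$, $\bf_1,\dots,\bf_n$ and $\bK_1,\dots,\bK_n$ are independent of the folded variable $X_\star$ and can be mutated inside $\cX_q^0$ to cluster monomials $e^{2\pi b L}$; for these the relation $(e^{2\pi b L})^{\frac{1}{b^2}}=e^{2\pi b\inv L}$ is tautological and agrees with the dual monomial produced in Theorem \ref{thmmod}. Thus the whole content of the conjecture is the single operator identity $\pi^0(\be_n)^{\frac{1}{b^2}}=\til{\pi}^0(\be_n)$, with the right-hand side the algebraically-defined modular generator of Theorem \ref{thmmod}; by Remark \ref{notstd} the obstruction is precisely that $\be_n$ cannot be reduced to a monomial in $\cX_q^0$.

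For the algebraic route I would generalize the $\sl_2$ computation in the proof of Theorem \ref{sl2mod}. There the two terms of $\pi^0(\be)$ are $q^4$-commuting because of the doubled multiplier, and one first extracts the clean $\frac{1}{2b^2}$ power by Lemma \ref{qbin} and then squares, the square producing exactly the coefficient $\til{q}^{\frac12}+\til{q}^{-\frac12}=\bin{2\\1}_{\til{q}^{\frac12}}$. The general mechanism should be identical with $2$ replaced by $n+1$: the folded variable $X_\star$ carries multiplier $d_\star=n+1$, so in a chart where $\pi^0(\be_n)$ is as close to monomial as possible the $X_\star$-dependence appears as a block of monomials that mutually $q^{2(n+1)}$-commute, i.e.\ $q_\star^2$-commute for $b_\star=\sqrt{n+1}\,b$. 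Writing $\frac{1}{b^2}=\frac{n+1}{b_\star^2}$, the proposal is to extract the $\frac{1}{b_\star^2}$ power of this block by Lemma \ref{qbin} applied at scale $b_\star$ (which distributes with no cross terms), and then raise the result to the $(n+1)$-st power via the noncommutative binomial theorem for $q^2$-commuting operators, generating precisely the $q$-binomial coefficients $\bin{n+1\\k}_{\til{q}^{\frac{1}{n+1}}}$ that appear in Theorem \ref{thmmod}. Matching these against $\til{\pi}^0(\be_n)$ term by term would close the argument.

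The analytic route uses Corollary \ref{eqcor}, which realizes $\cP^0=\cP_{\l_0}$ after shifting central parameters so that every monomial is again positive at the general solution $\l_0\in\C^n$. For real $\l$ the transcendental relation $\pi_\l(\be_n)^{\frac{1}{b^2}}=\til{\pi}_\l(\be_n)$ is already established. I would test both sides as sesquilinear forms against the core $\cW$ of entire rapidly decreasing vectors, show that $\l\mapsto\langle\phi,\pi_\l(\be_n)^{\frac{1}{b^2}}\psi\rangle$ and $\l\mapsto\langle\phi,\til{\pi}_\l(\be_n)\psi\rangle$ extend to holomorphic functions on a neighborhood of a path joining $\R^n$ to $\l_0$, controlling the power through a Mellin--Barnes/resolvent integral representation rather than bare functional calculus, and then invoke the identity theorem to propagate the equality to $\l_0$. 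Analyticity of the right-hand side follows from its explicit monomial expression in $\til{\cX}_q^0$, and the positivity restored at $\l_0$ guarantees that both limiting operators are the genuine self-adjoint ones, so the weak identity upgrades to an operator identity by irreducibility and the intertwiner argument of Proposition \ref{mainprop}.

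I expect the main obstacle to sit in the same place in both routes: the interaction of the $X_\star$-block with the remaining, non-$X_\star$, monomials of $\pi^0(\be_n)$. In the algebraic route this is the step where one must show that the non-monomial factor genuinely splits off as a pure $q_\star^2$-commuting block, so that the finer $\frac{1}{b_\star^2}$ power can be extracted before any cross terms appear; verifying that the telescopic elementary-symmetric structure of $\be_n$ in \eqref{enAB} organizes itself this way for $n\ge 3$ — where, by Remark \ref{notmut}, no single mutation of $\cX_q^0$ turns $\be_n$ into a monomial — is the delicate combinatorial heart of the problem. In the analytic route the same difficulty resurfaces as the non-self-adjointness of $\pi_\l(\be_n)$ for intermediate complex $\l$, which forces the fractional power to be continued through its integral representation and demands uniform control of the resolvent $(t+\pi_\l(\be_n))\inv$ along the path; establishing that bound on $\cW$ is where the real analytic work would concentrate.
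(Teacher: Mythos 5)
First, a framing point: the statement you are attacking is stated in the paper as a \emph{Conjecture}. The paper offers no general proof --- it explicitly says ``a new algebraic or analytic way is thus required to prove this relation'' and then verifies only the cases $A_1$ and $A_2$ (the Proposition immediately following the conjecture). So the benchmark for your proposal is that partial result, and the honest question is whether your plan closes the gap the authors could not. It does not. Your reduction to the single generator $\be_n$ is correct and is exactly the paper's own observation: the other $3n-1$ generators mutate to cluster monomials, for which $(e^{2\pi b L})^{\frac{1}{b^2}}=e^{2\pi b\inv L}$ is immediate. Your algebraic route is then precisely the paper's $A_2$ computation generalized: group the telescopic sum for $\pi^0(\be_n)$ into brackets, extract the $\frac{1}{b_\star^2}$ power of the $X_\star$-block via Lemma \ref{qbin}, and expand the $(n+1)$-st power by the $q$-binomial theorem to produce the coefficients $\bin{n+1\\k}_{\til{q}^{\frac{1}{n+1}}}$ of Theorem \ref{thmmod}. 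But the step you defer to ``the delicate combinatorial heart'' is the \emph{entire} open problem: Lemma \ref{qbin} requires the exact relation $XY=q^2YX$ between the summands, and in type $A_2$ this holds only because the six terms of $e_2$ happen to group into three brackets with uniform pairwise $q^{-2}$-commutation. For $n\geq 3$ the coefficients $A_k$ in \eqref{enAB} are themselves sums of many monomials whose mutual commutation relations, and whose commutation with the $X_\star$-block, are not uniform; Remarks \ref{notstd} and \ref{notmut} explain why no mutation inside $\cX_q^0$ can repair this (the unfolded mutation $g_b(\pi(X_\star)^{\frac{1}{n+1}})$ is unavailable after folding). Nothing in your proposal supplies this combinatorial input, so the algebraic route ends exactly where the paper's does.

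The analytic route has the same status. It is modeled on the continuation technique of Proposition \ref{mainprop}, but there the objects being continued are explicit quantum dilogarithm intertwiners $g_b(e^{\pi\mathring{L}})$, whose poles, zeros and growth are known, and the continuation is in a parameter entering the operators \emph{linearly}. Here you must continue $\l\mapsto \pi_\l(\be_n)^{\frac{1}{b^2}}$, a fractional power of an operator that fails to be self-adjoint (or even normal) at intermediate complex $\l$, so the fractional power is not defined by functional calculus along the path; your proposed Mellin--Barnes/resolvent definition requires precisely the uniform resolvent bound on $(t+\pi_\l(\be_n))\inv$ that you concede you do not have, and it is not clear the spectrum even stays off $\R_{<0}$ during the deformation. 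In short, your proposal correctly reproduces the two strategies the paper itself suggests and correctly locates the obstruction in each, but both routes terminate at that obstruction; what you have written is a research plan restating the conjecture's difficulty, not a proof of it.
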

For completeness, we demonstrate an algebraic proof in type $A_2$ below.
\begin{Prop} 
The transcendental relations are satisfied for $\cU_q(\sl(2,\R))$ and $\cU_q(\sl(3,\R))$ with $\l$ taken to be any general solution in \eqref{symeq}.
\end{Prop}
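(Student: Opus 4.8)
The plan is to handle the two ranks separately. For $\cU_q(\sl(2,\R))$ the assertion is already Theorem \ref{sl2mod}, so the entire content lies in type $A_2$, i.e. $\cU_q(\sl(3,\R))$. There I would first reduce the claim to the single generator $\be_2$: by the construction of $\cP^0$ in Theorem \ref{mainthmAn}, only $\be_n=\be_2$ is altered by the folding, while the remaining five Chevalley generators $\be_1,\bf_1,\bf_2,\bK_1,\bK_2$ retain their $\cP_\l$ polarizations and can be conjugated to cluster monomials by the unitaries of Proposition \ref{mainprop}, which at the general solution $\l_0$ of \eqref{symeq} avoid the poles and zeros of $g_b$. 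For a single exponential the transcendental relation $(e^{2\pi b L})^{\frac{1}{b^2}}=e^{2\pi b^{-1}L}$ is automatic, so these five generators are disposed of immediately and all the work concerns $\be_2$.

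Next I would make $\pi^0(\be_2)$ fully explicit. Starting from \eqref{enAB} with the degenerate Casimir data $B_1=B_2=0$ and $B_3=X_\star$, the image is $\be_2=A_0+A_3X_\star$, and after polarization this is a finite sum of positive $q$-commuting exponentials whose pairwise commutation constants I would record exactly. The structural point to isolate is that the combined symmetric variable $X_\star$ carries multiplier $d_\star=n+1=3$, so it commutes with every other monomial through the factor $q_\star^{\pm2}=q^{\pm6}$; equivalently the $X_\star$-block is governed by the rescaled parameter $b_\star=\sqrt3\,b$, whereas the monomials not involving $X_\star$ form telescoping chains governed by the ordinary factor $q^{\pm2}$.

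The computation of $\pi^0(\be_2)^{\frac{1}{b^2}}$ would then proceed by an iterated application of Lemma \ref{qbin}: on each genuinely $q^2$-commuting pair the $\tfrac{1}{b^2}$-power distributes additively, while on the $X_\star$-block I would write $\tfrac{1}{b^2}=\tfrac{3}{b_\star^2}$, apply Lemma \ref{qbin} at parameter $b_\star$, and expand the resulting cube by the $q$-binomial theorem. Since $\til q_\star=e^{\pi\bi b_\star^{-2}}=\til q^{\frac{1}{3}}$ with $\til q=e^{\pi\bi b^{-2}}$, this produces precisely the coefficients $\bin{3\\k}_{\til q^{\frac{1}{3}}}$, so that the resulting operator coincides with the modular-double image $\til\pi^0(\be_2)$ obtained from the adjoint action of $g_{b_\star^{-1}}(\til X_\star)$ in Theorem \ref{thmmod}. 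Reading off that this operator, together with the five monomial duals, satisfies the defining relations of $\cU_{\til q}(\sl(3,\R))$ at $\l=\l_0$ completes the proof.

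The hard part is exactly this combination step. Because the terms of $\pi^0(\be_2)$ do not all pairwise $q^2$-commute --- the pairs containing $X_\star$ commute through $q^6$ --- Lemma \ref{qbin} cannot be applied to the full sum in a single stroke, in contrast to the clean two-term situation of Theorem \ref{sl2mod}; and unlike the monomial generators there is no mutation available at $\l_0$ that would turn $\be_2$ into an exponential (cf. Remark \ref{notmut}). The care required is to order and group the monomials so that every invocation of Lemma \ref{qbin} acts on a bona fide $q^2$- or $q_\star^2$-commuting pair, and to verify that the telescoping contribution and the $X_\star$-binomial contribution assemble into the single $\til q^{\frac13}$-binomial without cross terms. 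It is precisely the growth of this bookkeeping that restricts a purely algebraic proof to the low ranks $A_1$ and $A_2$ and leaves the general statement as the preceding Conjecture.
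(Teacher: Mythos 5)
Your proposal is correct and takes essentially the same route as the paper's proof: the $A_1$ case is delegated to Theorem \ref{sl2mod}, the $A_2$ case is reduced to the single generator $\be_2$, whose six monomials are grouped into blocks that pairwise $q^{-2}$-commute so that Lemma \ref{qbin} can be applied iteratively, with the $X_\star$-block handled through the rescaled parameter $b_\star=\sqrt{3}\,b$ (writing $\tfrac{1}{b^2}=\tfrac{3}{b_\star^2}$) and a $q$-binomial expansion producing the $[3]_{\til{q}^{1/3}}$ coefficients, and the result is finally matched against the unfolded expression at the general solution of \eqref{symeq}.

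One side remark in your write-up is wrong, though it does not affect the validity of your argument: you assert, citing Remark \ref{notmut}, that no mutation is available at $\l_0$ turning $\be_2$ into a monomial. In fact Remark \ref{notstd} excludes this only for type $A_n$ with $n\geq 3$, and Remark \ref{notmut} explicitly states that the cluster-mutation realization \emph{does} hold in types $A_1$ and $A_2$. The paper exploits precisely this: its proof closes with an alternative argument that mutates $\be_2$ at the vertices $7,3,2,4$ to a standard monomial $X_8'$, takes the $\tfrac{1}{b^2}$-th power there (where it is trivial), and reverses the mutations. So the bookkeeping you describe, while correct, is not forced in rank $2$; the genuine obstruction to extending the algebraic proof arises only from rank $3$ onward, where $\be_n$ ceases to be mutation-equivalent to a standard monomial.
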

\begin{proof} The case for $\cU_q(\sl(2,\R))$ is proved in Section \ref{sl2mod}. 

For $\cU_q(\sl(3,\R))$, the symmetric quiver is shown in Figure \ref{fig-A2X0}. The thick green arrows indicate that $X_2X_\star=q^{-6}X_\star X_2$ and $X_4X_\star=q^{6}X_\star X_4$.

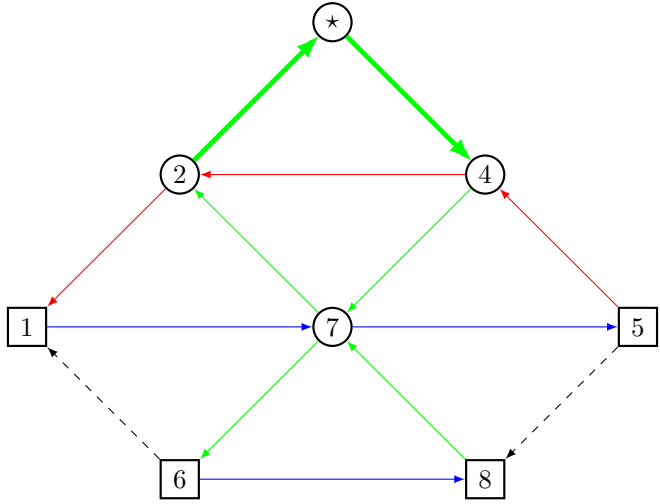
\begin{figure}
\centering
\begin{tikzpicture}[every node/.style={inner sep=0, minimum size=0.5cm, thick}, x=2cm, y=2cm]
\node (6) at (1,0)[draw]{$6$};
\node (8) at (3,0)[draw]{$8$};
\node (1) at (0,1)[draw]{$1$};
\node (7) at (2,1)[draw,circle]{$7$};
\node (5) at (4,1)[draw]{$5$};
\node (2) at (1,2)[draw,circle]{$2$};
\node (4) at (3,2)[draw,circle]{$4$};
\node (3) at (2,3)[draw,circle]{$\star$};
\drawpath{1,7,5}{blue}
\drawpath{6,8}{blue}
\drawpath{5,4,2,1}{red}
\drawpath{8,7,2,3,4,7,6}{green}
\drawpath{2,3,4}{green, vthick}
\drawpath{6,1}{dashed}
\drawpath{5,8}{dashed}
\end{tikzpicture}
\caption{The quiver for $\cX_q^0$ in type $A_2$.}\label{fig-A2X0}
\end{figure}

The image of $\be_2$ under the homomorphism $\fD_q(\sl_3)\to \cX_q^0$ is given by the green path
\Eqn{
\be_2\mapsto e_2=X_8+X_{8,7}+X_{8,7,2}+X_{8,7,2,\star}+X_{8,7,2,\star,4}+X_{8,7,2,\star,4,7}.
}
Now we observe that
\Eqn{
(X_8+X_{8,7})(X_{8,7,2}+X_{8,7,2,\star})&=q^{-2}(X_{8,7,2}+X_{8,7,2,\star})(X_8+X_{8,7})\\
(X_8+X_{8,7})(X_{8,7,2,\star,4}+X_{8,7,2,\star,4,7})&=q^{-2}(X_{8,7,2,\star,4}+X_{8,7,2,\star,4,7})(X_8+X_{8,7})\\
(X_{8,7,2}+X_{8,7,2,\star})(X_{8,7,2,\star,4}+X_{8,7,2,\star,4,7})&=q^{-2}(X_{8,7,2,\star,4}+X_{8,7,2,\star,4,7})(X_{8,7,2}+X_{8,7,2,\star}).
}
Hence, by Lemma \ref{qbin}, we have
$$e_2^{\frac{1}{b^2}}=(X_8+X_{8,7})^{\frac{1}{b^2}}+(X_{8,7,2}+X_{8,7,2,\star})^{\frac{1}{b^2}}+X_{8,7,2,\star,4}+(X_{8,7,2,\star,4,7})^{\frac{1}{b^2}}.$$
Note that the two terms in the first and the last brackets $q^{-2}$-commute, while those in the middle bracket $q^{-6}$-commute. Also recall from definition that $X_\star^{\frac{1}{b^2}}=\til{X}_\star^3$. Hence, again by Lemma \ref{qbin}, we compute
{\small\Eqn{
e_2^{\frac{1}{b^2}}&=(X_8^{\frac{1}{b^2}}+X_{8,7}^{\frac{1}{b^2}})+(X_{8,7,2}+X_{8,7,2,\star})^{\frac{1}{3b^2}3}+(X_{8,7,2,\star,4}^{\frac{1}{b^2}}+X_{8,7,2,\star,4,7}^{\frac{1}{b^2}})\\
&=\til{X}_8+\til{X}_{8,7}+(\til{X}_{8,7,2}^\frac13+\til{X}_{8,7,2,\star^3}^\frac13)^3+\til{X}_{8,7,2,\star^3,4}+\til{X}_{8,7,2,\star^3,4,7}\\
&=\til{X}_8+\til{X}_{8,7}+(\til{X}_{8,7,2}+[3]_{\til{q}^\frac13}\til{X}_{8,7,2,\star}+[3]_{\til{q}^\frac13}\til{X}_{8,7,2,\star^2}+\til{X}_{8,7,2,\star^3})+\til{X}_{8,7,2,\star^3,4}+\til{X}_{8,7,2,\star^3,4,7}
}}
which compares with the unfolded expression by setting the central parameter $\l$ to be the general solution of \eqref{symeq}.

Alternatively, one can first mutate $\be_2$ at the vertices $7,3,2,4$ to reduce it to a standard monomial $X_8'$ in the resulting cluster, apply $\frac{1}{b^2}$-th power, and reverse the mutation to obtain the same expression above.
\end{proof}
\section{Degenerate representations for general Lie types}\label{sec:higher}

For general Lie types, it seems impossible to nullify all the central characters simultaneously as in type $A_n$. This is particularly true for the non-simply-laced cases as the central characters corresponding to the long and short root should be treated separately. Therefore, the next best scenario is to isolate some parabolic part of $\cU_q(\g)$ that is isomorphic to a direct product of type $A_k$ Lie subalgebras and perform the folding construction for each of those subalgebras.

Let $J\subset I$ be a nontrivial subset of the Dynkin index such that the parabolic subgroup $W_J\subset W$ of the Weyl group is isomorphic to a direct product of type $A_k$ Weyl group. Let $w_J$ be the longest element of $W_J$ and write $w_0 = w_Jw'$, with its corresponding longest expression decomposed as $\bi_0=\bi_J\bi'$. We have an embedding of $\cU_q(\g)$ into the quantum torus algebra constructed from $\bi_0=\bi_J\bi'$. 

In \cite{Ip8}, we proved using the generalized Heisenberg double, that we still have a homomorphism of $\cU_q(\g)$ as long as the parabolic part of the quiver corresponding to $\bi_J$ still produces an image of $\cU_q(\g)$. In other words, we can perform the construction in the previous section to the $\bi_J$ part of the quiver. The rank of $W_J$ dictates the reduction of the dimension of the center. Hence, under a choice of polarization, we obtain a new family of representations called the \emph{degenerate positive representations} denoted by $\cP_\l^{0,J}$, parametrized by $n-|J|$ scalars. Note that as Hilbert spaces, they have the same functional dimension
\Eq{
\cP_\l^{0,J}\simeq L^2(\R^N) \simeq \cP_\l.
}
Since we are only modifying the expression of some parabolic $A_n$ part of the representations, the same argument as in Theorem \ref{mainthmAn} shows that the resulting representation is still irreducible by restricting to the variables corresponding to the parabolic part.

Hence, we state the Main Theorem of the paper.

\begin{Thm}\label{mainthm} We have a homomorphism of $\cU_q(\g)$ into a certain skew-symmetrizable quantum cluster algebra $\cO_q(\cX^0)$, obtained by identifying the symmetric part of the type $A$ parabolic subgroup $W_J$, such that the image of the Chevalley generators are universally Laurent polynomials in $\cO_q(\cX^0)$.

Upon choosing a polarization for any cluster chart of $\cO_q(\cX^0)$ produces a family of irreducible positive representations of $\cU_q(\g_\R)$ parametrized by $n-|J|$ central parameters $\l_i\in\R$.
\end{Thm}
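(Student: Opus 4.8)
The plan is to reduce the statement to the type $A$ construction of Theorem \ref{mainthmAn}, applied locally to the parabolic part of the quiver, and then to lift the resulting map to all of $\cU_q(\g)$ using the generalized Heisenberg double machinery of \cite{Ip8}. First I fix $J\subset I$ with $W_J$ a product of type $A$ Weyl groups, choose $w_0=w_Jw'$ with a reduced word $\bi_0=\bi_J\bi'$, and form the quantum torus algebra $\cX_q^{\bD(\bi_0)}$ together with its embedding $\iota:\cU_q(\g)\inj \cX_q^{\bD(\bi_0)}/\langle K_iK_i'=1\rangle$. The subquiver attached to $\bi_J$ is a disjoint union of type $A$ quivers, on each of which I perform exactly the symmetric folding of Theorem \ref{mainthmAn}: for a factor of type $A_r$ I mutate to the self-folded symmetric form, collapse its symmetric vertices $c_0,\dots,c_r$ into a single variable $X_\star=\prod_i X_{c_i}$ with multiplier $d_\star=r+1$, and set the parabolic Casimirs of that factor to zero.

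The first task is the homomorphism property. Within each type $A_r$ factor the parabolic Casimirs are central, so the argument of Theorem \ref{mainthmAn} shows that collapsing the symmetric part preserves all relations internal to that factor and produces a homomorphic image of the quotient $\cU_q(\sl_{r+1})/\langle\bC_\bullet=0\rangle$ into the folded torus $\cX_q^0$. The nontrivial point is that these parabolic Casimirs are \emph{not} central in the ambient $\cU_q(\g)$, so I must check that the remaining Chevalley generators — in particular those adjacent to the folded factor — still satisfy the defining relations of $\cU_q(\g)$ after the collapse. This is precisely what the generalized Heisenberg double construction of \cite{Ip8} guarantees: as soon as the $\bi_J$-part of the quiver yields an image of $\cU_q(\g)$, the same mutation-and-folding reasoning used there applies verbatim to the present folded quiver, so the resulting map $\cU_q(\g)\to\cX_q^0$ is a homomorphism, with image inside $\cO_q(\cX^0)$.

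Next I verify universal Laurentness. Every Chevalley generator outside the modified factors, together with every generator inside them that does not involve the symmetric vertices, is unchanged from the standard positive representation and hence already lies in $\cO_q(\cX)$; since folding commutes with mutation away from $X_\star$, these remain universally Laurent in $\cO_q(\cX^0)$. For the single modified generator per factor (the analogue of $\be_n$), I use that a simultaneous unordered mutation at $X_{c_0},\dots,X_{c_r}$ in the unfolded seed carries it to a standard monomial, and that this mutation descends to a single mutation at $X_\star$ in the folded seed; hence its image is universally Laurent by Lemma \ref{stdmon}, even though by Remark \ref{notstd} it need not be a standard monomial in the original chart.

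Finally I address the polarization, the parameter count, and irreducibility. Collapsing the $r+1$ symmetric vertices of each $A_r$ factor removes exactly $r$ independent central monomials, so the total number of surviving central parameters is $n-|J|$; choosing any irreducible polarization of a cluster chart of $\cO_q(\cX^0)$ then produces, via Lemma \ref{polaru} and Definition \ref{posrepX}, a representation of $\cU_q(\g_\R)$ by positive essentially self-adjoint operators parametrized by $\l\in\R^{n-|J|}$. For irreducibility I invoke the parabolic analogue of Lemma \ref{fCas}: inside each folded factor the modified generator is a rational expression in the unchanged generators and the (now vanishing) parabolic Casimirs, so any operator strongly commuting with the unmodified Chevalley generators also commutes with the modified ones; since the unmodified generators act exactly as in the irreducible standard representation $\cP_\l$, such an operator must be a scalar. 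The main obstacle is the homomorphism step of the second paragraph — establishing that collapsing a \emph{non-central} parabolic Casimir is consistent with the relations of the ambient $\cU_q(\g)$ — which is exactly why the argument is organized so as to reduce it to the already-established Heisenberg double result of \cite{Ip8}.
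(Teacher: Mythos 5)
Your overall strategy coincides with the paper's own treatment: decompose $w_0=w_Jw'$ and $\bi_0=\bi_J\bi'$, perform the symmetric folding of Theorem \ref{mainthmAn} on each type $A$ factor of the parabolic subquiver, invoke the generalized Heisenberg double result of \cite{Ip8} to get the homomorphism of the full $\cU_q(\g)$ (this is exactly how the paper handles the non-centrality of the parabolic Casimirs in the ambient algebra), count the $n-|J|$ surviving central parameters, and deduce irreducibility from the analogue of Lemma \ref{fCas} together with the irreducibility of $\cP_\l$. All of this matches the paper, whose proof is essentially the paragraph preceding the theorem statement, deferring to Section \ref{sec:An} and to \cite{Ip8}.

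There is, however, one genuine flaw: your argument for universal Laurentness of the modified generator. You claim that the single mutation at $X_\star$ (equivalently, the simultaneous unordered mutation at $X_{c_0},\dots,X_{c_r}$ upstairs) carries the modified generator to a standard monomial, and then you invoke Lemma \ref{stdmon}. This is false beyond rank one. Already in type $A_2$, the paper's computation at the end of Section \ref{sec:An:mod} reduces $e_2$ to a standard monomial only after mutating at the vertices $7,\star,2,4$, i.e.\ the reduction necessarily involves non-symmetric vertices; and Remarks \ref{notstd} and \ref{notmut} state that in type $A_n$ for $n\geq 3$ the special generator is in general \emph{not} reducible to a standard monomial in $\cX_q^0$ at all, precisely because the unfolded transformation $g_b(\pi(X_\star)^{\frac{1}{n+1}})$ that would realize such a reduction is not available as a cluster mutation of the folded algebra. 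So Lemma \ref{stdmon} cannot be applied this way, and this is exactly why the paper does not argue via standard monomials. The correct argument, used in the proof of Theorem \ref{mainthmAn}, is a descent statement: every mutation sequence in $\cX_q^0$ lifts to a mutation sequence in the unfolded algebra (with $\mu_\star$ lifting to the simultaneous mutations at $X_{c_0},\dots,X_{c_r}$); upstairs the Chevalley generators are universally Laurent in $\cO_q(\cX)$; and since the symmetric functions $\cE_i(X_{c_0},\dots,X_{c_r})$ being specialized are central, the specialization $\cE_i\mapsto 0$ for $i\leq r$ and $\cE_{r+1}\mapsto X_\star$ of a Laurent polynomial remains a Laurent polynomial in every folded chart. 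Replacing your Lemma \ref{stdmon} step by this folding-descent argument repairs the proof; the remainder of your proposal is sound and follows the paper.
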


\subsection{Maximal degenerate representations}\label{sec:higher:max}
In this section, we focus on the calculation of the action of the Casimir operators $\bC_k$ for general Lie types, by folding a parabolic $A_{n-1}$ part of maximal rank according to the previous section. Since this reduces the rank of the quiver by $n-1$, the resulting polarization of the skew-symmetrizable quantum torus algebra $\cX_q^0$ will be parametrized by a single scalar $\l\in\R_{\geq0}$ which can be taken to be positive. We call this family of positive representations the \emph{maximal degenerate representations}.

The calculation of the actions of the Casimir operators relies on the explicit expression of the generating monomials of the center of the quantum torus algebra $\cX_q^{\mathrm{sym}}$, how the central parameters are changed under the folding, and the fact that two polarizations are unitarily equivalent if and only if their actions on the central monomials coincide. We will focus on the representations of the quantum group $\cU_q(\g_\R)$, so that in particular our polarizations are always group-like, i.e. $\pi_\l(K_iK_i')=1$ for $i\in I$.

In the following, we illustrate the calculation using type $B_5$ as an example. The general strategy for the calculation of $\bC_k$ in other Lie types follows with appropriate modifications, and are explained in the proof of Theorem \ref{thmother}.

\subsubsection{Step 1: Central monomials of $\cX_q^{\mathrm{sym}}$}\label{sec:higher:sym}
We consider the parabolic part $A_{n-1}$ corresponding to all the long roots of $B_n$. With our labeling (see Appendix \ref{App:Dyn}), they are indexed by $J=\{1,2,...,n-1\}$. It is known that the reduced expression of $w_0=w_Jw'$ can be written as
\Eqn{
\bi_0&=(1\;2\;3\;\cdots\; n)^n\\
&=(1\;2\;3\;\cdots\; n)(1\;2\;3\cdots\;n)\cdots (1\;2\;3\;\cdots n)\\
&=(1\;2\;1\;3\;2\;1\cdots n-1\cdots 1)(n\;n-1\;n\;n-2\;n-1\;n\cdots 1\cdots n)\\
&=:\bi_{A_{n-1}}\bi'.
}
As in \cite{Ip8}, the quiver corresponding to the quantum torus algebra $\cX_q^{\mathrm{std}}$ can be rearranged as in Figure \ref{fig-B5} such that the shaded part consists of the parabolic $A_{n-1}$ subquiver. The colors indicate the embedding of the Chevalley generators, which can be obtained from the parabolic representations described explicitly in \cite{Ip8}, but otherwise are not really relevant to the discussion below.

\begin{figure}[htb!]
\centering
\begin{tikzpicture}[every node/.style={inner sep=0, minimum size=0.2cm, thick}, x=0.8cm, y=0.5cm]
\fill[green!10] (4,-10)--(4,-8)--(4,-6)--(4,-4)--(8,-0)--(12,-4)--(12,-6)--(12,-8)--(12,-10)--(8,-14)--cycle;
\node (1) at (0,-16)[draw,]{};
\node (2) at (1,-16)[draw,circle,fill=red]{};
\node (3) at (2,-16)[draw,circle,fill=red]{};
\node (4) at (3,-16)[draw,circle,fill=red]{};
\node (5) at (4,-16)[draw,circle,fill=red]{};
\node (6) at (8,-16)[draw,circle,fill=red]{};
\node (7) at (12,-16)[draw,circle,fill=red]{};
\node (8) at (13,-16)[draw,circle,fill=red]{};
\node (9) at (14,-16)[draw,circle,fill=red]{};
\node (10) at (15,-16)[draw,circle,fill=red]{};
\node [label={[xshift=1em, yshift=-1.1em] \tiny $-2\l_5$}](11) at (16,-16)[draw,]{};
\node (12) at (0,-10)[draw,]{};
\node (13) at (1,-10)[draw,circle,fill=red]{};
\node (14) at (2,-10)[draw,circle,fill=red]{};
\node (15) at (3,-10)[draw,circle,fill=red]{};
\node (16) at (4,-10)[draw,circle,fill=red]{};
\node (17) at (8,-14)[draw,circle]{};
\node (18) at (12,-10)[draw,circle,fill=red]{};
\node (19) at (13,-10)[draw,circle,fill=red]{};
\node (20) at (14,-10)[draw,circle,fill=red]{};
\node (21) at (15,-10)[draw,circle,fill=red]{};
\node [label={[xshift=1em, yshift=-1.1em] \tiny $-2\l_4$}](22) at (16,-10)[draw,]{};
\node (23) at (0,-8)[draw,]{};
\node (24) at (1,-8)[draw,circle,fill=red]{};
\node (25) at (2,-8)[draw,circle,fill=red]{};
\node (26) at (4,-8)[draw,circle,fill=red]{};
\node (27) at (5,-9)[draw,circle]{};
\node (28) at (8,-12)[draw,circle]{};
\node (29) at (11,-9)[draw,circle]{};
\node (30) at (12,-8)[draw,circle,fill=red]{};
\node (31) at (14,-8)[draw,circle,fill=red]{};
\node (32) at (15,-8)[draw,circle,fill=red]{};
\node [label={[xshift=1em, yshift=-1.1em] \tiny $-2\l_3$}](33) at (16,-8)[draw,]{};
\node (34) at (0,-6)[draw,]{};
\node (35) at (1,-6)[draw,circle,fill=red]{};
\node (36) at (4,-6)[draw,circle,fill=red]{};
\node (37) at (5,-7)[draw,circle]{};
\node (38) at (6,-8)[draw,circle]{};
\node (39) at (8,-10)[draw,circle]{};
\node (40) at (10,-8)[draw,circle]{};
\node (41) at (11,-7)[draw,circle]{};
\node (42) at (12,-6)[draw,circle,fill=red]{};
\node (43) at (15,-6)[draw,circle,fill=red]{};
\node [label={[xshift=1em, yshift=-1.1em] \tiny $-2\l_2$}](44) at (16,-6)[draw,]{};
\node (45) at (0,-4)[draw,]{};
\node (46) at (4,-4)[draw,circle,fill=red]{};
\node (47) at (5,-5)[draw,circle]{};
\node (48) at (6,-6)[draw,circle]{};
\node (49) at (7,-7)[draw,circle]{};
\node (50) at (8,-8)[draw,circle]{};
\node (51) at (9,-7)[draw,circle]{};
\node (52) at (10,-6)[draw,circle]{};
\node (53) at (11,-5)[draw,circle]{};
\node (54) at (12,-4)[draw,circle,fill=red]{};
\node [label={[xshift=1em, yshift=-1.1em] \tiny $-2\l_1$}](55) at (16,-4)[draw,]{};
\node [label={[xshift=0em, yshift=0em] \tiny $4\l_5$}](56) at (8,-18)[draw,circle,fill=red]{};
\node [label={[xshift=0em, yshift=0em] \tiny $4\l_4$}](57) at (8,-0)[draw,circle]{};
\node [label={[xshift=0em, yshift=0em] \tiny $4\l_3$}](58) at (8,-2)[draw,circle]{};
\node [label={[xshift=0em, yshift=0em] \tiny $4\l_2$}](59) at (8,-4)[draw,circle]{};
\node [label={[xshift=0em, yshift=0em] \tiny $4\l_1$}](60) at (8,-6)[draw,circle]{};
\drawpath{1,...,11}{blue}
\drawpath{12,...,22}{blue,thick}
\drawpath{23,...,33}{blue,thick}
\drawpath{34,...,44}{blue,thick}
\drawpath{45,...,55}{blue,thick}
\drawpath{11,56,1}{brown}
\drawpath{2,56,10}{}
\drawpath{22,10,21,32,43,54,57,46,35,24,13,2,12}{red,thick}
\drawpath{33,21,9,20,31,42,53,58,47,36,25,14,3,13,23}{orange,thick}
\drawpath{44,32,20,8,19,30,41,52,59,48,37,26,15,4,14,24,34}{purple,thick}
\drawpath{55,43,31,19,7,18,29,40,51,60,49,38,27,16,5,15,25,35,45}{green,thick}
\drawpath{39,49,59,51,39}{thick}
\drawpath{28,38,48,58,52,40,28}{thick}
\drawpath{17,27,37,47,57,53,41,29,17}{thick}
\drawpath{54,42,30,18,6,16,26,36,46}{thick}
\drawpath{45,34,23,12,1}{dashed,thick}
\drawpath{11,22,33,44,55}{dashed,thick}
\end{tikzpicture}
\caption{The quiver for the standard quiver of $\cX_q^{\mathrm{std}}$ in type $B_5$, with the vertices for the central monomial $Q_5$ colored in red.}\label{fig-B5}
\end{figure}
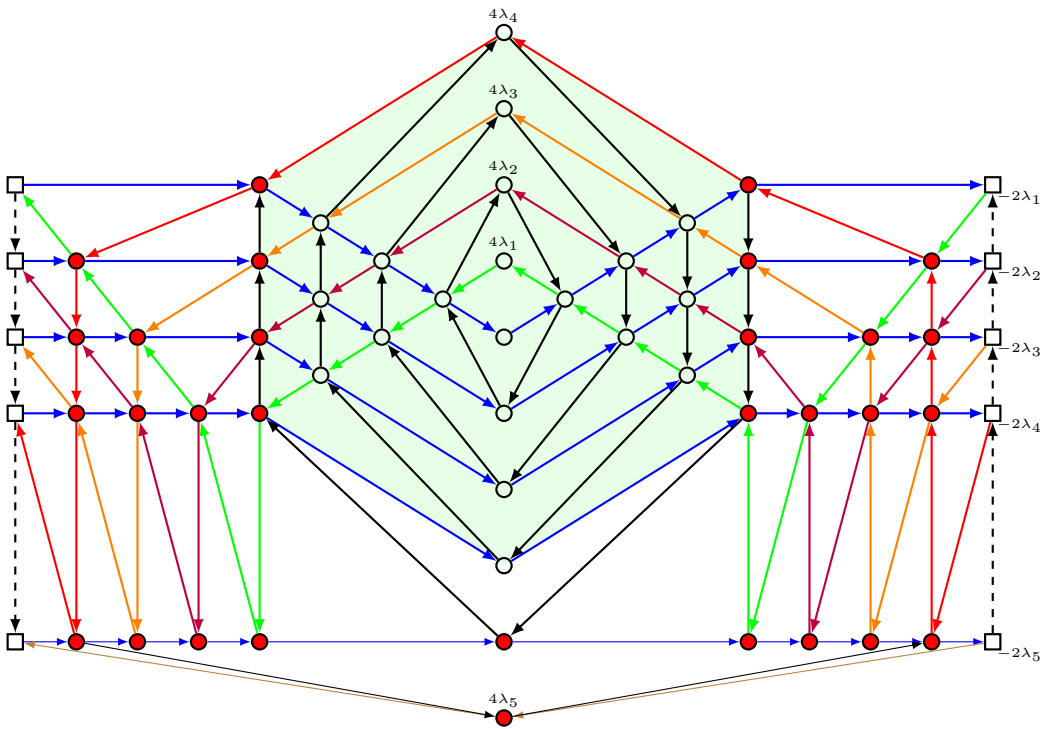

It is important to note that, by the results from the parabolic positive representations \cite{Ip8}, the indexing of the parabolic part $A_{n-1}$ requires a twisting by its Dynkin involution, i.e. the change in indices given by $i\mapsto i^*=n-i$. Therefore, the assignments of the central parameters are reversed compared with Figure \ref{fig-A4}.
\begin{Prop}
The center of $\cX_q^{\mathrm{std}}$ is generated by the following monomials:
\begin{itemize}
\item $\iota(\bK_i \bK_i')$, $i\in I$, which is the product of the cluster variables along each distinct blue path for the $\bf_i$ generators, and each distinct colored paths for the $\be_i$ generators.
\item $Q_k$, $k=1,...,n-1$, with central parameter $4\ol_k$, which coincides with the center of the parabolic $A_{n-1}$ part as in \eqref{Qk}.
\item $Q_n$, given by the product of the cluster variables associated to the red nodes, i.e. all the mutable vertices that are not within the interior of the parabolic part. It has central parameter $4\ol_n$. 
\end{itemize}
\end{Prop}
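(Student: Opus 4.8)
The plan is to produce exactly $2n$ linearly independent central monomials. By the definition of central monomials, the center of $\cX_q^{\mathrm{std}}$ is generated by $|Q|-\rank(\bQ)$ of them; since the irreducible polarization $\cP_\l$ acts on $L^2(\R^N)$, Lemma \ref{polaru} gives $\rank(\bQ)=2N$, and as $|Q|=2N+2n$ we get exactly $2n$ independent generators. Hence it suffices to exhibit $2n$ honest, linearly independent central monomials. Throughout I would test centrality through the pairing criterion: $X_\lambda$ with $\lambda=\sum_i n_i\vec{e_i}$ is central iff $(\lambda,\vec{e_j})=\sum_i n_i w_{ij}=0$ for every $j\in Q$, which in quiver language says that at each vertex the arrows coming from the support of $X_\lambda$ cancel.

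First, the $n$ Cartan monomials $\iota(\bK_i\bK_i')$ are central by the cluster realization theorem of \cite{Ip7} recalled above, and they are independent because each is the unique candidate involving the frozen variables attached to the simple root $i$. Next, the monomials $Q_1,\dots,Q_{n-1}$ are the non-Cartan central monomials of the parabolic $A_{n-1}$ subquiver, each a product of cluster variables along a monodromy cycle encircling the puncture as in \eqref{Qk}; their centrality inside the shaded region of Figure \ref{fig-B5} is inherited directly from the $A_{n-1}$ theory. The point that needs checking is that each $Q_k$ still commutes with the cluster variables of the complementary $\bi'$ part. This follows because $Q_k$ is supported on a closed loop around the puncture: at every vertex of the loop the arrows coming from outside the parabolic part enter and leave in balanced pairs, so the pairing with every external $\vec{e_j}$ vanishes as well. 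Their central parameter $4\ol_k$ is read off as in type $A_n$, keeping in mind the reversal $i\mapsto i^*=n-i$ of the labelling forced by the parabolic construction of \cite{Ip8}.

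The genuinely new object is $Q_n$, which I would define as the product of the cluster variables at the red mutable vertices of Figure \ref{fig-B5}, i.e. the mutable vertices lying outside the interior of the parabolic part. For this two things must be verified. \textbf{(i)} Centrality: the lattice vector $\sum_{j\ \mathrm{red}}\vec{e_j}$ lies in the radical of $(-,-)$. This is again a boundary flux-cancellation argument, but now along the outer cycle rather than a loop contained in the parabolic triangulation; I expect it to reduce, exactly as in Lemma \ref{outer}, to the observation that the red vertices trace the monodromy cycle for the remaining short root $n$, identified by following the mutation sequence of Theorem \ref{thmsym} detailed in Appendix \ref{App:mutate}. \textbf{(ii)} Central parameter: under the group-like polarization, summing the central parameters attached to the red vertices yields $4\ol_n$, which in the $B_5$ picture is the single contribution at vertex $56$. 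Finally, independence of the whole collection follows by comparing supports: the Cartan monomials are the only ones touching frozen variables, the loops underlying $Q_1,\dots,Q_{n-1}$ are mutually disjoint, and the outer cycle of $Q_n$ is disjoint from those interior loops, so no nontrivial lattice relation holds.

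The hard part will be the centrality and the central-parameter computation for $Q_n$: unlike $Q_1,\dots,Q_{n-1}$, whose centrality is imported from the $A_{n-1}$ subtheory, $Q_n$ is intrinsic to $B_n$, so its defining vertex set must first be pinned down correctly — it is precisely the set of mutable vertices outside the parabolic interior — before the boundary arrows can be seen to cancel. I would organize this verification to parallel the monomial-tracking of Lemma \ref{outer}, where an analogous outer monomial is shown to remain central and to retain its central parameter under the symmetric folding.
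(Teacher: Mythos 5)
Your overall framework is the right one: the paper in fact records this Proposition with no proof at all, treating it as a direct verification on the quiver of Figure \ref{fig-B5}, and your pairing criterion $(\lambda,\vec{e_j})=0$, flux cancellation, summation of central parameters over supports, and disjointness of supports is exactly the correct formalization of that verification. The genuine gap is that you never carry out the one computation that constitutes the content of the Proposition. The commutation of the parabolic cycles $Q_k$ with the $\bi'$-part variables, and above all the centrality of $Q_n$, are left as expectations, and the authority you appeal to, Lemma \ref{outer}, cannot supply them. That lemma is a type-$A$ statement about how the product of frozen variables transforms under the folding mutation sequence of Theorem \ref{thmsym}; its proof \emph{uses} centrality (centrality is what reduces a quantum cluster mutation to the pure monomial transformation of Proposition \ref{monopolar}), and the paper invokes it only \emph{after} this Proposition, to identify the image of $Q_n$ inside $\cX_q^{\mathrm{sym}}$. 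It cannot show that the red-vertex product is central in the type-$B_n$ algebra $\cX_q^{\mathrm{std}}$, which is precisely the new assertion here. What is required is the finite check you set up but skipped: at every non-red vertex the arrows joining it to the red support cancel in pairs (for instance, at the top corner of the parabolic region in Figure \ref{fig-B5} the arrow coming in from the red vertex on the right edge cancels the arrow going out to the red vertex on the left edge), and symmetrically, at every red vertex the arrows joining it to the support of each $Q_k$ cancel in pairs, with matching multipliers $d_i$. Until that is done you have a correct plan, not a proof.

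There is also a logical flaw in your reduction: exhibiting $2n$ linearly independent central monomials does \emph{not} prove generation, if ``generated'' is read multiplicatively. The center is the group algebra of the radical sublattice of $\L_\bQ$, and $\R$-independent central monomials only span a finite-index sublattice; the family $\{\iota(\bK_i\bK_i'),\,Q_1,\dots,Q_{n-1},\,Q_n^2\}$ passes every test you propose yet fails to generate, since $Q_n$ itself is central but not a Laurent monomial in it. In fact the strict reading fails even for the list in the Proposition: already in type $A_1$ the radical is $\Z\<\vec{e_1}+\vec{e_3},\;\vec{e_2}+\vec{e_4}\>$, while $\iota(\bK\bK')=X_{1^2,2,3^2,4}$ and $Q_1=X_{2,4}$ span only an index-two sublattice. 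So ``generated'' here must be understood as ``a maximal independent family of central monomials'' --- which is harmless for the paper's purposes, because in a positive polarization the central character is determined by its values on any finite-index sublattice of the radical, positive scalars having unique positive roots --- and your proof should either state this reading explicitly or add a saturation argument; as written, the claim you reduce to and the claim you verify do not match.
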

Note that the index $n\in I$ is \emph{short}, so $4\ol_n = 4b_s \l_n$ as in Notation \ref{notbi} and \ref{ol}.

Now we perform the folding construction for the parabolic $A_{n-1}$ part and obtain the quiver as in Figure \ref{fig-B5sym} with the corresponding assignments of the central parameters (again noting the twisting of index).
\begin{figure}[htb!]
\centering
\begin{tikzpicture}[every node/.style={inner sep=0, minimum size=0.2cm, thick}, x=0.7cm, y=1cm]
\fill[green!10] (11,-12)--(12,-11)--(13,-10)--(14,-9)--(11,-6)--(10,-1)--(9,-6)--(6,-9)--(7,-10)--(8,-11)--(9,-12)--cycle;
\node [label={[xshift=0em, yshift=0em] \tiny $4\l_5$}](56) at (10,-14)[draw,circle, fill=red]{};
\node (1) at (0,-13)[draw,]{};
\node (2) at (2,-13)[draw,circle, fill=red]{};
\node (3) at (4,-13)[draw,circle, fill=red]{};
\node (4) at (6,-13)[draw,circle, fill=red]{};
\node (5) at (8,-13)[draw,circle, fill=red]{};
\node (6) at (10,-13)[draw,circle, fill=red]{};
\node (7) at (12,-13)[draw,circle, fill=red]{};
\node (8) at (14,-13)[draw,circle, fill=red]{};
\node (9) at (16,-13)[draw,circle, fill=red]{};
\node (10) at (18,-13)[draw,circle, fill=red]{};
\node [label={[xshift=1em, yshift=-1.1em] \tiny $-2\l_5$}](11) at (20,-13)[draw,]{};
\node (12) at (0,-12)[draw,]{};
\node (13) at (3,-12)[draw,circle, fill=red]{};
\node (14) at (5,-12)[draw,circle, fill=red]{};
\node (15) at (7,-12)[draw,circle, fill=red]{};
\node (16) at (9,-12)[draw,circle, fill=red]{};
\node (18) at (11,-12)[draw,circle, fill=red]{};
\node (19) at (13,-12)[draw,circle, fill=red]{};
\node (20) at (15,-12)[draw,circle, fill=red]{};
\node (21) at (17,-12)[draw,circle, fill=red]{};
\node [label={[xshift=1em, yshift=-1.1em] \tiny $-2\l_4$}](22) at (20,-12)[draw,]{};
\node (23) at (0,-11)[draw,]{};
\node (24) at (4,-11)[draw,circle, fill=red]{};
\node (25) at (6,-11)[draw,circle, fill=red]{};
\node (26) at (8,-11)[draw,circle, fill=red]{};
\node (17) at (10,-11)[draw,circle, fill=red]{};
\node (30) at (12,-11)[draw,circle, fill=red]{};
\node (31) at (14,-11)[draw,circle, fill=red]{};
\node (32) at (16,-11)[draw,circle, fill=red]{};
\node [label={[xshift=1em, yshift=-1.1em] \tiny $-2\l_3$}](33) at (20,-11)[draw,]{};
\node (34) at (0,-10)[draw,]{};
\node (35) at (5,-10)[draw,circle, fill=red]{};
\node (36) at (7,-10)[draw,circle, fill=red]{};
\node (27) at (9,-10)[draw,circle, fill=red]{};
\node (29) at (11,-10)[draw,circle, fill=red]{};
\node (42) at (13,-10)[draw,circle, fill=red]{};
\node (43) at (15,-10)[draw,circle, fill=red]{};
\node [label={[xshift=1em, yshift=-1.1em] \tiny $-2\l_2$}](44) at (20,-10)[draw,]{};
\node (45) at (0,-9)[draw,]{};
\node (46) at (6,-9)[draw,circle, fill=red]{};
\node (37) at (8,-9)[draw,circle, fill=red]{};
\node (28) at (10,-9)[draw,circle, fill=red]{};
\node (41) at (12,-9)[draw,circle, fill=red]{};
\node (54) at (14,-9)[draw,circle, fill=red]{};
\node [label={[xshift=1em, yshift=-1.1em] \tiny $-2\l_1$}](55) at (20,-9)[draw,]{};
\node [label={[xshift=0em, yshift=0em] \tiny $4\l_4$}](47) at (7,-8)[draw,circle, fill=red]{};
\node (38) at (9,-8)[draw,circle, fill=red]{};
\node (40) at (11,-8)[draw,circle, fill=red]{};
\node (53) at (13,-8)[draw,circle, fill=red]{};

\node [label={[xshift=0em, yshift=0em] \tiny $4\l_3$}](39) at (8,-7)[draw,circle, fill=red]{};
\node (52) at (10,-7)[draw,circle, fill=red]{};
\node (57) at (12,-7)[draw,circle, fill=red]{};
\node [label={[xshift=-0.6em, yshift=0em] \tiny $4\l_2$}](58) at (9,-6)[draw,circle, fill=red]{};
\node (48) at (11,-6)[draw,circle, fill=red]{};

\node [label={[xshift=2.5em, yshift=-0.6em] \tiny $4\l_1$}](60) at (10,-5)[draw,circle]{};
\node (50) at (10,-4)[draw,circle]{};
\node [label={[xshift=2em, yshift=-0.6em] \tiny $-4\l_2$}](59) at (10,-3)[draw,circle]{};
\node [label={[xshift=2.5em, yshift=-0.6em] \tiny $-4\l_2-4\l_3$}](49) at (10,-2)[draw,circle]{};
\node [label={[xshift=3.2em, yshift=-0.6em] \tiny $-4\l_2-4\l_3-4\l_4$}](51) at (10,-1)[draw,circle, fill=red]{};
\drawpath{1,...,11}{blue}
\drawpath{12,13,14,15,16,18,19,20,21,22}{blue,thick}
\drawpath{23,24,25,26,17,30,31,32,33}{blue,thick}
\drawpath{34,35,36,27,29,42,43,44}{blue,thick}
\drawpath{45,46,37,28,41,54,55}{blue,thick}
\drawpath{11,56,1}{brown}
\drawpath{2,56,10}{}
\drawpath{54,42,30,18,6,16,26,36,46}{thick}
\drawpath{22,10,21,32,43,54,53}{red,thick}
\drawpath{53,47}{red,thick,bend right = 10}
\drawpath{47,46,35,24,13,2,12}{red,thick}
\drawpath{33,21,9,20,31,42,41,40,57}{orange,thick}
\drawpath{57,39}{orange,thick,bend right=10}
\drawpath{39,38,37,36,25,14,3,13,23}{orange, thick}
\drawpath{44,32,20,8,19,30,29,28,38,52,48}{purple, thick}
\drawpath{48,58}{purple, thick, bend right=10}
\drawpath{58,52,40,28,27,26,15,4,14,24,34}{purple, thick}
\drawpath{55,43,31,19,7,18,17,27,37,47,57,52,39,48,58,60,48,57,53,41,29,17,16,5,15,25,35,45}{green, thick}
\drawpath{58,50,48}{green,thick}
\drawpath{58,59,48}{green,thick}
\drawpath{58,49,48}{green,thick}
\drawpath{58,51,48}{green,thick}
\drawpath{45,34,23,12,1}{dashed,thick}
\drawpath{11,22,33,44,55}{dashed,thick}
\end{tikzpicture}
\caption{The quiver for $\cX_q^{\mathrm{sym}}$ in type $B_5$, with the vertices for the central monomial $Q_5$ colored in red.}\label{fig-B5sym}
\end{figure}
We label the symmetric nodes by $X_{c_0},...,X_{c_{n-1}}$ from top to bottom as before. Under this folding, the central monomials $Q_k$ for $k\neq n$ are given by the ratio 
\Eq{
Q_k=X_{c_{n-k}}X_{c_{n-k-1}}\inv,\tab k=1,...,n-1
}
as before, while by Lemma \ref{outer}, $Q_n$ is the product of the cluster variables (colored in red) associated to all the mutable variables not from the symmetric part, together with the single variable $X_{c_0}$.

\subsubsection{Step 2: Central parameters of $\cX_q^0$}\label{sec:higher:central}
We fold the parabolic $A_{n-1}$ part along the symmetric nodes as before, with the new cluster variable given by
\Eq{\label{X*}
X_\star:=\prod_{k=0}^{n-1}X_{c_k}
,} and obtain a skew-symmetrizable quantum torus algebra $\cX_q^0$ with the same rank but $n-1$ less variables. In particular, the non-Cartan central monomials are parametrized by a single real number $\l\in\R$. The quiver is shown in Figure \ref{fig-B5X0}.

By definition, the multipliers of the bottom-most vertex and vertices along the bottom row are $d=\half$, the folded vertex $d_\star=n$, and the remaining vertices $d=1$. In particular, the central parameter $\l$ is a \emph{short} weight.

\begin{figure}[htb!]
\centering
\begin{tikzpicture}[every node/.style={inner sep=0, minimum size=0.2cm, thick}, x=0.7cm, y=1cm]
\fill[green!10] (11,-12)--(12,-11)--(13,-10)--(14,-9)--(11,-6)--(10,-5)--(9,-6)--(6,-9)--(7,-10)--(8,-11)--(9,-12)--cycle;
\node [label={[xshift=0em, yshift=0em] \tiny $4\l$}](56) at (10,-14)[draw,circle, fill=red]{};
\node (1) at (0,-13)[draw,]{};
\node (2) at (2,-13)[draw,circle, fill=red]{};
\node (3) at (4,-13)[draw,circle, fill=red]{};
\node (4) at (6,-13)[draw,circle, fill=red]{};
\node (5) at (8,-13)[draw,circle, fill=red]{};
\node (6) at (10,-13)[draw,circle, fill=red]{};
\node (7) at (12,-13)[draw,circle, fill=red]{};
\node (8) at (14,-13)[draw,circle, fill=red]{};
\node (9) at (16,-13)[draw,circle, fill=red]{};
\node (10) at (18,-13)[draw,circle, fill=red]{};
\node [label={[xshift=0em, yshift=-1.4em] \tiny $-2\l$}](11) at (20,-13)[draw,]{};
\node (12) at (0,-12)[draw,]{};
\node (13) at (3,-12)[draw,circle, fill=red]{};
\node (14) at (5,-12)[draw,circle, fill=red]{};
\node (15) at (7,-12)[draw,circle, fill=red]{};
\node (16) at (9,-12)[draw,circle, fill=red]{};
\node (18) at (11,-12)[draw,circle, fill=red]{};
\node (19) at (13,-12)[draw,circle, fill=red]{};
\node (20) at (15,-12)[draw,circle, fill=red]{};
\node (21) at (17,-12)[draw,circle, fill=red]{};
\node (22) at (20,-12)[draw,]{};
\node (23) at (0,-11)[draw,]{};
\node (24) at (4,-11)[draw,circle, fill=red]{};
\node (25) at (6,-11)[draw,circle, fill=red]{};
\node (26) at (8,-11)[draw,circle, fill=red]{};
\node (17) at (10,-11)[draw,circle, fill=red]{};
\node (30) at (12,-11)[draw,circle, fill=red]{};
\node (31) at (14,-11)[draw,circle, fill=red]{};
\node (32) at (16,-11)[draw,circle, fill=red]{};
\node (33) at (20,-11)[draw,]{};
\node (34) at (0,-10)[draw,]{};
\node (35) at (5,-10)[draw,circle, fill=red]{};
\node (36) at (7,-10)[draw,circle, fill=red]{};
\node (27) at (9,-10)[draw,circle, fill=red]{};
\node (29) at (11,-10)[draw,circle, fill=red]{};
\node (42) at (13,-10)[draw,circle, fill=red]{};
\node (43) at (15,-10)[draw,circle, fill=red]{};
\node (44) at (20,-10)[draw,]{};
\node (45) at (0,-9)[draw,]{};
\node (46) at (6,-9)[draw,circle, fill=red]{};
\node (37) at (8,-9)[draw,circle, fill=red]{};
\node (28) at (10,-9)[draw,circle, fill=red]{};
\node (41) at (12,-9)[draw,circle, fill=red]{};
\node (54) at (14,-9)[draw,circle, fill=red]{};
\node (55) at (20,-9)[draw,]{};
\node (47) at (7,-8)[draw,circle, fill=red]{};
\node (38) at (9,-8)[draw,circle, fill=red]{};
\node (40) at (11,-8)[draw,circle, fill=red]{};
\node (53) at (13,-8)[draw,circle, fill=red]{};

\node (39) at (8,-7)[draw,circle, fill=red]{};
\node (52) at (10,-7)[draw,circle, fill=red]{};
\node (57) at (12,-7)[draw,circle, fill=red]{};
\node (58) at (9,-6)[draw,circle, fill=red]{};
\node (48) at (11,-6)[draw,circle, fill=red]{};

\node (60) at (10,-5)[draw,circle, fill=red]{$\star$};
\drawpath{1,...,11}{blue}
\drawpath{12,13,14,15,16,18,19,20,21,22}{blue,thick}
\drawpath{23,24,25,26,17,30,31,32,33}{blue,thick}
\drawpath{34,35,36,27,29,42,43,44}{blue,thick}
\drawpath{45,46,37,28,41,54,55}{blue,thick}
\drawpath{11,56,1}{brown}
\drawpath{2,56,10}{}
\drawpath{54,42,30,18,6,16,26,36,46}{thick}
\drawpath{22,10,21,32,43,54,53}{red,thick}
\drawpath{53,47}{red,thick,bend right = 10}
\drawpath{47,46,35,24,13,2,12}{red,thick}
\drawpath{33,21,9,20,31,42,41,40,57}{orange,thick}
\drawpath{57,39}{orange,thick,bend right=10}
\drawpath{39,38,37,36,25,14,3,13,23}{orange, thick}
\drawpath{44,32,20,8,19,30,29,28,38,52,48}{purple, thick}
\drawpath{48,58}{purple, thick, bend right=10}
\drawpath{58,52,40,28,27,26,15,4,14,24,34}{purple, thick}
\drawpath{55,43,31,19,7,18,17,27,37,47,57,52,39,48,58}{green, thick}
\drawpath{48,57,53,41,29,17,16,5,15,25,35,45}{green, thick}
\drawpath{58,60,48}{green,vthick}
\drawpath{45,34,23,12,1}{dashed,thick}
\drawpath{11,22,33,44,55}{dashed,thick}
\end{tikzpicture}
\caption{The quiver for $\cX_q^0$ in type $B_5$, with the vertices for the central monomial $Q_\star$ colored in red.}\label{fig-B5X0}
\end{figure}
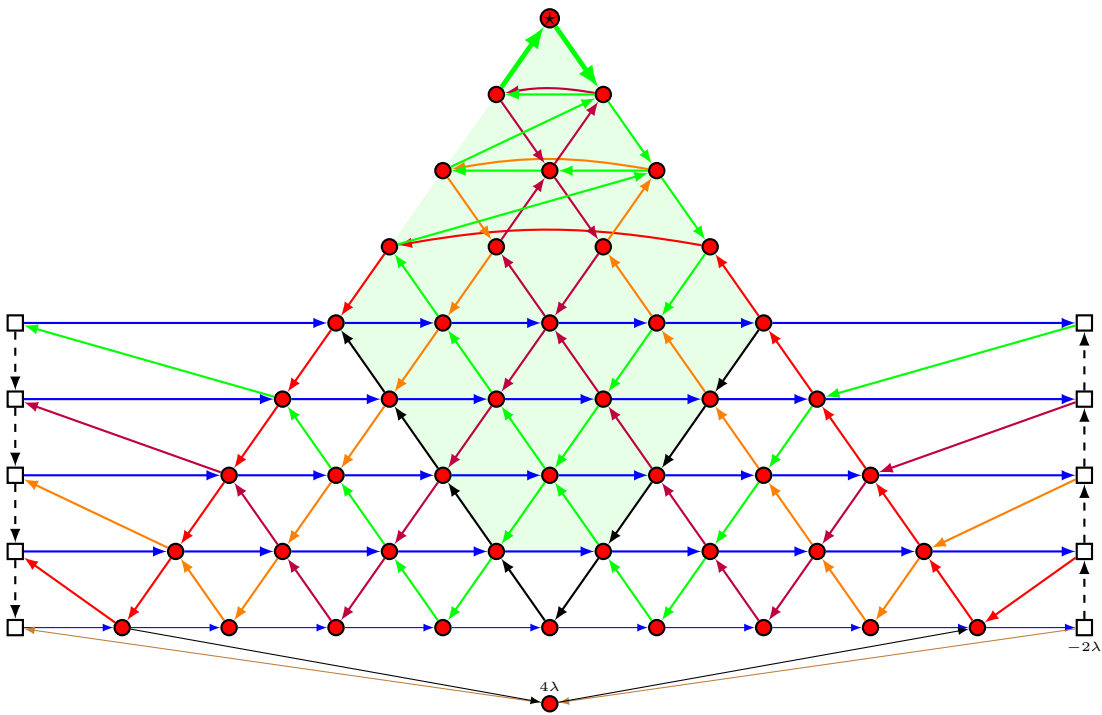

We wish to compute the action of the Casimir operators associated to this polarization. In order to do this, we try to find an equivalent description of the central parameters by comparing them with a specification of complex values to the central parameters of the unfolded quiver as in Section \ref{sec:An:pos0}.

Recall that under the polarization of the unfolded quiver above, the effect of folding is equivalent to setting $\l_1,...,\l_{n-1}$ to the general solution of $\eqref{symeq}$, and is independent of the choice of the solutions. We choose the standard solution $\l_1=\cdots =\l_{n-1}=\frac{\bi}{2nb}$. At the same time, the resulting polarization of $X_\star$ is obtained by taking the product of all those of the symmetric vertices, with their central parameters adding up. The sum of these central parameters (using the rescaled variables) is given by
\Eqn{
&4\ol_1+0+(-4\ol_2)+(-4\ol_2-4\ol_3)+\cdots + (-4\ol_2+\cdots -4\ol_{n-1})\\
&=4\ol_1-4\sum_{k=2}^{n-1}(n-k)\ol_k.
}
By setting the parameters to complex numbers, the resulting polarization is certainly not positive. However, note that a linear change of variables in the parameters does not change the action of the center. Thus, it suffices to find a specialization of the parameters, complex or not, in such a way that the central parameters of the central monomials coincide with the positive ones chosen as in Figure \ref{fig-B5X0}.

We know that the central monomial $Q_n$ of the original unfolded algebra $\cX_q^{\mathrm{sym}}$ is a monomial which, among other variables, contains a single cluster variable $X_{c_0}$. Since the Weyl part of the polarization of all the $X_{c_i}$'s are the same, by \eqref{X*} the polarization of $X_{c_0}^n$ and $X_\star$ coincide. From this, we easily see that the monomial given by
\Eq{
Q_\star:=Q_n X_{c_0}\inv X_\star^{\frac{1}{n}}
}
formally lies in the center of $\cX_q^0$, if we allow fractional powers of the cluster variables. It still makes sense to talk about its polarization, and $Q_\star$ will be the unique central monomial carrying a nontrivial central parameter.

By the folding, we compute the central parameter of $Q_\star$ to be
\Eqn{
&4\ol_n- (-4\ol_2-4\ol_3-\cdots -4\ol_{n-1}) + \frac{1}{n}\left(4\ol_1- 4b\sum_{k=2}^{n-1} (n-k)\l_k\right)\\
&=\frac{4}{n}\sum_{k=1}^nk\ol_k.
}

On the other hand, the central parameter of $Q_\star$ is given by $4\ol:=4b_s\l$ from the polarization in Figure \ref{fig-B5X0}. By setting \Eq{4\ol = \frac{4}{n}\sum_{k=1}^n k\ol_k} and substituting the general solution, we conclude that
\begin{Thm}
The polarization of the central monomials of $\cX_q^0$ is given by setting the parameters of $\cX_q^{\mathrm{sym}}$ to be
\Eq{
\l_n&=\l-\bi\h_{B_n}\label{subl1}\\
\l_k&=\frac{\bi}{2nb},&& k=1,...,n-1\label{subl2}
}
where the \emph{complex shift} is given by
\Eq{
\h_{B_n}:= \frac{n-1}{4nb_s}.\label{subl3}
}
Moreover, $Q_\star$ acts as the multiplication by a positive scalar.
\end{Thm}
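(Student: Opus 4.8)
The plan is to pin down the complex values of $(\l_1,\dots,\l_n)$ on $\cX_q^{\mathrm{sym}}$ that, after folding along the symmetric vertices, reproduce the positive central character of $\cX_q^0$ recorded in Figure \ref{fig-B5X0}. The guiding principle is Lemma \ref{polaru}: two polarizations on the same Hilbert space are unitarily equivalent exactly when their central characters agree, so it suffices to match the action on the central monomials. The Cartan monomials $\iota(\bK_i\bK_i')$ stay group-like under any specialization of the parameters (the condition $\iota(\bK_i\bK_i')=1$ is an algebraic identity in the quotient, independent of $\l$), and the folded monomials $Q_1,\dots,Q_{n-1}$ become trivial. Hence the whole matching reduces to the single surviving non-Cartan central monomial $Q_\star = Q_n X_{c_0}\inv X_\star^{\frac{1}{n}}$, the only one carrying a nontrivial central parameter.

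First I would record the two expressions for the central parameter of $Q_\star$. On one side, the folding computation already performed (using the description of $Q_n$ via Lemma \ref{outer}, the parameter $4\ol_1-4\sum_{k=2}^{n-1}(n-k)\ol_k$ of $X_\star$, and the fact that $X_{c_0}^n$ and $X_\star$ share the same Weyl part so that $Q_\star$ is genuinely central) yields central parameter $\tfrac{4}{n}\sum_{k=1}^n k\ol_k$. On the other side, the positive polarization of Figure \ref{fig-B5X0} assigns $Q_\star$ the central parameter $4\ol=4b_s\l$, since $\l$ is a short weight. Equating the two gives the matching equation
\[
4b_s\l=\frac{4}{n}\sum_{k=1}^n k\ol_k.
\]

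Next I would substitute the standard solution. By Section \ref{sec:An:pos0}, folding the parabolic $A_{n-1}$ part is equivalent to setting $\l_1,\dots,\l_{n-1}$ to a general solution of \eqref{symeq} for the $\sl_n$ subalgebra; the standard solution of Definition \ref{gensol} gives $\l_k=\tfrac{\bi}{2nb}$, hence $\ol_k=b\l_k=\tfrac{\bi}{2n}$ for $k=1,\dots,n-1$. Then $\sum_{k=1}^{n-1}k\ol_k=\tfrac{\bi}{2n}\cdot\tfrac{(n-1)n}{2}=\tfrac{\bi(n-1)}{4}$, and with $\ol_n=b_s\l_n$ the matching equation collapses to $b_s\l=\tfrac{\bi(n-1)}{4n}+b_s\l_n$, i.e. $\l_n=\l-\bi\tfrac{n-1}{4nb_s}=\l-\bi\h_{B_n}$, which is exactly the asserted complex shift. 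Positivity of $Q_\star$ is then immediate: its central parameter $4b_s\l$ is real because $\l\in\R$, and its Weyl part is trivial (a central monomial in an irreducible polarization), so $Q_\star$ acts as multiplication by $e^{4\pi b_s\l}>0$.

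The arithmetic above is routine; the genuine content lies in the two facts I take as established—that folding is equivalent to specializing the $A_{n-1}$ parameters to the general solution, and that $Q_\star$ (defined with a fractional power of $X_\star$) is central in $\cX_q^0$ with the stated central parameter, resting on Lemma \ref{outer} and the coincidence of the Weyl parts of all the $X_{c_i}$. I expect the main obstacle to be the bookkeeping of the Dynkin-involution twist $i\mapsto n-i$ on the parabolic part, which reverses the assignment of central parameters relative to Figure \ref{fig-A4}, together with the careful distinction between the rescaling factors $b$ for long roots and $b_s$ for the short root $n$; once these are tracked correctly the linear relation closes to give precisely $\h_{B_n}=\tfrac{n-1}{4nb_s}$.
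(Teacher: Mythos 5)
Your proposal is correct and follows essentially the same route as the paper: equate the two expressions for the central parameter of the unique nontrivial central monomial $Q_\star = Q_n X_{c_0}\inv X_\star^{\frac1n}$ — the value $\frac{4}{n}\sum_{k=1}^n k\ol_k$ coming from the folding computation and the value $4b_s\l$ read off from Figure \ref{fig-B5X0} — then substitute the standard solution $\l_k=\frac{\bi}{2nb}$ for $k=1,\dots,n-1$ and solve for $\l_n$, which yields exactly $\h_{B_n}=\frac{n-1}{4nb_s}$. Your added remarks (Lemma \ref{polaru} as the justification that matching central characters suffices, and the explicit positivity argument for $Q_\star$) are consistent with, and slightly more explicit than, the paper's presentation.
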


\subsubsection{Step 3: Action of Casimir operators}\label{sec:higher:cas}
Using the equivalence of the central characters of the folded quantum cluster algebra with the specification of complex values to the central parameters of the original quiver, we can now compute the action of the Casimir operators by substituting \eqref{subl1}--\eqref{subl3} to the explicit expressions of $\bC_k$ in \eqref{CasActEq}.

We shall compute this directly by using the generating function of the positive Casimirs derived in \cite{Ip5}, together with the explicit form of the finite dimensional fundamental representations of type $B_n$ Lie algebra, which consist of the spin representation $V_n=S$, the standard representation $V_1=V$ and its exterior products $V_k:=\L^k V$ for $k=1,...,n-1$.

\begin{Thm} Under any irreducible polarization of $\cX_q^0$ with central parameters as in Figure \ref{fig-B5X0}, the Casimir operators of the maximal degenerate representations $\cP_\l^{0,J}$ of type $B_n$ act as multiplication by the following scalars:
\Eq{
\pi_\l(\bC_1)&=1\nonumber\\
\pi_\l(\bC_2)&=0\nonumber\\
\vdots\\
\pi_\l(\bC_{n-1})&=0\nonumber\\
\pi_\l(\bC_n)&=e^{2\pi nb_s\l}+e^{-2\pi nb_s\l}\nonumber.
}
In particular, we have $\cP_\l^{0,J}\simeq \cP_{-\l}^{0,J}$, so we can restrict to positive parameter $\l\in\R_{\geq 0}$.
\end{Thm}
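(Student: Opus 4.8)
Because the $\bC_k$ are central, they act on the irreducible $\cP_\l^{0,J}$ as scalars, and by the identification of central characters in the preceding theorem these scalars are obtained by evaluating the standard positive representation formula \eqref{CasActEq} at the complex parameters \eqref{subl1}--\eqref{subl3}. Thus the entire computation reduces to evaluating the character of each fundamental representation $V_k$ of $B_n$ at the group element determined by $\vec{\l_\fh}$. First I would fix orthogonal coordinates: writing the weights of the standard representation $V_1=V$ as $\pm\varepsilon_1,\dots,\pm\varepsilon_n$ together with $0$, and the simple roots as $\a_i=\varepsilon_i-\varepsilon_{i+1}$ for $i<n$ and $\a_n=\varepsilon_n$, the identity $\a_i(\vec{\l_\fh})=\ol_i$ (which follows from $\a_i(W_k)=\d_{ik}$) gives $x_j:=\varepsilon_j(\vec{\l_\fh})=\sum_{k=j}^n\ol_k$. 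Substituting \eqref{subl1}--\eqref{subl3} into $\ol_k=b_{\eta(k)}\l_k$ (recalling $b_k=b$ for the long roots $k<n$ and $b_n=b_s$) yields $\ol_k=\frac{\bi}{2n}$ for $k<n$ and $\ol_n=b_s\l-\frac{\bi(n-1)}{4n}$, which collapses to the single clean expression $x_j=b_s\l+\frac{\bi(n+1-2j)}{4n}$. Hence $e^{2\pi x_j}=Z\omega_j$ with $Z:=e^{2\pi b_s\l}$ and $\omega_j:=e^{\pi\bi(n+1-2j)/2n}$.

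\textbf{The generators $\bC_1,\dots,\bC_{n-1}$.} Here $V_k=\L^kV$, so $\pi_\l(\bC_k)$ is the elementary symmetric polynomial $\cE_k$ in the $2n+1$ eigenvalues $\{u_j,u_j\inv,1\}$, where $u_j:=e^{4\pi x_j}=W\omega_j^2$ and $W:=Z^2$. The key step is the generating polynomial $P(t)=(t+1)\prod_{j=1}^n(t+u_j)(t+u_j\inv)$, whose coefficients are exactly $\cE_0,\dots,\cE_{2n+1}$. One checks $(\omega_j^2)^n=(-1)^{n+1}$ and that the $\omega_j^2$ are distinct, so $\{\omega_j^2\}_{j=1}^n$ is precisely the full set of $n$-th roots of $(-1)^{n+1}$; this forces the factorizations $\prod_j(t+W\omega_j^2)=t^n+W^n$ and $\prod_j(t+W\inv\omega_j^{-2})=t^n+W^{-n}$, whence $P(t)=(t+1)(t^n+W^n)(t^n+W^{-n})$. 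Expanding and matching $P(t)=\sum_k\pi_\l(\bC_k)\,t^{2n+1-k}$ reads off $\pi_\l(\bC_1)=1$ and $\pi_\l(\bC_k)=0$ for $2\le k\le n-1$ all at once.

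\textbf{The generator $\bC_n$ and the symmetry.} This must be treated separately, since $V_n$ is the spin representation $S$, not $\L^nV$; its weights $\frac12(\pm\varepsilon_1\pm\cdots\pm\varepsilon_n)$ give $\pi_\l(\bC_n)=\prod_{j=1}^n(e^{2\pi x_j}+e^{-2\pi x_j})=\prod_{j=1}^n(Z\omega_j+Z\inv\omega_j\inv)$. Using $\prod_j\omega_j=1$ (as $\sum_j(n+1-2j)=0$) this equals $Z^{-n}\prod_j(1+W\omega_j^2)$, and evaluating the factorization above at $t=1$ gives $\prod_j(1+W\omega_j^2)=1+W^n$, so $\pi_\l(\bC_n)=Z^n+Z^{-n}=e^{2\pi nb_s\l}+e^{-2\pi nb_s\l}$, as claimed. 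For the final assertion, every eigenvalue computed above is invariant under $\l\mapsto-\l$; since $\cP_\l^{0,J}$ is irreducible (Theorem~\ref{mainthm}) and, exactly as for the standard positive representations in the $\sl_2$ case, is determined up to unitary equivalence by its Casimir central character, the coincidence of these characters for $\l$ and $-\l$ yields $\cP_\l^{0,J}\simeq\cP_{-\l}^{0,J}$, so we may restrict to $\l\in\R_{\ge0}$.

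\textbf{Main obstacle.} I expect the difficulty to be bookkeeping rather than conceptual. The delicate points are keeping the orthogonal-coordinate evaluation $x_j$ consistent with the Dynkin-involution twist $i\mapsto n-i$ already built into the substitution \eqref{subl1}--\eqref{subl3}, and verifying the parity-dependent intermediate claim that the $\omega_j^2$ exhaust the $n$-th roots of $(-1)^{n+1}$ while the resulting products $t^n\pm W^{\pm n}$ come out uniformly in $n$. One must also be careful not to confuse $V_n$ with $\L^nV$, whose character is the genuinely different $W^n+W^{-n}$ appearing as the middle coefficient of $P(t)$; the passage to the spin representation is what replaces $W^{\pm n}=e^{\pm4\pi nb_s\l}$ by $Z^{\pm n}=e^{\pm2\pi nb_s\l}$.
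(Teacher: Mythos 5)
Your proof is correct and follows essentially the same route as the paper's: substitute the complexified parameters \eqref{subl1}--\eqref{subl3} into \eqref{CasActEq}, collapse the exterior-power generating polynomial to $(1+t)\left(t^n+e^{4\pi nb_s\l}\right)\left(t^n+e^{-4\pi nb_s\l}\right)$ via the roots-of-unity factorization, and handle the spin representation $V_n$ by the same factorization evaluated at $t=1$. The only cosmetic difference is that you read off $\pi_\l(\bC_k)$ for $k\le n-1$ directly as elementary symmetric polynomials in the $2n+1$ eigenvalues, whereas the paper additionally invokes $S\ox S\simeq 1\o+V\o+\cdots\o+\L^nV$ to express the middle coefficient of $p(t)$ in terms of the fundamental Casimirs --- a step your argument does not need.
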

\begin{proof}
By the explicit weight decomposition of the spin module \cite{Car} and \eqref{CasActEq}, we have
\Eqn{
\pi_\l(\bC_n)&=\prod_{i=1}^{n}\left(\prod_{j=n+1-i}^n e^{2\pi \ol_j} +\prod_{j=n+1-i}^n e^{-2\pi \ol_j} \right)\\
&=e^{-2\pi \sum_{k=1}^{n} k\ol_k}\prod_{i=1}^n\left(1+\prod_{j=n+1-i}^n e^{4\pi \ol_j} \right).
}
Upon setting $\l_1=\cdots=\l_{n-1}=\frac{\bi}{2nb}$ and $\l_n=\l-\frac{n-1}{4nb_s}\bi$, we obtain
\Eq{\label{CnProd}
\pi_\l(\bC_n)=e^{-2\pi nb_s\l}\prod_{k=1}^n \left(1- e^{4\pi b_s\l}\xi_n^{\frac{2k-1}{2}}\right)
}
where $\xi_n=e^{\frac{2\pi \bi}{n}}$ is a primitive $n$-th root of unity. Using the factorization
$$t^n+a^n=\prod_{k=1}^n \left(t-a \xi_n^{\frac{2k-1}{2}}\right)$$ and substituting $t=1$, the product equals $$\pi_\l(\bC_n)=e^{-2\pi nb_s\l}(1+e^{4\pi nb_s\l})=e^{2\pi nb_s\l}+e^{-2\pi nb_s\l}$$ as required.

We can also observe directly from the computation above that choosing $\l_i$ to be other general solutions of \eqref{symeq} amounts to permuting the factors in \eqref{CnProd}, which does not affect the final result.

The other Casimirs are obtained from the generating function of the weights of exterior product representations
\Eq{
\label{pt}
p(t)=(1+t)\prod_{i=1}^n\left( \left(t+\prod_{j=n+1-i}^n e^{4\pi \ol_j}\right) \left(t+\prod_{j=n+1=i}^n e^{-4\pi \ol_j}\right) \right),
}
which can be rewritten as
$$p(t)=(1+t^{2n+1})+(t+t^{2n})\bC_1+(t^2+t^{2n-1})\bC_2+\cdots + (t^n+t^{n+1})\bC$$
where $$\bC=\bC_n^2-\bC_{n-1}-\cdots-\bC_2-\bC_1-1$$
that follows from the decomposition of the fundamental representations:
\Eq{
S\ox S\simeq 1\o+V\o+\L^2 V\o+\cdots \o+\L^n V.
}

Exactly as in \eqref{CnProd}, upon setting $\l_n=\l-\frac{n-1}{4nb_s}\bi$ and $\l_1=\cdots=\l_{n-1}=\frac{\bi}{2nb}$, we compute that
$$
\prod_{i=1}^n \left(t+\prod_{j=n+1-i}^n e^{\pm 4\pi \ol_j} \right)=t^n+e^{\pm 4\pi nb_s\l}$$
and hence by \eqref{pt},
\Eqn{
p(t)&=(1+t)(t^n+e^{4\pi nb_s\l})(t^n+e^{-4\pi nb_s\l})\\
&=1+t+(e^{4\pi nb_s\l}+e^{-4\pi nb_s\l})(t^n+t^{n+1})+t^{2n}+t^{2n+1}.
}
By comparing coefficients, we obtain 
\Eqn{
\pi_\l(\bC_1)=1\\
\pi_\l(\bC_2)=\cdots = \pi_\l(\bC_{n-1})=0.
}
We verify that indeed $\pi_\l(\bC)=e^{4\pi nb_s\l}+e^{-4\pi nb_s\l}=\pi_\l(\bC_n)^2-2$. 
\end{proof}
This completes the analysis of the maximal degenerate representations of type $B_n$.

\subsubsection{Results for general Lie types}\label{sec:higher:other}
Using the algorithm in the previous subsections, we compute the actions of $\bC_k$ of the maximal degenerate representations of other Lie types (except type $F_4$ and $G_2$ which are treated separately) with respect to the parabolic part $A_{n-1}$ with root index $1,...,n-1$ labeled as in Appendix \ref{App:Dyn}. We compute the complex shift $\h_\g$ by analyzing the central monomials $Q_n$ under the symmetric folding as in subsection \ref{sec:higher:central}, and substitute the corresponding complexified central parameters to \eqref{CasActEq} in order to obtain the action of $\bC_k$ as in subsection \ref{sec:higher:cas}.

For completeness, we also include type $A_{n-1}\subset A_n$ which was discussed previously in Lemma \ref{outer} and is easy to deal with using the explicit expressions of the actions of $\bC_k$ as elementary symmetric polynomials \eqref{casact}.

\begin{Thm}\label{thmother} The complex shifts, together with the actions of the generalized Casimirs $\bC_k$ of the maximal degenerate representations in type $A_n$ to $E_n$ are given as follows.

$$\begin{array}{|c|c|l|}
\hline
\emph{Type}&\emph{Complex Shifts}&\multicolumn{1}{|c|}{\emph{Action of Casimirs}}\\
\hline\hline
A_n, n\geq 2& \h_{A_n}=\frac{n-1}{4nb}& \begin{array}{rl}\pi_\l(\bC_1)&=e^{\frac{4\pi nb}{n+1}\l}\\
\pi_\l(\bC_2)&=\cdots =\pi_\l(\bC_{n-1})=0\\
\pi_\l(\bC_n)&=e^{-\frac{4\pi nb}{n+1}\l}\end{array}\\\hline
B_n& \h_{B_n}=\frac{n-1}{4nb_s}&\begin{array}{rl}\pi_\l(\bC_1)&=1\\
\pi_\l(\bC_2)&=\cdots =\pi_\l(\bC_{n-1})=0\\
\pi_\l(\bC_n)&=e^{2n\pi b_s\l}+e^{-2n\pi b_s\l}\end{array}\\\hline
C_2&\multirow{4}{*}{$\h_{C_n}=\frac{n-1}{2nb}$}& \begin{array}{rl}\pi_\l(\bC_1)&=0\\\pi_\l(\bC_2)&=e^{4\pi b\l}+e^{-4\pi b\l}-1\end{array}\\\cline{1-1}\cline{3-3}
C_n, n\geq3&& \begin{array}{rl}\pi_\l(\bC_1)&=\pi_\l(\bC_3)=\pi_\l(\bC_4)=\cdots=\pi_\l(\bC_{n-1})=0\\\pi_\l(\bC_2)&=-1\\\pi_\l(\bC_n)&=e^{2\pi nb\l}+e^{-2\pi nb\l}\end{array}\\\hline
D_n, n=2k& \multirow{4}{*}{$\h_{D_n}=\frac{n-2}{2nb}$}& \begin{array}{rl}\pi_\l(\bC_1)&=\cdots=\pi_\l(\bC_{n-1})=0\\\pi_\l(\bC_n)&=e^{\pi n b\l}+e^{-\pi n b \l}\end{array}\\\cline{1-1}\cline{3-3}
D_n, n=2k+1& & \begin{array}{rl}\pi_\l(\bC_1)&=\cdots=\pi_\l(\bC_{n-2})=0\\\pi_\l(\bC_{n-1})&=e^{-\pi n b\l}\\\pi_\l(\bC_n)&=e^{\pi nb\l}\end{array}\\\hline
E_6& \multirow{7}{*}{$\h_{E_n}=\frac{3(n-3)}{4nb}$}&\begin{array}{rl}\pi_\l(\bC_1)&=\pi_\l(\bC_2)=\pi_\l(\bC_4)=\pi_\l(\bC_5)=0\\\pi_\l(\bC_3)&=1\\\pi_\l(\bC_6)&=e^{8\pi b\l}+e^{-8\pi b\l}\end{array}\\\cline{1-1}\cline{3-3}
E_7&&\begin{array}{rl}\pi_\l(\bC_1)&=\pi_\l(\bC_2)=\pi_\l(\bC_3)=\pi_\l(\bC_4)=\pi_\l(\bC_6)=0\\\pi_\l(\bC_5)&=-1\\\pi_\l(\bC_7)&=e^{14\pi b\l}+e^{-14\pi b\l}\end{array}\\\cline{1-1}\cline{3-3}
E_8&&\begin{array}{rl}\pi_\l(\bC_2)&=\pi_\l(\bC_3)=\pi_\l(\bC_5)=\pi_\l(\bC_6)=\pi_\l(\bC_7)=0\\\pi_\l(\bC_1)&=-1\\\pi_\l(\bC_4)&=-2\\\pi_\l(\bC_8)&=e^{32\pi b\l}+e^{-32\pi b\l}\end{array}\\\hline
\end{array}$$
\end{Thm}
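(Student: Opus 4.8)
The plan is to run, type by type, the same three-step algorithm carried out for $B_n$ in Steps 1--3 of Subsection \ref{sec:higher:max}; the only type-dependent inputs are the combinatorics of the residual central monomial $Q_n$ under the symmetric folding (which pins down the complex shift $\h_\g$) and the explicit weight decompositions of the fundamental representations $V_k$ (which feed into the weight-space formula \eqref{CasActEq}). For each type I would first fix the parabolic $A_{n-1}$ with root index $1,\dots,n-1$ labeled as in Appendix \ref{App:Dyn}, and perform the folding. By the equivalence underlying Corollary \ref{eqcor} and Subsection \ref{sec:higher:central}, folding amounts to setting $\l_1=\cdots=\l_{n-1}=\frac{\bi}{2nb}$, the standard solution of \eqref{symeq}, which immediately nullifies the central characters carried by the symmetric vertices $X_{c_0},\dots,X_{c_{n-1}}$.

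First I would determine the complex shift $\h_\g$. As in Subsection \ref{sec:higher:central}, after folding the only surviving non-Cartan central monomial is $Q_\star=Q_n X_{c_0}\inv X_\star^{1/n}$, so the task reduces to tracking how $Q_n$---the product of the cluster variables at the mutable vertices lying outside the parabolic interior together with $X_{c_0}$---transforms under the mutation sequence; Lemma \ref{outer} records precisely the contribution of the outer frozen boundary. Matching the central parameter of $Q_\star$ to the positive short weight $4\ol=4b_s\l$ of the chosen polarization of $\cX_q^0$ yields a linear identity expressing $4\ol$ in terms of $\sum_k k\ol_k$, and the imaginary part absorbed from $\l_1,\dots,\l_{n-1}$ is exactly the shift $\h_\g$. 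The specific numerators appearing in the table are then read off from how many symmetric vertices sit strictly inside the folded chain together with the multiplier $d_n$ of the retained simple root, so this step is a short bookkeeping computation on each quiver.

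Second, with $\l_k=\frac{\bi}{2nb}$ for $k<n$ and $\l_n=\l-\bi\h_\g$ substituted into $\vec{\l_\fh}=\sum_i\ol_i W_i$, I would evaluate $\pi_\l(\bC_k)=\sum_{\cV\subset V_k}\exp(-4\pi\mu_\cV(\vec{\l_\fh}))$. Following the $B_n$ proof, the efficient route is to collect the weights of all fundamental representations into a single generating polynomial $p(t)$ built from a minuscule (or spin, or standard) module, so that each $\bC_k$ is a coefficient of $p(t)$. Under the substitution the elementary products $\prod_i\bigl(t+\prod_j e^{\pm4\pi\ol_j}\bigr)$ collapse to $t^m\pm e^{\pm4\pi m b_s\l}$ via the cyclotomic factorization $t^m+a^m=\prod_k\bigl(t-a\,\xi_m^{(2k-1)/2}\bigr)$ exactly as in \eqref{CnProd}: the roots of unity coming from $\l_1=\cdots=\l_{n-1}$ telescope, only the two extreme weights survive, and one reads off $\pi_\l(\bC_k)=0$ for the intermediate $k$ together with a two-term expression $e^{\pm2\pi m b_s\l}$ for the top Casimir. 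For types $A_n$ and $D_n$ the relevant module is minuscule and $p(t)$ is a clean product; for $B_n,C_n,E_n$ one additionally invokes a tensor-square decomposition (as in $S\ox S\simeq\bo+_{k}\L^k V$ for $B_n$) to recover the remaining $\bC_k$ and to check the internal consistency relation $\bC=\bC_n^2-\sum_{k<n}\bC_k-1$.

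The main obstacle will be the exceptional series $E_6,E_7,E_8$, where the fundamental modules are large and not all minuscule, so the clean cyclotomic collapse of the classical cases is not directly available and nontrivial weight multiplicities appear. Here I would lean on the Weyl-type character formula \eqref{CasWeylEq} rather than a brute weight enumeration: after the substitution most numerator summands cancel in pairs under the residual symmetry fixing the standard solution, leaving a controllable residue which one evaluates at the relevant roots of unity. This is exactly what produces the nonzero integer entries $\pi_\l(\bC_3)=1$ for $E_6$, $\pi_\l(\bC_5)=-1$ for $E_7$, and $\pi_\l(\bC_1)=-1,\ \pi_\l(\bC_4)=-2$ for $E_8$, and I expect their verification to be case-by-case, relying on the known Dynkin labels and dimensions of the $E_n$ fundamental representations rather than a uniform argument. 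Types $F_4$ and $G_2$ are excluded precisely because the parabolic $A_{n-1}$ folding is no longer of the maximal type-$A$ form and are handled separately.
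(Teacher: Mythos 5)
Your three-step scheme is exactly the proof the paper runs: folding is implemented by substituting the standard solution $\l_1=\cdots=\l_{n-1}=\frac{\bi}{2nb}$ of \eqref{symeq}, the complex shift is obtained by matching the central parameter of the surviving monomial $Q_\star$ against the positive short weight $4\ol$, and for the classical types the Casimirs are extracted from a generating polynomial that collapses cyclotomically as in \eqref{CnProd}. Two details, however, diverge from what the paper actually does. For $C_n$ there is no spin module and no tensor-square decomposition is invoked: the paper gets the shift by Langlands duality with $B_n$ (inverting the multipliers, whence $b\h_{C_n}=2b_s\h_{B_n}$), and it uses that the fundamental representations are the quotients $V_k=\L^kV/\L^{k-2}V$, so the coefficients of $p(t)$ are the partial sums $\G_k=\sum_j\bC_{\over{k}+2j}$ of \eqref{Gk}, which are then inverted as a triangular system; your ``tensor-square'' phrasing is wrong for this type, though the repair is routine. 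Likewise your uniform formula $Q_\star=Q_nX_{c_0}\inv X_\star^{1/n}$ is literally only the $B_n$ case: the paper computes that $Q_n$ contains \emph{two} symmetric vertices in type $D_n$ and \emph{three} in type $E_n$, giving $Q_\star=Q_nX_{c_0}\inv X_{c_1}\inv X_\star^{2/n}$, resp.\ $Q_nX_{c_0}\inv X_{c_1}\inv X_{c_2}\inv X_\star^{3/n}$ --- you hedge correctly that the count of symmetric vertices governs the numerator, but this count is itself a nontrivial computation from the chosen reduced word, and it is precisely where $\frac{n-2}{2n}$ and $\frac{3(n-3)}{4n}$ come from. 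A smaller omission: in type $D_n$ the two surviving extreme spin weights must be distributed between $S^+$ and $S^-$ by the parity argument on sign vectors, which is why odd $n$ produces the single exponentials $\pi_\l(\bC_{n-1})=e^{-\pi nb\l}$, $\pi_\l(\bC_n)=e^{\pi nb\l}$ rather than a two-term sum.

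The genuine gap is your treatment of $E_6,E_7,E_8$. The paper does \emph{not} find cancellations in the alternating Weyl sum: it enumerates the weights of each fundamental representation with the \texttt{Sage} \texttt{WeylCharacterRing} functionality, writes $\pi_\l(\bC_k)=\sum_i e^{4\pi bL_i}$ as in \eqref{CasActEq}, substitutes the complexified parameters, and simplifies real and imaginary parts symbolically. Your proposed mechanism --- that ``most numerator summands cancel in pairs under the residual symmetry fixing the standard solution'' in \eqref{CasWeylEq} --- is not an argument: no such symmetry is identified, the numerator runs over the full Weyl group (of order roughly $7\times10^{8}$ for $E_8$), and the nonzero outputs $\pi_\l(\bC_3)=1$ for $E_6$, $\pi_\l(\bC_5)=-1$ for $E_7$, and $\pi_\l(\bC_1)=-1$, $\pi_\l(\bC_4)=-2$ for $E_8$ show that whatever cancellation pattern exists is not the clean pairing you describe. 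As written, this step is a conjecture about a computation rather than a proof; to close it you must either exhibit the claimed symmetry and control its residue explicitly, or concede, as the paper effectively does, that the exceptional entries are verified by a (computer-assisted) weight enumeration.
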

\begin{proof} We comment on some special aspects of the computation of different types.

\textbf{Type $C_n$.} It is known that the quantum torus algebras corresponding to positive representations of types $B_n$ and $C_n$ are related by Langlands duality, where the long and short weights are interchanged. More precisely, the quantum torus algebra $\cX_q^{C_n}$ is obtained from $\cX_q^{B_n}$ simply by setting the multipliers $\til{d_i}:=\frac{1}{2d_i}$, i.e. interchanging $1\corr\half$. 

The central monomials of $\cX_q^{C_n}$ is then given by the same expression as $\cX_q^{B_n}$, but with all variables rescaled by $X_i\mapsto X_i^{2d_i}$, i.e. all the original ``long" variables are squared. Therefore, when we compute the central monomial $Q_n$ in the folding, the corresponding complex shifts in $\l_1,...,\l_{n-1}$ which correspond to ``short" variables in type $C_n$ are doubled. Hence, the complex shifts are related by 
\Eq{
b\h_{C_n}=2b_s\h_{B_n}.}

The fundamental representations of $C_n$ are, however, quite different from those of $B_n$. They are given by the standard representation $V_1=V$ and the quotients of its exterior products $V_k:=\L^k V/\L^{k-2}V$ for $k=2,...,n$. The corresponding generating function is given by
\Eqn{
p(t)&=\prod_{i=1}^n\left(\left(t+\prod_{j=n+1-i}^n e^{2\pi \ol_j}\right) \left(t+\prod_{j=n+1-i}^n e^{-2\pi\ol_j}\right)\right)\\
&=(1+t^2+\cdots+t^{2n})+\G_1(t+t^{2n-1})+\G_2(t^2+t^{2n-2})+\cdots + \G_n(t^{n-1}+t^{n+1})+\G_1 t^n
}
where
\Eq{\label{Gk}
\G_k:=\sum_{j=0}^{\lfloor \frac{k-1}{2}\rfloor}\bC_{\over{k}+2j}
}
and $\over{k}\cong k\pmod{2} \in\{0,1\}$. Upon setting $\l_n=\l-\frac{n-1}{2nb}\bi$ (which is a long weight) and $\l_1=\cdots=\l_{n-1}=\frac{\bi}{2nb_s}$, we get
$$p(t)=1+(e^{2\pi nb\l}+e^{-2\pi nb\l})t^n+t^{2n}.$$
Comparing the coefficients with \eqref{Gk}, we can solve for the actions of $\bC_k$.\\

\textbf{Type $D_n$.} First, we choose a reduced word of $w_0$ of the form $\bi_0=\bi_{A_{n-1}}\bi'$. This can be computed to be given by
$$\bi_0=(1\;2\;1\;3\;2\;1\cdots n-1\;\cdots 1)(n\; n-2\;n-3\cdots 1)(n-1\; n-2\;n-3\cdots 2)(n\; n-2\cdots 3)\cdots( n-\over{n})$$
where $\over{n}\cong n\pmod{2} \in\{0,1\}$ depending on the parity of $n$. After constructing the corresponding quantum torus algebra, we compute that the central monomial $Q_n$ contains cluster variables from the outer two layers of the parabolic $A_{n-1}$ part of the quiver. Upon mutation to the symmetric quiver, $Q_n$ is transformed into a monomial which only consists of two symmetric vertices:
\Eq{
Q_n = X_{\cdots} X_{c_0}X_{c_1}.
}
Following the previous subsection, after folding, we see that the unique non-Cartan central monomial becomes
\Eq{
Q_\star:=Q_n X_{c_0}\inv X_{c_1}\inv X_\star^{\frac{2}{n}}.
}
The indexing of the central parameters of the parabolic $A_{n-1}$ part may differ depending on the parity of $n$, but does not matter once we set $\l_1=\cdots=\l_{n-1}$. Hence, we compute the central parameter of $Q_\star$ to be
\Eqn{
4\ol&:=4\ol_n-4\sum_{k=2}^{n-1}(-\ol_k)-4\sum_{k=2}^{n-2}(-\ol_k)+\frac{2}{n}\left(4\ol_1-4\sum_{k=2}^{n-1}(n-k)\ol_k\right)\\
&=4\ol_n-4\ol_{n-1}+\frac{8}{n}\sum_{k=1}^{n-1} k\ol_k.
}
By setting $\l_1=\cdots=\l_{n-1}=\frac{\bi}{2nb}$, we obtain the complex shift $\h_{D_n}=\frac{n-2}{2nb}$.

The fundamental representations of $D_n$ comprise the standard $2n$-dimensional module $V_1=V$ together with its exterior products $V_k=\L^k V$, $k=1,...,n-2$, as well as two spin modules $S^+, S^-$. The generating polynomial $p(t)$ for the Casimirs is similar to that of type $B_{n-1}$ but without the $(1+t)$ factor, and with $\l_{n-1}$ replaced by $\l_n-\l_{n-1}$. Similar computation shows that $\pi_\l(\bC_1)=\cdots =\pi_\l(\bC_{n-2})=0$ from the coefficients of $t^k$ in $p(t)$ for $k=1,...,n-2$.

For the spin modules, we have an explicit description of its weight spaces \cite{Car} as follows. The $2^n$ dimensional module $S$ has weight vectors
\Eq{\label{muVe}
\mu_{\cV_\e}:=\frac12 \sum_{i=1}^n \e_i\mu_i
}
where $\e=(\e_1,...,\e_n)$ ranges over all possible choices of signs $\e_i=\pm 1$, and the weights $\mu_i\in H^*$ are given by 
\Eq{\mu=M\w}
where
\Eq{
M=\mat{1\\-1&1\\&-1&1\\&&\ddots&\ddots\\&&&-1&1\\&&&&-1&1&1\\&&&&&-1&1}
}
and $\w=(\w_i)$ are the fundamental weights.

Now we can split the sum so that it gives a decomposition \Eq{
S\simeq S^+\o+ S^-
}
where $S^\pm$ corresponds to the weight spaces with even ($+$) or odd ($-$) number of $-1$'s in the choices of $\e_i$. Then 
\Eq{
\case{V_{n-1}\simeq S^-, V_n\simeq S^+&\mbox{$n$ is even,}\\V_{n-1}\simeq S^+,V_n\simeq S^-&\mbox{$n$ is odd.}}
}
The formula \eqref{CasActEq} together with \eqref{muVe} now allow the computation of the actions of $\bC_{n-1}$ and $\bC_n$, which are explicitly given by
\Eq{
\pi_\l(\bC_\e)=\sum_\e e^{-2\pi b \e\cdot M A\inv \L}
}
summing over all possible choices of signs of $\e=(\e_i)$ with even or odd parity, where $\L=(\l_i)$ are the parameters. Upon setting $\l_1=\cdots=\l_{n-1}=\frac{\bi}{2nb}$ and $\l_n=\l-\frac{n-2}{2nb}\bi$, a direct calculation using root of unity shows that
\begin{itemize}
\item The summation over $\e$ which consists of both $+1$ and $-1$ vanishes.
\item The top term with $\e=(1,...,1)$ gives $e^{-\pi nb\l}$, while the bottom term $\e=(-1,...,-1)$ gives $e^{\pi nb\l}$.
\end{itemize}
Hence, taking into account the description of $S^\pm$ based on the parity of the number of $-1$'s in $\e$, we complete the analysis of $\bC_{n-1}$ and $\bC_n$.\\

\textbf{Type $E_n$.} For type $E_n$, we use the following reduced word $\bi_0=\bi_{A_{n-1}}\bi'$ corresponding to parabolic $A_{n-1}$ with root index $1,...,n-1$:
\Eqn{
E_6:\bi_0=&(1 \;2 1 \;3 2 1 \;4 3 2 1 \;5 4 3 2 1)(6 3 2 1 4 3 2 6 3 4 5 4 3 6 2 3 4 1 2 3 6),\\
E_7:\bi_0=&(1\; 2 1 \;3 2 1 \;4 3 2 1 \;5 4 3 2 1 \;6 5 4 3 2 1)(7 3 2 1 4 3 2 7 3 4 5 4 3 7 2 3 4 1 2 3 7 6 5 4 3 
2 1 7 3 2 4 3 7 5 4 3 2 6 5 4 3 7),\\
E_8:\bi_0=&(1 \;2 1 \;3 2 1 \;4 3 2 1 \;5 4 3 2 1 \;6 5 4 3 2 1 \;7 6 5 4 3 2 1)(8 3 2 1 4 3 2 8 3 4 5 4 3 8 2 3 4 1 2 3 8 6 5 4 3 2 1 8 3 2 4 3 \\
&8 5 4 3 2 6 5 4 3 8 7 6 5 4 3 8 2 3 4 5 6 7 1 2 3 4 5 6 8 3 4 5 2 3 4 8 3 2 1 2 3 8 4 3 2 5 4 3 8 6 5 4 3 2 7 6 5 4 3 8).
}

The central monomial $Q_n$ can be computed explicitly and is shown to depend on the outer 3 layers of the parabolic $A_{n-1}$ part. Upon mutation to the symmetric quiver, $Q_n$ is transformed into the form
\Eq{
Q_n=X_{\cdots} X_{c_0}X_{c_1}X_{c_2}.
}
Again, after folding, the central monomial $Q_\star$ is given by
\Eq{
Q_\star:=Q_n X_{c_0}\inv X_{c_1}\inv X_{c_2}\inv X_\star^{\frac{3}{n}}.
}
Upon substituting $\l_1=\cdots =\l_{n-1}=\frac{\bi}{2nb}$, we obtain the resulting complex shifts $\h_{E_n}$. For $n=6,7,8$, they are given by $\frac{3}{8b}, \frac{3}{7b}$ and $\frac{15}{32b}$ respectively.

To compute explicitly the action of the Casimir operators using \eqref{CasWeylEq}, we use the \texttt{WeylCharacterRing::fundamental\_weights} function in \texttt{Sage} to output the set of weights for each fundamental representation of $\g$. Then, we manipulate the set of weights to get the exponents $L_i$ in the expression for each Casimir of the form
$$\pi_\l(\bC_k)=\sum e^{4\pi b L_i},$$
substitute the complex shifts, compute symbolically the real and imaginary parts of $\pi_\l(\bC_k)$ separately and simplify the results to obtain the action listed in Theorem \ref{thmother}.
\end{proof}

In type $F_4$, there is no parabolic subalgebra of type $A_3$. Instead, we consider the degenerate representations obtained by folding the parabolic subalgebra of type $A_1 \x A_2$ with respect to two different root lengths, namely the root index $(1,2,4)$ and $(1,3,4)$ respectively. The corresponding reduced words can be chosen as
\Eqn{\bi_0&=(1\;21\;4) (3 2 3 4 3 2 3 1 2 3 2 1 4 3 2 3 1 2 4 3)}
and
\Eqn{
\bi_0&=(4\;34\;1)(2 3 2 1 2 3 2 4 3 2 3 4 1 2 3 2 4 3 1 2).
}
respectively. They are chosen to be Langlands dual of each other with $1\corr 4$ and $2\corr 3$.

In type $G_2$, there are also two ways to obtain the maximal degenerate representation by folding the parabolic subalgebra of type $A_1$ with respect to root $1$ and $2$. 

The results are summarized as follows, where details are omitted as they can be obtained by direct computations from the definition. 
\begin{Thm}\label{thmother2} The complex shifts, together with the actions of the generalized Casimirs $\bC_k$ of the maximal degenerate representations in type $F_4$ and $G_2$ are given as follows.
$$\begin{array}{|c|c|c|l|}
\hline
\emph{Type}&J&\emph{Complex Shifts}&\multicolumn{1}{|c|}{\emph{Action of Casimirs}}\\
\hline\hline
\multirow{4}{*}{$F_4$}&\{1,2,4\}& \h_{F_4}^s=\frac{7}{24b_s}&\begin{array}{rl}\pi_\l(\bC_1)&=-1\\
\pi_\l(\bC_2)&=e^{24\pi b_s\l}+e^{-24\pi b_s\l}\\
\pi_\l(\bC_3)&=e^{24\pi b_s\l}+e^{-24\pi b_s\l}-1\\
\pi_\l(\bC_4)&=0\end{array} \\\cline{2-4}
& \{1,3,4\}&\h_{F_4}^l=\frac{11}{24b}&\begin{array}{rl}\pi_\l(\bC_1)&=-1\\
\pi_\l(\bC_2)&=e^{24\pi b\l}+e^{-24\pi b\l}\\
\pi_\l(\bC_3)&=2\\
\pi_\l(\bC_4)&=-1\end{array}\\\hline
\multirow{4}{*}{$G_2$}&\{1\}& \h_{G_2}^s=\frac{1}{8b_s}&\begin{array}{rl}\pi_\l(\bC_1)&=e^{8\pi b_s\l}+e^{-8\pi b_s\l}\\
\pi_\l(\bC_2)&=e^{8\pi b_s\l}+e^{-8\pi b_s\l}+1\end{array} \\\cline{2-4}
& \{2\}&\h_{G_2}^l=\frac{3}{8b}&\begin{array}{rl}
\pi_\l(\bC_1)&=e^{8\pi b\l}+e^{-8\pi b\l}\\
\pi_\l(\bC_2)&=-1\end{array}\\\hline
\end{array}$$
\end{Thm}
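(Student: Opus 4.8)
The plan is to run the same three-step algorithm used in the proof of Theorem~\ref{thmother}, but now applied separately to each simple factor of the type-$A$ parabolic part. For each choice of $J$ in the table I would first fix the reduced word $\bi_0=\bi_{A}\bi'$ displayed above, build the standard quiver of $\cX_q^{\mathrm{std}}$, and locate inside it the relevant type-$A$ subquivers: for $F_4$ an $A_2=\sl_3$ block and an $A_1=\sl_2$ block, and for $G_2$ a single $A_1$ block. Applying the symmetric folding of Section~\ref{sec:An} to each block in turn --- combining the three symmetric vertices of the $A_2$ block into one variable with multiplier $3$, and the two symmetric vertices of each $A_1$ block into one variable with multiplier $2$ --- produces the skew-symmetrizable algebra $\cX_q^0$ of the same rank, whose non-Cartan center is generated by a \emph{single} monomial since $n-|J|=1$. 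Because the two rows of the $F_4$ table (and likewise of $G_2$) come from Langlands-dual choices interchanging $1\corr 4$, $2\corr 3$ (i.e. the long and short root lengths), I expect them to be governed by the same folding combinatorics with the roles of $b$ and $b_s$ exchanged, mirroring the $B_n\leftrightarrow C_n$ relation.

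Next I would compute the complex shifts exactly as in Subsection~\ref{sec:higher:central}. The key local datum is how many outermost layers of each parabolic subquiver the central monomial $Q_n$ touches: after mutation to the symmetric quiver $Q_n$ becomes a monomial involving a fixed number $m$ of the symmetric vertices $X_{c_0},X_{c_1},\dots$, so that the unique non-Cartan central monomial of $\cX_q^0$ is $Q_\star=Q_n\cdot(X_{c_0}\cdots X_{c_{m-1}})\inv\cdot X_\star^{m/n}$, in exact parallel with the $B_n$, $D_n$, $E_n$ formulas. Reading off the central parameter of $Q_\star$ as a linear combination of $\ol_1,\dots,\ol_n$ and substituting the standard solution of \eqref{symeq} on the folded coordinates of each factor should yield $\h_{F_4}^s=\frac{7}{24b_s}$, $\h_{F_4}^l=\frac{11}{24b}$, $\h_{G_2}^s=\frac{1}{8b_s}$ and $\h_{G_2}^l=\frac{3}{8b}$. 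I would cross-check the two $F_4$ (resp. $G_2$) shifts against the Langlands rescaling $X_i\mapsto X_i^{2d_i}$ that relates the dual quivers, being careful that the $A_1$ and $A_2$ factors now sit at \emph{different} root lengths in each choice of $J$.

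With the complexified central parameters in hand, the Casimir actions follow by substituting the specializations $\l_i=\frac{\bi}{2nb}$ on the folded coordinates and $\l_n=\l-\bi\h$ into the weight-sum formula \eqref{CasActEq}, or equivalently into the Weyl character formula \eqref{CasWeylEq}. Concretely, for each fundamental representation $V_k$ (the $7$- and $14$-dimensional modules for $G_2$; the $26$-, $52$-, $273$- and $1274$-dimensional modules for $F_4$) I would list the weights $\mu_\cV$ with multiplicity, pair them against $\vec{\l_\fh}=\sum_i\ol_i W_i$, and collapse the resulting sums by the same root-of-unity cancellation used for \eqref{CnProd}: the mixed-sign contributions cancel, leaving a mixture of hyperbolic cosines in $\l$ together with constant integers. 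Comparing coefficients in the generating polynomial $p(t)$ then reads off each $\pi_\l(\bC_k)$. As in the $E_n$ case I would assemble the weight data with \texttt{Sage}'s \texttt{WeylCharacterRing} and verify the real and imaginary parts symbolically.

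The main obstacle is bookkeeping rather than any new idea. First, one must determine correctly, for each exceptional quiver, which layers of the parabolic part the monomial $Q_n$ depends on --- this fixes $m$ and hence the shift --- and verify that the folding indeed sends $Q_n$ to a monomial in only those symmetric vertices. Second, one must control the comparatively large weight multiplicities of the $F_4$ fundamental modules so that the root-of-unity cancellations can be confirmed \emph{exactly} rather than merely numerically. A further subtlety, absent in the classical series, is that the short and long rescalings must be tracked independently through the two folded factors; getting the normalization of $\ol_n$ as a long versus short weight right is precisely what pins down whether the surviving cosine carries $24\pi b_s\l$ or $24\pi b\l$ (and analogously $8\pi b_s\l$ versus $8\pi b\l$ for $G_2$) in the final table.
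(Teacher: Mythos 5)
Your proposal is correct and follows essentially the same route as the paper: the paper itself omits the details of Theorem \ref{thmother2}, stating only that the results follow by direct computation from the definitions via the algorithm of Theorem \ref{thmother} (fold each type-$A$ factor of $W_J$, track the central monomial $Q_\star$ to extract the complex shift, then substitute the complexified parameters into \eqref{CasActEq}), which is precisely your plan, including the use of \texttt{Sage} weight data for the exceptional fundamental modules. The only caution is that your single-factor formula $Q_\star=Q_n\cdot(X_{c_0}\cdots X_{c_{m-1}})\inv\cdot X_\star^{m/n}$ must be adapted to carry a fractional power of \emph{each} factor's folded variable (with the standard solution of \eqref{symeq} taken at that factor's rank and root length), a point you already flag when noting that the short and long rescalings must be tracked independently.
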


\subsection{Modular double counterpart} \label{sec:higher:mod}
As explained at the end of the discussion of type $A_n$ in Section \ref{sec:An:mod}, one can also obtain the modular double counterpart of the symmetric folding by substituting 
\Eq{
\l_1=\cdots=\l_{n-1}=\frac{\bi b}{2n}
}
instead, i.e. replacing $b$ by $b\inv$ in the folding of the $A_{n-1}$ parabolic part. The resulting quiver $\til{\cX}_q^0$ is the same as that of $\cX_q^0$ but with the weights of $d_\star$ being $\frac{1}{n}$ instead. More generally, we have a modular double counterpart to Theorem \ref{mainthm}.

\begin{Thm}\label{mainthmmod} For any parabolic subgroup $W_J\subset W$ of type $A_{k_1}\x\cdots \x A_{k_m}$, we have another new family of homomorphisms of $\fD_q(\g)$ into a skew-symmetrizable quantum cluster algebra $\cO_q(\til{\cX}^0)$, where the multiplier of the symmetric node $d_\star$ is inverse to that of $\cX_q^0$.
\end{Thm}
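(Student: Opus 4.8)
The plan is to run the parabolic folding construction of Theorem \ref{mainthm} essentially unchanged, except that on each type-$A$ block of $W_J$ we perform the \emph{modular} folding of Theorem \ref{thmmod} in place of the standard one. Recall that the homomorphism of Theorem \ref{mainthm} is assembled by folding the symmetric vertices of each parabolic component $A_{k_i}$ independently; each such folding collapses the $k_i+1$ symmetric vertices $X_{c_0},\dots,X_{c_{k_i}}$ into a single node $X_\star$ with multiplier $d_\star=k_i+1$. The structural fact already used in Theorem \ref{mainthmAn} is that the symmetric part of a given block enters the embedding only through that block's special generator and is proportional to the block's generalized Casimirs; in particular the symmetric-vertex sets of distinct blocks are disjoint, so the $m$ foldings are mutually independent and may be carried out simultaneously.

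First I would recall, from the proof of Theorem \ref{thmmod}, the modular mechanism for a single block. Fixing the $i$-th block and writing $b_\star:=\sqrt{k_i+1}\,b$ and $\til{X}_\star:=X_\star^{1/b_\star^2}$, the modular invariance of the quantum dilogarithm gives $g_{b_\star}(X_\star)=g_{b_\star^{-1}}(\til{X}_\star)$. Applying Lemma \ref{qbin} together with the binomial expansion to the folded adjoint action then produces the $\til{q}$-binomial coefficients $\bin{k_i+1\\j}_{\til{q}^{1/(k_i+1)}}$ in the variable $\til{X}_\star$; this is precisely the substitution of the $k_i$ block parameters to $\frac{\bi b}{2(k_i+1)}$, i.e. the replacement $b\mapsto b^{-1}$ on the $A_{k_i}$ block, and replacing $\til{q}$ by $q$ as in Corollary \ref{cormainAn} then yields the desired $\fD_q(\g)$-homomorphism. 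The substitution inverts the multiplier of the symmetric node to $d_\star=\frac{1}{k_i+1}$, as asserted, and gives the quiver of $\til{\cX}_q^0$ described at the start of this subsection.

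Next I would assemble these block-wise modular foldings into a single homomorphism $\fD_q(\g)\to\cO_q(\til{\cX}^0)$. Since only the special generator of each block sees its own symmetric variables, setting the elementary symmetric polynomials $\cE_j$ of those variables to zero for $1\le j\le k_i$ and $\cE_{k_i+1}$ to $X_\star$ again furnishes a homomorphic image of $\fD_q(\g)$, the symmetric part being central in each block. The universally-Laurent property is inherited exactly as in Theorem \ref{mainthmAn}: a mutation away from the folded nodes acts as in the unfolded algebra, while a mutation at a node $X_\star$ corresponds to the simultaneous unordered mutation at the unfolded symmetric vertices of that block followed by imposing the symmetric-polynomial relations, so the Chevalley generators remain in $\cO_q(\til{\cX}^0)$.

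The main obstacle I anticipate is bookkeeping rather than conceptual: one must check that the inverted multipliers $d_\star=\frac{1}{k_i+1}$ still define an admissible seed, i.e. that the rescaled exchange matrix satisfies the skew-symmetrizability condition that $W=DB$ is skew-symmetric for $\til{\cX}_q^0$. This is the sole point where the modular version genuinely differs from Theorem \ref{mainthm}, and it is handled, as indicated in the proof of Theorem \ref{thmmod}, by a proper rescaling of the lattice and the seed basis; after this the verification of the defining relations of $\fD_q(\g)$ and the computation of the central monomials go through verbatim.
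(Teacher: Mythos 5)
Your overall route --- running the parabolic construction of Theorem \ref{mainthm} and replacing the standard folding by the modular folding of Theorem \ref{thmmod} and Corollary \ref{cormainAn} on each type-$A$ block --- is exactly the paper's intended argument (the paper states Theorem \ref{mainthmmod} without a separate proof, as the direct combination of those results, i.e.\ substituting $\l_j=\frac{\bi b}{2(k_i+1)}$, which is $b^2$ times the general solution of \eqref{symeq}, on each block). Your first two paragraphs reproduce this correctly. The genuine gap is in your third paragraph: the substitution you use to assemble the homomorphism, namely ``$\cE_j\mapsto 0$ for $1\le j\le k_i$ and $\cE_{k_i+1}\mapsto X_\star$,'' is the substitution of the \emph{standard} folding (Theorem \ref{mainthmAn}), which produces the homomorphism into $\cO_q(\cX^0)$ with $d_\star=k_i+1$; it is not the modular one, and it contradicts your own second paragraph, where you correctly identify the modular mechanism with the $q$-binomial coefficients.

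Here is why the zero substitution cannot be carried over to the inverted-multiplier algebra. In $\til{\cX}_q^0$ the folded node $q^{\mp 2}$-commutes with its neighbours (proof of Theorem \ref{thmmod}), not $q^{\mp 2(k_i+1)}$-commutes. Taken literally, your prescription $\cE_{k_i+1}\mapsto X_\star$ then breaks the defining relations of $\fD_q(\g)$: already in an $\sl_2$ block, the single-jump image $\be\mapsto X_3+X_{3,\star}$ satisfies $[X_{3,\star},X_1]=0$ in $\til{\cX}_q^0$ (since $X_\star X_1=q^{-2}X_1X_\star$ and $X_3X_1=q^{2}X_1X_3$), so $[\be,\bf]$ loses its $\bK$-term; the monomial that must play the role of the old folded variable is $X_{\star^{k_i+1}}$, not $X_\star$. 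Even after correcting that, the image $X_3+X_{3,\star^2}$ (all intermediate coefficients zero) is \emph{not} universally Laurent in $\cO_q(\til{\cX}^0)$: a quantum mutation at $\star$ (Definition \ref{qmut}, now with $|\e_{\star i}|=k_i+1$ and $q_\star=q^{1/(k_i+1)}$) multiplies an adjacent variable by $\prod_{r=1}^{k_i+1}\bigl(1+q_\star^{2r-1}X_\star^{\pm1}\bigr)$, and $X_3+X_{3,\star^2}$ is not divisible by $(1+q_\star X_\star)(1+q_\star^{3}X_\star)$. So your claimed correspondence ``mutation at $X_\star$ $=$ simultaneous unfolded mutation followed by imposing $\cE_j\mapsto0$'' is valid only for $d_\star=k_i+1$, not for its inverse. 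The correct assembly substitution is
\Eq{
\cE_j(X_{c_0},\dots,X_{c_{k_i}})\;\mapsto\;\bin{k_i+1\\j}_{q^{1/(k_i+1)}}\,X_{\star^j},\tab j=1,\dots,k_i+1,
}
i.e.\ the block Casimirs act by $q$-binomial coefficients rather than zero; the image then factors through the mutation products above, which is precisely how Corollary \ref{cormainAn} obtains universal Laurentness. With this correction, your blockwise-independence argument and the seed-admissibility check go through and the proof matches the paper's.
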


Consequently, we obtain another family of representations of $\cU_q(\g_\R)$ by positive operators.
We call this the \emph{modular double counterpart of the degenerate positive representations}, denoted by $\cP_{\til{\l}}^{0,J}$.

In the case of maximal degenerate representations, one can compute using techniques from the previous subsection the actions of $\bC_k$ for the modular double counterpart, which are again parametrized by a single scalar $\l$. In general, they are Laurent polynomials in $e^{\pi \ol}$, with coefficients in terms of $\bc_k:=\bin{n\\k}_{q^{\frac{1}{n}}}$ such that, by the discussion of Theorem \ref{thmmod}, setting $\bc_k=0$ reduces the expression back to that of $\cP_\l^{0,J}$ in Theorem \ref{thmother}.

For example, in type $B_n$, some of the actions of the Casimir elements in the modular double counterpart of the maximal degenerate representations are given by
\Eqn{
\pi_{\til{\l}}(\bC_1)&=1+\bc_1(e^{4\pi b_s\l}+e^{-4\pi b_s\l})\\
\pi_{\til{\l}}(\bC_2)&=\bc_1\left(\bc_1+(e^{4\pi b_s\l}+e^{-4\pi b_s\l})+\bc_2(e^{8\pi b_s\l}+e^{-8\pi b_s\l})\right)\\
&\vdots\\
\pi_{\til{\l}}(\bC_n)&=e^{2\pi n b_s\l}+e^{-2\pi nb_s\l}+\sum_{k=1}^{\lfloor \frac{n}{2}\rfloor} \bc_k(e^{2\pi(n-2k)b_s\l}+e^{-2\pi(n-2k)b_s\l})
}
The actions for other Lie types can be computed similarly, but their expressions are omitted because they are in general quite complicated and not very illuminating.

\section{Classification of regular positive representations}\label{sec:class}
To extend the class of positive representations to incorporate the new family related to their images in quantum cluster algebra, we make the following definition.
\begin{Def}\label{regposrep} A representation $\cP$ of $\cU_q(\g_\R)$ is called a \emph{regular positive representation} if the Chevalley generators of $\cU_q(\g_\R)$ can be realized as a polarization of elements from the quantum algebra $\cO_q(\cX)$ of regular functions of some quantum cluster variety. 
\end{Def}
In other words,
\begin{itemize}
\item There exists a quantum torus algebra $\cX_q^\bQ$ and a homomorphism $\cU_q(\g)\to\cX_q^\bQ$.
\item The Chevalley generators are realized as universally Laurent polynomials in every quantum cluster chart mutation equivalent to $\cX_q^\bQ$
\item The representation $\cP$ is obtained from the above homomorphism by choosing a polarization of $\cX_q^\bQ$.
\end{itemize}
If $\cP$ is irreducible, it is necessary that the Casimir elements act as multiplications by real scalars.

For the standard positive representation $\cP_\l$, recall that we can always choose positive weight parameters $\l\in\R_{\geq0}^{\mathrm{rank}(\g)}$ since $\cP_\l\simeq \cP_{w\cdot \l}$ for any Weyl action $w\in W$. Now from the results of Theorem \ref{thmother} on the maximal degenerate representations, we note that the Casimir actions are the same for both $\cP_\l^{0,J}$ and $\cP_{-\l}^{0,{J^*}}$ where $J^*$ is obtained by replacing each index with their Dynkin involutions. This suggests that we can always restrict ourselves to a positive weight parameter $\l\in\R_{\geq 0}$.

In fact this is true for other ranks of $J$ as well.
\begin{Prop} Let $\mathrm{rank}(\g)=n$. For any $J\subset I$ and weight parameters $\l\in\R^{n-|J|}$, there exists $J^+\subset I$ with $|J|=|J^+|$ and $\l^+\in\R_{\geq0}^{n-|J|}$ such that
$$\cP_\l^{0,J}\simeq \cP_{\l^+}^{0,J^+}.$$
In particular we can always restrict to the case of positive weight parameters. 
\end{Prop}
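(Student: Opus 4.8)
The plan is to deduce the statement from the Weyl-equivariance of the standard family, analytically continued to the degenerate locus. Recall from Corollary \ref{eqcor} and Theorem \ref{mainthm} that $\cP_\l^{0,J}$ is the analytic continuation of the standard positive representation $\cP_\mu$ of $\cU_q(\g_\R)$ in which the components $\mu_i$ with $i\in J$ are pinned to a general solution of \eqref{symeq}, the remaining $n-|J|$ components being the free real parameters $\l$. Since $\cP_\mu\simeq\cP_{w\cdot\mu}$ for every $w\in W$, and since the intertwiners realizing this equivalence are built from quantum cluster mutations whose $g_b$-kernels remain invertible on a dense domain as $\mu$ is continued onto the degenerate locus (exactly as in Proposition \ref{mainprop}), each such equivalence survives the specialization as a nonzero densely defined intertwiner; the positivity-and-irreducibility argument of Proposition \ref{mainprop} then promotes it to a unitary equivalence. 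It therefore suffices to produce $w\in W$ carrying $\vec{\mu_\fh}$ to a parameter of the same shape — a general solution on some $J^+$ together with free parameters in $\R_{\geq0}^{n-|J|}$ — from which $J^+$ and $\l^+$ are read off.

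The first ingredient is the longest element $w_0$, which acts on the fundamental coweights by $W_i\mapsto -W_{i^*}$ and hence sends $\mu_i\mapsto -\mu_{i^*}$. Because the pertinent sets of roots of $-1$ are closed under inversion, the negation of a general solution of \eqref{symeq} is again a general solution; thus $w_0$ maps the $J$-locus to the $J^*$-locus and reverses every free parameter, yielding the global relation $\cP_\l^{0,J}\simeq\cP_{\l'}^{0,J^*}$ with $\l'_i=-\l_{i^*}$. This already settles the maximal case of a single free parameter. For independent control of the remaining signs one enlarges the available transformations by the relative Weyl group $N_W(W_J)/W_J$ and the parabolic longest elements $w_{0,K}$ attached to type $A$ sub-diagrams $K$: elements of $W_J$ only permute the general solution among its $W_J$-images and leave the representation unchanged, whereas suitable $w_{0,K}$ implement local Dynkin involutions that flip the signs of a prescribed subset of free parameters while carrying the $J$-locus to a $J^+$-locus. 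Combining these reduces the claim to the Lie-theoretic assertion that the subgroup they generate acts on $\R^{n-|J|}$ as a reflection group each of whose orbits meets the positive orthant.

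The main obstacle is precisely this last combinatorial assertion. The naive reflection $s_i$ at a free node $i\in I\setminus J$ does reverse $\mu_i$, but by \eqref{weylact} it also shifts $\mu_k$ for every $k$ adjacent to $i$; when such a $k$ lies in $J$ the general-solution condition is destroyed, so one cannot toggle the free parameters one at a time. The substance of the proof is thus to assemble those $w\in W$ that normalize the general-solution locus up to replacing $J$ by some $J^+$ into a genuine reflection group with the positive orthant as a fundamental domain, and to record how each generator permutes the Dynkin indices so as to pin down $J^+$. I expect this to be carried out along the classification of type $A$ parabolic subdiagrams, in the same spirit as the case-by-case computations of Section \ref{sec:higher}; once every orbit is shown to meet $\R_{\geq0}^{n-|J|}$, any $w$ realizing an orbit representative furnishes the required $(J^+,\l^+)$.
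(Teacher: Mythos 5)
Your overall strategy---Weyl equivariance of the standard family $\cP_\mu\simeq\cP_{w\cdot\mu}$ combined with the analytic-continuation and irreducibility argument of Proposition \ref{mainprop}---is the same as the paper's, but your proof has a genuine gap exactly where you yourself locate it: you never actually produce a Weyl element carrying the $J$-degenerate locus to a $J^+$-locus with nonnegative free parameters. You correctly observe that a simple reflection at a free node shifts the adjacent pinned parameters via \eqref{weylact}, so the signs of the free parameters cannot be toggled one at a time; but your proposed remedy---assembling $w_0$, elements of $N_W(W_J)/W_J$, and parabolic longest elements $w_{0,K}$ into a reflection group acting on $\R^{n-|J|}$ whose every orbit meets the positive orthant---is left as an expectation (``I expect this to be carried out along the classification of type $A$ parabolic subdiagrams'') rather than proved. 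Since this combinatorial assertion is the entire substance of the statement, the proof is incomplete; it is also not clear that the subgroup you describe is a reflection group at all, nor that the positive orthant would be a fundamental domain for it.

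The paper circumvents the obstacle with one standard fact instead of a case analysis. Mark the pinned entries by setting $\l_j=0$ for $j\in J$, so that the parameter vector $\L$ lies on a corank-$|J|$ wall of a Weyl chamber; since the closed dominant chamber is a fundamental domain for $W$, there is a single $w\in W$ carrying $\L$ to a corank-$|J|$ wall of the positive chamber, and the positions of the zero entries of $w\cdot\L$ \emph{define} $J^+$. Applying this same $w$ to a generic parameter vector, and correcting further by elements of $W_{J^+}$, the $J^+$-entries become a permutation of the $(\l_j)_{j\in J}$ (possibly shifted), while the remaining entries are the nonnegative scalars $\l^+$. Only after this does one substitute the complex values of the general solution of \eqref{symeq}; the positivity of the central monomials $Q_i$ for $i\notin J^+$ guarantees that the resulting complex shifts are those of $\cP_{\l^+}^{0,J^+}$, and Proposition \ref{mainprop} then upgrades the continued intertwiner to a unitary equivalence. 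In short, the ``locus-preserving subgroup'' you were trying to construct is unnecessary: a single global Weyl element, obtained from the fundamental-domain property and chosen \emph{before} specializing to complex values, does the job.
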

\begin{proof} We consider the Weyl group action on the weight parameter vector $\L=(\l_1,\l_2...,\l_n)$. Assume $\l_j=0$ for $j\in J$. Then $\L$ lies in the corank $|J|$ wall of a Weyl chamber in the weight space. Hence there exists $w\in W$ such that $\L^+:=w\cdot \L$ lies in the corank $|J|$ wall of the positive Weyl chamber. In particular, there are $|J|$ zero entries in $\L^+$. Let $J^+:=\{j: \L_j^+=0\}\subset I$.

If we now consider the Weyl action on a generic weight parameter vector $\L$, this means the $J^+$ entries are parametrized by $\l_j$ for $j\in J$. We can further act by a Weyl element (generated by $s_j\in W$ with $j\in J^+$) in such a way that $(\L^+)_{j\in J^+}$ is a permutation of $(\l_j)_{j\in J}$, while the other entries are parametrized by the remaining $n-|J|$ positive entries of $\L^+$ which we denote by $\l^+$, possibly with a shift in $\l_j$ for $j\in J$. Upon setting $\l_j$ ($j\in J$) to be the complex values required for the symmetric folding, the positivity of the central monomials $Q_i$ ($i\notin J^+$) ensures that the resulting parameters have the appropriate complex shifts coinciding with that of the degenerate representation $P_{\l^+}^{0,J^+}$. By Proposition \ref{mainprop}, the resulting representations are unitarily equivalent. In particular, the action of the Casimir operators are the same for both $\cP_\l^{0,J}\simeq \cP_{\l^+}^{0,J^+}$.
\end{proof}

As a consequence, we currently have the following list of known irreducible regular positive representations:
\begin{itemize}
\item[(1)] The standard positive representations $\cP_\l$, parametrized by $n$ positive scalars.
\item[(2)] The parabolic positive representations $\cP_\l^J$, with respect to the parabolic subgroup $W_J\subset W$ and parametrized by $n-|J|$ positive scalars\footnote{The analysis of the Casimir operators for parabolic positive representations will be done in a subsequent publication.}.
\item[(3)] The degenerate representations $\cP_\l^{0,J}$ with respect to the parabolic subgroup $W_J\subset W$ of type $A_{k_1}\x\cdots \x A_{k_m}$ and parametrized by $n-(k_1+\cdots + k_m)$ positive scalars.
\item[(4)] The modular double counterpart of the degenerate representations $\cP_{\til{\l}}^{0,J}$, also parametrized by $n-(k_1+\cdots + k_m)$ positive scalars.
\item[(5)] A mixture of type (2)--(4) for disconnected subsets of Dynkin indices.
\end{itemize}
\begin{Ex} As an example to explain (5), say, in type $A_6$, we can write $\bi_0=(1)(3)(565)\bi'$, and do a parabolic reduction for $J=\{1\}$, a degenerate reduction for $J=\{3\}$ and a modular degenerate reduction for $J=\{5,6\}$. The resulting representation is then a regular positive representation parametrized by $$6-1-1-2=2$$ positive weight parameters.
\end{Ex}
We state the main conjecture as follows.
\begin{Con}\label{mainconj}The class of all irreducible regular positive representations of $\cU_q(\g_\R)$ is classified by the five families listed in (1)--(5) above.
\end{Con}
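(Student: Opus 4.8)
The plan is to reduce the classification to two largely independent questions: the \emph{rigidity} of the cluster homomorphism underlying a regular positive representation, and the determination of the \emph{admissible central characters}. By Definition \ref{regposrep}, such a representation $\cP$ is the polarization of a homomorphism $\phi:\cU_q(\g)\to\cX_q^\bQ$ whose Chevalley generators lie in $\cO_q(\cX)$; irreducibility forces the generalized Casimirs $\bC_k$ to act as scalars, so $\cP$ determines a point in the joint spectrum of the $\bC_k$. First I would isolate a rigidity statement: any homomorphism $\phi$ of $\fD_q(\g)$ into a quantum torus algebra with universally Laurent images of the Chevalley generators and $n$ independent Cartan central monomials is, after cluster mutation and an $Sp$-change of polarization, conjugate to the standard embedding $\iota$ of \cite{Ip7}. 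Granting this, the classification reduces to describing exactly which (possibly complexified) central parameters $\l$ are compatible with positivity and with the universal-Laurent property, and then to proving uniqueness of the representation attached to each such $\l$.

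For the second question I would proceed by a local-to-global argument anchored at rank one. The base case is a complete classification of the regular positive representations of $\cU_q(\sl(2,\R))$: using the computation in Section \ref{sec:sl2:rem} one shows that, with the Casimir acting as $e^{2\pi b\l}+e^{-2\pi b\l}$, the lower generator $\bf$ (equivalently $\be$) remains universally Laurent only for the standard continuum $\bC\geq 2$, for the degenerate value $\bC=0$, and for the modular value $\bC=q^{\frac12}+q^{-\frac12}$ with inverted multiplier. Intermediate values such as $\bC=1$ are explicitly excluded there, since $\bf$ cannot be mutated to a cluster monomial. These three local types correspond precisely to the standard, degenerate, and modular-double possibilities, so that each simple root contributes one of exactly three admissible ``colours.''

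The assembly step then organizes these local choices over $I$. Restricting $\cP$ to each type-$A$ parabolic $\cU_q(\g_J)$ and invoking the folding analysis of Theorem \ref{mainthmAn} and Corollary \ref{eqcor}, I would argue that a set of parameters $\l_j$, $j\in J$, can be simultaneously degenerated (or modular-degenerated) while keeping all Chevalley generators universally Laurent if and only if $J$ decomposes into type-$A$ blocks on which $\l_j$ is set to a general solution of \eqref{symeq} in the sense of Definition \ref{gensol}. The remaining free parameters either stay positive real (standard directions) or are parabolically frozen (type (2) directions). Uniqueness of the representation attached to a fixed admissible $\l$ then follows from Lemma \ref{polaru} together with the analytic intertwiner argument of Proposition \ref{mainprop}, continued to the relevant complex value of $\l$. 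Combining the admissible colourings yields exactly the five families (1)--(5), including their mixtures over disconnected subdiagrams.

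The hardest part, I expect, is the rigidity statement in the first paragraph: ruling out \emph{exotic} cluster homomorphisms $\cU_q(\g)\to\cX_q^\bQ$ that are not mutation-equivalent to $\iota$. This is a strong uniqueness property of the cluster realization, and it is not supplied by any result quoted in the excerpt; a full proof would likely require a classification of quantum torus embeddings of $\fD_q(\g)$ compatible with the Hopf and $\ast$-structures, perhaps via the geometry of the associated framed $G$-local systems. A secondary obstacle is the completeness of the admissible-character locus for non-simply-laced $\g$: one must show that no complex specialization outside the type-$A$ foldings preserves the universal-Laurent property, which is precisely where the obstruction noted at the start of Section \ref{sec:higher} --- the incompatibility of nullifying the long and short central characters simultaneously --- must be upgraded from an observation into a proof.
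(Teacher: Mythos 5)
The statement you are trying to prove is Conjecture \ref{mainconj}: the paper itself offers \emph{no proof} of it, and presents it explicitly as the open classification problem motivating future work. So there is no paper argument to compare against; your proposal must stand on its own, and it does not: it is a research program, not a proof. You candidly flag this yourself, but the two claims you defer are not technical loose ends --- they \emph{are} the conjecture. The rigidity statement (every homomorphism $\fD_q(\g)\to\cX_q^\bQ$ with universally Laurent Chevalley images is mutation- and $Sp$-equivalent to the standard embedding $\iota$ of \cite{Ip7}, up to the allowed specializations) is exactly the assertion that no exotic regular positive representations exist, i.e.\ the content of Conjecture \ref{mainconj}; nothing in the paper (or in your proposal) supplies even a strategy for ruling out inequivalent cluster seeds, different quivers $\bQ$, or homomorphisms not arising from any basic quiver $\bD(\bi_0)$.

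Your base case is also weaker than you claim. Section \ref{sec:sl2:rem} only verifies that the \emph{single} value $\bC=1$ fails to be regular and that $\bC=q^{\frac12}+q^{-\frac12}$ succeeds; it does not show that the admissible Casimir spectrum for $\cU_q(\sl(2,\R))$ is exactly $\{[2,\infty)\}\cup\{0\}\cup\{q^{\frac12}+q^{-\frac12}\}$. Excluding one point of the complement is not a classification of the complement, and the universal-Laurent property is a condition over \emph{all} cluster charts, so a genuine proof needs a uniform obstruction (e.g.\ a growth or Newton-polytope argument for the mutated expressions of $\bf$), which is absent. The local-to-global step has a further unproved ingredient: you assume that restricting a regular positive representation of $\cU_q(\g_\R)$ to a rank-one or type-$A$ parabolic subalgebra again yields a regular positive representation of the subalgebra with respect to an induced cluster structure; the paper's parabolic machinery (\cite{Ip8}, Theorem \ref{mainthm}) goes in the opposite direction (constructing representations of $\g$ from parabolic data), not restricting them. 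Until rigidity, the complete rank-one classification, and this restriction compatibility are established, the proposal reduces the conjecture to statements at least as hard as the conjecture itself.
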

Since the tensor product of the positive representations is just given by amalgamation of two copies of the basic quiver, such that the action of the Chevalley generators are  the concatenation of the telescopic paths, we also expect that in general,
\begin{Con} The tensor product of regular positive representations is again regular, and they can be decomposed into a direct integral of regular positive representations.
\end{Con}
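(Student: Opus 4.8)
The plan is to treat the two assertions separately, building on the structure already established for the $\sl_2$ example in Theorem \ref{sl2casspec}, since regularity is a purely cluster-theoretic statement whereas the decomposition is an analytic spectral statement.

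\textbf{Regularity of the tensor product.} The key structural input, already noted just before the statement, is that the coproduct realizes the tensor product as an \emph{amalgamation} of the underlying quantum torus algebras. Concretely, given homomorphisms $\cU_q(\g)\to\cX_q^{\bQ^{(1)}}$ and $\cU_q(\g)\to\cX_q^{\bQ^{(2)}}$ witnessing that each factor is regular in the sense of Definition \ref{regposrep}, I would glue the seeds $\bQ^{(1)}$ and $\bQ^{(2)}$ along the appropriate frozen vertices in the sense of Fock--Goncharov \cite{FG1} to obtain an amalgamated seed $\bQ$, and verify that the coproduct formulas
\Eqn{
\D(\bE_i)=1\ox\bE_i+\bE_i\ox\bK_i,\tab \D(\bF_i)=\bF_i\ox 1+\bK_i'\ox\bF_i
}
express the images of the Chevalley generators as the concatenation of the telescopic paths of the two factors on the amalgamated quiver (cf. Notation \ref{path}). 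That this concatenation is again a homomorphism $\cU_q(\g)\to\cX_q^\bQ$ is immediate from the coalgebra axioms. The substantive point is that these images lie in $\cO_q(\cX)$: since amalgamation restricts to a well-defined map on upper cluster algebras (a mutation of $\bQ$ at a non-frozen vertex of one factor agrees with the mutation of that factor alone), the universal Laurent property of each factor's generators should be inherited by their concatenation, which I would confirm seed-by-seed using the sink criterion of Lemma \ref{stdmon}. A polarization of $\cX_q^\bQ$ then recovers the tensor product as a regular positive representation.

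\textbf{Direct integral decomposition.} Here I would follow the strategy of Theorem \ref{sl2casspec} and \cite{NT}. The commuting family of generalized Casimir operators $\D(\bC_k)$, $k=1,\dots,n$, acts by positive self-adjoint operators and generates (the closure of) the algebra of operators strongly commuting with the image of $\cU_q(\g_\R)$. First I would produce a simultaneous spectral decomposition of $\cH^{(1)}\ox\cH^{(2)}$ with respect to this family, writing it as a direct integral
\Eqn{
\cH^{(1)}\ox\cH^{(2)}\simeq \int_{\Sigma}^\o+ \cP_\l\, d\mu(\l)
}
over the joint spectrum $\Sigma\subset\R_{\geq 0}^n$. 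On each fiber the central characters of $\cX_q^\bQ$ are fixed, so by Lemma \ref{polaru} the fiber is an irreducible positive representation whose Chevalley generators are the \emph{same} universally Laurent polynomials evaluated at the fixed central character, hence regular. Identifying which of the five families (1)--(5) each fiber belongs to, and determining the Plancherel measure $d\mu(\l)$ on $\Sigma$, would complete the argument.

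\textbf{Main obstacle.} The hard part is the decomposition, for two intertwined reasons. First, the spectral analysis of the commuting but highly nonmonomial operators $\D(\bC_k)$ is delicate: in the $\sl_2$ case Theorem \ref{sl2casspec} succeeded only because $\D(\bC)$ reduced, after rescaling, to Kashaev's length operator with a known simple spectrum \cite{Ka2}; for higher rank no such explicit model is available, and even self-adjointness and joint spectral simplicity on the core $\cW$ require genuinely new analytic estimates. Second, the conclusion that every spectral fiber is regular and falls into one of the families (1)--(5) is essentially the content of the classification Conjecture \ref{mainconj}, so in its strongest form the decomposition presupposes that classification. A realistic intermediate target is therefore to prove regularity of the tensor product unconditionally as above, and to establish the decomposition only in type $A_n$, where the Casimir actions are governed by the elementary symmetric polynomials \eqref{casact} and the amalgamated quiver can be mutated into a self-folded form with explicit central monomials, making the Plancherel measure tractable.
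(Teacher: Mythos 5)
The statement you were asked to prove is a \emph{Conjecture} in the paper: the authors give no proof, only the motivating remark (which you reproduce) that tensor products correspond to amalgamation of basic quivers with concatenation of telescopic paths. So there is no paper proof to compare against, and your proposal — as you yourself partly acknowledge — is a research plan rather than a proof. Beyond the obstacles you flag, two of your steps contain genuine gaps.

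First, regularity. Your claim that the universal Laurent property is ``inherited'' because mutation at a non-frozen vertex of one factor agrees with mutation of that factor alone ignores that amalgamation \emph{defreezes} vertices: the frozen vertices along which the seeds are glued become mutable in the amalgamated seed $\bQ$. This happens already in the paper's own $\sl_2$ example — in Figure \ref{fig-tensor} vertex $2$ becomes unfrozen after amalgamation, and the proof of Theorem \ref{sl2casspec} mutates at it. Universal Laurentness of each factor's generators controls only mutations at the factors' original mutable vertices; the new mutation directions are precisely where the sink criterion of Lemma \ref{stdmon} can fail, so regularity of $\D(\be_i)$, $\D(\bf_i)$ is a genuinely new statement, not a formal consequence of the coalgebra axioms. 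Indeed, even for the standard $\cP_\l$ this is a theorem requiring the cluster/R-matrix machinery of \cite{Ip7, SS1}. Second, the decomposition. Your identification of the spectral fibers via Lemma \ref{polaru} conflates the joint spectrum of the operators $\D(\bC_k)$ with central characters of the quantum torus algebra $\cX_q^\bQ$: the $\D(\bC_k)$ are non-monomial Laurent elements, not central monomials of $\cX_q^\bQ$, so fixing their spectral values does not fix a polarization to which Lemma \ref{polaru} applies. In the $\sl_2$ case this is exactly the step supplied by Kashaev's explicit spectral analysis of the length operator \cite{Ka2} together with the theorem of \cite{NT} that decomposing $\D(\bC)$ suffices (which rests on multiplicity-freeness); neither ingredient is available in higher rank, and your assertion that the Casimirs generate the full commutant of the image of $\cU_q(\g_\R)$ is likewise unproven. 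Your closing suggestion — prove regularity unconditionally and attempt the decomposition only in type $A_n$ — is a sensible intermediate target, but as written neither half of the conjecture is established.
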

In view of Theorem \ref{sl2casspec}, it is interesting to classify those tensor products in which the decomposition involves only the standard positive representations. In other words, we wish to understand the closure of the braided tensor category of the standard positive representations, which will be important in the theory of integrable system and quantum geometry \cite{FG1, GS}.

\subsection{Example in type $A_2$}\label{sec:higher:A2}

Let us illustrate the joint spectrum $\left(\pi_\l(\bC_1),\pi_\l(\bC_2)\right)$ of the Casimirs of the representations of $\cU_q(\sl_3)$ listed in Conjecture \ref{mainconj}. We assume that $q$ is not a root of unity and the central parameters $\l$ are positive. Note that in type $A_2$, despite the discussion in Remark \ref{notmut}, the equivalence $\cP_\l^{0,J}\simeq \cP_{-\l}^{0,J^*}$ can actually be realized as cluster mutations and a change of polarizations. 

The actions of the Casimirs in different families are calculated in \cite{Ip5}, \cite{Ip8}, Theorem \ref{thmother} and Section \ref{sec:higher:mod} respectively. They are summarized in the table below.

$$\begin{array}{|c|c|l|}
\hline
\mbox{Representations}&J&\multicolumn{1}{|c|}{\mbox{Action of Casimirs}}\\
\hline\hline
\cP_\l&&\begin{array}{rl}
\pi_\l(\bC_1)&=e^{\frac43\pi b\l_1+\frac83\pi b\l_2}+e^{\frac43\pi b\l_1-\frac43\pi b\l_2}+e^{-\frac83\pi b\l_1-\frac43\pi b\l_2}\\
\pi_\l(\bC_2)&=e^{\frac83\pi b\l_1+\frac43\pi b\l_2}+e^{-\frac43\pi b\l_1+\frac43\pi b\l_2}+e^{-\frac43\pi b\l_1-\frac83\pi b\l_2}\end{array}\\\hline
\multirow{4}{*}{$\cP_\l^J$}&\{1\}&\begin{array}{rl}
\pi_\l(\bC_1)&=e^{-\frac83\pi b \l}-(q+q\inv)e^{\frac43\pi b\l}\\
\pi_\l(\bC_2)&=e^{\frac83\pi b\l}-(q+q\inv)e^{-\frac43\pi b\l}\end{array}\\\cline{2-3}
&\{2\}&\begin{array}{rl}
\pi_\l(\bC_1)&=e^{\frac83\pi b \l}-(q+q\inv)e^{-\frac43\pi b\l}\\
\pi_\l(\bC_2)&=e^{-\frac83\pi b\l}-(q+q\inv)e^{\frac43\pi b\l}\end{array}\\\hline
\multirow{4}{*}{$\cP_\l^{0,J}$}&\{1,2\}&\begin{array}{rl}
\pi_\l(\bC_1)&=0\\
\pi_\l(\bC_2)&=0\end{array}\\\cline{2-3}
&\{1\}&\begin{array}{rl}
\pi_\l(\bC_1)&=e^{\frac83\pi b\l}\\
\pi_\l(\bC_2)&=e^{-\frac83\pi b\l}\end{array}\\\cline{2-3}
&\{2\}&\begin{array}{rl}
\pi_\l(\bC_1)&=e^{-\frac83\pi b\l}\\
\pi_\l(\bC_2)&=e^{\frac83\pi b\l}\end{array}\\\hline
\multirow{4}{*}{$\cP_{\til{\l}}^{0,J}$}&\{1,2\}&\begin{array}{rl}
\pi_\l(\bC_1)&=q^{\frac23}+1+q^{-\frac23}\\
\pi_\l(\bC_2)&=q^{\frac23}+1+q^{-\frac23}\end{array}\\\cline{2-3}
&\{1\}&\begin{array}{rl}
\pi_\l(\bC_1)&=e^{\frac83\pi b\l}+(q^\half+q^{-\half})e^{-\frac43\pi b\l}\\
\pi_\l(\bC_2)&=e^{-\frac83\pi b\l}+(q^\half+q^{-\half})e^{\frac43\pi b\l}\end{array}\\\cline{2-3}
&\{2\}&\begin{array}{rl}
\pi_\l(\bC_1)&=e^{-\frac83\pi b\l}+(q^\half+q^{-\half})e^{\frac43\pi b\l}\\
\pi_\l(\bC_2)&=e^{\frac83\pi b\l}+(q^\half+q^{-\half})e^{-\frac43\pi b\l}\end{array}\\\hline
\end{array}$$

A plot (for all possible positive weight parameters $\l$ and a generic $q$ which is taken to be $b\sim0.5$) of the joint spectrum $\left(\pi_\l(\bC_1),\pi_\l(\bC_2)\right)$ of the list of regular positive representations of $\cU_q(\sl(3,\R))$ is given in Figure \ref{plot}. 

\begin{figure}[htb!]
\centering
\includegraphics[width=8cm]{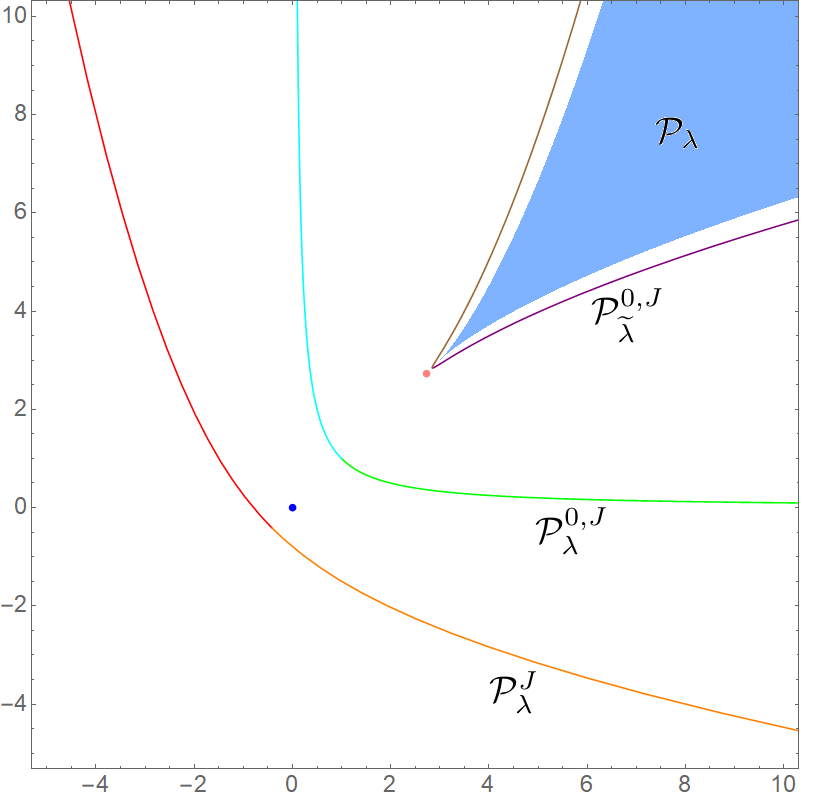}
\caption{A plot $(x,y)=(\pi_\l(\bC_1),\pi_\l(\bC_2))$ of the different families of the joint spectrum of the Casimir elements with different colors.}\label{plot}
\end{figure}

Recall that \cite{Ip5} the boundary of the spectrum of the standard positive Casimirs is given by the discriminant variety
\Eq{
(xy+9)^2=4(x^3+y^3+27),
}
which is independent of $q$. We observe that as expected, the spectral curves do not cross each other, reflecting the fact that these families of representations are not equivalent to one another.

As a generalization of Remark \ref{b=1}, the situation becomes interesting when $q$ is a root of unity. For $b=1$, we observe that the actions of Casimirs for $\cP_{\l}^{0,J}$ and $\cP_{\til{\l}}^{0,J}$ are the same, since $q^\half+q^{-\half}=0$. On the other hand, for $b=\frac{1}{\sqrt{2}}$, we note that the joint spectrum of the degenerate $\cP_\l^{0,J}$ and parabolic $\cP_{\l}^{J}$ coincide as well, since $q+q\inv=0$. These curious coincidences may be related to the representation theory of (compact) quantum groups at root of unity, and are worth studying in the future.
\begin{appendices}
\section{Labeling of Dynkin diagrams}\label{App:Dyn}
In this paper, the labeling of the simple roots are chosen such that (except for type $F_4$) the indices $1,2,...,n-1$ form a parabolic $A_{n-1}$ subalgebra of $\g$. We denote the short roots by black nodes.
\begin{itemize}
\item Type $A_n$: 
\begin{center}
  \begin{tikzpicture}[scale=.4]
    \draw[xshift=0 cm,thick] (0 cm, 0) circle (.3 cm);
    \foreach \x in {1,...,5}
    \draw[xshift=\x cm,thick] (\x cm,0) circle (.3cm);
    \draw[dotted,thick] (8.3 cm,0) -- +(1.4 cm,0);
    \foreach \y in {0.15,...,3.15}
    \draw[xshift=\y cm,thick] (\y cm,0) -- +(1.4 cm,0);
    \foreach \z in {1,...,5}
    \node at (2*\z-2,-1) {$\z$};
\node at (10,-1){$n$};
  \end{tikzpicture}
\end{center}
\item Type $B_n$: 
\begin{center}
  \begin{tikzpicture}[scale=.4]
    \foreach \x in {0,...,4}
    \draw[xshift=\x cm,thick] (\x cm,0) circle (.3cm);
    \draw[dotted,thick] (6.3 cm,0) -- +(1.4 cm,0);
    \draw[xshift=0 cm,thick,fill=black] (10 cm, 0) circle (.3 cm);
    \foreach \y in {0.15,...,2.15}
    \draw[xshift=\y cm,thick] (\y cm,0) -- +(1.4 cm,0);
    \draw[thick] (8.3 cm, .1 cm) -- +(1.4 cm,0);
    \draw[thick] (8.3 cm, -.1 cm) -- +(1.4 cm,0);
    \foreach \z in {1,...,4}
    \node at (2*\z-2,-1) {$\z$};
    \node at (8,-1){\small$n-1$};
\node at (10,-1){$n$};
  \end{tikzpicture}
\end{center}
\item Type $C_n$: 
\begin{center}
  \begin{tikzpicture}[scale=.4]
    \foreach \x in {0,...,4}
    \draw[xshift=\x cm,thick,fill=black] (\x cm,0) circle (.3cm);
    \draw[dotted,thick] (6.3 cm,0) -- +(1.4 cm,0);
    \draw[xshift=0 cm,thick] (10 cm, 0) circle (.3 cm);
    \foreach \y in {0.15,...,2.15}
    \draw[xshift=\y cm,thick] (\y cm,0) -- +(1.4 cm,0);
    \draw[thick] (8.3 cm, .1 cm) -- +(1.4 cm,0);
    \draw[thick] (8.3 cm, -.1 cm) -- +(1.4 cm,0);
    \foreach \z in {1,...,4}
    \node at (2*\z-2,-1) {$\z$};
    \node at (8,-1){\small$n-1$};
\node at (10,-1){$n$};
  \end{tikzpicture}
\end{center}
\item Type $D_n$: 
\begin{center}
  \begin{tikzpicture}[scale=.4]
    \draw[xshift=0 cm,thick] (10 cm, 1) circle (.3 cm);
    \draw[xshift=0 cm,thick] (10 cm, -1) circle (.3 cm);
    \foreach \x in {0,...,4}
    \draw[xshift=\x cm,thick] (\x cm,0) circle (.3cm);
    \draw[dotted,thick] (6.3 cm,0) -- +(1.4 cm,0);
   \draw[xshift=0.25 cm] (8 cm,0) -- +(1.4 cm,-1);
   \draw[xshift=0.25 cm] (8 cm,0) -- +(1.4 cm,1);   
 \foreach \y in {0.15,...,2.15}
    \draw[xshift=\y cm,thick] (\y cm,0) -- +(1.4 cm,0);
    \foreach \z in {1,...,4}
    \node at (2*\z-2,-1) {$\z$};
\node at (8,-1){\small $n-2$};
\node at (11,-1){$n$};
\node at (11.5,1){\small $n-1$};
  \end{tikzpicture}
\end{center}
\item Type $E_n$:
\begin{center}
  \begin{tikzpicture}[scale=.4]
    \draw[xshift=0 cm,thick] (0 cm, 0) circle (.3 cm);
    \foreach \x in {1,...,4}
    \draw[xshift=\x cm,thick] (\x cm,0) circle (.3cm);
    \foreach \y in {0.15,...,2.15}
    \draw[xshift=\y cm,thick] (\y cm,0) -- +(1.4 cm,0);
    \foreach \z in {1,...,4}
    \node at (2*\z-2,1) {$\z$};
    \draw[xshift=3.15 cm,thick, dotted] (3.15 cm,0) -- +(1.4 cm,0);
    \node at (8,1) {$n-1$};
\draw[xshift=0 cm,thick] (4 cm, -2) circle (.3 cm);
  \draw[xshift=0 cm] (4 cm,-0.25) -- +(0 cm,-1.5);
\node at (4,-3){$n$};
  \end{tikzpicture}
\end{center}
\item Type $F_4$:
 \begin{center}
  \begin{tikzpicture}[scale=.4]
    \draw[thick] (-2 cm ,0) circle (.3 cm);
	\node at (-2,-1) {$1$};
    \draw[thick] (0 ,0) circle (.3 cm);
	\node at (0,-1) {$2$};
    \draw[thick,fill=black] (2 cm,0) circle (.3 cm);
	\node at (2,-1) {$3$};
    \draw[thick,fill=black] (4 cm,0) circle (.3 cm);
	\node at (4,-1) {$4$};
    \draw[thick] (15: 3mm) -- +(1.5 cm, 0);
    \draw[xshift=-2 cm,thick] (0: 3 mm) -- +(1.4 cm, 0);
    \draw[thick] (-15: 3 mm) -- +(1.5 cm, 0);
    \draw[xshift=2 cm,thick] (0: 3 mm) -- +(1.4 cm, 0);
  \end{tikzpicture}
\end{center}
\item Type $G_2$: 
\begin{center}
  \begin{tikzpicture}[scale=.4]
    \draw[thick] (0 ,0) circle (.3 cm);
	\node at (0,-1) {$1$};
    \draw[thick,fill=black] (2 cm,0) circle (.3 cm);
	\node at (2,-1) {$2$};
    \draw[thick] (30: 3mm) -- +(1.5 cm, 0);
    \draw[thick] (0: 3 mm) -- +(1.5 cm, 0);
    \draw[thick] (-30: 3 mm) -- +(1.5 cm, 0);
  \end{tikzpicture}
\end{center}
\end{itemize}
\section{Mutation sequence for symmetric folding}\label{App:mutate}
In this section, we describe the mutation sequence for symmetric folding outlined in \cite{SS2}, and include the assignments of the corresponding changes in the central parameters, together with the monomial components of the product $\Xi$ of frozen variables.

We start with the self-folded quiver as in Figure \ref{fig-A4} (we provided one rank higher for clarity) and colored in red the resulting monomial components of $\Xi$.
\begin{figure}[H] 
\centering
\begin{tikzpicture}[scale=0.7, every node/.style={inner sep=0, minimum size=0.2cm, thick}, x=1cm, y=1.3cm]

\node(33) at (4,0) [draw, fill=red]{};
\node(35) at (6,0) [draw, fill=red, label={[xshift=1.3em, yshift=-0.7em] \tiny $-2\l_5$}]{};

\node (28) at (3,1) [draw, fill=red]{};
\node (34) at (5,1) [draw, circle, fill=red]{};
\node (32) at (7,1) [draw, fill=red, label={[xshift=1.3em, yshift=-0.7em] \tiny $-2\l_4$}]{};

\node (21) at (2,2) [draw, fill=red]{};
\node (29) at (4,2) [draw, circle, fill=red]{};
\node (31) at (6,2) [draw, circle, fill=red]{};
\node (27) at (8,2) [draw, fill=red, label={[xshift=1.3em, yshift=-0.7em] \tiny $-2\l_3$}]{};

\node (12) at (1,3) [draw, fill=red]{};
\node (22) at (3,3) [draw, circle, fill=red]{};
\node (30) at (5,3) [draw, circle, fill=red]{};
\node (26) at (7,3) [draw, circle, fill=red]{};
\node (20) at (9,3) [draw, fill=red,label={[xshift=1.3em, yshift=-0.7em] \tiny $-2\l_2$}]{};

\node (1) at (0,4) [draw, fill=red]{};
\node (13) at (2,4) [draw, circle, fill=red]{};
\node (23) at (4,4) [draw, circle, fill=red]{};
\node (25) at (6,4) [draw, circle, fill=red]{};
\node (19) at (8,4) [draw, circle, fill=red]{};
\node (11) at (10,4) [draw, fill=red,label={[xshift=1.3em, yshift=-0.7em] \tiny $-2\l_1$}]{};

\node (2) at (1,5) [draw, circle, fill=red]{};
\node (14) at (3,5) [draw, circle, fill=red]{};
\node (24) at (5,5) [draw, circle, fill=red]{};
\node (18) at (7,5) [draw, circle, fill=red]{};
\node (10) at (9,5) [draw, circle, fill=red]{};

\node (3) at (2,6) [draw, circle]{};
\node (15) at (4,6) [draw, circle]{};
\node (17) at (6,6) [draw, circle]{};
\node (9) at (8,6) [draw, circle]{};
\node (36) at (10,6) [draw, circle,label={[xshift=0em, yshift=0em] \tiny $4\l_1$}]{};

\node (4) at (3,7) [draw, circle]{};
\node (16) at (5,7) [draw, circle]{};
\node (8) at (7,7) [draw, circle]{};
\node (37) at (9,7) [draw, circle,label={[xshift=0em, yshift=0em] \tiny $4\l_2$}]{};

\node (5) at (4,8) [draw, circle]{};
\node (7) at (6,8) [draw, circle]{};
\node (38) at (8,8) [draw, circle,label={[xshift=0em, yshift=0em] \tiny $4\l_3$}]{};

\node (6) at (5,9) [draw, circle]{};
\node (39) at (7,9) [draw, circle,label={[xshift=0em, yshift=0em] \tiny $4\l_4$}]{};

\node (40) at (6,10) [draw, circle,label={[xshift=0em, yshift=0em] \tiny $4\l_5$}]{};

\drawpath{33,35,34,32,31,27,26,20,19,11,10,19,26,31,34,29,31,30,26,25,19,18,25,30,29,22,30,23,25,24,23,22,13,23,14,13,2,1,13,12,22,21,29,28,34,33}{}
\drawpath{2,3,14,15,24,17,18,9,10,36,9,17,15,3,4,15,16,17,8,9,37,8,16,4,5,16,7,8,38,7,5,6,7,39,5}{}
\drawpath{40,7}{}
\drawpath{5,40}{bend left}
\drawpath{5,38}{bend left=10}
\drawpath{4,37}{bend left=10}
\drawpath{3,36}{bend left=10}
\drawpath{38,4}{}
\drawpath{37,3}{}
\drawpath{36,2}{}
\drawpath{33,28,21,12,1}{dashed}
\drawpath{11,20,27,32,35}{dashed}
\end{tikzpicture}
\caption{The quiver for $\cX_q^{\mathrm{sym}}$ in type $A_5$.}
\end{figure}

We focus on the upper part of the quiver associated to the self-folded triangle (with dotted arrows added accordingly) shown in Figure \ref{fig-upper}, and rearrange the vertices such that it is a mirror image of \cite[Figure 30]{SS2}. 

\begin{figure}[H] 
\centering
\begin{tikzpicture}[scale=0.7, every node/.style={inner sep=0, minimum size=0.2cm, thick}, x=1cm, y=1.3cm]
\node (2) at (2,5) [draw,  fill=red]{};
\node (14) at (4,5) [draw,  fill=red]{};
\node (24) at (6,5) [draw,  fill=red]{};
\node (18) at (8,5) [draw,  fill=red]{};
\node (10) at (10,5) [draw,  fill=red]{};

\node (3) at (2,6) [draw, circle]{};
\node (15) at (4,6) [draw, circle]{};
\node (17) at (6,6) [draw, circle]{};
\node (9) at (8,6) [draw, circle]{};
\node (36) at (10,6) [draw, circle, fill=green,label={[xshift=0em, yshift=0em] \tiny $4\l_1$}]{\tiny$y_1$};

\node (4) at (2,7) [draw, circle]{};
\node (16) at (4,7) [draw, circle]{};
\node (8) at (6,7) [draw, circle]{};
\node (37) at (8,7) [draw, circle, fill=green,label={[xshift=0em, yshift=0em] \tiny $4\l_2$}]{\tiny$y_2$};

\node (5) at (2,8) [draw, circle]{};
\node (7) at (4,8) [draw, circle]{};
\node (38) at (6,8) [draw, circle, fill=green,label={[xshift=0em, yshift=0em] \tiny $4\l_3$}]{\tiny$y_3$};

\node (6) at (2,9) [draw, circle]{};
\node (39) at (4,9) [draw, circle, fill=green,label={[xshift=0em, yshift=0em] \tiny $4\l_4$}]{\tiny$y_4$};

\node (40) at (2,10) [draw, circle,label={[xshift=0em, yshift=0em] \tiny $4\l_5$}]{};
\drawpath{2,3,14,15,24,17,18,9,10,36,9,17,15,3,4,15,16,17,8,9,37,8,16,4,5,16,7,8,38,7,5,6,7,39,5}{}
\drawpath{40,7}{}
\drawpath{5,40}{bend left}
\drawpath{5,38}{bend left=10}
\drawpath{4,37}{bend left=10}
\drawpath{3,36}{bend left=10}
\drawpath{38,4}{}
\drawpath{37,3}{}
\drawpath{36,2}{}
\drawpath{10,18,24,14,2}{dashed}
\end{tikzpicture}
\caption{The top part of $\cX_q^{\mathrm{sf}}$ rearranged.}\label{fig-upper}
\end{figure}

The preliminary step involves mutations at the outer $n-1$ vertices indexed $y_1,...,y_{n-1}$ in green in Figure \ref{fig-upper}. Note that the monomial transformation contributes the variable $X_{y_1}$ to $\Xi$. The result is shown in Figure \ref{fig-pre}, where we rearrange the top 3 symmetric nodes after the mutations.

\begin{figure}[H] 
\centering
\begin{tikzpicture}[scale=0.7, every node/.style={inner sep=0, minimum size=0.2cm, thick}, x=1cm, y=1.3cm]
\node (2) at (2,5) [draw,  fill=red,label={[xshift=0em, yshift=0em] \tiny $4\l_1$}]{};
\node (14) at (4,5) [draw,  fill=red]{};
\node (24) at (6,5) [draw,  fill=red]{};
\node (18) at (8,5) [draw,  fill=red]{};
\node (10) at (10,5) [draw,  fill=red]{};

\node (3) at (2,6) [draw, circle,label={[xshift=0em, yshift=0em] \tiny $4\l_2$}]{};
\node (15) at (4,6) [draw, circle]{};
\node (17) at (6,6) [draw, circle]{};
\node (9) at (8,6) [draw, circle,label={[xshift=-0.8em, yshift=-0.2em] \tiny $4\l_1$}]{};
\node (36) at (10,6) [draw, circle,fill=red, label={[xshift=0em, yshift=0em] \tiny $-4\l_1$}]{};

\node (4) at (2,7) [draw, circle,label={[xshift=0em, yshift=0em] \tiny $4\l_3$}]{};
\node (16) at (4,7) [draw, circle]{};
\node (8) at (6,7) [draw, circle,label={[xshift=-0.8em, yshift=-0.2em] \tiny $4\l_2$}]{};
\node (37) at (8,7) [draw, circle,label={[xshift=0em, yshift=0em] \tiny $-4\l_2$}]{};

\node (5) at (2,8) [draw, circle,fill=green, label={[xshift=-0.6em, yshift=0em] \tiny $4\l_4$}]{\tiny $y_2$};
\node (7) at (4,8) [draw, circle,fill=green, label={[xshift=0.6em, yshift=0.2em] \tiny $4\l_3$}]{\tiny $y_1$};
\node (38) at (6,8) [draw, circle,label={[xshift=0em, yshift=0em] \tiny $-4\l_3$}]{};

\node (40) at (3,9) [draw, circle,label={[xshift=0em, yshift=0em] \tiny $4\l_5$}]{};
\node (6) at (3,10) [draw, circle]{};
\node (39) at (3,11) [draw, circle,label={[xshift=0em, yshift=0em] \tiny $-4\l_4$}]{};

\drawpath{3,14,15,24,17,18,9,36,10}{}
\drawpath{10,2}{bend right=10}
\drawpath{9,17,15,3,37,9}{}
\drawpath{36,3}{bend right=10}
\drawpath{8,16,4,38,8,37}{}
\drawpath{37,4}{bend right=10}
\drawpath{15,16,7,5,16,17,8}{}
\drawpath{7,38}{}
\drawpath{38,5}{bend right=10}
\drawpath{5,39,7}{}
\drawpath{5,40,7}{}
\drawpath{5,6,7}{}
\drawpath{10,18,24,14,2}{dashed}
\end{tikzpicture}
\caption{The quiver after performing the preliminary step of mutations at $y_1,...,y_4$ of the previous figure.}\label{fig-pre}
\end{figure}
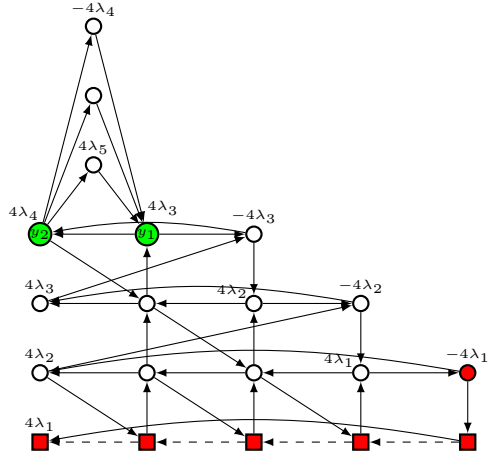

Let us call the row below the symmetric part (i.e. the row containing the green vertices in Figure \ref{fig-pre}) the first row of the folded part. The remaining mutation sequence is then given in $n-2$ waves of mutations. The $k$-th wave of mutations is along the zigzag paths from the second-to-last vertex in the $n-1-k$-th row (labeled by $y_1,...,y_{N_k}$ in each figure, where $N_j=\frac12k(k+3)$.). After each wave of mutations, the last vertex of the zigzag path is moved to the top of the symmetric part, and the left-most vertex of each remaining row to the right-most end of the row above. Finally, each row is shifted accordingly.

\begin{figure}[H] 
\centering
\begin{tikzpicture}[scale=0.7, every node/.style={inner sep=0, minimum size=0.2cm, thick}, x=1cm, y=1.3cm]
\node (2) at (2,5) [draw,  fill=red,label={[xshift=0em, yshift=0em] \tiny $4\l_1$}]{};
\node (14) at (4,5) [draw,  fill=red]{};
\node (24) at (6,5) [draw,  fill=red]{};
\node (18) at (8,5) [draw,  fill=red]{};
\node (10) at (10,5) [draw,  fill=red]{};

\node (3) at (2,6) [draw, circle,label={[xshift=0em, yshift=0em] \tiny $4\l_2$}]{};
\node (15) at (4,6) [draw, circle]{};
\node (17) at (6,6) [draw, circle]{};
\node (9) at (8,6) [draw, circle,label={[xshift=-0.8em, yshift=-0.2em] \tiny $4\l_1$}]{};
\node (36) at (10,6) [draw, circle,fill=red, label={[xshift=0em, yshift=0em] \tiny $-4\l_1$}]{};

\node (4) at (2,7) [draw, circle, fill=green,label={[xshift=0em, yshift=0em] \tiny $4\l_3$}]{\tiny$y_3$};
\node (16) at (4,7) [draw, circle, fill=green]{\tiny$y_2$};
\node (8) at (6,7) [draw, circle,fill=green,label={[xshift=0em, yshift=0.4em] \tiny $4\l_2$}]{\tiny$y_1$};
\node (37) at (8,7) [draw, circle,label={[xshift=0em, yshift=0em] \tiny $-4\l_2$}]{};

\node (7) at (2,8) [draw, circle, fill=green, label={[xshift=-0.6em, yshift=0em] \tiny $4\l_4$}]{\tiny$y_5$};
\node (38) at (4,8) [draw, circle, fill=green]{\tiny$y_4$};

\node (40) at (3,9) [draw, circle,label={[xshift=0em, yshift=0em] \tiny $4\l_5$}]{};
\node (6) at (3,10) [draw, circle]{};
\node (39) at (3,11) [draw, circle,label={[xshift=1.6em, yshift=-0.6em] \tiny $-4\l_4$}]{};
\node (5) at (3,12) [draw, circle, label={[xshift=0em, yshift=0em] \tiny $-4\l_3-4\l_4$}]{};

\drawpath{3,14,15,24,17,18,9,36,10}{}
\drawpath{9,17,15,3,37,9}{}
\drawpath{36,3}{bend right=10}
\drawpath{8,16,4,38,8,37}{}
\drawpath{37,4}{bend right=10}
\drawpath{4,15,16,38,7,16,17,8}{}
\drawpath{38,7}{bend right=10}
\drawpath{7,5,38}{}
\drawpath{7,39,38}{}
\drawpath{7,40,38}{}
\drawpath{7,6,38}{}
\drawpath{10,18,24,14,2}{dashed}
\drawpath{10,2}{bend right=10}
\end{tikzpicture}
\caption{After the first wave of mutations at $y_1,y_2$ of the previous figure.}
\end{figure}
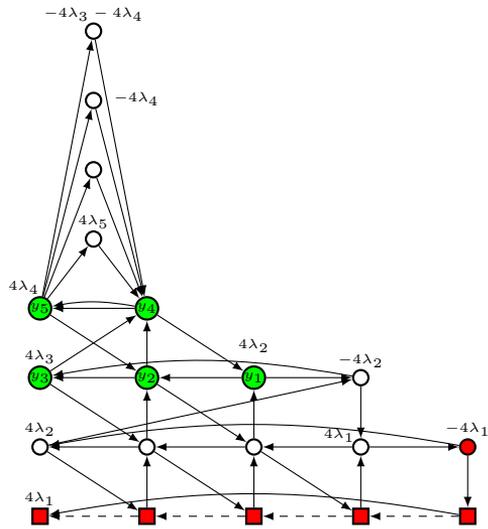

\begin{figure}[H]
\centering
\begin{tikzpicture}[scale=0.7, every node/.style={inner sep=0, minimum size=0.2cm, thick}, x=1cm, y=1.3cm]
\node (2) at (2,5) [draw,  fill=red,label={[xshift=0em, yshift=0em] \tiny $4\l_1$}]{};
\node (14) at (4,5) [draw,  fill=red]{};
\node (24) at (6,5) [draw,  fill=red]{};
\node (18) at (8,5) [draw,  fill=red]{};
\node (10) at (10,5) [draw,  fill=red]{};

\node (3) at (2,6) [draw, circle, fill=green,label={[xshift=0em, yshift=0em] \tiny $4\l_2$}]{\tiny$y_4$};
\node (15) at (4,6) [draw, circle, fill=green]{\tiny$y_3$};
\node (17) at (6,6) [draw, circle, fill=green]{\tiny$y_2$};
\node (9) at (8,6) [draw, circle, fill=green,label={[xshift=0em, yshift=0.4em] \tiny $4\l_1$}]{\tiny$y_1$};
\node (36) at (10,6) [draw, circle,fill=red, label={[xshift=0em, yshift=0em] \tiny $-4\l_1$}]{};

\node (16) at (2,7) [draw, circle, fill=green, label={[xshift=0em, yshift=0em] \tiny $4\l_3$}]{\tiny$y_7$};
\node (8) at (4,7) [draw, circle, fill=green]{\tiny$y_6$};
\node (37) at (6,7) [draw, circle, fill=green]{\tiny$y_5$};

\node (4) at (4,8) [draw, circle, fill=green]{\tiny$y_8$};
\node (38) at (2,8) [draw, circle, fill=green, label={[xshift=-0.6em, yshift=0em] \tiny $4\l_4$}]{\tiny$y_9$};

\node (40) at (3,9) [draw, circle,label={[xshift=0em, yshift=0em] \tiny $4\l_5$}]{};
\node (6) at (3,10) [draw, circle]{};
\node (39) at (3,11) [draw, circle,label={[xshift=2.1em, yshift=-0.6em] \tiny $-4\l_4$}]{};
\node (5) at (3,12) [draw, circle, label={[xshift=2.6em, yshift=-0.6em] \tiny $-4\l_3-4\l_4$}]{};
\node (7) at (3,13) [draw, circle, label={[xshift=0em, yshift=0em] \tiny $-4\l_2-4\l_3-4\l_4$}]{};

\drawpath{3,14,15,24,17,18,9,36,10}{}
\drawpath{9,17,15,3,37,9}{}
\drawpath{36,3}{bend right=10}
\drawpath{8,16,4,38,8}{}
\drawpath{37,16}{bend right=10}
\drawpath{16,15,8,17,37,8}{}
\drawpath{4,38}{bend right=10}
\drawpath{38,5,4}{}
\drawpath{38,39,4}{}
\drawpath{38,40,4}{}
\drawpath{38,6,4}{}
\drawpath{38,7,4}{}
\drawpath{10,18,24,14,2}{dashed}
\drawpath{10,2}{bend right=10}
\end{tikzpicture}
\caption{After the second wave of mutations at $y_1,...,y_5$ of the previous figure.}
\end{figure}

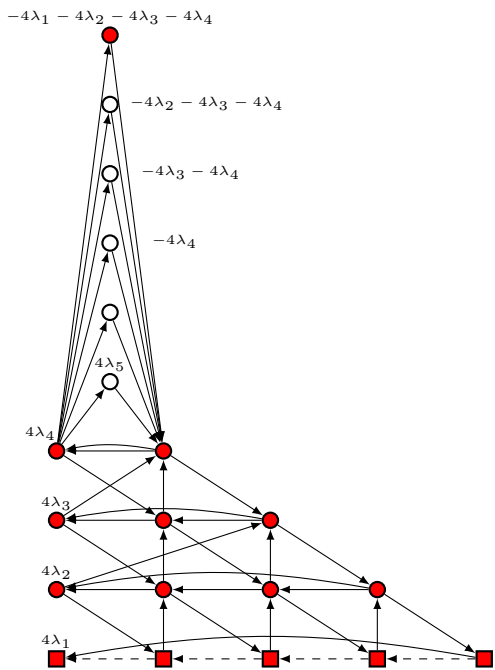
\begin{figure}[H] 
\centering
\begin{tikzpicture}[scale=0.7, every node/.style={inner sep=0, minimum size=0.2cm, thick}, x=1cm, y=1.3cm]
\node (2) at (2,5) [draw,  fill=red,label={[xshift=0em, yshift=0em] \tiny $4\l_1$}]{};
\node (14) at (4,5) [draw,  fill=red]{};
\node (24) at (6,5) [draw,  fill=red]{};
\node (18) at (8,5) [draw,  fill=red]{};
\node (10) at (10,5) [draw,  fill=red]{};

\node (15) at (2,6) [draw, circle,  fill=red,label={[xshift=0em, yshift=0em] \tiny $4\l_2$}]{};
\node (17) at (4,6) [draw, circle,  fill=red]{};
\node (9) at (6,6) [draw, circle,  fill=red]{};
\node (36) at (8,6) [draw, circle,fill=red]{};

\node (8) at (2,7) [draw, circle,  fill=red,label={[xshift=0em, yshift=0em] \tiny $4\l_3$}]{};
\node (37) at (4,7) [draw, circle,  fill=red]{};
\node (3) at (6,7) [draw, circle,  fill=red]{};

\node (4) at (2,8) [draw, circle,  fill=red,label={[xshift=-0.6em, yshift=0em] \tiny $4\l_4$}]{};
\node (16) at (4,8) [draw, circle,  fill=red]{};

\node (40) at (3,9) [draw, circle,label={[xshift=0em, yshift=0em] \tiny $4\l_5$}]{};
\node (6) at (3,10) [draw, circle]{};
\node (39) at (3,11) [draw, circle,label={[xshift=2.4em, yshift=-0.6em] \tiny $-4\l_4$}]{};
\node (5) at (3,12) [draw, circle, label={[xshift=3.0em, yshift=-0.6em] \tiny $-4\l_3-4\l_4$}]{};
\node (7) at (3,13) [draw, circle, label={[xshift=3.6em, yshift=-0.6em] \tiny $-4\l_2-4\l_3-4\l_4$}]{};
\node (38) at (3,14) [draw, circle,  fill=red, label={[xshift=0em, yshift=0em] \tiny $-4\l_1-4\l_2-4\l_3-4\l_4$}]{};

\drawpath{15,14,17,24,9,18,36,9,17,15,3,36,10}{}
\drawpath{8,17,37,9,3,37,8,16,4}{}
\drawpath{4,37,16,3}{}
\drawpath{36,15}{bend right=10}
\drawpath{3,8}{bend right=10}
\drawpath{16,4}{bend right=10}
\drawpath{4,5,16}{}
\drawpath{4,39,16}{}
\drawpath{4,40,16}{}
\drawpath{4,6,16}{}
\drawpath{4,7,16}{}
\drawpath{4,38,16}{}
\drawpath{10,18,24,14,2}{dashed}
\drawpath{10,2}{bend right=10}
\end{tikzpicture}
\caption{The top part of the quiver $\cX_q^{\mathrm{sym}}$ after the last wave of mutations at $y_1,...,y_9$ of the previous figure.}
\end{figure}

Note that only the last wave of mutations affects the monomial components of $\Xi$, where each successive mutation contributes a single monomial. The result includes all vertices from the non-symmetric part, together with the top vertex, which is the last vertex of the zigzag path in the last wave of mutations.
\end{appendices}

\end{document}